\numberwithin{equation}{section}
\newcommand{\sh}{\sinh}
\newcommand{\ch}{\cosh}
\newcommand{\hth}{\tanh}
\newcommand{\sch}{\mathrm{sech}}
\newcommand{\e}{\mathbb E}
\newcommand{\p}{\mathbb P}
\newcommand{\1}{\mymathbb{1}}
\newcommand{\0}{\mymathbb{0}}
\DeclareMathAlphabet{\mymathbb}{U}{BOONDOX-ds}{m}{n}
\newcommand{\eps}{\varepsilon}
\newcommand{\lla}{\langle\!\langle}
\newcommand{\rra}{\rangle\!\rangle}
\newcommand{\biglla}{\bigl\langle\!\bigl\langle}
\newcommand{\bigrra}{\bigr\rangle\!\bigr\rangle}
\newcommand{\Biglla}{\Bigl\langle\!\Bigl\langle}
\newcommand{\Bigrra}{\Bigr\rangle\!\Bigr\rangle}
\newcommand{\lee}{\preceq}
\newcommand{\gee}{\succeq}
\newcommand{\slee}{\prec}
\newcommand{\sgee}{\succ}
\newcommand{\B}{\mbox{\tiny VB}}
\newtheorem{theorem}{Theorem}[section]
\newtheorem{proposition}[theorem]{Proposition}
\newtheorem{lemma}[theorem]{Lemma}
\newtheorem{definition}[theorem]{Definition}
\theoremstyle{definition} 
\newtheorem*{ack}{Acknowledgments}
\newtheorem{remark}[theorem]{Remark}
\begin{document}
\title{On the de Almeida--Thouless Transition Surface in the Multi-Species SK Model with Centered Gaussian External Field}
\date{}

\author{ Heejune Kim \thanks{ Email: kim01154@umn.edu.} }

\maketitle

\begin{abstract}
We study the phase transition of the Parisi formula for the free energy in the multi-species Sherrington--Kirkpatrick model with a centered Gaussian external field and a positive-semidefinite variance profile matrix.
We show that in terms of the strength of the external field and the variance profile, the de Almeida--Thouless surface delineates the boundary between replica symmetric solutions and replica symmetry breaking solutions.

\end{abstract}

\tableofcontents

\section{Introduction}
The canonical Sherrington–Kirkpatrick (SK) model of spin glasses, introduced in \cite{Sherrington1975}, features homogeneous interactions. Using the physicists' replica method, Parisi \cite{Par80} conjectured a variational formula for the limiting free energy, which was later proved by Talagrand \cite{Tal06}, building on Guerra's replica symmetry breaking bound \cite{Guerra}. Generalizations of the SK model have been analyzed, including bipartite models and, more broadly, multi-species SK (MSK) models that capture inhomogeneous interactions and connect to neural networks \cite{ACM21, BGGPT14, BGF10, BGG11, BGGT14, Tal09}. For the MSK model with a positive-semidefinite interaction matrix, the Parisi formula was established by an upper bound in \cite{BCMT15} together with a matching lower bound in \cite{Pan15}.

Two central questions in Parisi's framework are the uniqueness of the minimizer of the Parisi formula, called a Parisi measure, and the characterization of when a phase transition occurs, namely, when the Parisi measure is nontrivial (not a Dirac measure). The uniqueness question for the SK model was completely resolved in \cite{AC15} and was recently extended to the MSK model with strictly convex interaction in \cite{CIM25}. The phase transition question was partly addressed by Toninelli \cite{Ton02}, who proved the existence of a transition above the conjectured de Almeida--Thouless (AT) line \cite{AT78} for the SK model. 
For the MSK model with a positive-definite  interaction matrix, Bates--Sloman--Sohn \cite{BSS19} developed a multi-species version of Toninelli's perturbative argument, and Dey–Wu \cite{DW21} confirmed the expected transition above the AT surface.\footnote{It's now a surface, rather than a line.}
Although the AT line is known to be inexact for some single-species models, such as the Ghatak--Sherrington model \cite{Pan05}, it is widely believed to be exact in the SK case. However, rigorously proving the absence of a transition inside the AT line for the SK model has remained elusive, with some  partial progress in \cite{BY22, JT17}. 
On the other hand,  Talagrand \cite[Chapter 13]{Talagrand2013vol2} derived another  characterization of the phase transition for the SK model with deterministic external fields, but it is not known if it analytically coincides with the AT line.

In contrast to a deterministic external field, allowing the external field to be centered Gaussian relaxes several technical difficulties, and Chen \cite{Che21} established the exactness of the AT line for the single-species SK model.\footnote{The proof in \cite{Che21} applies verbatim to external fields with symmetric decreasing density, but its extension to the MSK setting is not immediate, due to the technicalities in Section~\ref{sec: Minimizers of replica symmetric functional}.}
In this paper, we prove that for the MSK model  with a centered Gaussian external field and a positive-semidefinite interaction matrix, the AT surface is exact; in particular, there is no replica symmetry breaking inside the AT surface.




The MSK model is defined as follows.
For each $N\in\mathbb N$, denote the $N$-dimensional binary cube by $\{\pm 1\}^N$.
Each $\sigma =(\sigma_1,\dots,\sigma_N) \in \{\pm 1\}^N$ is called a configuration, where its $i$'th entry $\sigma_i$ is called the spin at site $i$.
Fix a finite set of species $\mathscr S$, and for each $N\in \mathbb N$, we partition the sites $\{1,\dots,N\}=   \dot \cup_{s\in \mathscr S}I_{s,N}$ into a finite disjoint union of species for some $ I_{s,N} \subseteq \{1,\dots ,N\}$.
For brevity, we will always denote $I_{s}$ instead of $I_{s,N}$, dropping the dependence on $N.$
For each $s\in\mathscr S$, we asymptotically fix the ratio $\lambda_{s,N}\colonequals |I_s|/N$ by assuming that the following limit exists \begin{equation*}
    \lim_{N\to\infty} \lambda_{s,N} \equalscolon \lambda_s \in (0,1].
\end{equation*} 
Denote the diagonal matrices $\Lambda =\text{diag}(\lambda_s: s\in\mathscr S)$ and $\Lambda_N =\text{diag}(\lambda_{s,N}: s\in\mathscr S)$.
For each $1\le i\le N$, we define $s(i)$ to be the species that site $i$ belongs to, that is, $i\in I_{s(i)}$.
Consider a symmetric variance matrix $\Delta^2=(\Delta^2_{st})_{s,t\in \mathscr S}$  and a vector $\tau^2= (\tau^2_s)_{s\in\mathscr S}$,  both entrywise non-negative.
Here, the square does not indicate a square of matrices, but we adopt this convention from the literature.
We consider the Hamiltonian of the  multi-species Sherrington--Kirkpatrick  model with centered Gaussian external field \begin{equation}\label{eq: MSK Hamiltonian}
    H_{N}(\sigma) = \frac{1}{\sqrt{N}}\sum_{1\le i,j\le N} J_{ij}  \sigma_i\sigma_j + \sum_{1\le i\le N}h_{i}\sigma_i ,\quad \sigma\in \{\pm 1\}^N, 
\end{equation}
 where $(J_{ij})_{1\le i,j\le N}$ and $(h_i)_{1\le i\le N}$ are  independent centered Gaussian random variables with inhomogeneous variance profile \begin{equation}\label{eq: variance of random variables}
 \begin{aligned}
     \e J_{ij}^2&= \Delta^2_{ss'} \quad \text{if $i\in I_s$, $j\in I_{s'}$ for $s,s' \in \mathscr S$},
     \\\e h_i^2 &=\tau_s^2\qquad \text{if $i\in I_s$ for $s\in \mathscr S$}.
 \end{aligned}
 \end{equation} 
The free energy associated to the Hamiltonian \eqref{eq: MSK Hamiltonian} is given by \begin{equation*}
    F_N=  \frac{1}{N} \log \sum_{\sigma\in\{\pm 1\}^N} \exp( H_N(\sigma)).
\end{equation*} 
For two configurations $\sigma^1$ and $\sigma^2$, we define the overlap vector by \[R_{12}= R(\sigma^1,\sigma^2)= \Bigl( \frac{1}{|I_s|}\sum_{i\in I_s}\sigma^1_i\sigma^2_i\Bigr)_{s\in\mathscr S}. \]
Without loss of generality, we may further assume that $\Delta^2$ is irreducible:  for any $a,b \in\mathscr S$, there exists a finite sequence $(s_1, \dots ,s_n)$  in $\mathscr S$ with $s_1=a$, $s_n=b$, and $\Delta^2_{s_i s_{i+1}}>0$ for all $1\le i\le n-1$.
Indeed, if this is not the case, the Hamiltonian \eqref{eq: MSK Hamiltonian} decomposes into a sum of Hamiltonians corresponding to ``noninteracting'' subsystems, which may then be treated separately as irreducible cases.


To prepare for the main result,  recall the Parisi formula of the MSK model  \cite{Pan15} for positive-semidefinite $\Delta^2$.
Given an integer $r\ge 1$, consider the parameter sequences \begin{align}
    0&=\zeta_{-1}\le  \zeta_0 < \dots <\zeta_{r-1}\le \zeta_{r}=1, \label{eq: zeta sequence}
    \\0&= q_0^s \le q_1^s \le \dots \le q_{r-1}^s\le q_{r}^s =1, \quad s \in \mathscr S, \label{eq: q sequence}
\end{align}  which we loosely denote by  $(\zeta_l)$ and $(q_l)$. 
We always denote the entrywise inequality for any vectors or matrices by $\lee$, the all-one vector by $\1$, and the all-zero vector or matrix $\0 $.
If strict inequalities hold for all entries,  we write $\slee.$
For two vectors $a$ and $b$,  we say $a\ne b$ if they differ in at least one entry.
For example, we can express \eqref{eq: q sequence} more succinctly as $\0\lee  q_1\lee \dots \lee q_{r-1}\lee \1$.
 With these parameters, for each $0\le l\le r$,  define \begin{equation} \label{eq: definition of Q}
     Q_l= q_l^{\intercal}\Lambda\Delta^2 \Lambda q_l  \quad \text{and} \quad  Q_l^s= 2(\Delta^2 \Lambda q_l)_s, \quad s \in \mathscr S.
 \end{equation} 
Let  $(z_i)_{1\le i\le r}$ be i.i.d. standard Gaussian random variables and $\e_{l}$ denote the expectation in $z_{l}$.
Let $(h_s)_{s\in\mathscr S}$ be i.i.d. centered Gaussian random variables with variance  $\e h^2_s =\tau^2_s$ for each $s\in\mathscr S.$
For each species $s\in \mathscr S$, we inductively define \begin{align}
    X^s_{l} &= \frac{1}{\zeta_{l}} \log  \e_{l+1}\exp(\zeta_{l} X^s_{l+1}), \quad 0\le l\le r-1, \label{eq: inductive definition}
\end{align} where the terminal random variable is \begin{equation}\label{eq: terminal X}
    X^s_{r} \colonequals \log \ch \Bigl(h_s + \sum_{1\le l \le r}z_{l}(Q_{l}^s-Q_{l-1}^s)^{1/2}\Bigr).
\end{equation} 
Note that $X_0^s$ is a $h_s$-measurable random variable.
The Parisi formula for the MSK model with (random) external field and positive-semidefinite $\Delta^2$ describes the limiting free energy as a variational formula,
\begin{equation*} 
    \lim_{N\to\infty} \e F_{N} = \inf_{r, (q_l), (\zeta_l)} \mathscr P ((q_l), (\zeta_l)),
\end{equation*} where the Parisi functional is given by \begin{equation}\label{eq: Parisi functional for MSK}
    \mathscr P((q_l), (\zeta_l)) \colonequals \log 2 +    \sum_{s \in \mathscr S} \lambda_s\e_h X_{0}^{s} - \frac{1}{2}\sum_{l=0}^{r-1}\zeta_l (Q_{l+1}-Q_l).
\end{equation} 
The Parisi functional can be extended to the compact space of probability measures on $[0,1]^{\mathscr S}$ with totally ordered support via  Lipschitz continuity (see Appendix~\ref{sec: Extension of Parisi functional}).
In particular, the minimum of the Parisi functional is attained  by some probability measures which we call Parisi measures. 
As mentioned in the introduction, Chen--Issa--Mourrat \cite[Theorem 1.2]{CIM25} confirm the uniqueness of Parisi measures for positive-definite $\Delta^2$.
For positive-semidefinite $\Delta^2$, multiple minimizers may exist, but uniqueness up to push-forward by $\Delta^2\Lambda$ is guaranteed, which implies that the supports of all minimizers have the same cardinality (see Appendix~\ref{sec: cardinality of supports of Parisi measures}).

Of particular interest is the case when a Parisi measure is supported on a singleton in $[0,1]^{\mathscr S}$, in which case  we say the model exhibits replica symmetric (RS) solution; otherwise, we say the model presents a replica symmetry breaking (RSB)  solution.
Alternatively, without referring to Parisi measures, one may define RS solutions to be the models with the limiting free energy being equal to the minimum of the RS functional which is the Parisi functional in the simplest case of $r=2$, $\zeta_0=0$, and $\zeta_1=1$ in \eqref{eq: zeta sequence},
\begin{equation}
    \mathsf{RS} (q) \colonequals \log 2 + \sum_{s \in \mathscr S} \lambda_s \e \log \ch\bigl(  z \sqrt{ \tau^2_s+ 2 (\Delta^2 \Lambda q)_s}\bigr) +\frac{1}{2}  (\1 -q)^{\intercal} \Lambda\Delta^2\Lambda (\1-q), \quad q\in [0,1]^{\mathscr S}. \label{eq: RS functional}
\end{equation} 
Associated to this RS functional is the following fixed-point equation
\begin{equation}
    q = \e \hth^2 \bigl(z \sqrt{\tau^2 + 2\Delta^2 \Lambda q}\bigr), 
    \quad q \in [0,1]^{\mathscr S}, \label{eq: fixed-point equation in RS}
\end{equation}
where $z$ is a standard Gaussian random variable, and the operations on the right-hand side are applied entrywise.
In Proposition~\ref{prop: uniqueness of fixed-point}, we show that if $\tau^2\ne \0$ then the equation \eqref{eq: fixed-point equation in RS} has a unique solution $q^* \in (0,1)^{\mathscr S}$.
If $\tau^2=\0$ and $\rho(\Delta^2\Lambda)\le 1/2$, then $q^*=\0$ is the unique solution.
If $\tau^2=\0$ and $\rho(\Delta^2\Lambda)> 1/2$, then  there are two solutions $\0$ and $q^*\in(0,1)^{\mathscr S}$.

Let us define a $\mathscr S \times \mathscr S$ diagonal matrix $\Gamma=\Gamma(\Delta^2,\tau^2)$ by 
 \begin{equation}\label{eq: diagonal operator}
    \Gamma_{ss} =  \e\sch^4 (z \sqrt{\tau^2_s+2 (\Delta^2 \Lambda q^*)_s}) \quad  \text{for all } s\in\mathscr S.
\end{equation}
Let $\rho(\cdot)$ be the spectral radius of a square matrix.
The AT surface is defined as the set of parameters $(\Delta^2,\tau^2)$ such that $\rho(\Gamma \Delta^2 \Lambda)=1/2$.
The following is our main result which characterizes the entire regime of RS solutions.


\begin{theorem}\label{thm: main theorem}  
For any irreducible positive-semidefinite $\Delta^2$  and for any $\tau^2 \gee\0 $,
the replica symmetric solution holds, that is, \[\lim_{N\to \infty}\e F_N  =\min_{q\in [0,1]^{\mathscr S}}\mathsf{RS}(q) \] if and only if $(\Delta^2,\tau^2)$ lies inside the AT surface,  $\rho(\Gamma\Delta^2\Lambda)\le 1/2.$ 
\end{theorem}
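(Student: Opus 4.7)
The plan is to prove the two directions of the biconditional separately. The forward direction, that RS implies $(\Delta^2,\tau^2)$ lies inside the AT surface, is the contrapositive of the transition result above the AT surface for positive-semidefinite $\Delta^2$. This follows from the multi-species perturbative arguments of Bates--Sloman--Sohn \cite{BSS19} and the sharpening by Dey--Wu \cite{DW21}: whenever $\rho(\Gamma\Delta^2\Lambda)>1/2$, there exists a two-atom choice of Parisi parameters, say $(\zeta_0,\zeta_1,\zeta_2)=(0,\zeta,1)$ with a suitably chosen $q_1\neq q^*$ in the Perron eigendirection of $\Gamma\Delta^2\Lambda$, for which $\mathscr{P}((q_l),(\zeta_l))<\mathsf{RS}(q^*)$. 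Thus no Parisi measure can be a Dirac mass and the model is RSB.

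For the nontrivial direction, assume $\rho(\Gamma\Delta^2\Lambda)\le 1/2$ and aim to prove that the limiting free energy equals $\min_q \mathsf{RS}(q)$. I would follow Chen's single-species strategy \cite{Che21}, adapted carefully to the multi-species setting. First, by Proposition~\ref{prop: uniqueness of fixed-point}, the fixed-point equation \eqref{eq: fixed-point equation in RS} identifies a distinguished $q^*\in[0,1]^{\mathscr{S}}$, and a direct computation shows that $q^*$ is in fact the unique minimizer of $\mathsf{RS}(\cdot)$ on $[0,1]^{\mathscr{S}}$. Second, assume toward contradiction that some Parisi measure has support of cardinality $r\ge 2$. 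Writing the first-order optimality conditions of $\mathscr{P}$ with respect to $q_l$ and $\zeta_l$ and repeatedly applying Gaussian integration by parts in the recursion \eqref{eq: inductive definition}--\eqref{eq: terminal X}, one produces identities relating $q_{l+1}-q_l$ to weighted averages of $\sch^2$ and $\sch^4$ of the cavity fields appearing in \eqref{eq: terminal X}. The AT condition $\rho(\Gamma\Delta^2\Lambda)\le 1/2$ should then convert these identities into a contractive inequality of the form $\|q_{l+1}-q_l\|\le \|q_{l+1}-q_l\|$ with equality only for constant ladders, forcing $q_1=\dots=q_{r-1}=q^*$ and thus contradicting $r\ge 2$. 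Once every Parisi measure is shown to be a Dirac mass at $q^*$, the variational formula collapses to $\mathsf{RS}(q^*)$.

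The main obstacle, flagged in the footnote of the introduction, is the multi-species adaptation of the ``minimizer of the RS functional'' step. In the single-species case one can exploit the monotonicity of the scalar map $q\mapsto \mathbb{E}\hth^2(z\sqrt{\tau^2+2\Delta^2 q})$ to pin down $q^*$ and to compare it with the atoms of a hypothetical RSB Parisi measure; in the multi-species case one must work with a vector-valued map whose Jacobian at $q^*$ is the matrix $2\Gamma\Delta^2\Lambda$, and only its spectral radius, not any entrywise bound, appears in the AT criterion. The delicate point is that $\Delta^2$ may be merely positive-semidefinite, so $2\Gamma\Delta^2\Lambda$ can be degenerate and standard contraction arguments do not apply. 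I would therefore lean heavily on irreducibility of $\Delta^2$ together with the Perron--Frobenius theorem to extract a left eigenvector witnessing the spectral condition, and use it as a weight to aggregate the species-indexed stationarity relations into a single scalar inequality in which the spectral radius appears cleanly. After this reduction, the closing argument should proceed along the same lines as \cite{Che21}.
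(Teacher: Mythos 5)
There is a genuine gap in the second (and main) direction. Your plan is to assume a Parisi measure with $r\ge 2$ atoms, write the first-order stationarity conditions of $\mathscr P$, and claim that $\rho(\Gamma\Delta^2\Lambda)\le 1/2$ converts them into a ``contractive inequality \dots with equality only for constant ladders.'' This step is asserted, not proved, and it is precisely the part that no known argument delivers: for deterministic external fields this is the long-standing open problem of showing no transition inside the AT line, and the stationarity identities alone (Proposition~\ref{prop: support of Parisi measure} here) do not force the support to collapse under the spectral condition. You also mischaracterize the route of \cite{Che21}: that proof does not argue by contradiction from critical-point equations; it runs a Talagrand-style interpolation and controls the \emph{coupled} two-replica free energy at fixed overlap $u$ via a Guerra--Talagrand 1-RSB upper bound, which is where the symmetry of the Gaussian field actually enters (through monotonicity of $\e\sch^4$ under shifts by a symmetric decreasing density, cf.\ \eqref{eq: consequence of symmetric decreasing density}). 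The paper's proof follows that template and then must overcome a genuinely multi-species obstruction you do not address: the GT bound only yields a free-energy cost $-C\,B(u-q^*)$ for overlaps $u$ in the totally ordered cone $\mathsf T$, so one needs the ASS scheme to identify limiting overlap laws as Parisi measures, the structural fact that $q^\mu_{\min},q^\mu_{\max}$ are comparable to $q^*$ (Proposition~\ref{prop: min and max of support}, which rests on the symmetric-field lemmas of Section~\ref{sec: Support of Parisi measures}), and a ball-enlargement argument to remove the constraint $R_{12}\in\mathsf T$ before closing with Gr\"onwall. None of these ingredients appears in your outline.

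Two smaller but real issues. First, ``a direct computation shows that $q^*$ is the unique minimizer of $\mathsf{RS}$'' is false for merely positive-semidefinite $\Delta^2$: minimizers are not unique there, since $\mathsf{RS}(q+v)=\mathsf{RS}(q)$ for $v\in\ker(\Delta^2\Lambda)$; one only gets that $q^*$ is \emph{a} minimizer satisfying the fixed-point equation (Lemma~\ref{lem: infimum of RS: semidefinite}), and even in the positive-definite case the uniqueness argument occupies all of Section~\ref{sec: Minimizers of replica symmetric functional} and relies on concavity of the fixed-point map plus irreducibility, not a direct computation. Second, the boundary cases (semidefinite $\Delta^2$ and $\rho(\Gamma\Delta^2\Lambda)=1/2$) require separate approximation arguments using continuity and monotonicity of $q^*$ in $(\Delta^2,\tau^2)$ (Lemma~\ref{lem: q^* monotone and continuous}) and Lipschitz continuity of the Parisi formula in the model parameters; your proposal does not account for these. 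The forward direction (RSB outside the AT surface via \cite{BSS19,DW21}) is essentially correct and matches the paper's Lemma~\ref{lem: RSB refined}.
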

We remark that for $\tau^2=\0$, the condition $\rho(\Delta^2\Lambda)\le 1/2$ implies $q^*=\0$ and $\Gamma=I$, hence a RS solution which coincides with the annealed free energy.
However, it is an open question whether the converse holds, that is, whether $\tau^2=\0$ and $\rho(\Delta^2\Lambda)>1/2$ imply $\rho(\Gamma\Delta^2\Lambda)>1/2$ and hence a RSB solution; this nontrivial equivalence at zero external field indeed holds for the single-species SK model (see, e.g., \cite{ALR87, Che17, Com96}).

\paragraph{Related work}
In parallel with the Ising line of work, the spherical setting has seen significant recent progress, while rigorous results in the Ising case remain relatively scarce, mainly confined to the high temperature regime \cite{CL25, Liu21, LD24, Wu24}. 
Recent works on spherical models include the establishment of the Parisi formula and simultaneous RSB\footnote{See Appendix~\ref{sec: Proof of simultaneous RSB}.} \cite{BS22_CMP, BS22}, the development of the TAP approach for non-convex interactions \cite{Sub23, Sub23_AoP, Sub25},  results delineating algorithmic capabilities \cite{HS23, HS24, HS25}, fluctuations of the free energy \cite{BL20, CL25_AIHP}, and the complexity of critical points \cite{Kiv23, McK24}.
In the non-convex interaction setting, the limiting free energy is not known in general \cite{Che24, CM25, Mou21}, except in certain spherical cases.
We also note that an analogue of the AT line at zero temperature for the spherical mixed $p$-spin model was established in \cite{CS17}.

\subsection{Overview of the proof}
Under the assumption of  positive-definite $\Delta^2$ and nonzero external field, RSB outside the AT surface has already been achieved by Toninelli's perturbative  argument \cite{Ton02} adapted to the MSK model in \cite{BSS19, DW21}.
We supply a necessary adjustment to extend the result to irreducible positive-semidefinite $\Delta^2$ and $\tau^2\gee \0,$ see Lemma~\ref{lem: RSB refined}.


The main task of this paper is to establish RS solutions inside the AT surface. We now outline the key steps of our approach, focusing on the case when $\Delta^2$ is positive-definite, $\tau^2\sgee \0$, and $\rho(\Gamma \Delta^2\Lambda)<1/2$.

\vspace{0.5em}
\noindent{\bf Step 1. Uniqueness of the minimizer $q^*$ of the RS functional.}
A first routine but highly nontrivial step, as in the case of the SK model, is to analyze the RS functional, in particular the nagging question of uniqueness of its minimizer. 
Partial results in tackling this technical issue have appeared in \cite{ACM21, BSS19, DW21}; see Remark~\ref{rmk: uniqueness literature} below.  They are usually based on the Latala--Guerra argument or on the contraction argument that do not appear to work in the entire  regime we consider here.
 We resolve this problem by using two ingredients: the irreducibility of $\Delta^2$ and the concavity of the fixed-point map in \eqref{eq: fixed-point equation in RS}, which  critically relies on the assumption of the external field being centered Gaussian.\footnote{Unfortunately, concavity does not seem to hold for nonzero deterministic external fields.} In Lemma~\ref{lem: vector field property on the boundary} below, we show  that the irreducibility of $\Delta^2$ rules out the possibility that the minimizers of the RS functional occur on the boundary of $[0,1]^{\mathscr S}$,  except possibly at $\0$. Consequently, for irreducible positive-definite $\Delta^2$, any minimizer satisfies the fixed-point equation, which together with the concavity of the fixed-point map ensures the uniqueness of minimizers (see Proposition~\ref{prop: infimum of RS functional for positive-definite} below).


\vspace{0.5em}
\noindent{\bf Step 2. Setting up the interpolation.} The next major step is motivated by Talagrand's treatment \cite[Chapter 13]{Talagrand2013vol2} to show the replica symmetric solution in the SK model with deterministic external field. We begin by considering the interpolated Hamiltonian $H_{N,t}(\sigma)$ defined in \eqref{eq: interpolating Hamiltonian} associated to the variance profile \begin{equation*}
    ( \,t\Delta^2, \, \tau^2+(1-t)2\Delta^2\Lambda q^*\, )\quad \text{for}\quad  t\in [0,1].
\end{equation*}
Denote the corresponding free energy by $\e F_N(t)$, the Gibbs measure by $G_{N,t}(\sigma) \propto \exp(H_{N,t}(\sigma)),$ the Gibbs average by $\lla\cdot \rra_{N,t}$, and the RS functional evaluated at $q^*$ by $\mathsf{RS}^*(t)$ which is in \eqref{eq: interpolated approximating RS functional}. 
Noting that $H_{N,t}=H_N$ when $t=1,$ the replica symmetric solution holds at $(\Delta^2,\tau^2)$ if  $\mathsf{RS}^*(1)-\e F_N(1)\approx 0$. 
A standard computation shows that $\mathsf{RS}^*(0)-\e F_N(0)\approx 0$  and   $ \frac{d}{dt} ( \mathsf{RS}^*(t) -\e F_N(t))  \approx  2^{-1} \e \lla \| R_{12}-q^*\|^2\rra_{N, t}$,  where the norm is given by $\|v\|\colonequals \sqrt{v^\intercal \Lambda \Delta^2\Lambda v}$ for $v\in \mathbb R^{\mathscr S}$.
As a result,
\[ \mathsf{RS}^*(1)- \e F_N (1) \approx \frac{1}{2}\int_0^{1} \e \lla \| R_{12}-q^*\|^2\rra_{N,t} \, dt,  \]
and we shall see $\mathsf{RS}^*(1)\approx \e F_N (1)$ if  we can show that the quantity $\e \lla \| R_{12}-q^*\|^2\rra_{N,t}$ is small along the interpolation on the interval $[0,1]$. 
The idea to establish this limit is to show 
\begin{equation}\label{eq: overlap upper bound}
    \e \lla \| R_{12}-q^*\|^2\rra_{N,t} \lesssim \mathsf{RS}^*(t)- \e F_N (t), \quad \forall t\in [0,1],
\end{equation} which implies \[\frac{d}{dt}(\mathsf{RS}^*(t)- \e F_N (t)) \lesssim \mathsf{RS}^*(t)- \e F_N (t), \quad \forall t\in [0,1].\] 
The desired replica symmetric solution then follows by Gr\"onwall's inequality,
  \[0\le \mathsf{RS}^*(1) -\e F_{N} (1)  \lesssim  \mathsf{RS}^*(0) - \e F_{N} (0)  \approx 0.\] 

We emphasize  that while the above approach was implemented in \cite[Chapter 13]{Talagrand2013vol2} for the SK model with deterministic external field, the RS solution was established only on a certain regime of the temperature and external field parameters.
In particular, it does not validate the RS solution up to the AT line. Under our setting, we show that the control of \eqref{eq: overlap upper bound} is indeed  achievable up to the entire AT surface, relying on two additional steps below.

\vspace{0.5em}
\noindent{\bf Step 3. Free energy cost via Guerra--Talagrand 1-RSB bound.} 
Toward \eqref{eq: overlap upper bound}, note that it suffices to  control the Gibbs measure  \begin{equation}\label{eq: control Gibbs measure: overview}
    \e G_{N,t}^{\otimes 2} \bigl(\|R_{12}-q^*\|^2 \gtrsim   \mathsf{RS}^*(t) - \e F_N (t)\bigr) \approx 0.
\end{equation}
To this end, we first establish a multi-species extension of the Guerra--Talagrand (GT) 1-RSB bound in Lemma \ref{lem: constrained 1RSB bound}.
We then utilize this bound to show that by giving an $\eps>0$ room and denoting  $\delta(t) \colonequals C( \mathsf{RS}^*(t) - \lim_{N\to\infty}\e F_N (t)+ 2\eps )>0$ for some constant $C>0$ independent of $t$,  if $\rho(\Gamma \Delta^2 \Lambda)<1/2$, 
for large enough $N$,
\begin{equation}\label{eq: free energy cost: overview}
    \frac{1}{N} \e \log \sum_{\|R_{12}-q^*\| \ge \sqrt{\delta(t)},\, R_{12}\in \mathsf{T}}\exp \bigl (H_{N,t}(\sigma^1)+H_{N,t}(\sigma^2)\bigr)\leq 2 \e F_N(t)-\eps,
\end{equation} where $\mathsf{T}\colonequals \{q\in [0,\infty)^{\mathscr S}: q\lee q^* \; \text{or} \; q\gee q^*\}$.\footnote{We in fact establish  $N^{-1}\e \log \sum_{R_{12}=u}\exp(H_{N,t}(\sigma^1)+H_{N,t}(\sigma^2))\le 2\e F_N(t)-2\eps$ for any $u\in  \mathsf{T}$ with $\|u-q^*\|\ge  \sqrt{\delta(t)}$. Since $R_{12}$ can take at most $(2N+1)^{|\mathscr S|}$ many values due to Ising spins, this implies \eqref{eq: free energy cost: overview}.}
In view of Talagrand's positivity principle, we may add a harmless perturbation of a smaller order to the Hamiltonian and assume that any asymptotic overlap vector has non-negative entries.

Ideally, removing the constraint $R_{12}\in\mathsf{T}$ in \eqref{eq: free energy cost: overview} would then lead to \eqref{eq: control Gibbs measure: overview} by Gaussian concentration, thereby yielding \eqref{eq: overlap upper bound} as desired.
Unfortunately, such constraint is unavoidable because, roughly speaking, we can only establish the multi-species GT 1-RSB bound for totally ordered $q_1\lee q_2$; see Lemma~\ref{lem: constrained 1RSB bound}. 
In contrast, this monotone restriction in the GT bounds for the single species case does not pose a difficulty as there is only one direction in $[0,\infty)$, that is, $\mathsf{T}=[0,\infty)$, and the constraint is vacuous.

\vspace{0.5em}
\noindent{\bf Step 4. Aizenman--Sims--Starr scheme and a key property of Parisi measures.} To circumvent the difficulty sketched above, we first approximate the ratios of species  $(\lambda_s)_{s\in\mathscr S}$ by rational numbers $(k_s/k)_{s\in\mathscr S}$, where the integer $k$ can be interpreted as the level of approximation.
We then consider a modified Hamiltonian $\hat H_{N,t}^{\mathrm{pert}}$ in \eqref{eq: hat pert interpolating Hamiltonian} that differs from $H_{N,t}$ in distribution by a vanishing perturbative term as $N\to\infty.$ 
Denote by $\hat G_{N,t}^{\mathrm{pert}}$ the associated Gibbs measure. 
Since this perturbation is of a smaller order, it is standard to check that \eqref{eq: free energy cost: overview} corresponding to $\hat H_{N,t}^{\mathrm{pert}}$ remains valid so that by using the Gaussian concentration for the free energies, we obtain a  weaker variant of \eqref{eq: control Gibbs measure: overview}, \[ \e (\hat G_{N,t}^{\mathrm{pert}})^{\otimes 2} (\|R_{12}-q^*\| \ge \sqrt{\delta(t)},\, R_{12}\in \mathsf{T}) \approx 0. \]

The idea to remove the constraint $R_{12}\in \mathsf{T}$ is sketched as follows. 
By employing the Aizenman--Sims--Starr (ASS) scheme for the MSK model,  we may assume that any subsequential weak limit of $(\e (\hat G_{Nk,t}^{\mathrm{pert}})^{\otimes 2}(R_{12}\in\cdot ))_{N\geq 1}$ is a Parisi measure. 
Let $\mu$ denote one  such subsequential limit, which is known to be totally ordered \cite[Theorem 4]{Pan15}.
The preceding display implies \[\operatorname{supp}(\mu) \cap \operatorname{int}\bigl(  \mathsf{T}\setminus \mathsf{B}(q^*,\sqrt{\delta(t)}) \bigr)=\emptyset,\] where $\mathsf{B}(x,r)$ is an open  ball of radius $r$  centered at $x$ under the norm $\|\cdot\|$, and $\operatorname{int}(\cdot)$ denotes the interior of a set.
Now, we make a crucial observation (see
 Proposition~\ref{prop: min and max of support} below) that as $\tau^2\sgee\0$, \[ \{q^\mu_{\min},q^\mu_{\max}\}\subseteq \{q^*\} \cup \operatorname{int}( \mathsf{T}),\]
where  $q^\mu_{\min}$ and $q^\mu_{\max}$ are the smallest and largest points of $\operatorname{supp}(\mu)$, respectively.
The last two displays together imply $ \{q^\mu_{\min},  q^\mu_{\max}\} \subseteq \{q^*\}\cup \bigl(\operatorname{int}(\mathsf{T})\cap \operatorname{cl}(\mathsf{B}(q^*,\sqrt{\delta(t)}))\bigr)$, where $\operatorname{cl}(\cdot)$ is the closure of a set.
It might seem that this would readily imply $\mbox{supp}(\mu)\subseteq \operatorname{cl}(\mathsf{B}(q^*,\sqrt{\delta(t)})),$ but it is not necessarily the case. However, by enlarging the ball, we still have the following containment
\[\operatorname{supp}(\mu) \subseteq \mathsf{B}(q^*,C'\sqrt{\delta(t)})\] for some constant $C'>1$ that only depends on the geometry of the ball induced by the norm $\|\cdot \|$; see Figure~\ref{fig: two ellipses: overview} below for an illustration.
Since $\mu$ was arbitrary,  we can rewrite  this as \[ \e (\hat G_{Nk,t}^{\mathrm{pert}})^{\otimes 2} \bigl(\, \|R_{12}-q^*\| \ge C'\sqrt{\delta(t)}\,\bigr) \approx 0,\]
which is exactly \eqref{eq: control Gibbs measure: overview} for the Hamiltonian $\hat H_{Nk,t}^{\mathrm{pert}}$.
Now, the same argument in Step 2 starting from \eqref{eq: overlap upper bound} can still be carried through, and the  fact that the associated limiting free energy for the Hamiltonian  $\hat H_{Nk,t}^{\mathrm{pert}}$ is the same as the original Hamiltonian $H_{N,t}$ completes the proof.

The boundary cases where $\Delta^2$ is only positive-semidefinite, where $\rho(\Gamma \Delta^2\Lambda)=1/2$, and where $\tau_s^2=0$ for some $s\in\mathscr S$ can each be obtained by approximation from the case established above.
This requires extra effort in understanding certain continuity and monotonicity properties of $q^*$; see, e.g., Lemma~\ref{lem: q^* monotone and continuous} and Steps 2 and 3 in the proof of Theorem~\ref{thm: main theorem}.
We remark that many proofs will greatly simplify if we employ a stronger assumption  $\Delta^2\sgee \0$, in lieu of mere irreducibility.

\begin{figure}[h]
\centering
\begin{tikzpicture}[scale=1,>=stealth]

  \pgfmathsetmacro{\thetaDeg}{-15}
  \pgfmathsetmacro{\A}{2}
  \pgfmathsetmacro{\B}{sqrt(2)}

  \pgfmathsetmacro{\cosa}{cos(\thetaDeg)}
  \pgfmathsetmacro{\sina}{sin(\thetaDeg)}

  \pgfmathsetmacro{\Rx}{sqrt((\A*\cosa)^2 + (\B*\sina)^2)}
  \pgfmathsetmacro{\Ry}{sqrt((\A*\sina)^2 + (\B*\cosa)^2)}



  \pgfmathsetmacro{\scaleS}{1.556}

  \draw[gray!60] (-3,0)--(3,0);
  \draw[gray!60] (0,-2.22)--(0,2.22);

\begin{scope}
  \clip[rotate around={\thetaDeg:(0,0)}]
        (0,0) ellipse [x radius=\scaleS*\A, y radius=\scaleS*\B];

  \fill[cyan!20,opacity=.25] (0,0) rectangle (100,100);
  \fill[cyan!20,opacity=.25] (-100,-100) rectangle (0,0);
\end{scope}

\node[cyan!80!black] at (1.2*\Rx, 0.3*\Ry) {$\mathsf{T}$};
\node[cyan!80!black] at (-1.2*\Rx, -0.3*\Ry) {$\mathsf{T}$};


  \begin{scope}[rotate around={\thetaDeg:(0,0)}]
    \draw[line width=1.0pt] (0,0) ellipse [x radius=\A, y radius=\B];
    \draw[line width=1pt] (0,0) ellipse [x radius=\scaleS*\A, y radius=\scaleS*\B];
  \end{scope}

  \draw[dashed, line width=0.9pt] (-\Rx,-\Ry) rectangle (\Rx,\Ry);


\filldraw[black] (0,0) circle (1.2pt) node[below right] {$q^*$};
\filldraw[blue] (0.17,1.3) circle (1.5pt) node[below left] {$q^{\mu}_{\max}$};
\filldraw[blue] (-1.7,-0.25) circle (1.5pt) node[right] {$q^{\mu}_{\min}$};

  \filldraw[blue] (-1.6,1.2) circle (1.5pt) node[above left] {};

  \draw[->,thick,red,dashed] (0,0) -- ({2.5*cos(75)},{2.5*sin(75)});
\end{tikzpicture}
\caption{Illustration of Step 4 in two-species. Three blue dots indicate $\operatorname{supp}(\mu)$ which is totally ordered with respect to the partial order $\lee$. The smaller ellipse depicts $\mathsf{B}(q^*,\sqrt{\delta(t)})$ and  the larger ellipse depicts $\mathsf{B}(q^*, C' \sqrt{\delta(t)})$. 
The first and the third quadrants shaded in light blue depicts $\mathsf{T}= \{q\in [0,1]^{\mathscr S}: q\lee q^* \; \text{or} \; q\gee q^*\}$.
Although $ \{q^\mu_{\min},  q^\mu_{\max}\} \subseteq \{q^*\}\cup  \bigl(\operatorname{int}(\mathsf{T})\cap \operatorname{cl}(\mathsf{B}(q^*,\sqrt{\delta(t)}))\bigr)$, there remains  a blue dot  that lies outside $\operatorname{cl}(\mathsf{B}(q^*,\sqrt{\delta(t)}))$, which necessitates enlarging the ellipse to ensure containment. 
The ellipses are always elongated in this way, since the principal eigenvector (red dashed arrow) of $\Lambda\Delta^2\Lambda$ has positive entries by the Perron--Frobenius theorem.
}

\label{fig: two ellipses: overview}
\end{figure}
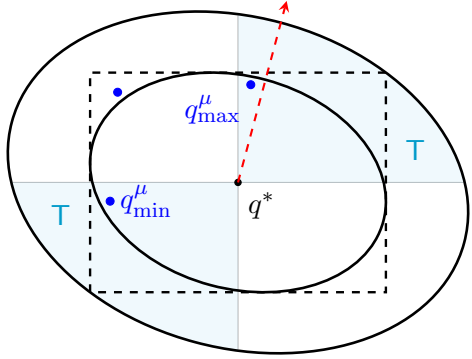

\subsection{Organization of the paper}
In Section~\ref{sec: Minimizers of replica symmetric functional}, we investigate the RS functional.
Section~\ref{sec: Support of Parisi measures} is devoted to confirming the aforementioned key property of the support of  Parisi measures.
In Section~\ref{sec: interpolation and  Aizenman-Sims-Starr cavity computation}, we make a connection between the asymptotic overlap and a Parisi measure  through the ASS scheme, and identify  the coupled free energy cost as a sufficient condition for RS solutions.
In turn, this cost  is established strictly inside the AT surface in Section~\ref{sec: Two-dimensional Guerra--Talagrand bound}, assuming the multi-species 1-RSB GT bound.
We carry out the proof of the main theorem in Section~\ref{sec: Proof of main theorem}.
Appendix~\ref{sec: Extension of Parisi functional}, in which the results also hold for deterministic external fields, provides the  directional derivative of the Parisi functional and derives an equation Parisi measures must satisfy.
Appendix~\ref{sec: Proof of continuity of Parisi formula in model parameters} establishes some continuity properties of the Parisi formula in its external parameters $(\Lambda,\Delta^2,\tau^2)$, which is used throughout the article.
Appendix~\ref{sec: cardinality of supports of Parisi measures} proves that the support cardinalities of Parisi measures are all equal.
In Appendix~\ref{sec: Proof of simultaneous RSB}, we confirm simultaneous RSB for irreducible, positive-definite $\Delta^2$ with deterministic external fields.


\begin{ack}
    \textrm{The author thanks his advisors, Wei-Kuo Chen and Arnab Sen, for suggesting this project and for many helpful discussions and comments. The author also acknowledges Qiang Wu for clarifying explanations regarding \cite{DW21}. This work was partially supported by NSF grant DMS-2246715.}
\end{ack}

\section{Minimizers of the replica symmetric functional}\label{sec: Minimizers of replica symmetric functional}
This section is devoted to establishing  various properties of the minimizers of the RS functional.
The  following main result of this section is a  description of the minimizers of the RS functional for irreducible positive-definite $\Delta^2.$
\begin{proposition}\label{prop: infimum of RS functional for positive-definite}
    Assume $\Delta^2$ is irreducible and positive-definite.
    The infimum $\inf_{q\in [0,1]^{\mathscr S}}\mathsf{RS}(q)$ is attained by a unique minimizer.
    This minimizer is a solution to the fixed-point equation \eqref{eq: fixed-point equation in RS}.
If $\tau^2=\0$ and $\rho(\Delta^2\Lambda)\le 1/2$, then $\0$ is the  minimizer.
Otherwise, the minimizer is in the interior $(0,1)^{\mathscr S}$.
\end{proposition}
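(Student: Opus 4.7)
The plan is to identify critical points of $\mathsf{RS}$ with fixed points of the map $T(q) := \e \hth^2(z\sqrt{\tau^2 + 2\Delta^2\Lambda q})$, use Lemma~\ref{lem: vector field property on the boundary} to restrict any minimizer to $\{\0\}\cup(0,1)^{\mathscr S}$, and then combine Proposition~\ref{prop: uniqueness of fixed-point} with a Hessian analysis at $\0$.

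I would start by computing $\nabla\mathsf{RS}$. Writing $a_s(q) := \tau^2_s + 2(\Delta^2\Lambda q)_s$ and applying Gaussian integration by parts $\e[z\,\hth(z\sqrt{a_s})] = \sqrt{a_s}\,\e\sch^2(z\sqrt{a_s})$ together with $\sch^2 = 1 - \hth^2$, a routine calculation gives
\begin{equation*}
\nabla \mathsf{RS}(q) = \Lambda\Delta^2\Lambda\bigl(q - T(q)\bigr).
\end{equation*}
Since $\Delta^2$ is positive-definite and $\Lambda$ is a strictly positive diagonal, $\Lambda\Delta^2\Lambda$ is invertible, so interior critical points of $\mathsf{RS}$ are exactly the interior solutions of \eqref{eq: fixed-point equation in RS}. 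A minimizer exists by continuity and compactness, and by Lemma~\ref{lem: vector field property on the boundary} lies in $\{\0\}\cup(0,1)^{\mathscr S}$; in either case it satisfies the fixed-point equation ($\0$ is trivially a fixed point when $\tau^2=\0$).

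Next I would case-split using Proposition~\ref{prop: uniqueness of fixed-point}. If $\tau^2 \ne \0$, picking any $u$ with $\tau^2_u > 0$ and using $\Delta^2_{uu} > 0$ (by positive-definiteness) gives
\begin{equation*}
\nabla\mathsf{RS}(\0)_u \le -\lambda_u^2\,\Delta^2_{uu}\,\e\hth^2(z\sqrt{\tau^2_u}) < 0,
\end{equation*}
which rules out $\0$ as a minimizer; the unique minimizer is therefore the unique interior fixed point of $T$ provided by Proposition~\ref{prop: uniqueness of fixed-point}. If $\tau^2 = \0$ and $\rho(\Delta^2\Lambda) \le 1/2$, the same proposition says $\0$ is the only fixed point in $[0,1]^{\mathscr S}$, so there is no interior candidate and the unique minimizer is $\0$.

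The remaining case $\tau^2 = \0$, $\rho(\Delta^2\Lambda) > 1/2$ is the main obstacle I anticipate, since $\nabla\mathsf{RS}(\0) = \0$ and both $\0$ and the nontrivial interior fixed point $q^*$ supplied by Proposition~\ref{prop: uniqueness of fixed-point} are critical points. I plan to exclude $\0$ through a Hessian calculation: the expansion $\e \hth^2(z\sqrt{a}) = a + O(a^2)$ as $a \to 0$ yields
\begin{equation*}
\nabla^2 \mathsf{RS}(\0) = \Lambda\bigl(\Delta^2 - 2\Delta^2\Lambda\Delta^2\bigr)\Lambda.
\end{equation*}
Setting $A := \Lambda^{1/2}\Delta^2\Lambda^{1/2}$, which is symmetric positive-definite with $\rho(A) = \rho(\Delta^2\Lambda)$, and testing against $v := \Lambda^{-1/2}\xi$ for a Perron eigenvector $A\xi = \rho\xi$, one finds
\begin{equation*}
v^\intercal \nabla^2 \mathsf{RS}(\0)\,v = \xi^\intercal(A - 2A^2)\xi = \rho(1 - 2\rho)\,\xi^\intercal\xi < 0.
\end{equation*}
Thus $\0$ is not a local minimum, and the unique minimizer must be $q^*$.
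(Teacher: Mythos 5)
Your proposal is correct and follows essentially the same route as the paper: the gradient identity $\nabla\mathsf{RS}(q)=\Lambda\Delta^2\Lambda(q-T(q))$ together with Lemma~\ref{lem: vector field property on the boundary} reproduces the paper's Lemma~\ref{lem: infimum of RS} (any minimizer lies in $\{\0\}\cup(0,1)^{\mathscr S}$ and is a fixed point), the case split is exactly Proposition~\ref{prop: uniqueness of fixed-point}, and your Hessian test against $v=\Lambda^{-1/2}\xi$ is the same Perron-direction second-derivative computation as the paper's Lemma~\ref{lem: infimum of RS when h=0}, since $\Lambda^{-1/2}\xi$ is precisely the Perron vector of $\Delta^2\Lambda$. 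No gaps.
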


To achieve this, we first show that  any minimizer must be a solution of the fixed-point equation; see Lemma~\ref{lem: infimum of RS}.
In turn,  we can study the number and locations of the  fixed-points as follows.

\begin{proposition}\label{prop: uniqueness of fixed-point}
    For an irreducible $\Delta^2$, the following holds.
    \begin{enumerate}[label=(\roman*)]
        \item If $\tau^2\ne \0$, then there is a unique fixed-point of \eqref{eq: fixed-point equation in RS}, and it is in  $(0,1)^{\mathscr S}$.
        \item Suppose $\tau^2=\0$.
    If $\rho(\Delta^2\Lambda )>1/2$, there are exactly two fixed-points of \eqref{eq: fixed-point equation in RS}; one is $\0$ and the other is in $(0,1)^{\mathscr S}$.
    If $\rho(\Delta^2\Lambda)\le 1/2$, then $\0$ is the only fixed-point.
    
    \end{enumerate}  
    
\end{proposition}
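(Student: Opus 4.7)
I would cast the proposition as a study of the map
\[
F\colon [0,1]^{\mathscr S}\to [0,1]^{\mathscr S},\qquad F(q)_s\colonequals \e\hth^2\bigl(z\sqrt{\tau_s^2+2(\Delta^2\Lambda q)_s}\bigr),\quad s\in\mathscr S,
\]
whose fixed points are precisely the solutions of \eqref{eq: fixed-point equation in RS}. Three structural facts about $F$ will do all the work: (a) $F$ is continuous and maps $[0,1]^{\mathscr S}$ into itself (Brouwer already gives existence of a fixed point); (b) $F$ is coordinate-wise $\lee$-monotone, since $\Delta^2,\Lambda$ have nonnegative entries and $v\mapsto \e\hth^2(z\sqrt v)$ is increasing; (c) each coordinate $F(q)_s$ is a strictly concave function of $q$ along any direction in which $(\Delta^2\Lambda\cdot)_s$ is nonconstant. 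Fact (c) reduces to strict concavity of $\Phi(v)\colonequals \e\hth^2(z\sqrt v)$ on $[0,\infty)$, a standard single-species Gaussian-field computation via $\Phi''(v)=\tfrac14 \e g^{(4)}(u)$ with $u\sim\mathcal N(0,v)$ and $g=\hth^2$; this is precisely the input highlighted in the overview as specific to centered Gaussian external fields.

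I would first handle the location claims. Any fixed point satisfies $q\slee \1$ because $\hth^2<1$. For $q\gee \0$ a fixed point with $q\ne \0$, set $S_+\colonequals \{s:q_s>0\}\ne\emptyset$. If $s\notin S_+$, then $q_s=0$ forces $\tau_s=0$ and $(\Delta^2\Lambda q)_s=\sum_{t\in S_+}\Delta^2_{st}\lambda_t q_t=0$, so $\Delta^2_{st}=0$ for every $t\in S_+$. Irreducibility of $\Delta^2$ supplies a path from any $s\notin S_+$ to some $t\in S_+$, hence an edge crossing from $S_+^c$ into $S_+$ with positive weight, a contradiction unless $S_+=\mathscr S$. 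When $\tau^2\ne \0$, the coordinate with $\tau_s>0$ is automatically in $S_+$, so every fixed point lies in $(0,1)^{\mathscr S}$, proving the location part of (i). Uniqueness of interior fixed points is then a Krein--Rutman-style argument: if $q_1\ne q_2$ both lie in $(0,1)^{\mathscr S}$, after relabeling set $\alpha\colonequals \min_s(q_2)_s/(q_1)_s\le 1$, attained at some $s^*$. Monotonicity gives $q_2=F(q_2)\gee F(\alpha q_1)$. When $\alpha<1$, strict concavity of $t\mapsto F(tq_1)_s$ on $[0,1]$---which uses $(\Delta^2\Lambda q_1)_s>0$, available since $q_1\sgee \0$ and every row of $\Delta^2$ is nonzero by irreducibility---combined with $F(\0)\gee \0$ yields
\[
F(\alpha q_1)_s>\alpha F(q_1)_s+(1-\alpha)F(\0)_s\ge \alpha (q_1)_s\quad\text{for every }s,
\]
contradicting $(q_2)_{s^*}=\alpha(q_1)_{s^*}$. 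The case $\alpha=1$ is dispatched by swapping $q_1$ and $q_2$. This finishes (i) and the uniqueness half of (ii).

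For (ii), note that $F(\0)=\0$ when $\tau^2=\0$, so $\0$ is always a fixed point. Suppose $\rho(\Delta^2\Lambda)\le 1/2$ and let $q\gee \0$ be any fixed point. The inequality $\hth^2(x)\le x^2$ yields $q\lee 2\Delta^2\Lambda q$; pairing with the positive left Perron eigenvector $v$ of the irreducible nonnegative matrix $\Delta^2\Lambda$ gives $v^\intercal q\le 2\rho v^\intercal q\le v^\intercal q$, so equality holds throughout and $q_s=2(\Delta^2\Lambda q)_s$ for every $s$. Strictness of $\hth^2(x)<x^2$ for $x\ne 0$ then forces $(\Delta^2\Lambda q)_s=0$ for every $s$, i.e.\ $\Delta^2\Lambda q=\0$; for $q\gee \0$, the same irreducibility argument used in the location step forces $q=\0$. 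Conversely, if $\rho(\Delta^2\Lambda)>1/2$, take a right Perron eigenvector $v\sgee \0$ with $\Delta^2\Lambda v=\rho v$ and use the Taylor expansion $\e\hth^2(z\sqrt y)=y-2y^2+O(y^3)$ to obtain $F(\epsilon v)_s=2\rho\epsilon v_s-O(\epsilon^2)$. Since $2\rho>1$, for $\epsilon>0$ small enough $F(\epsilon v)\sgee \epsilon v$; monotonicity makes $(F^n(\epsilon v))_{n\ge 0}$ nondecreasing and bounded in $[0,1]^{\mathscr S}$, so it converges to a fixed point $q^*\sgee \epsilon v\ne \0$. The interior uniqueness already proved guarantees $q^*$ is the only nonzero fixed point.

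I expect the main obstacle to be verifying the strict concavity of $\Phi$, which is precisely what makes the centered Gaussian assumption essential; once that is in hand, everything reduces to bookkeeping plus Perron--Frobenius. The second-most delicate piece is the boundary case $\rho(\Delta^2\Lambda)=1/2$ with $\tau^2=\0$, where both $\hth^2(x)\le x^2$ and its strictness must be combined with irreducibility to propagate the equality case all the way to $q=\0$.
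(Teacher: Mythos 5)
Your proposal is correct, and in two of the three main steps it takes a genuinely different (and arguably cleaner) route than the paper. The shared backbone is the strict concavity of $v\mapsto \e\hth^2(z\sqrt{v})$ --- the one ingredient you defer is exactly the paper's Lemma~\ref{lem: concavity of fixed-point equation}, proved there by a direct two-derivative computation of $x\mapsto\hth^2(a\sqrt{b+x})$ rather than via the heat-semigroup identity $\Phi''(v)=\tfrac14\e g^{(4)}(u)$ you suggest; either route requires essentially the same calculus, so this is a correctly flagged dependency rather than a gap. Your boundary/location step and your monotone-iteration construction of a nonzero fixed point when $\rho(\Delta^2\Lambda)>1/2$ coincide with the paper's (Lemma~\ref{lem: vector field property on the boundary}, Claim~2, and the $\hth^2(x)\ge(1-\eps)x^2$ argument, which your Taylor expansion replaces harmlessly). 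Where you diverge: (1) for uniqueness of interior fixed points, the paper builds the extremal fixed points $\hat q_0\lee\hat q_1$ by iterating from $\0$ and $\1$, then joins two putative distinct fixed points by a line extended \emph{outside} the cube to a point $q(t_*)$ with a vanishing coordinate and derives a sign contradiction from the concavity dichotomy --- this forces the careful verification of items (a)--(d) about $t_*$; your minimal-ratio (Thompson-metric/Krein--Rutman) argument, scaling one fixed point by $\alpha=\min_s (q_2)_s/(q_1)_s$ and using $F(\alpha q_1)\sgee\alpha F(q_1)+(1-\alpha)F(\0)\gee\alpha q_1$, stays inside the cube, needs no comparability of the two fixed points, and avoids that bookkeeping entirely. (2) For $\tau^2=\0$, $\rho(\Delta^2\Lambda)\le 1/2$, the paper runs a norm-contraction estimate using $|(\hth^2)''|<2$ a.e.\ and the spectral norm of $\Lambda^{1/2}\Delta^2\Lambda^{1/2}$; your pairing of the pointwise bound $\hth^2(x)\le x^2$ against the positive \emph{left} Perron eigenvector is more elementary, and the equality-case analysis via strictness of $\hth^2(x)<x^2$ handles the critical value $\rho=1/2$ more transparently than the paper's strict inequality chain. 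The only caveats worth recording are cosmetic: your claim that ``every row of $\Delta^2$ is nonzero by irreducibility'' needs $|\mathscr S|\ge 2$ (the degenerate one-species zero matrix is vacuously irreducible under the paper's definition, but the proposition is trivial there), and in the $\alpha$-argument one should note that strict concavity of $t\mapsto F(tq_1)_s$ on $(0,1]$ together with continuity at $t=0$ suffices for the strict chord inequality even when $\tau^2_s=0$.
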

    \begin{remark}\label{rmk: uniqueness literature}
        The uniqueness of the fixed-point in the high temperature  regime $\rho(\Delta^2\Lambda)<1/2$ for positive-definite $\Delta^2$ and nonzero deterministic external field was established in \cite[Theorem 1.4]{DW21} via Banach fixed-point theorem,  and here we strengthen their argument to cover the case $\rho(\Delta^2\Lambda)\le 1/2$, along with a   complementary result for $\rho(\Delta^2\Lambda)>1/2.$
    Earlier work in two-species case \cite{BSS19} employs an elementary argument together with the standard Latala--Guerra lemma \cite[Proposition A.14.1]{Talagrand2013vol1}, which leads to the uniqueness for all nonzero deterministic external field.
    In another direction, \cite[Theorem 3]{ACM21} verifies the uniqueness for a special indefinite, irreducible $\Delta^2$ and any $\tau^2\sgee \0$, which again enables the Latala--Guerra lemma.
    \end{remark}

As there are at most two fixed-points which are comparable by the partial order $\slee$, the notion of a maximal fixed-point is well-defined, and it indeed coincides with  the minimizer  in Proposition~\ref{prop: infimum of RS functional for positive-definite}.
\begin{definition}\label{def: definition of q^* for irreducible}
For  $\tau^2\gee \0$ and an irreducible $\Delta^2$, we denote the maximal fixed-point of \eqref{eq: fixed-point equation in RS} by \begin{equation*}
    q^*=q^*(\Delta^2,\tau^2).
\end{equation*} 
\end{definition}

In Subsection~\ref{sec: further properties of fixed-points}, we collect additional properties of the fixed-point  for later use.
Lemma~\ref{lem: q^* monotone and continuous} establishes  certain continuity and monotonicity of the fixed-point  in the variance profile $(\Delta^2,\tau^2)$, which we later use when approximating the boundary cases in the proof of the main theorem.
Lemma~\ref{lem: monotone region lemma} proves a property about the vector field associated to the fixed-point map, which is crucially used in the proof of Proposition~\ref{prop: min and max of support}.
Finally, in Lemma~\ref{lem: infimum of RS: semidefinite}, we investigate minimizers for positive-semidefinite $\Delta^2$.

\subsection{Proof of Proposition~\ref{prop: uniqueness of fixed-point}}

Our approach to Proposition~\ref{prop: uniqueness of fixed-point} relies on an elementary concavity property  of the fixed-point map.

\begin{lemma}\label{lem: concavity of fixed-point equation}
    The function $x\mapsto \hth^2 (a \sqrt{b+x})$ for $a> 0$ and $b\ge 0$ is strictly concave on the domain $(0,\infty).$
\end{lemma}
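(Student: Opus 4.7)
The plan is to reduce the concavity of $f(x) := \tanh^2\!\bigl(a\sqrt{b+x}\bigr)$ to a one-variable inequality by introducing the substitution $y := \sqrt{b+x}$, so that $f(x) = g(y)$ with $g(y) := \tanh^2(ay)$. Since $\tfrac{dy}{dx} = \tfrac{1}{2y}$, a direct application of the chain rule gives
\[ f''(x) \;=\; \frac{g''(y)\,y \,-\, g'(y)}{4y^{3}}, \]
so strict concavity of $f$ on $(0,\infty)$ is equivalent to the pointwise inequality $g''(y)\,y - g'(y) < 0$ for all $y > \sqrt{b} \ge 0$, which I will in fact verify for every $y > 0$.

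Next I would compute $g'$ and $g''$ explicitly. Using $\tfrac{d}{dy}\tanh(ay) = a\,\sch^{2}(ay)$ and $\tfrac{d}{dy}\sch^{2}(ay) = -2a\,\sch^{2}(ay)\tanh(ay)$, one gets $g'(y) = 2a\tanh(ay)\sch^{2}(ay)$ and
\[ g''(y) \;=\; 2a^{2}\,\sch^{2}(ay)\bigl(\sch^{2}(ay) - 2\tanh^{2}(ay)\bigr). \]
Setting $t := ay > 0$ and using $\sch^{2} t = 1 - \tanh^{2} t$, the inequality $g''(y)\,y - g'(y) < 0$ becomes, after dividing by the positive factor $2a\sch^{2}(ay)$,
\[ \phi(t) \;:=\; t\bigl(1 - 3\tanh^{2} t\bigr) \,-\, \tanh t \;<\; 0, \qquad t > 0. \]

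Finally, I would establish this scalar inequality by differentiation from the boundary value $\phi(0) = 0$. A short computation, again using $\sch^{2} t = 1 - \tanh^{2} t$, gives
\[ \phi'(t) \;=\; -\,2\tanh^{2} t \,-\, 6t\,\tanh(t)\,\sch^{2}(t), \]
which is strictly negative for all $t > 0$. Hence $\phi(t) < 0$ on $(0,\infty)$, which yields the claimed strict concavity of $f$. There is no real obstacle here beyond careful bookkeeping; the one subtle point is the reduction via the $y$-substitution, after which everything is an elementary one-variable exercise.
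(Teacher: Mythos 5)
Your proposal is correct and follows essentially the same route as the paper: after the change of variables the argument reduces to the identical scalar inequality $t - 3t\tanh^2 t - \tanh t < 0$ for $t>0$, verified by the same derivative computation. The only cosmetic difference is that you carry the parameters $a,b$ through the substitution $y=\sqrt{b+x}$, whereas the paper first normalizes to $\tanh^2(\sqrt{x})$ via the affine reparametrization.
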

\begin{proof}
    It suffices to check the concavity of  $g(x)\colonequals \hth^2 ( \sqrt{x})$.
    Taking derivatives for $x>0$, we have \begin{align*}
        g'(x)&= \hth (\sqrt{x}) \sch^2(\sqrt{x}) \cdot  x^{-1/2},
        \\ 2g''(x) &= \sch^2(\sqrt{x}) (1- 3\hth^2 (\sqrt{x})) \cdot x^{-1} - \hth (\sqrt{x}) \sch^2(\sqrt{x}) \cdot x^{-3/2}
        \\&= x^{-3/2} \sch^2(\sqrt{x}) \bigl( \sqrt{x}- 3\sqrt{x}\hth^2 (\sqrt{x}) -\hth(\sqrt{x}) \bigr).
    \end{align*} 
    In view of the last equation, the proof would be complete if we can show the function \[k(x)\colonequals x-3x\hth^2(x) -\hth(x), \quad x\ge 0,\] is negative for $x>0$.
    Note that $k(0)=0.$
    An elementary calculation shows that \[k'(x)=-2\hth^2(x) -6x\hth(x)\sch^2(x) <0  \quad\text{for} \quad x>0,\] which implies $k(x)<0$ for $x>0$, as desired.
\end{proof}


    

\begin{proof}[\bf Proof of Proposition~\ref{prop: uniqueness of fixed-point}]
    Denote the map on the right-hand side of \eqref{eq: fixed-point equation in RS} by \begin{equation}\label{eq: contraction mapping}
        F(p) \colonequals      \e\hth^2 \bigl( |z|\sqrt{ \tau^2+ 2\Delta^2\Lambda p}\bigr), \quad  p\in [0,1]^{\mathscr S},
    \end{equation} where $|\cdot|$ denotes entrywise modulus.
    Let us denote the $n$-fold composition of $F$ by $F^n$ for any $n\in\mathbb N.$

    Let $q_0\equiv \0$ and $q_1\equiv \1$. Note that $F^n(q_0)$ is monotonically increasing in the sense that $F^n(q_0)\lee  F^{n+1}(q_0).$ To see this, note that obviously $q_0\lee  F(q_0).$ If $F^{n-1}(q_0)\lee  F^n(q_0),$ then $F^n(q_0)\lee  F^{n+1}(q_0)$ since $\tanh^2(x)$ is strictly increasing in $x>0.$ Similarly, we also see that $F^n(q_1)$ is monotonically decreasing. 
    Thus, from Dini's theorem, $F^n(q_0)\to \hat q_0$ and $F^n(q_1)\to \hat q_1$ uniformly over different species for some vectors $\hat q_0$ and $\hat q_1.$ Consequently, $\hat q_0$ and $\hat q_1$ are fixed-points of $F$ and $\hat q_0\lee  \hat q_1.$  

\textit{Claim 1.} If $q_*$ is an arbitrary fixed-point of $F$, then $\hat q_0\lee  q_*\lee  \hat q_1.$ In fact, this holds by noting that $q_0\lee  q_*\lee  q_1$ and again $\tanh^2(x)$ is strictly increasing in $x>0.$ From these, we have $F^n(q_0)\lee  F^n(q_*)=q_*\lee  F^n(q_1)$ and sending $n\to\infty$ validates our claim.

\textit{Claim 2.}  For any $q\in[0,1]^{\mathscr S}$ and $n\ge 0$, it holds that \[\operatorname{supp}(F^{n}(q)) \supseteq \operatorname{supp}( (\Delta^2\Lambda)^{n} q).\]
    This is trivial for $n=0$. 
    Suppose the claim holds for some $n\ge 0$.
    Let $s\in\mathscr S$.
    Suppose $ F^{n+1}(q)_s= \e \hth^2(z\sqrt{(\tau^2)_s+2(\Delta^2 \Lambda F^n(q))_s})=0$.
    Then $\sum_{k\in\mathscr S}(\Delta^2 \Lambda)_{sk} F^n(q)_k=0$ so that $F^n(q)_k=0$ whenever $(\Delta^2\Lambda)_{sk}>0$.
    By the induction hypothesis, this implies $((\Delta^2\Lambda)^nq )_k=0$ whenever $(\Delta^2\Lambda)_{sk}>0$.
    As a result, $((\Delta^2\Lambda)^{n+1}q)_s= \sum_{k\in\mathscr S}(\Delta^2\Lambda)_{sk} ((\Delta^2\Lambda)^n q)_k=0.$
    This completes the induction and the proof of the claim.

    From hereafter, we consider the following two cases separately.
    
    \noindent\textbf{Case 1. $\tau^2\ne \0.$}
    
    \noindent From Claim 1, the uniqueness holds if and only if $\hat q_0 = \hat q_1.$      
    Toward a contradiction, suppose otherwise, i.e.,  $\hat q_0 \ne \hat q_1.$
    Consider the line connecting $\hat q_0$ and $\hat q_1$, \begin{equation}\label{eq: q line}
        q(t)\colonequals  \hat q_0 + t(\hat q_1 -\hat q_0) , \quad t\in\mathbb R,
    \end{equation}
    and define \[ t_*= \sup \{t\le 0: q(t)_s=0 \text{ for some } s\in \mathscr S \}.\] 

    We verify the following properties of $t_*$: 
    \begin{enumerate}[label=(\alph*), itemsep=-0.2em]
        \item  $t_*$ is well-defined;
        \item  $t_*<0$; 
        \item  $q(t_*)\gee \0$;
        \item  $F(q(t))$ is well-defined for $t\ge t_*$.
    \end{enumerate}
    Note $\hat q_0 \ne \hat q_1$ and $\hat q_0\lee  \hat q_1$ imply $(\hat q_0)_s< ( \hat q_1)_s$ for some $s \in \mathscr S$.
    For this species $s$, we have $q(t)_s=0$ for $t=- (\hat q_0)_s/((\hat q_1)_s - (\hat q_0)_s)\le 0$ and the set appearing in the definition of $t_*$ is non-empty.
    This verifies item (a).
    Next, note that  $\tau^2\ne \0$ and, hence, $v\colonequals F(q_0)=\e\hth^2 h\ne \0$.
The irreducibility of $\Delta^2$ ensures that for any $s\in\mathscr S$, there exists some $m_s\in\mathbb N$ such that  $((\Delta^2\Lambda)^{m_s} v)_s>0$.
    Combining this with Claim 2 and the previously established monotonicity, we conclude
    \begin{equation}\label{eq: q_0>0}
        (\hat q_0)_s \ge F^{m_s}(v)_s>0 \quad \text{for all } s\in\mathscr S,
    \end{equation} which readily implies item (b).
     To show item (c), Suppose $q(t_*)_s <0$ for some $s\in \mathscr S$. 
     From \eqref{eq: q line} and \eqref{eq: q_0>0}, we must have $(\hat q_1)_s -(\hat q_0)_s>0$ and, by the intermediate value theorem, there exists $t>t_*$ such that $q(t)_s=0.$ This contradicts the definition of $t_*$ as a supremum and, therefore, $q(t_*)\gee \0.$
     Note that item (c) and $\hat q_0 \lee  \hat q_1$ imply  $\0\lee  q(t_*)\lee  q(t)$ for all $t\ge t_*$.
     Since $\Delta^2 \Lambda$ preserves nonnegativity, $\Delta^2 \Lambda q(t) \gee \0$ and $F(q(t))=\e\hth^2(|z|\sqrt{\tau^2 +2\Delta^2\Lambda q(t)})$ is well-defined for all $t\ge t_*$.
     This checks item (d).
    All the items above are demonstrated.

    By Lemma~\ref{lem: concavity of fixed-point equation},  the map $t\mapsto F(q(t))_s$ is strictly concave  for each $s \in \mathscr S$.
    Here, we used the fact that, since $\Delta^2$ is irreducible and $\hat q_0\sgee \0$ by \eqref{eq: q_0>0}, it must hold $\Delta^2\Lambda \hat q_0 \sgee \0$ and, therefore, for all $s\in\mathscr S$, $t\mapsto F(q(t))_s$ is not identically zero.
    Moreover, $F(q(i))= q(i)$ for $i\in \{0,1\}$.
    These imply the following dichotomy: \begin{align*}
      &F(q(t))\sgee q(t) \quad \text{if} \quad t\in (0,1),
      \\&F(q(t))\slee q(t) \quad \text{if} \quad t\in [t_*,0)\cup (1,\infty).
    \end{align*}
    Note that we had to restrict our domain to $[t_*,\infty)$, in view of property (d) above.
     On the other hand, there exists some $s_*\in \mathscr S$  such that $q(t_*)_{s_*}=0$ and, therefore,  $F(q(t_*))_{s_*}< q(t_*)_{s_*}=0.$ 
    This is a contradiction to $F(q(t_*))\gee \0$, completing the proof of the uniqueness.

    Note that \eqref{eq: q_0>0} and the fact $\hth^2<1$ imply that the unique fixed-point lies inside the interior $(0,1)^{\mathscr S}$.

    \noindent\textbf{Case 2. $\tau^2=\0$.}

    \noindent In this case, $\0$ is a trivial fixed-point, which can be identified with $\hat q_0$.  
    Assume $\rho(\Delta^2\Lambda)\le 1/2$.
    We will show  $\hat q_1 =\0$, in which case $\0$ is the unique fixed-point.
    Toward a contradiction, suppose $\hat q_1 \ne \0$.
    By \eqref{eq: nonzero vector field on the boundary}, $\hat q_1 \in (0,1)^{\mathscr S}$.
    Consider the linear interpolation $q_t\colonequals t\hat q_1$ for $t\in[0,1]$.
    Note that $F(q_t)$ is well-defined since $[0,1]^{\mathscr S}$ is convex.
    In spirit of \cite{DW21}, consider $\Lambda^{1/2} F(q_t).$
    For $0<t<1,$  we can differentiate using Gaussian integration by parts, \begin{equation*}
        \frac{d}{dt}\Lambda^{1/2}F(q_t) = \Lambda^{1/2}D_t\Delta^2\Lambda \hat q_1,
    \end{equation*}  where $D_t \colonequals \text{diag}(\e(\hth^2)''(z \sqrt{2(\Delta^2\Lambda q_t)_s}): s\in\mathscr S)$.
    Here in the calculation of Gaussian IBP, we used the fact that $\Delta^2$ is irreducible and,  for $0<t<1$, $q_t\sgee \0$, hence $\Delta^2\Lambda q_t\sgee \0.$
    By the fundamental theorem of calculus, since $F(q_0)=\0$ and  $|(\hth^2)''(x)|< 2$ Lebesgue a.e., \begin{align*}
        \|\Lambda^{1/2} F(\hat q_1)\|_2\le  \bigl\| \Lambda^{1/2} \int_0^1  |D_t|  dt \, \Delta^2\Lambda \hat q_1 \bigr\|_2< 2 \| \Lambda^{1/2} \Delta^2 \Lambda \hat q_1 \|_2 \le 2 \rho(\Delta^2\Lambda) \|\Lambda^{1/2} \hat q_1\|_2\le \|\Lambda^{1/2} \hat q_1\|_2,
    \end{align*} where  $|\cdot |$ and integration are done entrywise for $D_t.$
    In the second-to-last inequality of the preceding display, we used the fact that $\|\Lambda^{1/2} \Delta^2 \Lambda^{1/2}\|_2 =\rho(\Lambda^{1/2} \Delta^2 \Lambda^{1/2})=\rho(\Delta^2 \Lambda)$.
    Since $F(\hat q_1)=\hat q_1$, the preceding display leads to a contradiction.
    Consequently, we must have $\hat q_1=\0$, as desired.

    Assume the complementary case $\rho(\Delta^2\Lambda)>1/2.$
    Let $\eps>0$ be such that $(1-\eps)^2 2\rho(\Delta^2\Lambda)> 1$.
    We can choose a small $\delta>0$ such that $\hth^2 (x) \ge (1-\eps)x^2$ for all $|x|<\delta$.
    Let $u\sgee \0$ be the Perron vector of the irreducible $\Delta^2\Lambda$ (see \cite[(8.3.9)]{Mey00_book}).
    Let us also choose a small enough $t=t(\eps,\delta,u)>0$  such that \[ \p \bigl(  \sqrt{2\rho(\Delta^2\Lambda) t u} \, |z| \slee \delta \1 \bigr)>1-\eps, \] where $|\cdot|$ is the entrywise modulus of the standard Gaussian vector $z.$
    Then, \begin{align*}
        F(tu)=\e \hth^2(z \sqrt{ 2\rho(\Delta^2\Lambda)t u})\gee (1-\eps)^2 2\rho (\Delta^2\Lambda) t u \sgee tu ,
    \end{align*}
    which implies by induction that $\0\slee F^n(tu)\lee F^{n+1}(tu) \slee \1$ for all $n\in\mathbb N$. 
    It must converge to some fixed-point $\hat u\colonequals \lim_{n\to\infty} F^n(tu)\sgee \0$ by monotonicity.
    By Claim 1, it holds that $\0 \slee \hat u \lee \hat q_1$.
    We have shown that  $\hat q_1$ is non-trivial.
    Now, we will show that $\{\0, \hat q_1\}$ are all the fixed-points.
    Toward a contradiction, suppose there exists another fixed-point $p \notin \{\0,\hat q_1\}$.
    From Claim 1, we must have $\0\lee p\lee \hat q_1$.
    Moreover, by \eqref{eq: nonzero vector field on the boundary},  $p, \hat q_1\in (0,1)^{\mathscr S}$. 
    Then,  the line connecting $p$ and $\hat q_1$ lies strictly inside the interior $(0,1)^{\mathscr S}$, which allows us to carry out the argument leading to a contradiction as in Case 1 with one modification; item (b) simply follows from $p \sgee \0$ in lieu of \eqref{eq: q_0>0}.
    Consequently, $\hat  q_1$ is the only fixed-point other than $\0$, and it is inside the interior $(0,1)^{\mathscr S}$.
\end{proof}

\subsection{Proof of Proposition~\ref{prop: infimum of RS functional for positive-definite}}

We start with ruling out fixed-points on the boundary of the cube $[0,1]^{\mathscr S}$.
\begin{lemma}\label{lem: vector field property on the boundary}
    Assume irreducible $\Delta^2$.
    For $q\ne \0$ on the boundary of $[0,1]^{\mathscr S}$, we have \begin{align}
        \e \hth^2\bigl(z \sqrt{\tau^2+2 \Delta^2 \Lambda q}\bigr) -q &\ne \0. \label{eq: nonzero vector field on the boundary}
        \end{align}
        For any $q\in [0,1]^{\mathscr S}$, 
        \begin{align}
        q+ \eps \bigl(\e \hth^2\bigl(z \sqrt{\tau^2+2 \Delta^2 \Lambda q}\bigr) -q\bigr) &\in [0,1]^{\mathscr S} \text{ for small enough $\eps>0$.} \label{eq: vector field direction okay}
    \end{align} 
\end{lemma}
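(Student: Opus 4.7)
Denote the map on the right-hand side of \eqref{eq: fixed-point equation in RS} by
\[
F(q)_s \colonequals \e\hth^2\bigl(z\sqrt{\tau_s^2+2(\Delta^2\Lambda q)_s}\bigr),\qquad s\in\mathscr S,
\]
so that the two assertions translate to $F(q)\ne q$ on the boundary (for $q\ne \0$) and $q+\eps(F(q)-q)\in[0,1]^{\mathscr S}$ for small $\eps>0$. The plan is to treat the two claims separately; the substance lies in \eqref{eq: nonzero vector field on the boundary}.

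For \eqref{eq: nonzero vector field on the boundary}, I would split on whether some coordinate of $q$ equals $1$. If $q_s=1$ for some $s$, then since $\hth^2<1$ almost surely under any Gaussian measure, $F(q)_s<1=q_s$, and the difference is already non-zero in the $s$-th entry. Otherwise, $q$ on the boundary and $q_s<1$ for every $s$ forces the zero set $Z\colonequals\{s\in\mathscr S: q_s=0\}$ to be non-empty, while $q\ne \0$ forces $Z\ne \mathscr S$. Assume toward a contradiction that $F(q)=q$. Then for every $s\in Z$, $F(q)_s=0$ implies that $z\sqrt{\tau_s^2+2(\Delta^2\Lambda q)_s}$ vanishes almost surely, and since all contributions are non-negative, $\tau_s=0$ and $(\Delta^2)_{sk}\lambda_k q_k=0$ for every $k\in\mathscr S$. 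Because $\lambda_k>0$, this yields $q_k=0$ (i.e., $k\in Z$) whenever $(\Delta^2)_{sk}>0$. Thus $Z$ is closed under taking neighbors in the directed graph on $\mathscr S$ whose edges are $\{(s,k):(\Delta^2)_{sk}>0\}$. However, the irreducibility hypothesis on $\Delta^2$ asserts exactly that this graph is strongly connected, so no proper non-empty subset is forward-closed, a contradiction.

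Claim \eqref{eq: vector field direction okay} follows immediately once one rewrites
\[
q_s+\eps\bigl(F(q)_s-q_s\bigr) \;=\; (1-\eps)\,q_s+\eps\, F(q)_s,
\]
which is a convex combination of two numbers in $[0,1]$ (noting that $F(q)_s\in[0,1)$ always since $\hth^2\in[0,1)$). Hence any $\eps\in[0,1]$ works, and in particular small positive $\eps$.

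The only non-trivial step is the graph-theoretic contradiction in the first part; I do not anticipate a real obstacle beyond spelling out that the fixed-point equation on $Z$ together with the non-negativity of $\tau^2$ and $\Delta^2\Lambda$ propagates zeros along $\Delta^2$-edges, after which irreducibility closes the argument.
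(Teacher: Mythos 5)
Your proof is correct and follows essentially the same route as the paper's: the key step is identical (if the fixed-point equation held at a boundary point with a non-empty, proper zero set, then non-negativity would force $\Delta^2_{sk}=0$ between the zero set and its complement, contradicting irreducibility), and the coordinates equal to $1$ are handled trivially via $\hth^2<1$. Your convex-combination observation for \eqref{eq: vector field direction okay} is a slightly cleaner packaging of the paper's coordinatewise check, but not a different argument.
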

\begin{remark}
    The proof below also holds for deterministic external fields.
\end{remark}
\begin{proof}
    Let $q \ne \0$ be on the Euclidean boundary of $[0,1]^{\mathscr S}$.
    Then define \[J=\{s\in\mathscr S: q_s =0\},\quad  K= \{s\in\mathscr S: 0<q_s <1\}, \quad \text{and} \quad L= \{s\in\mathscr S: q_s=1\},\] which partition $\mathscr S.$
    Note that $J\ne \mathscr S$ since $q\ne\0$, and $K\ne \mathscr S$ since $q$ is on the boundary.

We first check \eqref{eq: nonzero vector field on the boundary}.
    If $L\ne \emptyset$, then the \eqref{eq: nonzero vector field on the boundary} is trivial from $\hth^2<1.$
    Suppose $L =\emptyset$.
    Then,  since $J\cup K=\mathscr S$ while $J,K\ne\mathscr S$, we have $J,K\ne \emptyset $.
    We claim that there exists $j\in J$ such that $\e \hth^2(z\sqrt{(\tau^2)_j + 2(\Delta^2\Lambda q)_j}) >0.$
    Suppose otherwise.
    We immediately have $\tau^2_j=0= (\Delta^2\Lambda q)_j$ for all $j\in J.$
    Since $q_k >0$ for all $k\in K$ and  \[0=(\Delta^2\Lambda q)_j=\sum_{s\in\mathscr S} (\Delta^2\Lambda)_{js} q_s= \sum_{s\in K} (\Delta^2\Lambda)_{js} q_s, \quad \forall j\in J, \] we have $\Delta^2_{jk}=0$ for all $(j,k)\in J\times K$.
    This contradicts the assumption that $\Delta^2$ is irreducible, and finishes the proof of the claim.
    Now, the claim readily verifies \eqref{eq: nonzero vector field on the boundary}.

    Next, we check \eqref{eq: vector field direction okay}.
    Fix $q\in [0,1]^{\mathscr S}$ and define $J$, $K$, and $L$ as before.
    Since $\mathscr S$ is a finite set, it suffices to show for each $s\in\mathscr S$, if $\eps>0$ is small enough, then \[ 0\le q_s+ \eps \bigl(\e \hth^2\bigl(z \sqrt{(\tau^2)_s+2 (\Delta^2 \Lambda q)_s}\bigr) -q_s\bigr) \le 1.\]
    For  $s \in J\cup K$, this is obvious.
    For $s\in L$, this follows from the inequality $\hth^2<1$.
    This verifies \eqref{eq: vector field direction okay} and completes the proof.
\end{proof}

For irreducible and positive-definite $\Delta^2$, minimizers of the RS functional indeed satisfy the fixed-point equation.
\begin{lemma}\label{lem: infimum of RS}
    Suppose  $\Delta^2$ is irreducible and positive-definite.
    Any $q\in [0,1]^{\mathscr S}$ achieving the infimum of the RS functional satisfies \eqref{eq: fixed-point equation in RS}.

\end{lemma}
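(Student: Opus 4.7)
The strategy is to derive a closed-form expression for the gradient of $\mathsf{RS}$ and then test it against the direction $F(q)-q$ that Lemma~\ref{lem: vector field property on the boundary} guarantees to be inward-pointing, where $F$ is the map in \eqref{eq: contraction mapping}. Positive-definiteness (rather than mere positive-semidefiniteness) of $\Delta^2$ will be the key feature that converts nonnegativity of a quadratic form into the vanishing of the perturbation direction.

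The first step is a routine Gaussian integration by parts: for $a>0$,
\begin{equation*}
\tfrac{d}{da}\,\e\log\ch(z\sqrt{a}) \;=\; \e\Bigl[\hth(z\sqrt{a})\cdot \tfrac{z}{2\sqrt{a}}\Bigr] \;=\; \tfrac{1}{2}\,\e\sch^2(z\sqrt{a}) \;=\; \tfrac{1}{2}\bigl(1-\e\hth^2(z\sqrt{a})\bigr),
\end{equation*}
which extends continuously to $a=0$ via the Taylor expansion $\log\ch(x)=x^2/2+O(x^4)$, making $a\mapsto \e\log\ch(z\sqrt{a})$ of class $C^1$ on $[0,\infty)$. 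Combining this with the chain rule (applied to $a_s=\tau_s^2+2(\Delta^2\Lambda q)_s$), the symmetry of $\Delta^2$, and the derivative of the quadratic penalty, a direct computation yields
\begin{equation*}
\nabla \mathsf{RS}(q) \;=\; \Lambda\Delta^2\Lambda\bigl(q-F(q)\bigr), \quad q\in[0,1]^{\mathscr S}.
\end{equation*}

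Now fix any minimizer $q^*$ of $\mathsf{RS}$ on $[0,1]^{\mathscr S}$ and set $v\colonequals F(q^*)-q^*$. By \eqref{eq: vector field direction okay}, $q^*+\eps v\in[0,1]^{\mathscr S}$ for all sufficiently small $\eps>0$, so the one-sided directional derivative of $\mathsf{RS}$ at $q^*$ in the direction $v$ must be nonnegative. Plugging in the gradient formula,
\begin{equation*}
0 \;\le\; v^{\intercal}\nabla \mathsf{RS}(q^*) \;=\; -\,v^{\intercal}\Lambda\Delta^2\Lambda\, v.
\end{equation*}
Since $\Delta^2$ is positive-definite and $\Lambda$ has strictly positive diagonal entries, $\Lambda\Delta^2\Lambda$ is positive-definite, forcing $v=\0$. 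Hence $q^*=F(q^*)$, which is exactly \eqref{eq: fixed-point equation in RS}.

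The only mildly delicate point is ensuring that the gradient formula holds uniformly up to the boundary of the cube, i.e., also at points where some $a_s$ vanishes; this is handled by the $C^1$-extension of $\e\log\ch(z\sqrt{a})$ to $a=0$ noted above. Irreducibility of $\Delta^2$ is not used in this lemma itself — it enters only afterwards, in Proposition~\ref{prop: infimum of RS functional for positive-definite}, to locate the minimizer via the fixed-point analysis of Proposition~\ref{prop: uniqueness of fixed-point}. What is essential here is the positive-definiteness of $\Delta^2$, which supplies the strict coercivity of the quadratic form $v\mapsto v^{\intercal}\Lambda\Delta^2\Lambda\,v$ used in the last display.
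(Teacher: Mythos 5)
Your argument is correct and is essentially the paper's proof: the paper likewise computes the one-sided derivative $\frac{d}{d\eps}\mathsf{RS}(q+\eps u(q))\big|_{\eps=0^+}=-u(q)^{\intercal}\Lambda\Delta^2\Lambda\,u(q)$ with $u(q)=F(q)-q$ (its \eqref{eq: right derivative of RS}), invokes \eqref{eq: vector field direction okay} to make the perturbation admissible, and concludes $u(q)=\0$ from positive-definiteness. Your added remarks (the $C^1$-extension at $a=0$ and the observation that irreducibility is not needed for this step, only for \eqref{eq: vector field direction okay}'s companion statement used later) are accurate refinements of the same route.
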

\begin{proof}
Let $\eps>0$, $q \in [0,1]^{\mathscr S}$, and $u(q)=\e \hth^2\bigl(z \sqrt{\tau^2+2 \Delta^2 \Lambda q}\bigr) -q$.
By Lemma~\ref{lem: vector field property on the boundary}, $q+\eps u(q) \in [0,1]^{\mathscr S}$ for small enough $\eps>0$, and the right-derivative $\partial_\eps \mathsf{RS} (q+\eps u(q))|_{\eps=0^+}$ is well-defined.
We can then carry out the differentiation using Gaussian IBP to get \begin{equation}\label{eq: right derivative of RS}
    \frac{d}{d\eps} \mathsf{RS} (q+\eps u(q))\Bigr|_{\eps=0^+} =    - u(q)^\intercal \Lambda\Delta^2\Lambda u(q).
\end{equation}
For any $q\ne \0$ on the boundary of $[0,1]^{\mathscr S}$, since $u(q)\ne \0$ by  Lemma~\ref{lem: vector field property on the boundary},  it holds that \eqref{eq: right derivative of RS} is strictly negative.
Moreover, $q+\eps u(q) \in [0,1]^{\mathscr S}$ for small enough $\eps>0$, so such $q$ cannot be a minimizer of $\mathsf{RS}$.
Therefore, a minimizer must be either $\0$ or in the interior $(0,1)^{\mathscr S}$.
In turn, for a minimizer $q\in \{\0\}\cup (0,1)^{\mathscr S}$,  \eqref{eq: right derivative of RS} and the positive-definiteness of $\Delta^2$ readily imply  $u=\0$ as desired.

\end{proof}

The following rules out $\0$ as a minimizer of the RS functional at zero external field and low temperature.
\begin{lemma}\label{lem: infimum of RS when h=0}
   Assume $\tau^2=\0$ and irreducible $\Delta^2$.
     If $\rho(\Delta^2\Lambda )>1/2$, then $\0$ does not attain the infimum of the RS functional.
\end{lemma}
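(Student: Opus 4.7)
My plan is to exhibit an explicit direction of strict descent for $\mathsf{RS}$ at $\0$. Since $\Delta^2\Lambda$ is irreducible with nonnegative entries (as $\Lambda$ is positive-diagonal and $\Delta^2$ is irreducible), the Perron--Frobenius theorem supplies a right eigenvector $v \sgee \0$ with eigenvalue $\rho \colonequals \rho(\Delta^2\Lambda) > 0$; I would use this $v$ as the descent direction, so that $tv \in [0,1]^{\mathscr S}$ for small $t > 0$, and aim to show $\mathsf{RS}(tv) < \mathsf{RS}(\0)$ for such $t$.

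The computation proceeds via a second-order Taylor expansion in $t$. Define $\phi(\alpha) \colonequals \e\log\ch(z\sqrt{2\alpha})$ for $\alpha \geq 0$. Gaussian integration by parts gives $\phi'(\alpha) = 1 - \e\hth^2(z\sqrt{2\alpha})$, and together with $\hth^2(y) = y^2 + O(y^4)$ near $0$, this yields $\phi(\alpha) = \alpha - \alpha^2 + O(\alpha^3)$ as $\alpha \downarrow 0$. Substituting $\alpha = t(\Delta^2\Lambda v)_s$ into the log-cosh sum and expanding the quadratic term $\tfrac12(\1-tv)^\intercal\Lambda\Delta^2\Lambda(\1-tv)$, the linear-in-$t$ contributions cancel exactly (both equal $t\1^\intercal\Lambda\Delta^2\Lambda v$, using symmetry of $\Lambda\Delta^2\Lambda$), leaving
\begin{equation*}
\mathsf{RS}(tv) - \mathsf{RS}(\0) \;=\; t^2\Bigl[\tfrac12\, v^\intercal \Lambda\Delta^2\Lambda v - v^\intercal \Lambda\Delta^2\Lambda\Delta^2\Lambda v\Bigr] + O(t^3).
\end{equation*}

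Finally, $\Delta^2\Lambda v = \rho v$ reduces the bracket to $\rho(\tfrac12 - \rho)\, v^\intercal \Lambda v$, which is strictly negative because $\rho > 1/2$ by hypothesis and $v^\intercal \Lambda v = \sum_s \lambda_s v_s^2 > 0$. Thus $\mathsf{RS}(tv) < \mathsf{RS}(\0)$ for all sufficiently small $t > 0$, contradicting the assumption that $\0$ attains the infimum. The main conceptual step is recognizing that when $\rho > 1/2$ the Hessian of $\mathsf{RS}$ at $\0$ fails to be positive semidefinite, with the Perron eigenvector of $\Delta^2\Lambda$ supplying the precise direction of negativity; the remaining Taylor analysis is routine and justified by smoothness of $\log\ch$ together with the bound $0 \le \log\ch(x) \le x^2/2$ for uniform integrability in the Gaussian integrals.
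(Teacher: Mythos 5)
Your proposal is correct and follows essentially the same route as the paper: the paper likewise shows the first directional derivative of $\mathsf{RS}$ at $\0$ vanishes and computes the second derivative along the Perron eigenvector $u$ of $\Delta^2\Lambda$ (via Gaussian IBP and $(\hth^2)''(0)=2$) to be $(-2\rho^2+\rho)\,u^\intercal\Lambda u<0$, which matches your bracket $\rho(\tfrac12-\rho)v^\intercal\Lambda v$ up to the factor $t^2/2$. Your Taylor-expansion bookkeeping of $\phi(\alpha)=\alpha-\alpha^2+O(\alpha^3)$ is a legitimate substitute for the paper's direct differentiation, so no gap.
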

\begin{proof}
Suppose $\rho(\Delta^2\Lambda)>1/2$.
Let $\eps>0$, $q \in [0,1]^{\mathscr S}$.
By the same calculation yielding \eqref{eq: right derivative of RS},  for any $v\gee \0$, the first and second right-derivatives are given by \begin{align*}
    \frac{d}{d\eps }\mathsf{RS}(\eps v) &= -v^\intercal \Lambda \Delta^2\Lambda \e \hth^2 (\sqrt{2\Delta^2\Lambda \eps v}) +\eps v^\intercal \Lambda \Delta^2\Lambda v,
    \\ \frac{d^2}{d\eps^2}\mathsf{RS}(\eps v) &=  -v^\intercal \Lambda\Delta^2\Lambda D \Delta^2\Lambda v +  v^\intercal \Lambda \Delta^2\Lambda v,
\end{align*} where $D\colonequals \text{diag}(\e (\hth^2)''(\sqrt{2(\Delta^2\Lambda \eps v)_s }): s\in\mathscr S)$.
In particular, for any $v\gee \0$, we have \begin{equation}\label{eq: Gateaux derivative at zero}
    \frac{d}{d\eps} \mathsf{RS}(\eps v)\Bigr|_{\eps=0^+}=0,
\end{equation} and since $(\hth^2)''(0)=2$, \begin{equation*}
     \frac{d^2}{d\eps^2} \mathsf{RS} (\eps v)\Bigr|_{\eps=0^+} = -2v^\intercal \Lambda(\Delta^2\Lambda)^2 v +  v^\intercal \Lambda \Delta^2\Lambda v. 
\end{equation*} 
As $\Delta^2$ is irreducible, so is $\Delta^2\Lambda$, and  the Perron vector $u\sgee \0$ of $\Delta^2\Lambda$ exists, that is, $\Delta^2\Lambda u=\rho(\Delta^2\Lambda) u$.  
Plugging this into the preceding display, since $\rho(\Delta^2\Lambda)>1/2$, \[\frac{d^2}{d\eps^2} \mathsf{RS} (\eps u)\Bigr|_{\eps=0^+}= (-2\rho(\Delta^2\Lambda)^2 +\rho(\Delta^2\Lambda) ) u^\intercal \Lambda u<0, \]
which together with \eqref{eq: Gateaux derivative at zero} show that $\0$ cannot attain the infimum of the RS functional. 
    
\end{proof}



\begin{proof}[\bf Proof of Proposition~\ref{prop: infimum of RS functional for positive-definite}]
    Combining Lemma~\ref{lem: infimum of RS}, Proposition \ref{prop: uniqueness of fixed-point}, and Lemma~\ref{lem: infimum of RS when h=0} completes the proof.
\end{proof}

\subsection{Further properties of fixed-points}\label{sec: further properties of fixed-points}
We begin with a simple calculus, whose proof is omitted.
\begin{lemma}\label{lem: tanh^2 gets flatter}
    For any $a\ge b\ge 0$ and $c> 0$, the function $x\mapsto \hth^2( c\sqrt{a+x})- \hth^2 (c\sqrt{b+x}) $ is non-negative and non-increasing on $(0,\infty).$
\end{lemma}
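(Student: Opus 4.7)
The plan is to reduce the lemma to the strict concavity of the map $y\mapsto \hth^2(c\sqrt{y})$ on $(0,\infty)$, which is exactly Lemma~\ref{lem: concavity of fixed-point equation} applied with parameters $(c,0)$. Non-negativity is immediate from $a\ge b\ge 0$: the square root is monotone, so $c\sqrt{a+x}\ge c\sqrt{b+x}\ge 0$, and since $\hth^2$ is non-decreasing on $[0,\infty)$, the difference $\hth^2(c\sqrt{a+x})-\hth^2(c\sqrt{b+x})$ is nonnegative for every $x>0$.

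For the monotonicity part, set $g(y)\colonequals \hth^2(c\sqrt{y})$. Lemma~\ref{lem: concavity of fixed-point equation} gives strict concavity of $g$ on $(0,\infty)$, so $g'$ is strictly decreasing on that interval. Differentiating $h(x)\colonequals g(a+x)-g(b+x)$ with respect to $x$ gives $h'(x)=g'(a+x)-g'(b+x)$, and since $a+x\ge b+x>0$ for all $x>0$, the monotonicity of $g'$ forces $h'(x)\le 0$. Hence $h$ is non-increasing on $(0,\infty)$, as claimed.

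I do not anticipate any real obstacle; the lemma is essentially a bookkeeping consequence of the earlier concavity result. The only mild point of care is to keep $x$ strictly positive so that both arguments $a+x$ and $b+x$ lie in the open region where the concavity of $g$ applies, which is why the statement is restricted to $(0,\infty)$ and covers the boundary case $b=0$ without trouble.
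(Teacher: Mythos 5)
Your proof is correct. The paper omits its own proof of this lemma, so there is nothing to compare against, but your argument is the natural one: non-negativity follows from monotonicity of $\hth^2\circ\sqrt{\cdot}$, and the non-increasing claim reduces cleanly to the strict concavity of $y\mapsto\hth^2(c\sqrt{y})$ on $(0,\infty)$, which is exactly Lemma~\ref{lem: concavity of fixed-point equation} with $b=0$; since $b+x>0$ for $x>0$, both arguments stay in the region where $g'$ is decreasing, so $h'(x)=g'(a+x)-g'(b+x)\le 0$ as you say.
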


We establish that $q^*$ is  monotone and continuous in the variance profile when $\Lambda$ is fixed.

\begin{lemma}\label{lem: q^* monotone and continuous}
    For $i=1,2$, let $\Delta^2_i$ be irreducible and $q^*_i= q^*_i(\Delta^2_i, \tau^2_i)$  be given by Definition \ref{def: definition of q^* for irreducible}.
    Then, the following holds.
    \begin{enumerate}[label=(\roman*)]
        \item If $\Delta^2_1\gee \Delta^2_2$ and $\tau^2_1 \gee \tau^2_2$, then  $q^*_1\gee q^*_2$.
        \item  If $\Delta^2_1 \gee\Delta^2_2$ and $(\tau^2_1)_s > (\tau^2_2)_s$ for some $s\in\mathscr S$, then $(q^*_1)_s>(q^*_2)_s$.
    \end{enumerate}
    
    Regard $q^*=q^*(\Delta^2,\tau^2)$ as a map on the domain $\{(\Delta^2,\tau^2): \text{irreducible }\Delta^2\}$ with the subspace topology induced by any matrix norm.
    Then, $q^*=q^*(\Delta^2,\tau^2)$ is continuous at any $(\Delta^2,\tau^2)$ such that $\tau^2\ne \0$.
    Moreover, $q^*=q^*(\Delta^2,\tau^2)$ is continuous from above everywhere on the domain: if $\lim_{n\to\infty}(\Delta^2_n,\tau^2_n)= (\Delta^2,\tau^2)$  while $(\Delta^2_n,\tau^2_n)\gee (\Delta^2,\tau^2)$ for all $n\in\mathbb N$, then $\lim_{n\to\infty}q^*(\Delta^2_n,\tau^2_n)= q^*(\Delta^2,\tau^2).$

\end{lemma}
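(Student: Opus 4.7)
The plan is to combine an elementary monotonicity argument based on the fixed-point iteration (already set up in the proof of Proposition~\ref{prop: uniqueness of fixed-point}) with a compactness-plus-uniqueness passage to the limit for the continuity statements.

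For monotonicity, denote by $F_i(q)=\e\hth^2(z\sqrt{\tau^2_i+2\Delta^2_i\Lambda q})$ the fixed-point map associated with $(\Delta^2_i,\tau^2_i)$. Since $\hth^2$ is increasing on $[0,\infty)$, the hypotheses $\Delta^2_1\gee\Delta^2_2$ and $\tau^2_1\gee\tau^2_2$ yield $F_1(q)\gee F_2(q)$ pointwise on $[0,1]^{\mathscr S}$. Using that each $F_i$ is itself monotone in $q$, induction gives $F_1^n(\1)\gee F_2^n(\1)$ for all $n$. By the monotone convergence argument in the proof of Proposition~\ref{prop: uniqueness of fixed-point}, the limits $\lim_n F_i^n(\1)$ coincide with $q^*_i$ in every case, so $q^*_1\gee q^*_2$. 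For the strict version, note that $a\mapsto\e\hth^2(z\sqrt{a})$ is strictly increasing on $[0,\infty)$. Combining the weak monotonicity $\Delta^2_1\Lambda q^*_1\gee\Delta^2_2\Lambda q^*_2$ with $(\tau^2_1)_s>(\tau^2_2)_s$, the fixed-point identity $(q^*_i)_s=\e\hth^2(z\sqrt{(\tau^2_i)_s+2(\Delta^2_i\Lambda q^*_i)_s})$ immediately yields $(q^*_1)_s>(q^*_2)_s$.

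For continuity at $(\Delta^2,\tau^2)$ with $\tau^2\ne\0$, suppose $(\Delta^2_n,\tau^2_n)\to(\Delta^2,\tau^2)$ with each $\Delta^2_n$ irreducible. The values $q^*_n=q^*(\Delta^2_n,\tau^2_n)$ lie in the compact cube $[0,1]^{\mathscr S}$; dominated convergence (the integrand is bounded by $1$) gives $F_n\to F$ uniformly on this cube. Any subsequential limit $q_\infty$ of $q^*_n$ therefore satisfies $q_\infty=F(q_\infty)$, and Proposition~\ref{prop: uniqueness of fixed-point} forces $q_\infty=q^*(\Delta^2,\tau^2)$ by uniqueness of the fixed point when $\tau^2\ne\0$. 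Since every subsequence has the same limit, $q^*_n\to q^*(\Delta^2,\tau^2)$.

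For continuity from above, the monotonicity step yields $q^*_n\gee q^*(\Delta^2,\tau^2)$. Any subsequential limit $q_\infty$ is again a fixed point of the limiting $F$ and satisfies $q_\infty\gee q^*$; since $q^*$ is by definition the maximal fixed point, $q_\infty\lee q^*$ as well, and equality follows. The delicate case is $\tau^2=\0$ together with $\rho(\Delta^2\Lambda)=1/2$, where the two branches of fixed points merge and two-sided continuity in general fails; the asymmetric statement (continuity from above only) is tailored precisely to accommodate this, and maximality of $q^*$ is what salvages the argument there. This last subtlety, together with making sure the iteration limit $\lim_n F^n(\1)$ can be identified with the maximal fixed point uniformly across the parameter family, is the main technical hurdle.
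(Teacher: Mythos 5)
Your argument is correct, and for the weak monotonicity and both continuity claims it follows essentially the same route as the paper: iterate $F_i^n(\1)$ and compare, then pass to subsequential limits in the fixed-point equation and invoke either uniqueness (Proposition~\ref{prop: uniqueness of fixed-point}, when $\tau^2\ne\0$) or maximality of $q^*$ (for continuity from above). The one place you genuinely diverge is the strict-monotonicity bullet: you compare the two fixed-point identities directly, using $(\tau^2_1)_s+2(\Delta^2_1\Lambda q^*_1)_s>(\tau^2_2)_s+2(\Delta^2_2\Lambda q^*_2)_s$ together with strict monotonicity of $a\mapsto \e\hth^2(z\sqrt a)$ on $[0,\infty)$, whereas the paper instead bounds $F_1^{n+1}(\1)-F_2^{n+1}(\1)$ from below uniformly in $n$ via Lemma~\ref{lem: tanh^2 gets flatter} and then sends $n\to\infty$. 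Your version is shorter and avoids that auxiliary calculus lemma entirely; it is valid because the weak monotonicity you already established gives $\Delta^2_1\Lambda q^*_1\gee\Delta^2_2\Lambda q^*_2$, so the arguments inside $\hth^2$ are strictly ordered in the $s$-coordinate. Two small caveats, neither a gap: dominated convergence by itself only yields pointwise convergence $F_n\to F$, not uniform convergence, but all your limit passage actually needs is joint continuity of $(\Delta^2,\tau^2,q)\mapsto F_{(\Delta^2,\tau^2)}(q)$ (which is how the paper phrases it via the map $H$); and the identification $q^*=\lim_n F^n(\1)$ is not a remaining hurdle---it is already contained in the proof of Proposition~\ref{prop: uniqueness of fixed-point}, where Claim~1 shows every fixed point is dominated by $\hat q_1=\lim_n F^n(\1)$.
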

\begin{proof}
    To validate $(i)$, assume $\Delta^2_1\gee \Delta^2_2$ and $\tau_1^2\gee \tau_2^2$, and denote the corresponding $F$ in \eqref{eq: contraction mapping} by $F_1$ and $F_2.$ 
    For $i=1,2$, in view the proof of Proposition \ref{prop: uniqueness of fixed-point}, we may identify $q_i=\lim_{n\to\infty}F^n_i(\1)$.
    For any $p\gee \tilde p\gee \0$, we have $\Delta^2_1 \Lambda p \gee \Delta^2_2 \Lambda \tilde p \gee \0$ and \[F_1(p) = \e\hth^2(|z|(\tau_1^2 +2\Delta^2_1\Lambda p)^{1/2})  \gee \e\hth^2(|z| (\tau_2^2 +2\Delta^2_2\Lambda \tilde p)^{1/2})=F_2(\tilde p).\] 
    The preceding display implies $\1\gee F^n_1(\1) \gee F^n_2(\1)$ for any $n\in\mathbb N$ and, by sending $n\to\infty$, $q^*_1\gee q^*_2.$
    To check $(ii)$, we note that
    \begin{align*}
        F^{n+1}_1(\1) -F^{n+1}_2(\1) &= \e\hth^2(|z|(\tau_1^2 +2\Delta^2_1\Lambda F^n_1(\1))^{1/2}) -\e\hth^2(|z|(\tau_2^2 +2\Delta^2_2\Lambda F^n_2(\1))^{1/2}) 
        \\&\gee \e\hth^2(|z|(\tau_1^2 +2\Delta^2_1 \Lambda F^n_1(\1))^{1/2}) -\e\hth^2(|z|(\tau_2^2 +2\Delta^2_1\Lambda F^n_1(\1))^{1/2}) 
        \\&\gee \e\hth^2(|z|(\tau_1^2 +2\Delta^2_1 \Lambda \1 )^{1/2}) -\e\hth^2(|z|(\tau_2^2 +2\Delta^2_1 \Lambda \1)^{1/2}),
    \end{align*} where the last inequality is due to Lemma~\ref{lem: tanh^2 gets flatter}.
    The last line of the preceding display is independent of $n$, so sending $n\to\infty$ yields $(ii)$.
    
   Next, we check continuity.
 Fix an irreducible $\Delta^2$.
    Regarded as  $|\mathscr S|^2 +|\mathscr S|$-dimensional Euclidean vectors in the usual Euclidean norm, let $\lim_{n\to\infty} (\Delta^2_n, \tau^2_n) = (\Delta^2,\tau^2)$ for some sequence $(\Delta^2_n, \tau^2_n)_{n\ge 1}$, where each $\Delta^2_n $ is irreducible.
    For an arbitrary subsequence $(\Delta^2_{n_m}, \tau^2_{n_m})_{m\ge1}$, there exists a further subsequence $(\Delta^2_{n_{m_k}},\tau^2_{n_{m_k}})_{k\ge1}$ such that the limit $\lim_{k\to\infty}q^*(\Delta^2_{n_{m_k}},\tau^2_{n_{m_k}})$ exists due to the compactness of $[0,1]^{\mathscr S}$.
    Define a jointly continuous vector-valued map \[H(\Delta^2, \tau^2,q)= \e\hth^2(|z| (\tau^2+ 2\Delta^2\Lambda q)^{1/2}) -q  \text{ for }(\Delta^2, \tau^2,q)\in [0,\infty)^{\mathscr S \otimes \mathscr S}\times  [0,\infty)^{\mathscr S}\times  [0,1]^{\mathscr S}.\] 
    From the continuity of $H$, \begin{equation}\label{eq: continuity of H}
    H(\Delta^2, \tau^2,q^*(\Delta^2,\tau^2))=\0=\lim_{k\to\infty}H(\Delta^2_{n_{m_k}},\tau^2_{n_{m_k}},q^*(\Delta^2_{n_{m_k}}\tau^2_{n_{m_k}}))=H(\Delta^2,\tau^2, \lim_{k\to\infty}q^*(\Delta^2_{n_{m_k}},\tau^2_{n_{m_k}})).
    \end{equation} 
    In the case  $\tau^2\ne \0$, by the uniqueness  in Proposition \ref{prop: uniqueness of fixed-point}, the preceding display implies \begin{equation}\label{eq: subsubseq  converge}
        q^*(\Delta^2,\tau^2)=\lim_{k\to\infty}q^*(\Delta^2_{n_{m_k}},\tau^2_{n_{m_k}}).
    \end{equation}
    Therefore, the full sequence converges, that is, $q^*(\Delta^2,\tau^2)=\lim_{n\to\infty} q^*(\Delta^2_n,\tau^2_n)$,  establishing the desired continuity whenever $\tau^2\ne \0$.
Now, consider the case $\tau^2=\0$ and suppose $(\Delta^2_n,\tau^2_n)\gee (\Delta^2,\tau^2)$ for all $n\in\mathbb N$.
    By the previously established monotonicity, we have \begin{equation*}
        \lim_{k\to\infty}q^*(\Delta^2_{n_{m_k}},\tau^2_{n_{m_k}}) \gee q^*(\Delta^2,\tau^2).
    \end{equation*}
    Combining this with \eqref{eq: continuity of H} yields \eqref{eq: subsubseq  converge}, since $q^*(\Delta^2,\tau^2)$ is the maximal fixed-point by definition.
    Therefore, the full sequence converges in this case as well.
    This finishes the proof.
\end{proof}

For each $s\in \mathscr S$, define the function \[G_s(q)=\e \hth^2 \bigl(z \sqrt{\tau^2_s+2(\Delta^2\Lambda q)_s}\bigr)-q_s, \quad q\in [0,1]^{\mathscr S}.\]
For irreducible $\Delta^2$, define the regions \begin{align*}
    \mathsf{R}_{1} &=  \{q\in [0,1]^{\mathscr S}:  \forall s, \,  G_s(q) \ge  0\},
    \\ \mathsf{R}_{2} &= \{q\in [0,1]^{\mathscr S}:  \forall s, \,  G_s(q) \le  0\} \setminus \{\0\}.
\end{align*}
In what follows, we show that the regions $\mathsf{R}_i$ for $i=1,2$ 
are totally ordered with respect to $q^*$, which will play a key role in the proof of Proposition~\ref{prop: min and max of support}.
\begin{lemma}\label{lem: monotone region lemma}
    Assume irreducible $\Delta^2$.
    For $i=1,2$, we have \begin{align}
        \mathsf{R}_i &\subseteq \{q\in [0,1]^{\mathscr S}: (-1)^i q\gee (-1)^iq^*\}. \label{eq: monotone region}
    \end{align}
In fact, we have \begin{equation}\label{eq: strict monotone region}
    \mathsf{R}_i\setminus \{q^*\} \subseteq \{q\in [0,1]^{\mathscr S}: (-1)^i q \sgee  (-1)^iq^*\}.
\end{equation}
\end{lemma}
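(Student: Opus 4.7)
The plan is to handle each inclusion in \eqref{eq: monotone region} by iterating the fixed-point map $F$ from \eqref{eq: contraction mapping} monotonically, and then to upgrade each to \eqref{eq: strict monotone region} via the irreducibility of $\Delta^2$. The monotonicity of $F$ on $[0,1]^{\mathscr S}$, which I will use throughout, is immediate from the monotonicity of $x\mapsto \e\hth^2(z\sqrt{a+x})$ in $x\ge 0$ for each fixed $a\ge 0$.

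For $\mathsf{R}_1$, if $q\in\mathsf{R}_1$ then $F(q)\gee q$, so $F^n(q)$ is increasing and bounded by $\1$. Its limit $\tilde q$ is a fixed-point of $F$, and comparing with $F^n(\1)\to \hat q_1$ yields $\tilde q\lee \hat q_1$; Proposition~\ref{prop: uniqueness of fixed-point} together with Definition~\ref{def: definition of q^* for irreducible} identifies $\hat q_1=q^*$, giving $q\lee\tilde q\lee q^*$. For $\mathsf{R}_2$, the same argument run downward produces $F^n(q)\to \tilde q\lee q$ for some fixed-point $\tilde q$. When $\tau^2\ne\0$, Proposition~\ref{prop: uniqueness of fixed-point} forces $\tilde q=q^*$; when $\tau^2=\0$ and $\rho(\Delta^2\Lambda)\le 1/2$, one has $q^*=\0$ and the conclusion $q\gee q^*$ is trivial.

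The delicate case is $\tau^2=\0$ with $\rho(\Delta^2\Lambda)>1/2$, where the fixed-points are $\{\0,q^*\}$ and I must rule out $\tilde q=\0$. From the elementary bound $\hth^2(x)\ge x^2-\tfrac{2}{3}x^4$ I would derive
\begin{equation*}
    F(p)\gee 2\Delta^2\Lambda p - c\|p\|_1^2\,\1, \quad p\in[0,1]^{\mathscr S},
\end{equation*}
for some $c=c(\Delta^2,\Lambda)>0$. Letting $v\sgee \0$ denote the left Perron--Frobenius eigenvector of the nonnegative irreducible matrix $\Delta^2\Lambda$ and using $v^{\intercal}F(p)\le v^{\intercal}p$ for $p\in\mathsf{R}_2$, I obtain
\begin{equation*}
    (2\rho(\Delta^2\Lambda)-1)\,v^{\intercal}p \le c(v^{\intercal}\1)\,\|p\|_1^2,
\end{equation*}
which, combined with $v^{\intercal}p\ge (\min_s v_s)\|p\|_1$ and $\min_s v_s>0$, enforces a uniform lower bound $\|p\|_1\ge \delta_*>0$ on every $p\in\mathsf{R}_2$. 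The iterates $F^n(q)$ remain in $\mathsf{R}_2$ because $F(p)=\0$ would require $\Delta^2\Lambda p=\0$, and testing against $v$ forces $p=\0$. Hence $\|\tilde q\|_1\ge \delta_*$, so $\tilde q\ne\0$ and $\tilde q=q^*$, yielding $q\gee q^*$.

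For the strict inclusion \eqref{eq: strict monotone region}, assume toward contradiction that $q\in\mathsf{R}_i\setminus\{q^*\}$ has $q_{s_0}=q^*_{s_0}$ for some $s_0$. Set $J\colonequals\{s:q_s=q^*_s\}$ and $K\colonequals\mathscr S\setminus J$; both are nonempty by hypothesis. For each $s\in J$, the inequality defining $\mathsf{R}_i$, together with monotonicity of $F$ applied to the already-established comparison $q\lee q^*$ (for $i=1$) or $q\gee q^*$ (for $i=2$), sandwiches $F_s(q)=F_s(q^*)$. The strict monotonicity of $x\mapsto\e\hth^2(z\sqrt{\tau_s^2+2x})$ on positive arguments, together with a direct check of the degenerate case $\tau_s^2+2(\Delta^2\Lambda q^*)_s=0$, then forces $(\Delta^2\Lambda q)_s=(\Delta^2\Lambda q^*)_s$; expanding this as $\sum_t\Delta^2_{st}\lambda_t(q_t-q^*_t)=0$ and using that $(-1)^i(q_t-q^*_t)>0$ for $t\in K$ yields $\Delta^2_{st}=0$ for every $s\in J$ and $t\in K$. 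This contradicts irreducibility, since any $\Delta^2$-path from a point of $J$ to a point of $K$ must cross a zero edge. The main obstacle is the linearization step for $\mathsf{R}_2$ in the zero-field low-temperature regime, where the positive Perron gap $2\rho(\Delta^2\Lambda)-1$ must dominate the quadratic error from the Taylor expansion of $\hth^2$.
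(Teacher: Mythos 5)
Your proof is correct, but it follows a genuinely different route from the paper's. The paper proves the lemma by induction on $|\mathscr S|$: it slices the cube along the last coordinate, builds the auxiliary maps $t\mapsto \hat p_t$ and $x\mapsto y(x)$ from lower-dimensional fixed-point problems, and controls the sign of $f(t)=y((\Delta^2\Lambda)_{m+1,\le m}\hat p_t)-t$ using the concavity from Lemma~\ref{lem: concavity of fixed-point equation}; the strict inclusions are then obtained by a Gaussian-IBP/path argument. You instead read the defining sign conditions of $\mathsf{R}_i$ as $F(q)\gee q$ (resp.\ $F(q)\lee q$) and iterate $F$ monotonically, so that everything reduces to the classification of fixed points in Proposition~\ref{prop: uniqueness of fixed-point}; the only delicate point is excluding collapse of the decreasing iterates to $\0$ when $\tau^2=\0$ and $\rho(\Delta^2\Lambda)>1/2$, which your quantitative Taylor bound $\hth^2(x)\ge x^2-Cx^4$ tested against the left Perron vector handles correctly (it gives a uniform lower bound $\|p\|_1\ge\delta_*$ on $\mathsf{R}_2$, and your check that the iterates stay in $\mathsf{R}_2$ is sound). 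Your strict-inclusion step is also cleaner than the paper's: the sandwich $q_s\le F_s(q)\le F_s(q^*)=q^*_s$ for $s\in J$ forces $(\Delta^2\Lambda q)_s=(\Delta^2\Lambda q^*)_s$ by strict monotonicity of $a\mapsto\e\hth^2(z\sqrt{a})$, and the sign-definite sum over $K$ then kills all edges between $J$ and $K$, contradicting irreducibility. What your approach buys is brevity and the avoidance of the induction and the slice machinery; what the paper's buys is that it reuses only the concavity lemma already in place and needs no additional Taylor estimate. Both are complete proofs.
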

\begin{proof}
We proceed by induction on the number of species $m\colonequals |\mathscr S|.$

If $m=1$, then the concavity of $G_1$, which follows from Lemma~\ref{lem: concavity of fixed-point equation}, combined with $G_1(0)\ge 0>G_1(1)$ show that $G_1(q)>0$ if $0< q<q^*$ and $G_1(q)<0$ if $q>q^*$, verifying the base case.

Suppose that \eqref{eq: monotone region} and \eqref{eq: strict monotone region} hold for some $m\ge 1.$
We will verify the statement for $m+1$.
Before we begin the induction step, by permuting the species if necessary, we may assume that  any leading principal submatrices are also irreducible.
Denote the projection of any vector $v\in [0,1]^{m+1}$ to the first $m$ coordinates by $v_{\le m}$.
Similarly, denote the leading principal $m\times m$ submatrix of a matrix $A$ by $A_{\le m}$.
We also denote, respectively, $A_{\le m, m+1}$ and $A_{m+1, \le m}$ the projection of the last $(m+1)$'th column and row vectors of $A$ to their first $m$-coordinates.
We first analyze the following vector field, which is one  dimension lower.
 Since  $(\Delta^2\Lambda q)_{\le m} = (\Delta^2\Lambda)_{\le m}q_{\le m} + (\Delta^2\Lambda)_{\le m,m+1}q_{m+1}$ for any $q\in [0,1]^{m+1}$,  we can write  \begin{equation}\label{eq: vector field induction: 1}
     (G_i(q))_{1\le i\le m}= \e \hth^2 \bigl(z ( \tau^2_{\le m}+ 2 (\Delta^2\Lambda)_{\le m, m+1}q_{m+1}+ 2(\Delta^2\Lambda)_{\le m} q_{\le m})^{1/2}\bigr)-q_{\le m}.
 \end{equation}
Define the $m$-dimensional slices $\mathsf S_t = \{q\in [0,1]^{m+1}: q_{m+1}=t\}$ for $t\in [0,1].$
Define the normalization constant $\sum_{1\le i\le m}\lambda_i=c_m \in (0,1)$.
The preceding display shows that, for $t\in [0,1]$, the vector field $\{(G_i(q))_{1\le i\le m}: q \in \mathsf{S}_t\}$ is identical to the one from the variance profile $(c_m\Delta^2_{\le m},\tau^2_{\le m}+2(\Delta^2\Lambda)_{\le m, m+1}t) $ and species ratios $(\lambda_i/c_m)_{1\le i\le m}$.
As $c_m\Delta^2_{\le m}$ is irreducible by construction, we can define a map $t\mapsto \hat p_t \in [0,1]^m$ through the fixed-point equation $(G_i(\hat p_t,t))_{1\le i\le m}=\0$, per  Definition~\ref{def: definition of q^* for irreducible}.
Moreover, observe that $(\Delta^2\Lambda)_{\le m,m+1}=\Delta^2_{\le m,m+1}\Lambda_{\le m} \ne \0$ by the irreducibility of $\Delta^2$, so the effective external field  $\tau^2_{\le m}+2(\Delta^2\Lambda)_{\le m, m+1}t$ on $\mathsf{S}_t$ is increasing in $t\in [0,1]$ and $\ne\0$ for $t\in (0,1].$
Then, by Lemma~\ref{lem: q^* monotone and continuous}, the map $\hat p_t$ is  continuous and non-decreasing in $t\in [0,1]$.
By the induction hypothesis, for $i=1,2$ and $t\in [0,1]$, \begin{equation}\label{eq: vector field induction: 1.5}
\begin{aligned}
    &\{p\in [0,1]^{m}:  \forall k \le m,\, \,  G_k(p,t) \ge  0\}\subseteq \{p\in [0,1]^{m}:  p \lee  \hat p_t\},
    \\&\{p\in [0,1]^{m}:  \forall k \le m,\, \,  G_k(p,t) \le  0\} \setminus \{\0\}\subseteq \{p\in [0,1]^{m}: p \gee   \hat p_t\},
\end{aligned}
\end{equation}
and 
\begin{equation}\label{eq: vector field induction: 1.5.5}
\begin{aligned}
    &\{p\in [0,1]^{m}:  \forall k \le m,\, \,  G_k(p,t) \ge  0\} \setminus \{\hat p_t\}\subseteq \{p\in [0,1]^{m}: p \slee   \hat p_t\},
    \\&\{p\in [0,1]^{m}:  \forall k \le m,\, \,  G_k(p,t) \le  0\} \setminus \{\hat p_t, \0\}\subseteq \{p\in [0,1]^{m}: p \sgee   \hat p_t\}.
\end{aligned}
\end{equation}
On the other hand, the remaining component can be written as follows.
Since $(\Delta^2\Lambda q)_{m+1}=  (\Delta^2\Lambda)_{m+1, \le m} q_{\le m} + (\Delta^2\Lambda)_{m+1,m+1} q_{m+1} $, \begin{equation}\label{eq: vector field induction: 2}
    G_{m+1}(q)=\e \hth^2 \bigl(z ( \tau^2_{m+1}+ 2 (\Delta^2\Lambda)_{m+1, \le m} q_{\le m}+ 2(\Delta^2\Lambda)_{m+1,m+1} q_{m+1})^{1/2}\bigr)-q_{m+1}, 
\end{equation}
which corresponds to the variance profile $((\Delta^2\Lambda)_{m+1,m+1},\, \tau^2_{m+1}+  2(\Delta^2\Lambda)_{m+1, \le m} q_{\le m})$.  
In view of \eqref{eq: vector field induction: 2}, define an auxiliary real-valued function \[\widetilde G_{m+1}(x,y) =\e \hth^2 \bigl(z ( \tau^2_{m+1}+ 2x+ 2(\Delta^2\Lambda)_{m+1,m+1} y)^{1/2}\bigr)-y, \quad (x,y)\in [0,\infty)\times [0,1].\]
From Definition \ref{def: definition of q^* for irreducible}, there exists a non-negative function $[0,(\Delta^2\Lambda)_{m+1, \le m} \1] \ni x\mapsto y(x) $ such that $\widetilde G_{m+1}(x,y(x))=0$, and by Lemma~\ref{lem: q^* monotone and continuous}, it is continuous and increasing.
The domain of this function is non-empty since $(\Delta^2\Lambda)_{m+1,\le m}\1 >0$ due to the irreducibility of $\Delta^2$.
In particular, its image is a non-empty closed interval $[y(0), y( (\Delta^2\Lambda)_{m+1,\le m} \1)] \subseteq [0,1].$
For $t\in [0,1]$, it is not hard to see from the base case $m=1$ that 
\begin{equation} \label{eq: vector field induction: 3}
\begin{aligned}
    &G_{m+1}(p,t)= 0 \quad \text{if}\quad  t = y((\Delta^2\Lambda)_{m+1, \le m} p) \text{ or } \{t=0,\, \tau^2_{m+1}+ (\Delta^2\Lambda)_{m+1, \le m} p=0\}, 
    \\&G_{m+1}(p,t)> 0 \quad \text{if}\quad 0<t< y((\Delta^2\Lambda)_{m+1, \le m} p) \text{ or } \{t=0, \, \tau^2_{m+1}+ (\Delta^2\Lambda)_{m+1, \le m} p >0\},
    \\&G_{m+1}(p,t)<0 \quad \text{if}\quad  t> y((\Delta^2\Lambda)_{m+1, \le m} p).
\end{aligned} 
\end{equation}
Note that  the preceding display exhausts all possibilities.

Having defined the continuous functions $t\mapsto \hat p_t$ and $x\mapsto y(x)$, we define another continuous function \[ f(t)\colonequals y( (\Delta^2\Lambda)_{m+1, \le m} \hat p_t) - t \text{ for } t\in [0,1].\]
Since $\Delta^2$ is irreducible,   $q^*=q^*(\Delta^2,\tau^2)$ and possibly $\0$ are the only elements of $[0,1]^{m+1}$  that  render both \eqref{eq: vector field induction: 1} and \eqref{eq: vector field induction: 2}  simultaneously zero.
The utility of the function $f$ comes from the characterization that for $t\in (0,1]$, we have  $f(t)=0$ if and only if $(G_i(\hat p_t,t))_{1\le i\le m+1}=\0$, in view of \eqref{eq: vector field induction: 3}.
Therefore,  \begin{equation}\label{eq: t>q*m+1}
    f(q^*_{m+1})=0\ne f(t) \quad \text{if}\quad t\in (q^*_{m+1},1].
\end{equation}
Note that $(\Delta^2\Lambda)_{m+1,\le m}\ne \0$ by the irreducibility of $\Delta^2$, which implies $(\Delta^2\Lambda)_{m+1,\le m} (\1 -\hat p_1)>0$.
It follows that  \begin{equation}
    f(1)= y((\Delta^2\Lambda)_{m+1, \le m} \hat p_1)-1 <  y((\Delta^2\Lambda)_{m+1, \le m} \1)-y( (\Delta^2\Lambda)_{m+1,\le m} \1) =0. \label{eq: f(b)<0}
\end{equation}
Combining \eqref{eq: t>q*m+1} and \eqref{eq: f(b)<0}, the intermediate value theorem shows \begin{equation}\label{eq: f(t)<0}
    f(t)<0 \quad \text{for} \quad t\in (q^*_{m+1},1].
\end{equation} 
As for $0\le t\le  q^*_{m+1}$, we will use  concavity to show \begin{equation}\label{eq: f(t) ge 0}
\begin{aligned}
    f(t)&\ge0 \quad \text{for} \quad  t\in [0,q^*_{m+1}],
    \\ f(t)&>0 \quad \text{for} \quad t \in (0, q^*_{m+1}).
\end{aligned}
\end{equation}
To this end, we first note that  \eqref{eq: f(t) ge 0} is trivial whenever $q^*=\0$.
Hence, we may assume $q^*\in (0,1)^{m+1}$ by Definition~\ref{def: definition of q^* for irreducible}.
With obvious notations, since $G_{\le m+1}(\hat p_{q^*_{m+1}},q^*_{m+1})=\0$ and $G_{\le m+1}(\0)\gee \0$, the concavity in Lemma~\ref{lem: concavity of fixed-point equation} implies \begin{equation}\label{eq: showing f(t) ge 0}
    \begin{aligned}
    &G_{\le m+1} (s \hat p_{q^*_{m+1}}, sq^*_{m+1}) \gee \0, \quad \forall s\in[0,1],
    \\&G_{\le m+1} (s \hat p_{q^*_{m+1}}, sq^*_{m+1}) \sgee \0, \quad \forall s\in (0,1).
    \end{aligned}
\end{equation}
In particular, the induction hypothesis \eqref{eq: vector field induction: 1.5} applied to $G_{\le m}$ shows \begin{equation}\label{eq: showing f(t) ge 0: 1}
      \hat p_{sq^*_{m+1}}\gee    s \hat p_{q^*_{m+1}} ,\quad \forall s\in [0,1].
\end{equation}
Moreover, \eqref{eq: vector field induction: 3} and \eqref{eq: showing f(t) ge 0} together imply that \begin{equation}\label{eq: showing f(t) ge 0: 2}
\begin{aligned}
        &y((\Delta^2\Lambda)_{m+1, \le m} s \hat p_{q^*_{m+1}})\ge s q^*_{m+1}, \quad \forall s\in [0,1],
        \\& y((\Delta^2\Lambda)_{m+1, \le m} s \hat p_{q^*_{m+1}})> s q^*_{m+1}, \quad \forall s\in (0,1).
\end{aligned}
\end{equation}
Combining \eqref{eq: showing f(t) ge 0: 1} and \eqref{eq: showing f(t) ge 0: 2}, since $x\mapsto y(x)$ is increasing, we arrive at an equivalent of \eqref{eq: f(t) ge 0},
 \begin{equation*}
    \begin{aligned}
        &f(sq^*_{m+1})=  y((\Delta^2\Lambda)_{m+1, \le m}  \hat p_{sq^*_{m+1}}) -sq^*_{m+1}\ge 0, \quad \forall s\in [0,1],
        \\& f(sq^*_{m+1})=  y((\Delta^2\Lambda)_{m+1, \le m}  \hat p_{sq^*_{m+1}}) -sq^*_{m+1}> 0, \quad \forall s\in (0,1).
    \end{aligned}
\end{equation*}  
We are now in a position to finish the induction with the following four steps.

\textit{Step 1. Verify \eqref{eq: monotone region} for $i=1$.}
Suppose that $q\in \mathsf{R}_1$, i.e., $G_k(q_{\le m}, q_{m+1})\ge 0$ for all $1\le k\le m+1$.
By the induction hypothesis \eqref{eq: vector field induction: 1.5}, we have  $q_{\le m}\lee \hat p_{q_{m+1}}.$
On the other hand,  \eqref{eq: vector field induction: 3} implies $y((\Delta^2\Lambda)_{m+1,\le m} q_{\le m})\ge q_{m+1}.$
The last two inequalities imply $f(q_{m+1})\ge 0$, from which another dichotomy from \eqref{eq: f(t)<0}  and \eqref{eq: f(t) ge 0}   implies $q_{m+1}\le q^*_{m+1}.$
Since $t\mapsto \hat p_t$ is non-decreasing, we further have  $q_{\le m} \lee \hat p_{q_{m+1}}\lee \hat p_{q^*_{m+1}}=q^*_{\le m}$.
The previous inequalities verify $q\lee q^*$, as desired.

\textit{Step 2. Verify \eqref{eq: strict monotone region} for $i=1$.}
If $q^*= \0$, then $\mathsf{R}_1=\{\0\}$ from Step 1, which implies $\mathsf{R}_1\setminus\{q^*\}=\emptyset$.
Hence, we may assume $q^*\sgee \0$ by Proposition~\ref{prop: uniqueness of fixed-point}.
Suppose $q\in \mathsf{R}_1 \setminus \{q^*\}$, that is, $G_s(q)\ge 0$ for all $s\in\mathscr S$ and $q\ne q^*.$
We know $q\lee q^*$ from Step 1.
The goal is to show $q\slee q^*.$
Define $I=\{k\in\mathscr S: q_k= q^*_k\}$ and $J=\{k\in\mathscr S: q_k<q^*_k\}$ so that $I\dot \cup J=\mathscr S$.
Since $q\ne q^*$, we have $J\ne\emptyset.$
Toward a contradiction, assume $I\ne\emptyset$.
By irreducibility, there exist some $i_0\in I$ and $j_0\in J$ such that $\Delta^2_{i_0j_0}>0$.
On the other hand, we can find an increasing, piecewise-linear path $\mathcal{P}$ from $q$ to $q^*$ inside the polytope $\{p\in[0,1]^{\mathscr S} : \forall k\in I, \, p_k = q_k^* \}$, formed by line segments each parallel to some axis in $J$.
Since $G_{i_0}(q^*)=0$, we would obtain a contradiction to the fact $G_{i_0}(q)\ge 0$ if (i)  $\partial_k G_{i_0}\ge 0$ for all $k\in J$, (ii)  $\partial_{j_0} G_{i_0} >0$,   by applying the fundamental theorem of calculus to the function $G_{i_0}$ along the path $\mathcal P$.
To this end,  by Gaussian IBP, for all $l \ne i_0$, we have \begin{equation}\label{eq: partial of G}
    \partial_{l} G_{i_0} (q) = \e (\tanh^2)''(z\sqrt{(\tau^2)_{i_0}+2(\Delta^2 \Lambda q)_{i_0}}) (\Delta^2\Lambda)_{i_0 l}.
\end{equation}
Denote the smooth density  of the random variable  $z\sqrt{(\tau^2)_{i_0}+2(\Delta^2\Lambda q)_{i_0}}$ by $\psi_{i_0,q}$, which is even and decreasing on $[0,\infty)$.
For $l\ne i_0$, if we have degenerate density $\psi_{i_0,q}=\delta_0$, then \eqref{eq: partial of G} is equal to $2(\Delta^2\Lambda)_{i_0l}\ge 0$ because $(\hth^2)''(0)=2.$ 
If $\psi_{i,q}$ is non-degenerate, applying integration by parts to \eqref{eq: partial of G} yields   \begin{equation*}
    \partial_{l} G_{i_0} (q)= (\Delta^2\Lambda)_{i_0l}\int_{-\infty}^\infty (\hth^2)''(x) \psi_{i_0,q}(x)dx = -(\Delta^2 \Lambda)_{i_0l} \int_{-\infty}^\infty(\hth^2)'(x)\psi_{i_0,q}'(x)dx \ge 0, 
\end{equation*} 
where the inequality holds because the integrand is negative on the whole real line.
In both cases, for $i_0\ne l$, \eqref{eq: partial of G} is  always non-negative, and indeed positive whenever $\Delta^2_{i_0 l}>0$.
This implies the desired conditions (i) and (ii), thereby confirming that $I=\emptyset$.

\textit{Step 3. Verify \eqref{eq: monotone region} for $i=2$.}
Suppose $G_k(q_{\le m}, q_{m+1})\le 0$ for all $1\le k\le m+1$ and $q\ne \0$.
Toward a contradiction, assume $q_{\le m}=\0$.
Since $q\ne \0$, we then have $q_{m+1}>0$.
From the irreducibility, there exists $1\le l\le m$ such that $\Delta^2_{l,m+1}>0$.
From \eqref{eq: partial of G} and the argument in Step 2, we have $\partial_{m+1}G_{l}>0$.
Since $G_l(\0)\ge 0$, the fundamental theorem of calculus shows $G_l(q)>0$.
This is a contradiction, so we must have $q_{\le m}\ne \0.$
In turn, by the induction hypothesis \eqref{eq: vector field induction: 1.5}, we have \begin{equation}\label{eq: vector field induction: 4}
    q_{\le m}\gee \hat p_{q_{m+1}}.
\end{equation}
On the other hand, by \eqref{eq: vector field induction: 3}, we have either (i) $q_{m+1}\ge y( (\Delta^2\Lambda)_{m+1,\le m}q_{\le m})$ or (ii) $q_{m+1}=0=\tau^2_{m+1}+(\Delta^2\Lambda)_{m+1,\le m}q_{\le m}$.
The case (ii) is impossible as follows.
By irreducibility, there exists $1\le i_0\le m$ such that $\Delta^2_{m+1,i_0}>0$.
Then, $0=(\Delta^2\Lambda)_{m+1,\le m}q_{\le m}=\sum_{1\le l\le m}\Delta^2_{m+1,l}\lambda_l q_l$ implies  $q_{i_0}=0$.
 As a result,  \eqref{eq: vector field induction: 4} implies $(\hat p_0)_{i_0}=0$ and hence $\hat p_0=\0$ by Proposition~\ref{prop: uniqueness of fixed-point}.
Thanks to $q_{\le m}\ne \0$,  another induction hypothesis \eqref{eq: vector field induction: 1.5.5} shows $q_{\le m}\sgee \0$.
However,  this is a contradiction to $q_{i_0}=0$, thereby ruling out the latter case (ii).
In the former case (i), together with \eqref{eq: vector field induction: 4}, it holds that $f(q_{m+1})\le 0$.
Consequently, \eqref{eq: f(t)<0} and \eqref{eq: f(t) ge 0} show either (a) $0=q_{m+1}=f(0)=y( (\Delta^2\Lambda)_{m+1,\le m}\hat p_0)$ or (b) $q_{m+1}\ge q^*_{m+1}$.
To rule out the subcase (a), notice that $0=y( (\Delta^2\Lambda)_{m+1,\le m}\hat p_0)$ already implies,  by Proposition~\ref{prop: uniqueness of fixed-point}, \begin{equation}
    0=\tau^2_{m+1}.\label{eq: vector field induction: 5}
\end{equation}
In particular, we have $0\ge G_{m+1}(q_{\le m},0)=\e \hth^2 (z (2(\Delta^2\Lambda)_{m+1,\le m}q_{\le m})^{1/2})\ge 0$, which shows \begin{equation}\label{eq: vector field induction: 6}
    0=(\Delta^2\Lambda)_{m+1,\le m}q_{\le m}.
\end{equation}
From $0=q_{m+1}$, \eqref{eq: vector field induction: 5}, and \eqref{eq: vector field induction: 6}, one sees that subcase (a) reduces to the previous case (ii) which was shown to be impossible. 
Finally, we are left with  subcase (b) and, by \eqref{eq: vector field induction: 4}, we have $q_{\le m} \gee \hat p_{q_{m+1}}\gee \hat p_{q^*_{m+1}}=q^*_{\le m}$, finishing the proof of Step 3.

\textit{Step 4. Verify \eqref{eq: strict monotone region} for $i=2.$}
Suppose $G_k(q_{\le m}, q_{m+1})\le 0$ for all $1\le k\le m+1$ and $q\notin \{\0,q^*\}$.
By Step 3, we have $q\gee q^*.$ 
Now, we can repeat the argument in Step 2 to derive a contradiction whenever $\{k\in\mathscr S: q_k = q^*_k\}\ne \emptyset.$



\end{proof}

For later use, we establish a variant of Proposition \ref{prop: infimum of RS functional for positive-definite} under the weaker assumption of positive-semidefinite and irreducible $\Delta^2$.
    Under this weaker assumption, we  lose uniqueness of minimizers; for any minimizer $q$ and $v\in \ker (\Delta^2\Lambda)$, we have $\mathsf{RS}(q+v)=\mathsf{RS}(q)$.
    However, the following lemma guarantees that there are  minimizers that still satisfy the fixed-point equation \eqref{eq: fixed-point equation in RS}.

\begin{lemma}\label{lem: infimum of RS: semidefinite}
    Assume $\Delta^2$ is irreducible and positive-semidefinite.
    Then, $q^*(\Delta^2,\tau^2)$ is a minimizer of the RS functional and a fixed-point of \eqref{eq: fixed-point equation in RS}.


\end{lemma}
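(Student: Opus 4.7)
The fixed-point assertion is immediate: by Definition~\ref{def: definition of q^* for irreducible}, $q^*(\Delta^2,\tau^2)$ is defined as the maximal fixed-point of \eqref{eq: fixed-point equation in RS}, whose existence is provided by Proposition~\ref{prop: uniqueness of fixed-point} under the sole assumption that $\Delta^2$ be irreducible. The substantive content is to verify that $q^*$ minimizes the RS functional in the positive-semidefinite case, where Lemma~\ref{lem: infimum of RS} cannot be invoked directly because $v^\intercal \Lambda \Delta^2 \Lambda v = 0$ no longer forces $v=\0$, so the first-order argument loses its punchline.

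The plan is to approximate by irreducible positive-definite matrices. For each $n\ge 1$, set $\Delta^2_n \colonequals \Delta^2 + n^{-1} I$. Each $\Delta^2_n$ is symmetric with non-negative entries, positive-definite, and irreducible (the off-diagonal entries are unchanged, so the graph of positive entries persists). Moreover $(\Delta^2_n,\tau^2)\gee (\Delta^2,\tau^2)$ and $(\Delta^2_n,\tau^2)\to (\Delta^2,\tau^2)$ as $n\to\infty$. Proposition~\ref{prop: infimum of RS functional for positive-definite} applied to $(\Delta^2_n,\tau^2)$ implies that $q^*_n \colonequals q^*(\Delta^2_n,\tau^2)$ is the unique minimizer of the RS functional associated to $(\Delta^2_n,\tau^2)$, which I shall denote by $\mathsf{RS}_n$. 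The continuity-from-above part of Lemma~\ref{lem: q^* monotone and continuous} then gives $q^*_n \to q^*(\Delta^2,\tau^2)$.

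To transfer the minimization property to the limit, I observe from the explicit formula \eqref{eq: RS functional} that the map $(\Delta^2,q)\mapsto \mathsf{RS}(q)$ is jointly continuous on $[0,\infty)^{\mathscr S\otimes \mathscr S}\times [0,1]^{\mathscr S}$: the Gaussian integrand is smooth and uniformly bounded in $q\in[0,1]^{\mathscr S}$, so dominated convergence handles the convergence of the entropy term, while the quadratic term depends continuously on $\Delta^2$. Hence for any fixed $q\in [0,1]^{\mathscr S}$, passing $n\to\infty$ in the inequality $\mathsf{RS}_n(q^*_n)\le \mathsf{RS}_n(q)$ yields $\mathsf{RS}(q^*(\Delta^2,\tau^2))\le \mathsf{RS}(q)$, establishing the minimization claim.

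No serious obstacle is anticipated: the whole argument rests on two continuity inputs (Lemma~\ref{lem: q^* monotone and continuous} and the joint continuity of the RS functional), and the variational first-order analysis of Lemma~\ref{lem: infimum of RS} is altogether sidestepped, which is essential since its non-degeneracy hypothesis fails under mere positive-semidefiniteness. The only thing to double-check is that the approximants $\Delta^2_n$ honor all the structural requirements (symmetry, non-negativity, irreducibility, and strict positive-definiteness), which is immediate from the construction.
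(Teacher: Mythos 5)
Your proposal is correct and follows essentially the same route as the paper: approximate by $\Delta^2_n=\Delta^2+n^{-1}I$, invoke Proposition~\ref{prop: infimum of RS functional for positive-definite} for each $n$, use the continuity-from-above part of Lemma~\ref{lem: q^* monotone and continuous} to get $q^*_n\to q^*$, and pass to the limit in the minimization. The only cosmetic difference is that the paper cites uniform convergence $\mathsf{RS}_n\to\mathsf{RS}$ (via the Lipschitz estimate of Lemma~\ref{lem: Lipschitz of Parisi functional in model parameters}) where you use joint continuity of $(\Delta^2,q)\mapsto\mathsf{RS}(q)$ by dominated convergence; both close the argument the same way.
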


\begin{proof}
     
     
     \noindent For $n\in\mathbb N$, consider   \begin{equation}\label{eq: approximating Delta^2}
        \Delta^2_n\colonequals \Delta^2+n^{-1} I \quad \text{and} \quad  \tau^2_n\colonequals \tau^2+n^{-1}\1.
    \end{equation}
    For each $n\in\mathbb N$,  $\Delta^2_n$ is positive-definite and  irreducible, and $\tau^2_n\sgee \0.$
    Hence, for each $n\in\mathbb N$, Proposition \ref{prop: infimum of RS functional for positive-definite} guarantees a unique minimizer $q^*_n\colonequals q^*(\Delta^2_n,\tau^2_n)$ of the RS functional $\mathsf{RS}_n$ under the variance profile $(\Delta^2_n,\tau^2_n)$.
    In other words, $\mathsf{RS}_n(q^*_n)=\min_{q\in [0,1]^{\mathscr S}} \mathsf{RS}_n(q).$
    
    On the one hand, by the Lipschitz continuity of the Parisi functional in Lemma~\ref{lem: Lipschitz of Parisi functional in model parameters}  (or an elementary calculus), it is not hard to see that $\mathsf{RS}_n\to \mathsf{RS}$ uniformly on $[0,1]^{\mathscr S}$  as $n\to\infty$.
    On the other hand, by continuity from above in Lemma~\ref{lem: q^* monotone and continuous}, we have $\lim_{n\to\infty}q^*_n= q^*$, where $q^*$ is the maximal fixed-point under the variance profile $(\Delta^2,\tau^2)$ according to Definition \ref{def: definition of q^* for irreducible}. 
    Combining these, $q^*$ must be a minimizer; indeed, we have \begin{equation}\label{eq: extending to non-negative entries: 0}
        \mathsf{RS}(q^*)=\lim_{n\to\infty}\mathsf{RS}_n(q^*_n)=\lim_{n\to\infty} \inf_{q\in [0,1]^{\mathscr S}}\mathsf{RS}_n(q)=\inf_{q\in [0,1]^{\mathscr S}}\mathsf{RS}(q).
    \end{equation}

    

\end{proof}


\section{Support of Parisi measures}\label{sec: Support of Parisi measures}
This section is dedicated to establishing an important property of a Parisi measure, which was the key ingredient in Step 4 of the proof sketch.
Denote the space of probability measures on $[0,1]^{\mathscr S}$ with supports totally ordered with respect to the partial order $\lee$ by $\mathcal M^\uparrow$.
Let  $\mathcal M^\uparrow_d\subseteq \mathcal M^\uparrow$ be the discrete probability measures with totally ordered supports.
As outlined in the introduction, the Parisi functional can be continuously extended from $\mathcal M^\uparrow_d$ to  $\mathcal M^\uparrow$, and the Parisi measure can formally be defined as a minimizer of the Parisi functional in $\mathcal M^\uparrow$; see Proposition~\ref{prop: Lipschitz continuity of Parisi functional} and Lemma~\ref{lem: closure of discrete with totally ordered support}.
Indeed,  $\mathcal M^\uparrow$ is compact as a weak closure of $\mathcal M^\uparrow_d$.

For any $\mu\in\mathcal M^\uparrow$, we define $q_{\text{min}}^\mu$ and $q_{\text{max}}^\mu$ to be the unique minimizer and maximizer of the $L^1$ norm on $\operatorname{supp}(\mu)$, respectively.
These  are well-defined since the support of a measure is totally ordered and compact in $[0,1]^{\mathscr S}$.
\begin{proposition}\label{prop: min and max of support}
 Assume irreducible and positive-definite $\Delta^2$. 
 For any Parisi measure $\mu\in\mathcal M^\uparrow$, we have either $q^{\mu}_{\mathrm{min}} =q^*$ or $q^\mu_{\mathrm{min}} \slee q^*$.  
We also have either $q^{\mu}_{\mathrm{max}} =q^*$ or $q^\mu_{\mathrm{max}} \sgee q^*$.  
If we further assume $\tau^2\sgee \0$, then $q^\mu_{\min}\sgee\0$.
\end{proposition}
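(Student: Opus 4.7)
The plan is to combine first-order optimality conditions for Parisi measures, developed in Appendix~\ref{sec: Extension of Parisi functional}, with the monotone region lemma (Lemma~\ref{lem: monotone region lemma}). The target is to place $q^\mu_{\mathrm{min}} \in \mathsf{R}_1 \cup \{\0\}$ and $q^\mu_{\mathrm{max}} \in \mathsf{R}_2 \cup \{\0\}$, after which Lemma~\ref{lem: monotone region lemma} yields the asserted dichotomy.

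First, by approximating $\mu$ in the weak topology by discrete measures in $\mathcal{M}^{\uparrow}_d$ and invoking Lipschitz continuity of the extended Parisi functional, the measure $\mu$ realizes the infimum of $\mathscr P$ over $\mathcal{M}^{\uparrow}$. Computing right-derivatives of $\mathscr P$ along admissible perturbations that preserve a totally ordered support yields, at every \emph{interior} support atom $p$ of $\mu$, a fixed-point identity of the form $G_\mu(p) = \0$ for a suitable $\mu$-dependent vector field $G_\mu$. At the two extremal atoms, however, only one-sided perturbations preserve the total ordering, so one obtains only the inequalities
\begin{equation*}
G_\mu(q^\mu_{\mathrm{min}}) \gee \0 \qquad \text{and} \qquad G_\mu(q^\mu_{\mathrm{max}}) \lee \0.
\end{equation*}

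Second, at the two extremal atoms the functional $G_\mu$ collapses to the vanilla vector field $G(q) \colonequals \e\hth^2(z\sqrt{\tau^2+2\Delta^2\Lambda q})-q$ associated to the RS fixed-point equation \eqref{eq: fixed-point equation in RS}. At $q^\mu_{\mathrm{min}}$, the Parisi chain \eqref{eq: inductive definition}--\eqref{eq: terminal X} has no cavity contribution from atoms below, so the effective external field reduces to $\tau^2$; at $q^\mu_{\mathrm{max}}$, an analogous reduction occurs at the top of the chain via the identity $\zeta_r = 1$. Consequently $q^\mu_{\mathrm{min}} \in \mathsf{R}_1 \cup \{\0\}$ and $q^\mu_{\mathrm{max}} \in \mathsf{R}_2 \cup \{\0\}$, and Lemma~\ref{lem: monotone region lemma} gives $q^\mu_{\mathrm{min}} \lee q^*$ with strict inequality unless $q^\mu_{\mathrm{min}} = q^*$, and symmetrically for $q^\mu_{\mathrm{max}}$. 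The edge case $q^\mu_{\mathrm{min}} = \0$ is compatible with both alternatives, since Proposition~\ref{prop: uniqueness of fixed-point} forces either $q^* = \0$ (equality) or $q^* \sgee \0$ (strict inequality); the edge case $q^\mu_{\mathrm{max}} = \0$ forces $\mu = \delta_{\0}$, whence $\mathscr P(\mu) = \mathsf{RS}(\0)$, so by Lemmas~\ref{lem: infimum of RS} and \ref{lem: infimum of RS when h=0} we must have $q^* = \0$, again closing that case.

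The main obstacle I anticipate is the rigorous reduction of the Parisi-chain first-order expression to the vanilla $G$ at the extremal atoms, that is, identifying the cavity-field collapse in the multi-species chain and ensuring that the one-sided perturbation at an extremal atom is simultaneously admissible in every species. One also has to verify that the expression obtained at the top of the chain, where $\zeta_r = 1$, indeed simplifies to the same entrywise $\hth^2$-average that appears in $G$, requiring careful bookkeeping of the quadratic forms $Q_l$ and $Q_l^s$ from \eqref{eq: definition of Q} at the extremal levels.
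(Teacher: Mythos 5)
Your overall architecture matches the paper's: a stationarity condition for Parisi measures at the support points, combined with Lemma~\ref{lem: monotone region lemma} to convert a sign condition on the RS vector field $G(q)=\e\hth^2(z\sqrt{\tau^2+2\Delta^2\Lambda q})-q$ into the ordering with $q^*$. But the central step of your argument --- the claim that the first-order expression ``collapses to the vanilla vector field $G$'' at the extremal atoms --- is false, and this is precisely where the real work of the section lives. At $q^\mu_{\min}$ (with $\zeta_0=0$) the stationarity identity of Proposition~\ref{prop: support of Parisi measure} reads, entrywise, $q_1^s=\e_h\e_1\bigl(\e_{2,\dots,r}W^s_2\cdots W^s_r\hth Y^s_r\bigr)^2$; the absence of atoms below only kills the weight $W^s_1$, while the entire chain of tilted weights $W^s_2,\dots,W^s_r$ coming from the atoms \emph{above} remains inside the square and does not reduce to $\hth^2(Y^s_1)$. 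Similarly at $q^\mu_{\max}$, the identity $\e_r W^s_r\hth Y^s_r=\hth Y^s_{r-1}$ collapses only the top level; the outer weights $W^s_1\cdots W^s_{r-1}$ still bias the expectation away from $\e\hth^2(Y^s_{r-1})$. What is actually true --- and what the paper proves --- is a pair of one-sided comparisons: $\e_h\Psi_\mu(h,q^\mu_{\min})\lee\e\hth^2(h+z\sqrt{2\Delta^2\Lambda q^\mu_{\min}})$ via the recursive weight-reduction bound $|\e_{2,\dots,r}W^s_2\cdots W^s_r\hth Y^s_r|\le|\hth Y^s_1|$ (Lemma~\ref{lem: weight reduction lemma}), and $\e_h\Psi_\mu(h,q^\mu_{\max})\gee\e\hth^2(h+z\sqrt{2\Delta^2\Lambda q^\mu_{\max}})$ via an iterated FKG inequality using that each $g_l$ is even and non-decreasing on $[0,\infty)$. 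These comparisons rest on the auxiliary lemmas about symmetric decreasing densities (Lemmas~\ref{lem: auxiliary lemma: 0}--\ref{lem: auxiliary lemma: 2}), which is exactly where the centered-Gaussian external field assumption enters; asserting an exact collapse skips all of this.

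A secondary inaccuracy: you only claim one-sided inequalities $G_\mu(q^\mu_{\min})\gee\0$ and $G_\mu(q^\mu_{\max})\lee\0$ from one-sided admissible perturbations, but the paper's Proposition~\ref{prop: derivative of push forward of Parisi functional} uses the specific perturbation $a(q)=-q+\e_h\Psi_\mu(h,q)$, shows it is admissible at \emph{every} support point (including the extremal ones, via the monotonicity and range properties of Lemma~\ref{lem: properties of Psi} together with Lemma~\ref{lem: vector field property on the boundary}-type considerations), and concludes the exact equation $q=\e_h\Psi_\mu(h,q)$ on all of $\operatorname{supp}(\mu)$ from positive-definiteness of $\Delta^2$. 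The inequalities you want do not come from one-sidedness of the optimality condition; they come from the equality combined with the comparison inequalities between $\Psi_\mu$ and the vanilla $\hth^2$-average described above. Your handling of the edge cases $q^\mu_{\min}=\0$ and $q^\mu_{\max}=\0$ is fine but peripheral. To repair the proof you would need to supply the weight-reduction and FKG arguments; as written, the proposal has a genuine gap at its key step.
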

The  idea to achieve this is to show that for any Parisi measure $\mu$, the end points  $q^\mu_{\mathrm{min}}$ and $q^\mu_{\mathrm{max}}$ are in the regions $\mathsf R_1$ and $\mathsf R_2$, respectively, in accordance with Lemma~\ref{lem: monotone region lemma}. 
The  assertion  $q^\mu_{\min}\sgee \0$ and $q^\mu_{\max}\slee \1$ will be treated separately.
Toward this, we analyze a certain critical point equation for Parisi measures,  which is stated in Proposition~\ref{prop: support of Parisi measure}, by discretizing a Parisi measure as in \eqref{eq: Psi for discrete measure}.
Hence,  most of the effort in this section is to examine this discretization, which is the content of the next subsection.

\subsection{Preparatory lemmas }
Define the random variables  $Y^s_0= h_s$ for $s\in\mathscr S$ and
\begin{align*}
    Y^s_l &= h_s+\sum_{1\le l'\le l}z_{l'} (Q_{l'}^s -Q_{l'-1}^s)^{1/2} \quad \text{for} \quad  1\le l\le r, \, \, s \in \mathscr S,
    \\W^s_l &= \exp (\zeta_{l-1} (X_{l}^s -X_{l-1}^s)) \quad  \text{for} \quad  1\le l\le r.
\end{align*} 
Note that $W^s_l$ is $(h_s,(z_{k})_{1\le k\le l})$-measurable and $\e _l W^s_l=1$ for all $l$ by \eqref{eq: inductive definition}.
Define the operators \[T_{k}(Z)=(\e_{k+1}Z)^{\zeta_{k-1}/\zeta_k}, \quad 0\le k\le r,\]  for any  $((h_s)_{s\in\mathscr S},(z_{l})_{1\le l\le k+1})$-measurable random variable $Z\ge 0$.
Here, $\e_{r+1}$ is understood to be the identity operator.
We define its composition \[\widetilde T_k = T_k\circ \dots \circ T_r, \quad 0\le k\le r.\]
These operators are related to the weights in the discretization in \eqref{eq: Psi for discrete measure}, namely \[\e_h\e_{1,\dots, l} W^s_1\dots  W^s_l (\e_{l+1,\dots,r} W^s_{l+1}\dots W^s_{r} \hth  Y^s_r )^2,\] in the following way.
\begin{lemma}\label{lem: weight W}
    For any $1\le l\le r$ and $s \in \mathscr S,$  \[ W^s_l= \frac{\widetilde T_l \ch Y^s_r}{\e_l \widetilde T_l \ch Y^s_r}.  \] 
\end{lemma}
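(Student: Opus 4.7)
The plan is to establish, by backward induction on $k$ from $r$ down to $1$, the identity
\[
\widetilde T_k \ch Y^s_r = \exp(\zeta_{k-1} X^s_k),
\]
from which Lemma~\ref{lem: weight W} will follow by an immediate telescoping.

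For the base case $k=r$, since $\zeta_r=1$ and $\e_{r+1}$ is the identity operator,
\[
\widetilde T_r \ch Y^s_r = T_r(\ch Y^s_r) = (\ch Y^s_r)^{\zeta_{r-1}} = \exp(\zeta_{r-1}\log\ch Y^s_r) = \exp(\zeta_{r-1}X^s_r),
\]
using the definition \eqref{eq: terminal X} of $X^s_r$. For the inductive step, assuming the identity at level $k+1$, I would compute
\[
\widetilde T_k \ch Y^s_r = T_k\bigl(\widetilde T_{k+1}\ch Y^s_r\bigr) = \bigl(\e_{k+1}\exp(\zeta_k X^s_{k+1})\bigr)^{\zeta_{k-1}/\zeta_k} = \exp(\zeta_{k-1} X^s_k),
\]
where the final equality is precisely the inductive definition \eqref{eq: inductive definition} of $X^s_k$, raised to the power $\zeta_{k-1}$.

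Once this identity is proved, the lemma follows immediately: for $l\ge 1$,
\[
\frac{\widetilde T_l \ch Y^s_r}{\e_l\widetilde T_l \ch Y^s_r} = \frac{\exp(\zeta_{l-1} X^s_l)}{\e_l \exp(\zeta_{l-1} X^s_l)} = \frac{\exp(\zeta_{l-1}X^s_l)}{\exp(\zeta_{l-1}X^s_{l-1})} = \exp\bigl(\zeta_{l-1}(X^s_l-X^s_{l-1})\bigr) = W^s_l,
\]
where the middle step again invokes \eqref{eq: inductive definition}, giving $\e_l\exp(\zeta_{l-1}X^s_l)=\exp(\zeta_{l-1}X^s_{l-1})$.

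I do not anticipate a real obstacle; the only minor subtlety is the possibility $\zeta_0=0$ (allowed by \eqref{eq: zeta sequence}), which occurs only at $l=1$. In that case both sides degenerate: the ratio reduces to $1/1=1$ (since $\widetilde T_1\ch Y^s_r = \exp(0\cdot X^s_1) = 1$), and $W^s_1=\exp(0)=1$ as well, so the identity holds vacuously. For all other $l$, we have $\zeta_{l-1}>0$, so raising to the power $\zeta_{l-1}/\zeta_k$ and exponentiating cause no issues, and the proof proceeds as outlined.
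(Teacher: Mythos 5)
Your proof is correct and follows essentially the same route as the paper, which only sketches the computation for the last three levels and explicitly leaves the formal induction to the reader; your inductive invariant $\widetilde T_k \ch Y^s_r = \exp(\zeta_{k-1}X^s_k)$ is exactly the statement that induction establishes. The handling of the degenerate case $\zeta_0=0$ at $l=1$ is also correct.
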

\begin{proof}[Proof sketch]
    We compute the last three terms and leave the formal inductive proof to the reader.
    From \eqref{eq: inductive definition} and \eqref{eq: terminal X}, we have $\exp (\zeta_{r-1}X_{r-1}^s) = \e_r \ch^{\zeta_{r-1}}(Y^s_r)$ and \[W^s_r = \frac{\ch^{\zeta_{r-1}}(Y^s_r)}{\e_r \ch^{\zeta_{r-1}}(Y^s_r)}.\] 
    One more step yields $\exp (\zeta_{r-2}X_{r-2}^s) = \e_{r-1}  (\e_r \ch^{\zeta_{r-1}}(Y^s_r))^{\zeta_{r-2}/\zeta_{r-1}}$ and \[W^s_{r-1} = \frac{(\e_r \ch^{\zeta_{r-1}}(Y^s_r))^{\zeta_{r-2}/\zeta_{r-1}}}{\e_{r-1}  (\e_r \ch^{\zeta_{r-1}}(Y^s_r))^{\zeta_{r-2}/\zeta_{r-1}}}.\] 
    Yet another step yields $\exp (\zeta_{r-3}X_{r-3}^s) = \e_{r-2} (\e_{r-1}  (\e_r \ch^{\zeta_{r-1}}(Y^s_r))^{\zeta_{r-2}/\zeta_{r-1}})^{\zeta_{r-3}/\zeta_{r-2}} $ and \[W^s_{r-2} = \frac{(\e_{r-1}  (\e_r \ch^{\zeta_{r-1}}(Y^s_r))^{\zeta_{r-2}/\zeta_{r-1}})^{\zeta_{r-3}/\zeta_{r-2}}}{\e_{r-2} (\e_{r-1}  (\e_r \ch^{\zeta_{r-1}}(Y^s_r))^{\zeta_{r-2}/\zeta_{r-1}})^{\zeta_{r-3}/\zeta_{r-2}}}.\] 
        
\end{proof}

    



We will revisit the operator $\widetilde T_l$ after we establish an auxiliary lemma about symmetric random variables.

\begin{lemma}\label{lem: auxiliary lemma: 3}
    Let  $g:\mathbb R\to\mathbb R$ be a measurable function satisfying $g(y)\le g( -y)$ for all $y\ge0.$
    Let $X$ be a random variable with an even density function that is  non-increasing on $[0,\infty)$.
    Assume a mild integrability condition $\sup_{y\in\mathbb R}\e |g(y+X)|<\infty.$
    For any $c \ge 0$, we have \[\e g( cy+X)\le \e g( -cy+X), \quad \forall \, y\ge 0.\]
    In particular, for a measurable function $h:\mathbb R\to\mathbb R$ which is even, non-increasing on $[0,\infty)$, and satisfies $\sup_{t\ge 0}\e |h(t+X)|<\infty$,  the function $t\mapsto \e h(t+X)$ is non-increasing on $[0,\infty)$.
    Similarly, if such $h$ is now non-decreasing on $[0,\infty)$, then  $t\mapsto \e h(t+X)$ is non-decreasing on $[0,\infty)$.
\end{lemma}
\begin{proof}
    Denote the even density of $X$ by $f:\mathbb R\to\mathbb [0,\infty)$.
    For $c,y\ge 0$, by change of variables, \begin{align*}
        \e g( cy+X)- \e g( -cy+X)& = \int_{-\infty}^\infty g(cy+w)f(w) dw -   \int_{-\infty}^\infty g(-cy+w)f(w) dw
        \\& = \int_{-\infty}^\infty g(z) \bigl( f(z- cy) -f(z+cy)\bigr) dz.
    \end{align*}
    Splitting  the integral and changing the variables again, the last integral is equal to \begin{align*}
    &\int_{-\infty}^0 g(z) (f(z- cy) -f(z+cy)) dz  +\int_{0}^\infty g(z) (f(z- cy) -f(z+cy)) dz
        \\&  =\int_{0}^\infty g(-z) (f(-z-cy)- f(-z+cy)) dz  +\int_{0}^\infty g(z) (f(z- cy) -f(z+cy)) dz
        \\&  =\int_{0}^\infty (g(z) -g(-z)) (f(z- cy) -f(z+cy)) dz ,
    \end{align*} where the last equality is due to the evenness of $f$.
    For any $z,y\ge 0$,  the triangle inequality shows  $|z-cy|\le z+cy$ and, therefore, $f(z- cy) \ge f(z+cy)$. 
    This, together with the assumption on $g$, implies that the preceding display is non-positive, which is the first assertion of the lemma.

    As a special case, consider a measurable function $h:\mathbb R\to\mathbb R$ which is  even and non-increasing on $[0,\infty)$.
    Let $0\le a<b$.
    Denote the center by $c= (a+b)/2>0$ and the radius by $r= (b-a)/2>0.$
    Define $g(y)=h(c+y)$ for  $y\in\mathbb R$.
    One can easily check that $g(y)=h(c+y)\le h(c-y)=g(-y)$ for any $y\ge 0$ due to the assumptions on $h.$
    From the first assertion above, \[\e h(b+X) =\e h(c+r+X) =\e g(r+X) \le \e g(-r+X)= \e h(c-r+X)= \e h(a+X),\] which shows that $t\mapsto \e h(t+X)$ is non-increasing on $[0,\infty)$.
    For $h$ non-decreasing on $[0,\infty)$, we can consider $-h$ to conclude that $t\mapsto \e h(t+X)$ is non-decreasing on $[0,\infty)$, which completes the proof.
\end{proof}

With this auxiliary lemma, we  establish some useful properties of the operator $\widetilde T_l$.
\begin{lemma}\label{lem: properties of g_l}
    For each $1\le l\le r$, define a measurable function $g_l$ through the relation $g_l(Y^s_l)= \widetilde T_l(\ch Y^s_r).$
    Then, for all $1\le l\le r$, the functions $g_l$ are positive, smooth, even, and non-decreasing on $[0,\infty)$.
\end{lemma}
\begin{proof}
Since $\widetilde T_l (\ch Y^s_r)$ is $Y^s_l$-measurable, there exists some some measurable function $g_l$ such that $g_l(Y^s_l)= \widetilde T_l(\ch Y^s_r)$.
We check the asserted properties of $g_l$ by backward induction on $l.$
The base case $l=r$ clearly holds, since $g_r= \ch^{\zeta_{r-1}}$.
Suppose the claim is true for some $1\le l\le r.$
Since $\widetilde T_{l-1}= T_{l-1}\circ \widetilde T_l$, we have \[ g_{l-1}(Y^s_{l-1}) = T_{l-1} ( \widetilde T_l (\ch Y^s_r))= T_{l-1} (g_l(Y^s_l) ) = (\e_l g_l(Y^s_l))^{\zeta_{l-2}/\zeta_{l-1}}.\] 
The preceding display and $Y^s_l= Y^s_{l-1}+ z_l (Q^s_l-Q^s_{l-1})^{1/2}$ show
 \begin{equation}\label{eq: recursive g}
     g_{l-1}(x) = (\e_l  g_l(x+z_l (Q^s_l-Q^s_{l-1})^{1/2}))^{\zeta_{l-2}/\zeta_{l-1}}, \quad x\in\mathbb R.
 \end{equation} 
Then, the induction hypothesis and Lemma~\ref{lem: auxiliary lemma: 3} readily show that $g_{l-1}$ satisfies the required properties, which completes the induction.

\end{proof}

We prove another auxiliary lemma on symmetric random variables, which serves as the base step of the induction in the sequel.
\begin{lemma}\label{lem: auxiliary lemma: 1}
    Let $f:\mathbb R\to\mathbb R$ be an odd measurable function such that $f\ge 0$ on $[0,\infty)$.
    Let $g:\mathbb R\to\mathbb R $ be a  bounded measurable function satisfying $g(y)\le g( -y)$ for any $y\ge0.$
    Let $X$ be a  symmetric random variable.
    Suppose $ \e f(|X|)<\infty.$
    Then, we have  \[\e f(X) g( X) \le 0.\] 
\end{lemma}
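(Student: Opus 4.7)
The plan is to reduce the expectation to an integral over $[0,\infty)$ via the symmetry of $X$ and the oddness of $f$, and then conclude by comparing $g$ at $a+x$ and $a-x$ using the geometric inequality $|a-x| \le |a+x|$ for $a,x \ge 0$, exactly as in the proof of Lemma~\ref{lem: auxiliary lemma: 0}.

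More concretely, let $\mu$ denote the (symmetric) law of $X$. The hypotheses $\e f(X)^2 <\infty$ and $\e g(a+X)^2 <\infty$ ensure, via Cauchy--Schwarz, that $f(X)g(a+X)$ is integrable, so I may split the expectation across $\{X\ge 0\}$ and $\{X<0\}$. In the half over $(-\infty,0)$ I would perform the change of variable $x\mapsto -x$ and use the symmetry of $\mu$ together with $f(-x)=-f(x)$ to rewrite
\begin{equation*}
\e f(X) g(a+X) = \int_0^{\infty} f(x)\bigl[g(a+x)-g(a-x)\bigr]\,d\mu(x).
\end{equation*}

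For the second step, I would observe that for $a,x\ge 0$, $(a+x)^2-(a-x)^2=4ax\ge 0$, hence $|a-x|\le |a+x|=a+x$. Because $g$ is even and non-increasing on $[0,\infty)$, this gives $g(a+x)=g(|a+x|)\le g(|a-x|)=g(a-x)$. Combined with $f(x)\ge 0$ on $[0,\infty)$, the integrand above is everywhere non-positive, and the desired inequality follows upon integrating.

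There is essentially no obstacle here: the entire argument is a direct variant of Lemma~\ref{lem: auxiliary lemma: 0}, with the only ingredients being the symmetry/oddness reduction and the elementary inequality $|a-x|\le |a+x|$ for $a,x\ge 0$. The finite second-moment assumptions are used solely to justify the integral splitting.
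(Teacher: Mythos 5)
Your proof is correct and follows essentially the same route as the paper: both reduce $\e f(X)g(a+X)$ via the symmetry of $X$ and the oddness of $f$ to an integral of $f(x)\bigl(g(a+x)-g(a-x)\bigr)$ over $[0,\infty)$, and both conclude from $|a-x|\le|a+x|$ together with $g$ being even and non-increasing on $[0,\infty)$ (the paper cites this as \eqref{eq: even decreasing function property} from the proof of Lemma~\ref{lem: auxiliary lemma: 0}). Your explicit Cauchy--Schwarz justification of integrability is a welcome touch the paper leaves implicit.
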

\begin{proof}
    Since $X\stackrel{d}{=} S|X|$ for some independent symmetric Rademacher random variable $S$, we can write \begin{align*}
        \e f(X) g(X) &=2^{-1}\e \bigl( f(|X|) g( |X|) +f(-|X|)g( -|X|)\bigr)
        \\&=2^{-1}\e  f(|X|)  \bigl( g(|X|) - g(-|X|)\bigr)
        \le 0,
    \end{align*} 
    where we used the fact that $f$ is odd in the second equality.
    Note that the last inequality is valid from the assumptions on $g$ and the fact that $f\ge 0$ on $[0,\infty)$.

\end{proof}

We record a generalization of Lemma~\ref{lem: auxiliary lemma: 1} for centered Gaussian random variables.
\begin{lemma}\label{lem: generalization}
    Let $n\in\mathbb N$ and $(z_i)_{1\le i\le n}$ be independent centered Gaussian random variables.
    Denote the partial sums by $S_i= \sum_{1\le k\le i}z_i$ for $1\le i\le n.$
    Let $(g_{i})_{1\le i\le n}$ be a family of non-negative, bounded, and   measurable functions such that $g_{i}(y) \le g_{i}(-y)$ for all $1\le i\le n$ and $y\ge 0$.
    Let $f$ be an odd measurable function such that $f\ge 0$ on $[0,\infty)$.
    Suppose $\e f(|S_n|)<\infty$.  
    Then, we have \[ \e f(S_n) \prod_{1\le i\le n}  g_{i}( S_i) \le 0.\] 
    In particular, if $(h_i)_{1\le i\le n}$ is a familty of non-negative and even functions that are non-increasing on $[0,\infty)$, we have \[\e f(S_n) \prod_{1\le i\le n}  h_i(x+S_i) \le 0, \quad \forall x\ge 0.\]
\end{lemma}
\begin{proof}
    We use induction on $n$.
    The base case $n=1$ holds by Lemma~\ref{lem: auxiliary lemma: 1}.
    Suppose the statement is true for some $n\ge 1$, and consider the case for $n+1$.
    By conditioning, we have \begin{equation}\label{eq: conditioning: 1}
            \e f(S_{n+1}) \prod_{1\le i\le n+1}g_{i}(S_i) = \e \bigl(  \e [ g_{1}(S_1)|S_2,\dots, S_{n+1} ]  \cdot f(S_{n+1}) \prod_{2\le i\le n+1} g_{i}(S_i) \bigr).
    \end{equation}
    The conditional expectation in \eqref{eq: conditioning: 1} can be simplified by the tower property: for any bounded measurable function $h$, \begin{align}
        \e \bigl[ h(S_1) |S_2,\dots, S_{n+1} \bigr]&= \e \bigl[ \e [  h(S_1) |S_2,z_3,\dots,z_{n+1}]\, \bigr| S_2,\dots, S_{n+1} \bigr] \nonumber
        \\&= \e \bigl[ \e [ h(S_1) |S_2]\, \bigr| S_2,\dots, S_{n+1} \bigr]
        = \e\bigl[ h(S_1) |S_2\bigr], \label{eq: conditioning: 2}
    \end{align}
    where the second equality holds since $(S_1,S_2)$ is independent with $(z_3,\dots,z_{n+1})$.
    We can write $S_1=\rho S_2+ R$, where $\rho \colonequals \e z_1^2 / (\e z_1^2 +\e z_2^2)$ and  $R\sim N(0, \e z_1^2 \e z_2^2 /(\e z_1^2 +\e z_2^2) )$ is a normal random variable independent with  $S_2$. 
    Using \eqref{eq: conditioning: 2}, the conditional expectation appearing in \eqref{eq: conditioning: 1} is  \begin{equation}\label{eq: conditioning: 3}
        \e \bigl[g_{1}(S_1)|S_2 \bigr]= \e \bigl[g_{1}( \rho S_2 + R)|S_2\bigr] = \hat g_{1}( S_2),
    \end{equation} where the function $\hat g_{1}:\mathbb R^2 \to [0,\infty)$ is given by $\hat g_{1}(y)= \e g_{1}(\rho y+R)$ for  $y\in\mathbb R$.
     Combining \eqref{eq: conditioning: 1}, \eqref{eq: conditioning: 2}, and \eqref{eq: conditioning: 3}, if we define $\tilde g_{2}:\mathbb R\to [0,\infty)$ by the pointwise product $\tilde g_{2} =\hat g_{1}\cdot g_{2} $, we have \begin{equation*}
        \e f(S_{n+1}) \prod_{1\le i\le n+1} g_{i}(S_i)= \e f(S_{n+1}) \tilde  g_{2}(S_2) \prod_{3\le i\le n+1} g_{i}(S_i).
    \end{equation*}
    Notice that $\hat g_{1}(  y)\le \hat g_{1}( -y)$ for all $y\ge 0$ by Lemma~\ref{lem: auxiliary lemma: 3}, whereas $g_{2}$ satisfies the same inequality by assumption.
    This, together with $\hat g_{1}, g_{2} \ge 0$, implies $\tilde g_{2}(y)\le \tilde g_{2}(-y)$ for all $y\ge 0$.
    Now, the induction hypothesis can be applied to  the  Gaussian random variables $(S_2, S_3,\dots, S_{n+1})$ and the functions $(\tilde g_{2},g_{3},\dots,g_{n+1})$, finishing the induction.

    For  even and non-negative functions $(h_i)_{1\le i\le n}$  which are non-increasing on $[0,\infty)$, we first fix $x\ge 0$ and define $g_{i}(y)=h_i(x+y)$ for any $y\in\mathbb R$ and $1\le i\le n$.
    Since $g_{i}(y)=h_i(x+y)\le h_i(x-y)=g_{i}(-y)$ for all $y\ge 0$ and $1\le i\le n$, the previous result can be applied  in this special case, yielding the desired result.
\end{proof}


We end this subsection with two parallel lemmas that are critically used in the proof of Proposition~\ref{prop: min and max of support} in the next subsection.
\begin{lemma} \label{lem: weight reduction lemma}
For all $1\le l\le r$ and $s \in \mathscr S$, we have \[ |\e_{l,\dots, r} W^s_l\dots W^s_r \hth Y^s_r| \le |\hth 
Y^s_{l-1}|.\] 
\end{lemma} 
\begin{proof}
For brevity, let us drop the species notation $s$ and write $Y_l= h +\sum_{1\le i\le l} \eta_{i}$ for $0\le l\le r$, where  $\eta_{i}\colonequals z_{i} (Q^s_{i} -Q^s_{i-1})^{1/2}$.
Fix $1\le l\le r.$
From Lemma~\ref{lem: weight W} and  Lemma~\ref{lem: properties of g_l}, the target inequality  can be written as  \begin{equation*}
    \Bigl| \e_{l,\dots,r}\prod_{k:l\le k\le r}\frac{g_k(Y_k)}{\e_{k}
    g_k(Y_k)}  \cdot \hth Y_r \Bigr| \le |\hth  Y_{l-1}|,
\end{equation*}  where the functions $(g_k)_{l\le k\le r}$ enjoy the properties of the latter lemma.
In particular, the functions $(g_k)_{l\le k\le r}$ are even.
Combining this with the fact that $(-\eta_i)_{l\le i\le r}$ is equal in distribution to $(\eta_i)_{l\le i\le r}$, it is not hard to see that both sides of the preceding display without the absolute values are odd measurable functions in $Y_{l-1}$.
Therefore,  by substituting $x=Y_{l-1}$, it suffices to show that  \begin{equation*}
    \e_{l,\dots,r}\prod_{k:l\le k\le r}\frac{g_k(x+S_{lk})}{\e_{k}
    g_k(x+S_{lk})}  \cdot \hth (x+S_{lr}) \le \hth  (x), \quad \forall \, x\ge 0,
\end{equation*} where $S_{lk}\colonequals \sum_{l\le i\le k} \eta_i$ for each $k\ge l$.
Since  $\e_{l,\dots,r}\prod_{k:l\le k\le r}\frac{g_k(x+S_{lk})}{\e_{k}
    g_k(x+S_{lk})} =1$, the preceding inequality is equivalent to \begin{equation*}
        \e_{l,\dots,r}\prod_{k:l\le k\le r}\frac{g_k(x+S_{lk})}{\e_{k}
    g_k(x+S_{lk})}  \cdot (\hth (x+S_{lr})-\hth(x)) \le 0, \quad \forall \, x\ge 0.
    \end{equation*}
By \eqref{eq: recursive g}, we have $\e_{k+1}g_{k+1}(x+S_{l,k+1}) = (g_k(x+S_{lk}))^{\zeta_k/\zeta_{k-1}}$ for all $k$ and $x\in \mathbb R$.
Telescoping the preceding display and multiplying out the deterministic quantity $\e_l g_l(x+S_{ll})>0$ yields another equivalent inequality \begin{equation*}
     \e_{l,\dots,r}\prod_{k: l\le k\le r-1} (g_k(x+S_{lk}))^{-m_k}  \cdot  g_r(x+S_{lr}) \cdot (\hth (x+S_{lr}) -\hth(x))  \le  0, \quad \forall \, x\ge 0,
\end{equation*} where $m_k\colonequals   \zeta_k \zeta_{k-1}^{-1}  -1 \ge 0$.
Further multiplying $\ch(x)>0$ to both sides and using the formula $\sh(a)\ch(b)-\ch(a)\sh(b)=\sh(a-b)$, our target inequality becomes \begin{equation}\label{eq: target inequality}
    \e_{l,\dots,r}\prod_{k: l\le k\le r-1} (g_k(x+S_{lk}))^{-m_k}  \cdot  g_r(x+S_{lr}) \cdot (\ch(x+S_{lr}))^{-1} \cdot \sh (S_{lr})   \le  0, \quad \forall \, x\ge 0.
\end{equation}
Notice that the functions  $(g_k^{-m_k})_{l\le k\le r-1}$ and $g_r\cdot  \ch^{-1}= \ch^{-1+\zeta_{r-1}}$ are positive, even, and non-increasing on $[0,\infty)$ by Lemma~\ref{lem: properties of g_l}.
Consequently,  Lemma~\ref{lem: generalization} can be applied to validate \eqref{eq: target inequality}, which completes the proof.
\end{proof}

We will apply the following lemma to the function $\kappa(x)=\hth^2(x)$.
\begin{lemma}\label{lem: weight reduction lemma: 2}
    Let $1\le \ell\le r-1$ and $s\in \mathscr S$.
     For any measurable function $\kappa:\mathbb R\to [0,\infty)$ that is bounded, even, and non-decreasing on $[0,\infty)$, we have \[\e_h\e_{1,\dots, \ell} W^s_1\dots W^s_\ell \kappa(Y^s_\ell) \ge \e_h\e_1 \kappa( Y^s_1).\]
\end{lemma}
\begin{proof}
    Throughout the proof, we adopt the same notational convention from the proof of Lemma~\ref{lem: weight reduction lemma}, in particular dropping the notation for species $s$.
    We first claim that, for any function $\kappa$ satisfying the assumption of the lemma, conditional on the external field $h$, we have \[ \e_{1,\dots, \ell} W_1\dots W_\ell \kappa( Y_\ell) \ge \e_{1,\dots, \ell} \kappa(Y_\ell).\]
    We use induction on $\ell$.
    From Lemma~\ref{lem: weight W} and Lemma~\ref{lem: properties of g_l}, the base case $\ell=1$ follows from 
    \[ \e_1 g_1(Y_1) \kappa(Y_1) = \e_1 g_1(|Y_1|) \kappa(|Y_1|) \ge \e_1 g_1(|Y_1|) \cdot \e_1 \kappa(|Y_1|)= \e_1 g_1(Y_1)\cdot  \e_1 \kappa(Y_1), \] where the inequality is due to the FKG inequality for the functions $g_1$ and $\kappa$ that are non-decreasing on $[0,\infty)$, thanks to Lemma~\ref{lem: properties of g_l}.
    Suppose the claim holds for some $\ell\ge 1$ and consider the case for $\ell+1$.
    Again by the FKG inequality as in the base case $\ell=1$, we have $\e_{\ell+1}W_{\ell+1}\kappa(Y_{\ell+1})\ge \e_{\ell+1}\kappa(Y_{\ell+1})$ and hence \begin{equation}\label{eq: claim: weight reduction}
        \e_{1,\dots, \ell+1} W_1\dots W_{\ell+1} \kappa( Y_{\ell+1})  \ge \e_{1,\dots, \ell} W_1\dots W_{\ell} \e_{\ell+1}\kappa( Y_{\ell+1}).
    \end{equation}
    Notice that $\e_{\ell+1}\kappa(Y_{\ell+1})$ is $Y_\ell$-measurable and can be written as $\tilde \kappa(Y_\ell)$ for some measurable function $\tilde k$, that is, $\tilde \kappa(x) =\e_{\ell+1}\kappa(x+\eta_{\ell+1})$ for $x\in\mathbb R.$
    By Lemma~\ref{lem: auxiliary lemma: 3}, $\tilde \kappa$ is non-decreasing on $[0,\infty)$, and it is clearly bounded, non-negative, and even by the evenness of $\kappa$ and the symmetry of the random variable $\eta_{\ell+1}$.
    Consequently, we can apply the induction hypothesis to the function $\tilde \kappa$ to obtain \[ \e_{1,\dots, \ell} W_1\dots W_{\ell} \tilde \kappa(Y_\ell) \ge \e_{1,\dots,\ell}\tilde \kappa(Y_{\ell})=\e_{1,\dots,\ell+1} \kappa(Y_{\ell+1}), \] which together with \eqref{eq: claim: weight reduction} finishes the induction and the proof of the claim.

    Our next claim is that, for any function $k$ satisfying the assumption of the lemma, we have \[\e_h \e_{1,\dots \ell}\kappa(Y_\ell)\ge \e_h\e_1 \kappa(Y_1).\] 
    Indeed, since the external field $h$ is centered Gaussian, we have that $Y_\ell$ and $Y_1$ are both centered Gaussians unconditionally, where $\e Y_\ell^2 \ge \e Y_1^2$.
    Therefore, we can write $Y_\ell\stackrel{d}{=}\alpha Y_1$ for some constant $\alpha\ge 1$, and we have $\e \kappa(Y_\ell)= \e \kappa(\alpha Y_1)=\e \kappa(\alpha |Y_1|) \ge \e \kappa(|Y_1|) =\e \kappa(Y_1) $, where the inequality holds because $k$ is non-decreasing on $[0,\infty)$.
    This proves the claim.\footnote{More generally, this claim holds for an external field with a symmetric non-increasing density.}
    Combining the previous two claims completes the proof.
\end{proof}


\subsection{Proof of Proposition~\ref{prop: min and max of support}}

We begin by stating an extension of a standard result regarding the support of a Parisi measure, whose proof will be kept in  Appendix~\ref{sec: Extension of Parisi functional}.
\begin{proposition}[Extension of \cite{AC15_PTRF}, Theorem 5 or  \cite{Tal06}, Proposition 3.2]\label{prop: support of Parisi measure}
  If $\Delta^2$ is positive-definite and $\mu\in \mathcal M^\uparrow$ is a Parisi measure, then \[q=\e_h\Psi_\mu(h,  q) \quad \text{for all} \quad q\in\operatorname{supp}(\mu). \] 
\end{proposition}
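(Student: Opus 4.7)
The plan is to derive the identity $q=\e_h\Psi_\mu(h,q)$ as the first-order optimality condition for the extended Parisi functional at the minimizer $\mu$, adapting the Auffinger--Chen / Talagrand single-species argument to the multi-species setting. The two main ingredients are a careful Gaussian integration-by-parts through the inductive tower \eqref{eq: inductive definition}--\eqref{eq: terminal X}, and the martingale identity for the weights $W_l^s$ furnished by Lemma~\ref{lem: weight W}.

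I would first reduce to discrete Parisi measures $\mu\in\mathcal{M}^{\uparrow}_d$. Using the Lipschitz continuity of the extended Parisi functional established in Appendix~\ref{sec: Extension of Parisi functional}, any $\mu\in\mathcal{M}^{\uparrow}$ is a weak limit of discrete near-minimizers $\mu_n\in\mathcal{M}^{\uparrow}_d$, and one can arrange the approximation so that each $q\in\operatorname{supp}(\mu)$ is the limit of a sequence of atoms of $\mu_n$ (by refining grids that contain $q$). Once the identity is established for $\mu_n$ with a vanishing error, joint continuity of $\Psi_\mu(h,q)$ in $\mu$ and $q$ yields the result for $\mu$.

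For discrete $\mu$ parametrized by $0=\zeta_{-1}\le \zeta_0<\cdots<\zeta_{r-1}\le \zeta_r=1$ and $\0=q_0\lee q_1\lee\cdots\lee q_r=\1$, the functional $\mathscr{P}((q_l),(\zeta_l))$ is smooth in the atom locations. The heart of the argument is the gradient computation
\begin{equation*}
\nabla_{q_l}\mathscr{P} \;=\; (\zeta_l-\zeta_{l-1})\,\Lambda\Delta^2\Lambda\,\bigl[q_l-\e_h\Psi_\mu(h,q_l)\bigr], \qquad 1\le l\le r-1,
\end{equation*}
obtained by chain-ruling through the inductive tower \eqref{eq: inductive definition}--\eqref{eq: terminal X} and applying Gaussian IBP in the variables $z_{l+1},\ldots,z_r$. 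Lemma~\ref{lem: weight W}, which represents the weights $W_l^s$ as Radon--Nikodym factors $\widetilde T_l\ch Y_r^s/\e_l\widetilde T_l\ch Y_r^s$, is essential to collapse the resulting nested expectations into the compact form \eqref{eq: Psi for discrete measure} for $\Psi_\mu(h,q_l)$. Any interior atom $q_l$ (with $1\le l\le r-1$) of positive mass admits a free small perturbation in every direction preserving the total ordering $q_{l-1}\lee q_l\lee q_{l+1}$, so minimality of $\mu$ forces $\nabla_{q_l}\mathscr{P}=\0$. The positive-definiteness of $\Delta^2$ gives injectivity of $\Lambda\Delta^2\Lambda$, and the identity $q_l=\e_h\Psi_\mu(h,q_l)$ follows.

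The main obstacle is the Gaussian IBP bookkeeping: since the derivative with respect to $q_l$ propagates through every level of the inductive definition of $X_l^s$, a clean identification of the terminal expression as $\e_h\Psi_\mu(h,q_l)$ requires a careful inductive argument backed by the martingale identity of Lemma~\ref{lem: weight W} and the auxiliary symmetric-random-variable estimates of Lemmas~\ref{lem: auxiliary lemma: 0}--\ref{lem: auxiliary lemma: 2}. A secondary difficulty is handling atoms $q$ on the Euclidean boundary of $[0,1]^{\mathscr{S}}$ (in particular $\0$ or $\1$, if they lie in $\operatorname{supp}(\mu)$), where admissible perturbations are only one-sided; I would address these by either inserting auxiliary zero-mass atoms just inside and applying the interior argument, or by pushing the identity from interior atoms to boundary atoms through the refining-discretization argument using continuity of $q\mapsto \e_h\Psi_\mu(h,q)$.
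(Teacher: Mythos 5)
Your core computation is the right one---the gradient formula $\nabla_{q_l}\mathscr P=(\zeta_l-\zeta_{l-1})\Lambda\Delta^2\Lambda\,(q_l-\e_h\Psi_\mu(h,q_l))$ is exactly \eqref{eq: derivative of MSK Parisi functional} combined with \eqref{eq: Psi for discrete measure}, obtained via the RPC representation and Gaussian IBP (Lemma~\ref{eq: classical calculation for derivative of X^s_0}); incidentally, Lemmas~\ref{lem: auxiliary lemma: 0}--\ref{lem: auxiliary lemma: 2} play no role here, they are used only for Proposition~\ref{prop: min and max of support}. The gap is in the variational scheme built on top of this formula. First, your claim that an interior atom ``admits a free small perturbation in every direction preserving the total ordering'' is false in general: the constraint is $q_{l-1}\lee q_l\lee q_{l+1}$ with non-strict inequalities, so whenever adjacent atoms agree in some coordinate (which is allowed for measures in $\mathcal M^\uparrow_d$), the admissible perturbations in that coordinate are one-sided, and minimality only yields directional inequalities, not $\nabla_{q_l}\mathscr P=\0$; your boundary fix addresses only $\0$ and $\1$, not this ordering constraint, and inserting zero-mass atoms gives no information because the gradient carries the factor $\zeta_l-\zeta_{l-1}$. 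Second, the reduction step is not justified as stated: a general Parisi measure need not be discrete, an exact minimizer over $\mathcal M^\uparrow_d$ need not exist, and near-minimality alone does not bound the first derivative---you would need a quantitative stationarity argument (e.g.\ a second-derivative/Lipschitz-gradient bound over a perturbation range) together with control of the mass of the atom approximating a given $q\in\operatorname{supp}(\mu)$ relative to the Wasserstein approximation error, since the conclusion degrades like $\sqrt{\eps_n}/(\zeta_l-\zeta_{l-1})$. None of this is supplied, and it is where the real work lies.

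The paper avoids both obstructions by a different variational argument. It first proves a directional-derivative formula valid for \emph{every} $\mu\in\mathcal M^\uparrow$ (Proposition~\ref{prop: derivative of push forward of Parisi functional}): for an admissible map $q\mapsto q+ta(q)$, $\frac{d}{dt}\mathscr P(\mu_t)|_{t=0^+}=\int a(q)^\intercal\Lambda\Delta^2\Lambda(q-\e_h\Psi_\mu(h,q))\,d\mu(q)$, where only the \emph{formula} (not the optimality argument) is checked on discrete measures and extended. It then applies first-order optimality at the true minimizer $\mu$ along the single direction $a(q)=\e_h\Psi_\mu(h,q)-q$, whose admissibility (order preservation and staying in $[0,1]^{\mathscr S}$) is guaranteed by the monotonicity and $1$-Lipschitz properties of $\Psi$ from Lemma~\ref{lem: properties of Psi}; the derivative then equals $-\int (q-\e_h\Psi_\mu)^\intercal\Lambda\Delta^2\Lambda(q-\e_h\Psi_\mu)\,d\mu\le 0$, and minimality plus positive-definiteness of $\Delta^2$ force the integrand to vanish on $\operatorname{supp}(\mu)$. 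If you want to salvage your atom-wise scheme, you would have to import precisely this monotonicity input to certify that the direction $\e_h\Psi_\mu(h,q_l)-q_l$ is admissible at each atom, at which point you are essentially reproducing the paper's argument with extra approximation overhead.
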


    Here, the vector-valued map $\Psi_{\mu}$ is defined as follows.
Consider a modification of  \cite[Equation (18)]{AC15_PTRF} by changing $M(\cdot)$ therein to $x+M(\cdot)$  to accommodate external field $x$. Denote this modification by $\Gamma_{\mu,\xi}(x,u)$, where $\mu$ is a probability measure supported  on $[0,1]$ and $\xi:\mathbb R \to\mathbb R$ is an analytic function defining the model therein, which we can choose for our purpose. 
For each $s\in\mathscr S$, set $\xi_s(x)=2^{-1}(2\Delta^2\Lambda \1)_s x^2$ and define a map $S_s:\mathbb R\to \mathbb R$   by $S_s(x)=(2\Delta^2\Lambda \1)_s x$. 
We also denote the projection to the $s$'th coordinate by $\pi_s:\mathbb R^{\mathscr S}\to \mathbb R$.
   For any $\mu\in\mathcal M^\uparrow$,  we can now define the vector-valued map  \begin{equation}\label{eq: definition of vector valued map Psi} 
        \Psi_\mu(x,p) = \Bigl(\Gamma_{\mu\circ   (2\Delta^2\Lambda)^{-1} \circ \pi_s^{-1}\circ S_s, \; \xi_s}\Bigl(x_s, \frac{(2\Delta^2\Lambda p)_s}{(2\Delta^2\Lambda \1)_s}\Bigr)\Bigr)_{s \in \mathscr S}  \quad\text{for}\quad (x,p)\in   \mathbb R^{\mathscr S}\times[0, 1]^{\mathscr S},
    \end{equation}
    where for any measure $\nu$ and a suitable map $T$, $\nu\circ T^{-1}$ denotes the push-forward and $\nu\circ T$ denotes the pull-back of $\nu.$

Some remarks on \eqref{eq: definition of vector valued map Psi} are in order.
It is well-defined since $\Delta^2 \Lambda \1 \sgee \0$ whenever $\Delta^2$ is positive-definite, and can be viewed as a multi-dimensional generalization of \cite[Equation (3.28)]{Tal06}.
In fact, letting $\Psi_{\nu,V}$  be \cite[Equation (3.28)]{Tal06} for a probability measure $\nu$ supported on $[0,V]$ for some $V>0$, we have $\Psi_{\mu\circ (\xi')^{-1}, \xi'(1)} (x,\xi'(u))= \Gamma_{\mu,\xi}(x,u)$ for any $(x,u)\in \mathbb R\times [0,1]$, any probability measure $\mu$ supported on $[0,1]$, and any $\xi.$
Lastly, for a discrete measure $\mu \in \mathcal M^\uparrow_d$ corresponding to \eqref{eq: zeta sequence} and \eqref{eq: q sequence}, i.e., $\mu(q_l)= \zeta_l-\zeta_{l-1}$ for $0\le l\le r$, we can explicitly write \eqref{eq: definition of vector valued map Psi}.
Indeed, by comparing \eqref{eq: derivative of X^s_0 in Q^s}  with  \cite[Equation (3.27)]{Tal06}, for any $s \in \mathscr S$ and $1\le l\le r-1$, \begin{equation}\label{eq: Psi for discrete measure}
    \e_h\Psi_\mu (h, q_l )_s= \e_h\e_{1,\dots, l} W^s_1\dots  W^s_l \bigl(\e_{l+1,\dots,r} W^s_{l+1}\dots W^s_{r} \hth  Y^s_r \bigr)^2  .
\end{equation} 

The proof of Proposition~\ref{prop: min and max of support} is as follows.
    We can find a sequence $(\mu_n)_{n\ge 1}$ in $\mathcal M_d^\uparrow$ such that $\mu_n \stackrel{w}{\to}\mu$ as $n\to\infty$ while keeping $q^{\mu_n}_{\text{min}} =q^\mu_{\text{min}}$ and $q^{\mu_n}_{\text{max}} =q^\mu_{\text{max}}$ for all $n\ge 1.$    
    
    Let us begin by establishing the first two assertions of the proposition.
    Fix $n\ge 1$ and consider the sequences \eqref{eq: zeta sequence} and \eqref{eq: q sequence} associated to $\mu_n.$
    Fix $n\ge 1$ and consider the sequences \eqref{eq: zeta sequence} and \eqref{eq: q sequence} associated to $\mu_n.$
    Without loss of generality, we may assume  $\zeta_0=0$ and $\zeta_{r-1}=1$.
  Then,  \[q^\mu_{\text{min}}=q^{\mu_n}_{\text{min}}=q_1 \quad\text{and}\quad q^\mu_{\text{max}}=q^{\mu_n}_{\text{max}}=q_{r-1}.\]
    
We first argue that  \begin{equation}\label{eq: claim 1}
    \e\Psi_{\mu_n}(h,  q_{r-1})\gee \e \hth^2(h+z\sqrt{ 2 \Delta^2\Lambda q_{r-1}}).
\end{equation}
To this end, observe that by \eqref{eq: Psi for discrete measure}, the preceding display is equivalent to \begin{equation}\label{eq: surrogate claim 1}
    \e_h\e_{1,\dots, l} W^s_1\dots  W^s_{r-1} (\e_{r} W^s_{r} \hth  Y^s_r )^2  \ge  \e \hth^2(h_s+z\sqrt{ 2 (\Delta^2\Lambda q_{r-1})_s}),\quad  \forall\,  s\in\mathscr S.
\end{equation}
Since $\zeta_{r-1}=1$,  we have $W^s_r= \ch Y^s_r / \e_r \ch Y^s_r$ and \[ \e_r W^s_r \hth(Y^s_r) = \frac{\e_{r}\sh Y^s_{r}}{\e_{r}\ch Y^s_{r}}  = \hth (Y^s_{r-1}),\] where the last equality follows from the elementary calculations $\e \sh (a+bz) =\sh(a)\exp(b^2/2) $ and $\e \sh (a+bz) =\sh(a)\exp(b^2/2) $ for any $a,b\in\mathbb R$ and standard normal $z$.
Then, \eqref{eq: surrogate claim 1} readily follows from Lemma~\ref{lem: weight reduction lemma: 2} for  $\kappa(x)=\hth^2(x)$ and $l=r-1$.
 On the other hand,   by \cite[Propsotision 1-(ii)]{AC15_PTRF}, $\lim_{n\to\infty }\Psi_{\mu_n} =\Psi_\mu$ uniformly.
  Then, Proposition~\ref{prop: support of Parisi measure} and \eqref{eq: claim 1} imply \begin{equation*}
      q^\mu_{\text{max}} = \e \Psi_\mu (h,  q^\mu_{\text{max}}) = \lim_{n\to\infty} \e \Psi_{\mu_n} (h,  q^\mu_{\text{max}}) \gee  \e \hth^2(h+z\sqrt{2\Delta^2 \Lambda q^\mu_{\text{max}}}).
  \end{equation*}
From this and Lemma~\ref{lem: monotone region lemma}, we deduce either $q^*= q^\mu_{\mathrm{max}}$ or $q^*\slee q^\mu_{\mathrm{max}}$.
    
    In a similar manner, we argue that \begin{equation}\label{eq: claim 2}
        \e\Psi_{\mu_n}(h,  q_1)\lee \e \hth^2(h+z\sqrt{2\Delta^2\Lambda q_1}).
    \end{equation}
    To this end, note that, for any $s\in\mathscr S$, we have  $W^s_1=1 $ since $\zeta_0=0$. 
    Combining this  with Lemma~\ref{lem: weight reduction lemma} for $l=2$, we obtain  \[ \e \Psi_{\mu_n}(h,q_1)_s = \e_h \e_1 (\e_{2,\dots, r} W^s_2\dots W^s_r \hth(Y^s_r))^2 \le \e_h\e_1 \hth^2(Y^s_1),\] which immediately validates \eqref{eq: claim 2}.
 Sending $n\to\infty$ in \eqref{eq: claim 2}, we have \[q^\mu_{\text{min}}=\e \Psi_{\mu}(h,q^\mu_{\text{min}}) =\lim_{n\to\infty} \e\Psi_{\mu_n}(h,q^\mu_{\text{min}}) \lee \e \hth^2(h+z (2\Delta^2\Lambda q^\mu_{\text{min}})^{1/2}).\] Lemma~\ref{lem: monotone region lemma} implies either $q^*=q^\mu_{\text{min}}$ or $q^*\sgee q^\mu_{\text{min}}$.
 The first two assertions have been verified.

 Finally, let us prove the last assertion of the proposition.
 Suppose $\tau^2\sgee \0.$
By Jensen's inequality, \[\e \Psi_{\mu_n}(h,q_1)_s = \e_h \e_1 (\e_{2,\dots, r} W^s_2\dots W^s_r \hth(Y^s_r))^2 \ge \e_h  (\e_1\e_{2,\dots, r} W^s_2\dots W^s_r \hth(Y^s_r))^2.\]
From the proof of \cite[Lemma 14.7.15]{Talagrand2013vol2}, there exists a constant $c=c(\Lambda,\Delta^2,\tau^2)>0$ depending only on the model parameters $(\Lambda,\Delta^2,\tau^2)$ such that (recall $W^s_1=1$) \[|\e_1\e_{2,\dots, r} W^s_2\dots W^s_r \hth(Y^s_r)| \ge c |\hth(h_s)|.\] 
Combining the last two displays, we arrive at, for any $n\ge 1$ and $s\in\mathscr S$, \[\e \Psi_{\mu_n}(h,q_1)_s \ge c \, \e_h \hth^2(h_s)>0, \] where the last inequality is due to $\tau^2_s>0.$
Invoking the uniform convergence $\Psi_{\mu_n} \to \Psi_\mu$ and Propposition~\ref{prop: support of Parisi measure}, we can send $n\to\infty$ in the last display and, since $c \, \e_h \hth^2(h_s)$ does not depend on $n$, we obtain the desired result $q^\mu_{\min}\sgee \0$.
The proof is now complete.



\section{Interpolation and the Aizenman--Sims--Starr scheme}\label{sec: interpolation and  Aizenman-Sims-Starr cavity computation}
In this section, we carry out the rest of Step 4 in the proof sketch, assuming that the free energy cost \eqref{eq: free energy cost: overview} stated in Step 3 is valid.
For  positive-semidefinite $\Delta^2$, $\tau^2\gee \0$, and $t\in [0,1]$, consider the interpolating system with variance profile $(t\Delta^2, \tau^2+ (1-t)2\Delta^2\Lambda q^*(\Delta^2,\tau^2))$, which corresponds to the Hamiltonian \begin{equation} \label{eq: interpolating Hamiltonian}
       H_{N,t}(\sigma)=   \frac{ \sqrt{t}}{\sqrt{N}}\sum_{1\le i,j\le N} J_{ij}  \sigma_i\sigma_j  
            + \sqrt{1-t}  \sum_{s\in\mathscr S}(2(\Delta^2\Lambda q^*(\Delta^2,\tau^2))_s)^{1/2} \sum_{i\in I_s}z_i\sigma_i
            + \sum_{1\le i\le N}h_{i} \sigma_i
\end{equation} and the minimum of the RS functional, which we later show in Lemma~\ref{lem: stability of q^*} to be 
              \begin{equation} \label{eq: interpolated approximating RS functional}
                  \mathsf{RS}^* (t) \colonequals \log 2 +  \sum_{s \in \mathscr S} \lambda_{s}\e\log \ch\bigl(z \sqrt{\tau^2_s+ 2(\Delta^2 \Lambda q^*(\Delta^2,\tau^2))_s } \bigr)+\frac{t}{2} B(\1 -q^*(\Delta^2,\tau^2)),
\end{equation}   where $B(x)\colonequals B(x,x)$ is the associated quadratic form of the bilinear form   \[B(x,y)\colonequals x^{\intercal} \Lambda\Delta^2\Lambda y \quad \text{for} \quad x,y\in [-1,1]^{\mathscr S}. \] 
Denote the region that is coordinatewise ordered relative  to $q^*(\Delta^2,\tau^2)$ by
\begin{equation}
    \mathsf T = \{u\in[0,\infty)^{\mathscr S}:   q^*(\Delta^2,\tau^2) \lee  u \text{ or }  q^*(\Delta^2,\tau^2) \gee  u\}.
     \label{eq: definition of T}
\end{equation}
The following proposition  supplies a  sufficient condition to ensure replica symmetric solutions.
\begin{proposition}\label{prop: sufficient condition for RS}
     Let  $\Delta^2$ be irreducible and positive-definite. 
     Let $\tau^2\sgee \0.$
        Suppose there exists some constant $C=C(\Lambda, \Delta^2,\tau^2)>0$ such that  for any $u\in \mathsf T $, $t\in [0,1]$, \[ \frac{1}{N} \e \log \sum_{R_{12}=u} \exp ( H_{N,t}(\sigma^1) +H_{N,t}(\sigma^2) ) \le 2\mathsf{RS}^*(t) - C B(u-q^*) +o(1),\] where $o(1)\to 0$ as $N\to\infty$,  uniformly in $u$.
        Then, the model exhibits replica symmetric solution.
\end{proposition}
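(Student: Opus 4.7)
The plan is to implement Steps~2 and~4 of the overview, treating the hypothesis (Step~3 of the overview) as a black box. The key driving identity is a Gaussian integration-by-parts calculation on the interpolated Hamiltonian~\eqref{eq: interpolating Hamiltonian}. Using $\e J_{ij}^2 = \Delta^2_{s(i)s(j)}$ for the coupling term and the variance $2\Delta^2\Lambda q^*$ of the added Gaussian field for the cavity term, the cross terms combine cleanly to
\begin{equation*}
\frac{d}{dt}\bigl(\mathsf{RS}^*(t) - \e F_{N}(t)\bigr) \;=\; \frac{1}{2}\e \lla B(R_{12}-q^*)\rra_{N,t} + o(1), \quad t\in[0,1].
\end{equation*}
Since at $t=0$ the Hamiltonian is a sum of independent site terms, $\mathsf{RS}^*(0) = \lim_{N\to\infty}\e F_N(0)$. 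By Gr\"onwall's inequality, the RS solution at $t=1$ will follow once the quantity $\e \lla B(R_{12}-q^*)\rra_{N,t}$ is bounded by $\mathsf{RS}^*(t) - \lim_{N\to\infty}\e F_N(t) + o(1)$ uniformly in $t\in[0,1]$.

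To establish the requisite overlap bound, fix $\eps>0$ and set $\delta(t) \colonequals (2/C)(\mathsf{RS}^*(t) - \lim_{N\to\infty}\e F_N(t) + 2\eps)>0$, where $C$ is the constant of the hypothesis. Gaussian concentration of both $2F_N(t)$ and of the constrained log-partition $N^{-1}\log\sum_{R_{12}=u}\exp(H_{N,t}(\sigma^1)+H_{N,t}(\sigma^2))$ around their expectations, combined with the hypothesis, gives that $\e G_{N,t}^{\otimes 2}(R_{12}=u)$ is exponentially small in $N$ for every fixed $u\in\mathsf{T}$ with $B(u-q^*)\ge \delta(t)$; summing over the at most $O(N^{|\mathscr S|})$ values taken by $R_{12}$ yields $\e G_{N,t}^{\otimes 2}(R_{12}\in \mathsf{T},\, B(R_{12}-q^*)\ge \delta(t)) \le e^{-\Omega(N)}$. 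To remove the constraint $R_{12}\in\mathsf{T}$, introduce the perturbative Hamiltonian $\hat H^{\mathrm{pert}}_{Nk,t}$ from \eqref{eq: hat pert interpolating Hamiltonian}, which agrees with $H_{N,t}$ up to a vanishing perturbation, so that the free-energy limits coincide and the preceding Gibbs bound persists for $\hat G^{\mathrm{pert}}_{Nk,t}$. By the ASS scheme, along a subsequence, the distribution $\e(\hat G^{\mathrm{pert}}_{Nk,t})^{\otimes 2}(R_{12}\in \cdot)$ converges weakly to some Parisi measure $\mu\in\mathcal M^\uparrow$. The previous Gibbs bound then gives
\begin{equation*}
\operatorname{supp}(\mu)\cap \operatorname{int}\bigl(\mathsf{T}\setminus \operatorname{cl}(\mathsf{B}(q^*,\sqrt{\delta(t)}))\bigr) = \emptyset,
\end{equation*}
while Proposition~\ref{prop: min and max of support} gives $\{q^\mu_{\min}, q^\mu_{\max}\}\subseteq \{q^*\}\cup \operatorname{int}(\mathsf{T})$. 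Combining these two facts with the total ordering of $\operatorname{supp}(\mu)$ and then enlarging the $B$-ball as in Figure~\ref{fig: two ellipses: overview} yields $\operatorname{supp}(\mu)\subseteq \operatorname{cl}(\mathsf{B}(q^*,C'\sqrt{\delta(t)}))$ for a constant $C'=C'(\Lambda,\Delta^2)>1$ controlled by the ellipse aspect ratio of $B(\cdot)$; the latter is bounded using that the Perron eigenvector of the irreducible matrix $\Lambda\Delta^2\Lambda$ has strictly positive entries.

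Integrating against $B(\cdot-q^*)$ and passing to the subsequential limit, we obtain $\e\lla B(R_{12}-q^*)\rra_{Nk,t} \le (C')^2 \delta(t) + o(1)$, first for $\hat G^{\mathrm{pert}}_{Nk,t}$ and then for $G_{N,t}$ itself via the matching of limits under the perturbation. Plugging this into the derivative identity of the first paragraph and applying Gr\"onwall's inequality on $[0,1]$ yields $0 \le \mathsf{RS}^*(1) - \lim_{N\to\infty}\e F_N(1) \le C_2 \eps$; sending $\eps \downarrow 0$ completes the proof. The main obstacle in this program is the middle step: calibrating the perturbation $\hat H^{\mathrm{pert}}_{Nk,t}$ so that it is simultaneously mild enough that the cost hypothesis and the free-energy limits are preserved, yet nontrivial enough that the ASS scheme identifies subsequential overlap distributions as Parisi measures; and, once this is in hand, combining Proposition~\ref{prop: min and max of support} with the geometric enlargement step to control the whole of $\operatorname{supp}(\mu)$ from information on only its extremal elements $q^\mu_{\min}$ and $q^\mu_{\max}$.
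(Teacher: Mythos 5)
Your architecture matches the paper's proof: the derivative identity via Gaussian integration by parts, the Markov/concentration step converting the cost hypothesis into smallness of $\e G^{\otimes 2}(R_{12}\in\mathsf{T},\,B(R_{12}-q^*)\ge\delta(t))$, the perturbation plus ASS scheme to identify subsequential overlap laws as Parisi measures, the use of Proposition~\ref{prop: min and max of support} with the geometric enlargement to drop the constraint $R_{12}\in\mathsf{T}$, and finally Gr\"onwall. The one step that does not hold as stated is the transfer of the overlap bound from $\hat G^{\mathrm{pert}}_{Nk,t}$ back to $G_{N,t}$ ``via the matching of limits under the perturbation.'' The quantities $\e\lla B(R_{12}-q^*)\rra_{N,t}$ and $\e\lla B(R_{12}-q^*)\rra^{\mathrm{hat,pert}}_{N,t}$ are, up to additive constants and $o(1)$, the $t$-derivatives of $2\e F_N(t)$ and $2\e\hat F^{\mathrm{pert}}_{N}(t)$ respectively; uniform convergence of the free energies to a common limit does not imply their derivatives share a limit, so matching of the free-energy limits alone does not justify the transfer. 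The repair is simple and is what the paper does: run the entire Gr\"onwall argument on $g_N(t)=\mathsf{RS}^*(t)-\e\hat F^{\mathrm{pert}}_{N,U}(t)$ (whose derivative is controlled along the subsequence $Nk$ via reverse Fatou), conclude $g(1)=0$, and only then invoke $\lim_N\e\hat F^{\mathrm{pert}}_{N,U}(1)=\lim_N\e F_N(1)$ to deduce the RS solution for the original model. With that restructuring, which stays entirely within your own framework, the argument is complete.
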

From the given free energy cost, under the interpolated Gibbs measure, if $R_{12}$ is in $\mathsf{T}$, then  it must  be close to $q^*.$
Hence, the main task of the proof of this proposition is to remove the constraint $ \mathsf{T}$, for which we invoke ASS scheme as in the following subsection.


\subsection{The ASS scheme along the interpolation}
We first record an important feature of the Gaussian interpolation: the quantity $q^*(t\Delta^2, \tau^2+(1-t)2\Delta^2\Lambda q^*(\Delta^2,\tau^2))$ is constant for all $t\in[0,1]$.
Later in the proof of the main theorem, this will allow us to transfer the GT 1RSB bound established in Section~\ref{sec: Two-dimensional Guerra--Talagrand bound} for the variance profile $(\Delta^2,\tau^2)$ to the new profile $(t\Delta^2, \tau^2+(1-t)2\Delta^2\Lambda q^*(\Delta^2,\tau^2))$.
\begin{lemma}\label{lem: stability of q^*}
    For any irreducible $\Delta^2$ and  $t\in (0,1]$, we have  \[q^*(t\Delta^2, \tau^2+(1-t)2\Delta^2\Lambda q^*(\Delta^2,\tau^2))= q^*(\Delta^2,\tau^2).\]
    In particular, if we further assume that $\Delta^2$ is positive-semidefinite, then \eqref{eq: interpolated approximating RS functional} is precisely the minimum of the RS functional at any $t\in [0,1]$ along the interpolation.
\end{lemma}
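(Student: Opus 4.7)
The plan is to verify by direct substitution that $q^*=q^*(\Delta^2,\tau^2)$ is a fixed-point of \eqref{eq: fixed-point equation in RS} for the interpolated variance profile $(t\Delta^2,\tau^2+(1-t)2\Delta^2\Lambda q^*)$, and then to identify it as the \emph{maximal} such fixed-point via Proposition~\ref{prop: uniqueness of fixed-point}. For the first step, the new fixed-point equation reads
\begin{equation*}
    q=\e\hth^2\bigl(z\sqrt{\tau^2+(1-t)2\Delta^2\Lambda q^*+2t\Delta^2\Lambda q}\bigr),
\end{equation*}
and plugging in $q=q^*$ collapses the radicand to $\tau^2+2\Delta^2\Lambda q^*$, so that the right-hand side equals $q^*$ by the defining fixed-point equation for $q^*$.

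For the second step, observe that $t\Delta^2$ inherits irreducibility from $\Delta^2$ whenever $t>0$, so Proposition~\ref{prop: uniqueness of fixed-point} applies to the interpolated profile. If $\tau^2\ne\0$, then the new external field $\tau^2+(1-t)2\Delta^2\Lambda q^*\gee\tau^2$ is nonzero, and Proposition~\ref{prop: uniqueness of fixed-point}(i) yields uniqueness. If $\tau^2=\0$ and $q^*=\0$, then Proposition~\ref{prop: uniqueness of fixed-point}(ii) forces $\rho(\Delta^2\Lambda)\le 1/2$, hence $\rho(t\Delta^2\Lambda)=t\rho(\Delta^2\Lambda)\le 1/2$; the interpolated field is $\0$, and another application of Proposition~\ref{prop: uniqueness of fixed-point}(ii) identifies $\0=q^*$ as the unique fixed-point. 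If $\tau^2=\0$ and $q^*\sgee\0$, then for $t<1$ the new external field $(1-t)2\Delta^2\Lambda q^*$ is nonzero (by irreducibility of $\Delta^2$ combined with $q^*\sgee\0$), so Proposition~\ref{prop: uniqueness of fixed-point}(i) applies; the case $t=1$ is trivial since the profile is unchanged. In all cases $q^*$ is the maximal fixed-point of the new system.

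For the ``in particular'' statement, assume $\Delta^2$ is irreducible and positive-semidefinite; then so is $t\Delta^2$, and Lemma~\ref{lem: infimum of RS: semidefinite} applied to the interpolated profile certifies that $q^*(t\Delta^2,\tau^2+(1-t)2\Delta^2\Lambda q^*)$ is a minimizer of the associated RS functional $\mathsf{RS}_t$. By the first part, this minimizer coincides with $q^*$. Finally, substituting $q=q^*$ directly into the formula for $\mathsf{RS}_t$—the radicand in the $\log\ch$ term collapses exactly as above, and the quadratic term becomes $\tfrac{t}{2}(\1-q^*)^\intercal\Lambda\Delta^2\Lambda(\1-q^*)=\tfrac{t}{2}B(\1-q^*)$—reproduces \eqref{eq: interpolated approximating RS functional}, giving $\min_q\mathsf{RS}_t(q)=\mathsf{RS}_t(q^*)=\mathsf{RS}^*(t)$. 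The only subtle point is the case analysis in the second step; once the irreducibility of $t\Delta^2$ and the monotonicity of the effective external field along the interpolation are noted, the argument is purely bookkeeping.
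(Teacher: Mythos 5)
Your proposal is correct and follows essentially the same route as the paper: verify by direct substitution that $q^*(\Delta^2,\tau^2)$ solves the fixed-point equation for the interpolated profile, invoke Proposition~\ref{prop: uniqueness of fixed-point} to identify it as the maximal fixed-point (your case analysis is just a more explicit version of the paper's two-case argument), and then apply Lemma~\ref{lem: infimum of RS: semidefinite}. One small caveat: for the ``in particular'' part your claim that $t\Delta^2$ is irreducible fails at $t=0$, so Lemma~\ref{lem: infimum of RS: semidefinite} does not apply there; the paper handles $t=0$ separately by noting that the RS functional is then constant in $q$, so \eqref{eq: interpolated approximating RS functional} is trivially the minimum.
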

\begin{proof}
    Recall Definition \ref{def: definition of q^* for irreducible}.
    Denote the interpolated variance profile at $t\in (0,1]$ by \[(\tilde \Delta^2, \tilde \tau^2)  =  (t\Delta^2,\tau^2+(1-t)2\Delta^2 \Lambda q^*(\Delta^2,\tau^2)).\]
    We claim that $q^*(\tilde \Delta^2,\tilde \tau^2)=q^*(\Delta^2,\tau^2)$.
    Suppose $q^*(\Delta^2,\tau^2)=\0$. 
    By Proposition \ref{prop: uniqueness of fixed-point}, $\tau^2=\0$ and $\rho(\Delta^2\Lambda)\le 1/2$.
    In this case, $(\tilde \Delta^2, \tilde \tau^2)= (t\Delta^2, \0)$ and $\rho(\tilde \Delta^2 \Lambda) = t \rho(\Delta^2\Lambda)\le 1/2$, so by Proposition \ref{prop: uniqueness of fixed-point}, $q^* (\tilde \Delta^2,\tilde \tau^2)=\0$.
    Suppose $q^*(\Delta^2,\tau^2) \ne \0.$
    Then, the fixed-point equation for $(\tilde\Delta^2, \tilde\tau^2)$ reads
    \[ q= \e \hth^2\bigl( z \bigl( \tau^2+ (1-t)2\Delta^2\Lambda q^*(\Delta^2,\tau^2) +2  t\Delta^2\Lambda q\bigr)^{1/2}\bigr),\]
    in which  $q=q^*(\Delta^2,\tau^2)$ is an obvious nonzero solution.
    Then, Proposition \ref{prop: uniqueness of fixed-point} implies the desired claim.
    Finally,  for $t\in (0,1]$, the claim implies $ \tilde \tau^2 +2\tilde \Delta^2 \Lambda q^*(\tilde \Delta^2, \tilde \tau^2)  = \tau^2 +2 \Delta^2 \Lambda q^*(\Delta^2,  \tau^2) $, from which \eqref{eq: interpolated approximating RS functional} is easily seen to be the minimum by Lemma~\ref{lem: infimum of RS: semidefinite}.
    At $t=0,$ the RS functional \eqref{eq: RS functional} does not depend on $q$, so \eqref{eq: interpolated approximating RS functional} is  trivially the minimum.

\end{proof}

We now explain the ASS scheme.
First of all, since $q^*(t\Delta^2,\tau^2+(1-t)2\Delta^2\Lambda q^*(\Delta^2,\Lambda))$ is constant along the interpolation by Lemma~\ref{lem: stability of q^*}, we can simply denote it by $q^*$.
Since the Parisi formula depends continuously on the model parameters (see Appendix~\ref{sec: Proof of continuity of Parisi formula in model parameters}),  we may assume that  $\lambda$'s are rational: for each $s\in\mathscr S$, there exists $k_s \in \mathbb N$ such that $\lambda_s= k_s/k$ where \[k\colonequals \sum_{s\in\mathscr S}k_s.\]
For $N\in\mathbb N$, let $(\tilde h_{N,x}(\sigma))_{\sigma\in\{\pm 1\}^N}$ be the perturbing  Gaussian process  in \cite[Equation (27)]{Pan15}, but let us assume it involves a deterministic infinite array $x\in [1,2]^{\mathbb N\otimes \mathbb N}$ rather than $x$ having i.i.d. $\mathrm{Uniform}[1,2]$ entries.
Whenever the array $x$ consists of i.i.d. $\mathrm{Uniform}[1,2]$  random variables, we will denote it by $U$ instead of $x.$
Let $s_N=N^{\gamma}$ for some fixed $4^{-1} < \gamma <2^{-1}$.
For $(\Delta^2,\tau^2)$, $t\in[0,1]$, and deterministic $x\in [1,2]^{\mathbb N\otimes\mathbb N}$, define the modified Hamiltonian\footnote{The ``hat'' stands for the modification of the normalization by $k$ and ``pert'' stands for the perturbation $\tilde h_{N,x}$.}
\begin{align}
\hat H_{N,t,x}^{\mathrm{pert}} (\sigma)&= \frac{\sqrt{t}}{\sqrt{N+k}}\sum_{1\le i,j\le N}J_{ij} \sigma_i\sigma_j+\sqrt{1-t}  \sum_{s\in\mathscr S}(2(\Delta^2\Lambda q^*)_s)^{1/2} \sum_{i\in I_s}z_i\sigma_i+ \sum_{1\le i\le N}h_{i} \sigma_i  \nonumber
           \\&\quad +s_N \tilde  h_{N,x} (\sigma) , \label{eq: hat pert interpolating Hamiltonian}
    \end{align}    where $(z_i)_{1\le i\le N}$ are i.i.d. standard Gaussian, the variance profiles of $J$ and $h$ are given by \eqref{eq: variance of random variables}, and all the random variables are independent.
    We will keep the dependence on the array $x$ explicit in the subscript.
    Define the Gibbs measure associated to the modified Hamiltonian  by \[\hat G_{N,t,x}^{\mathrm{pert}}(\sigma) = \frac{\exp \hat H_{N,t,x}^{\mathrm{pert}}(\sigma)}{\sum_{\sigma\in\{\pm 1\}^N} \exp( \hat H_{N,t,x}^{\mathrm{pert}}(\sigma))}, \quad \sigma\in \{\pm 1\}^N,\]
and we denote the integration with respect to the measure $(\hat G_{N,t,x}^{\mathrm{pert}})^{\otimes \infty}$ on $(\{\pm 1\}^N)^{\otimes \infty}$ by $\lla \cdot \rra_{N,t,x}^{\mathrm{hat,pert}}$.
Let $D_{N,t,x}(f,n,w,p)$ denote \cite[Equation (31)]{Pan15} under the Gibbs measure $\hat G_{N,t,x}^{\mathrm{pert}}$.
Precisely, it is given by, for  any Borel measurable function $|f|\le 1$, $n\ge 2$, $w\in\mathscr W$, and $p\ge 1$   for some countable dense subset $\mathscr W\subseteq [0,1]^{\mathscr S}$,\begin{align*}
    D_{N,t,x}(f,n,w,p)=  \e \lla f(R^n) R_w (\sigma^1,\sigma^{n+1})^p\rra_{N,t,x}^{\mathrm{hat,pert}} &-\frac{1}{n}\e \lla f(R^n)\rra_{N,t,x}^{\mathrm{hat,pert}} \lla  R_w (\sigma^1,\sigma^{n+1})^p\rra_{N,t,x}^{\mathrm{hat,pert}}  
    \\&-\frac{1}{n}\sum_{\ell =2}^n \e \lla f(R^n) R_w (\sigma^1,\sigma^{\ell})^p\rra_{N,t,x}^{\mathrm{hat,pert}}  ,
\end{align*} 
where $R^n\colonequals (R(\sigma^\ell,\sigma^{\ell'})_s)_{s\in\mathscr S, 1\le \ell,\ell'\le n}$ and $R_w(\sigma^1,\sigma^2)  \colonequals \sum_{s\in\mathscr S}\lambda_s w_s R(\sigma^1,\sigma^2)_s$.
For each $n\ge 1,$ define $\mathcal P_n$ to be the countable set of all multivariate  monic polynomials on $[-1,1]^{|\mathscr S|\times n^2}$.
As in \cite[Lemma~3.3]{Pan13_SK_book}, for sufficiently large $N\ge 1$, there exist Borel sets $A_{N,t}\subseteq [1,2]^{\mathbb N \otimes \mathbb N}$ such that for any   $n \ge 2$, $f\in\mathcal P_n$,  $w \in \mathscr W$, and $p \ge 1$,  we have $\lim_{N\to\infty }\p_{U}(A_{N,t}) =1$ and
\begin{align}\label{eq: choosing nonrandom x}
    \lim_{N\to\infty}  D_{N,t,x_{N,t}}(f,n,w,p)&=0 \text{ along any sequence $\{x_{N,t}\}_{N\geq 1}$ with $x_{N,t}\in A_{N,t}$ for all $N\geq 1.$}  
\end{align} 
Consider any non-random sequence $(x_{N,t})_{N\ge 1}$ in \eqref{eq: choosing nonrandom x}.
By \cite[Theorem 3]{Pan15}, any subsequential limiting array  of the overlap array $(R(\sigma^\ell,\sigma^{\ell'})_s)_{s\in\mathscr S, \ell,\ell' \ge 1}$ sampled from $\e (\hat G_{N,t,x_{N,t}}^{\mathrm{pert}})^{\otimes \infty}$ satisfies the Ghirlanda--Guerra identities (GGI).\footnote{
We note here that the following technical approximation argument is used after taking the limit in \eqref{eq: choosing nonrandom x}; for any complete Borel probability measure on a compact Hausdorff metrizable space,  any bounded Borel measurable function is an almost sure limit of continuous functions (cf. \cite[Theorem 2.18, Corollary of Theorem 2.24]{Rud_RCA}), and any continuous function on $[-1,1]^d$ for any $d\ge 1$ is uniformly approximated by polynomials.}
Moreover, the ASS scheme in \cite[Section 5]{Pan15} implies \begin{equation}\label{eq: consequence of ASS scheme}
    \lim_{N\to\infty}  \e F_{N}(t) =\mathscr P_t(\mu) = \inf_{\nu\in \mathcal M^\uparrow} \mathscr P_t(\nu), \quad \forall \mu \in \mathcal N_t, 
\end{equation} where  $\mathscr P_t$ is the Parisi functional with the model parameters $(\Lambda,t\Delta^2, \tau^2+(1-t)2\Delta^2 \Lambda q^*(\Delta^2,\tau^2))$ and \[\mathcal N_t\colonequals \{\text{subsequential weak limits of the sequence } (\e( \hat G_{Nk,t,x_{N,t}}^{\mathrm{pert}})^{\otimes 2}(R_{12 }\in \cdot ))_{N\ge 1}\}.\]
Indeed, $\mathscr P_t(\mu)$ is well-defined for $\mu\in \mathcal N_t$ as follows.
\begin{lemma}\label{lem: Talagrand positivity}
For any $0\le t\le 1$, we have  $\mathcal N_t\subseteq \mathcal M^\uparrow.$    
\end{lemma}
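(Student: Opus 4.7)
The plan is to combine the standard Panchenko subsequential extraction with the multi-species synchronization theorem of \cite[Theorem 4]{Pan15}. Fix $t\in [0,1]$ and $\mu\in\mathcal N_t$; by definition there is some subsequence $(N_j)_{j\ge 1}$ along which $\e(\hat G_{N_jk,t,x_{N_j,t}}^{\mathrm{pert}})^{\otimes 2}(R_{12}\in\cdot)\stackrel{w}{\to}\mu$.

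First I would pass to a further subsequence, via a Cantor diagonal argument on the countable index set $\mathbb N\times\mathbb N$ together with the compactness of $[-1,1]^{\mathscr S}$ (and hence of the product space of overlap arrays), along which the joint law of the entire array $(R(\sigma^\ell,\sigma^{\ell'}))_{\ell,\ell'\ge 1}$ sampled from $\e(\hat G_{N_jk,t,x_{N_j,t}}^{\mathrm{pert}})^{\otimes \infty}$ converges weakly to some limit; its two-replica marginal is $\mu$.

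Next I would upgrade the convergence $D_{N,t,x_{N,t}}(f,n,w,p)\to 0$ guaranteed by \eqref{eq: choosing nonrandom x} from monic polynomials $f\in\mathcal P_n$ and rational directions $w\in\mathscr W$ to all bounded Borel functions of the overlap matrix and all $w\in[0,1]^{\mathscr S}$. This is a standard density chain: Stone--Weierstrass approximation of continuous functions on $[-1,1]^{|\mathscr S|\times n^2}$ by polynomials, continuity of the map $w\mapsto R_w$ for bounded continuous test functions, and approximation of bounded Borel functions by continuous ones in $L^1$ against the limiting overlap law. The upshot is the Ghirlanda--Guerra identities in the limit for every weighted overlap $R_w=\sum_{s\in\mathscr S}\lambda_s w_s R_s$ with $w\in[0,1]^{\mathscr S}$.

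Finally, this family of GGI is precisely the hypothesis of \cite[Theorem 4]{Pan15}, the multi-species synchronization theorem, which implies that in the limit the support of the vector-valued overlap distribution is totally ordered with respect to $\lee$ and, via the standard perturbation mechanism of \cite{Pan15}, contained in $[0,1]^{\mathscr S}$. Hence $\mu\in\mathcal M^\uparrow$. The only substantive step is the appeal to \cite[Theorem 4]{Pan15}; what I actually need to verify here is just that the perturbation in \eqref{eq: hat pert interpolating Hamiltonian} survives into the limit enough to yield GGI, which is exactly the purpose of the construction of the sets $A_{N,t}$ in \eqref{eq: choosing nonrandom x}, so I do not anticipate any further obstacle beyond these routine approximation steps.
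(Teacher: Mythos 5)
Your proposal is correct and follows essentially the same route as the paper: extract a subsequential limit of the full overlap array, use the choice of $x_{N,t}\in A_{N,t}$ in \eqref{eq: choosing nonrandom x} (upgraded by the standard polynomial/Borel density chain, which the paper handles in a footnote) to get the Ghirlanda--Guerra identities in the limit, then invoke Talagrand's positivity principle for the containment in $[0,1]^{\mathscr S}$ and the synchronization theorem \cite[Theorem 4]{Pan15} for the totally ordered support. The only cosmetic difference is that the paper names the positivity principle explicitly where you refer to it as the ``standard perturbation mechanism,'' but the content is identical.
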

\begin{proof}
    Any  subsequential weak limit of the sequence of array $(\e (\hat G_{Nk,t,x_{N,t}}^{\mathrm{pert}})^{\otimes \infty}(R(\sigma^\ell,\sigma^{\ell'})\in \cdot ))_{ \ell,\ell'\ge 1}$ satisfies the GGI by \eqref{eq: choosing nonrandom x}.
      By the GGI, Talagrand's positivity principle holds exactly in the limit and, therefore, all members of $\mathcal N_t$ is supported on $[0,1]^{\mathscr S}$.
    Moreover, by \cite[Theorem 4]{Pan15}, for any $\mu\in \cup_{0\le t\le 1}\mathcal N_t$, there exist Lipschitz and non-decreasing functions $L_s^\mu: [0,1]\to [0,1]$ for each species $s \in \mathscr S$  such that   \[\operatorname{supp}(\mu) \subseteq \{ (L_s^\mu(a))_{s \in \mathscr S}:0\le a\le 1\}.\]
    The right-hand side of the preceding display has a totally ordered trajectory.
    Therefore, $\operatorname{supp}(\mu)$ is totally ordered and the proof is complete.
\end{proof}

\subsection{Proof of Proposition~\ref{prop: sufficient condition for RS}}

We start by showing that the modification of the Hamiltonian cannot be detected in the limiting free energies.
For  $N\in\mathbb N$, $t\in[0,1]$, $x\in[1,2]^{\mathbb N\otimes\mathbb N}$, and $u\in [-1,1]^{\mathscr S}$, denote the free energies corresponding to the Hamiltonians \eqref{eq: interpolating Hamiltonian} and \eqref{eq: hat pert interpolating Hamiltonian},  respectively,  by $\e F_N(t)$ and $\e \hat F_{N,x}^{\mathrm{pert}}(t)$, 
and also the constrained free energies by
    \begin{align}
  \psi_{N }(t,u)&= \frac{1}{N} \e \log \sum_{R_{12 }= u} \exp(H_{N,t}(\sigma^1)+H_{N,t}(\sigma^2)), \label{eq: definition of constrained free energy}
  \\ \hat \psi_{N,x}^{\mathrm{pert}} (t,u)&= \frac{1}{N}  \e \log \sum_{R_{12 }= u} \exp(\hat H_{N,t,x}^{\mathrm{pert}}(\sigma^1)+\hat H_{N,t,x}^{\mathrm{pert}}(\sigma^2)). \nonumber
    \end{align}            

\begin{lemma}\label{lem: hat in free energies}
We have
 \[\limsup_{N\to\infty}\sup_{0\le t\le 1}\sup_{x\in [1,2]^{\mathbb N\otimes\mathbb N}} |  \e \hat F^{\mathrm{pert}}_{N,x}(t)- \e F_N(t)| = 0\]
 and \[\limsup_{N\to\infty}\sup_{u\in [-1,1]^{\mathscr S}} \sup_{0\le t \le 1} \sup_{x\in [1,2]^{\mathbb N\otimes\mathbb N}} |\hat \psi_{N,x}^{\mathrm{pert}}(t,u)- \psi_{N}(t,u)|=0.\]
\end{lemma}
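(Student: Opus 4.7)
The plan is to couple the two Hamiltonians so that they share the same disorder realization $(J_{ij}),(h_i),(z_i)$ and then control the free energy difference pointwise by the supremum over $\sigma$ of the Hamiltonian discrepancy. Write
\[
\Delta_x(\sigma) \colonequals \hat H^{\mathrm{pert}}_{N,t,x}(\sigma) - H_{N,t}(\sigma) = \sqrt{t}\Bigl(\frac{1}{\sqrt{N+k}} - \frac{1}{\sqrt{N}}\Bigr)\sum_{1\le i,j\le N} J_{ij}\sigma_i\sigma_j + s_N \tilde h_{N,x}(\sigma).
\]
For any two real-valued functions $H,\Delta$ on $\{\pm1\}^N$, the elementary bound $\bigl|\log \sum_\sigma e^{H(\sigma)+\Delta(\sigma)} - \log\sum_\sigma e^{H(\sigma)}\bigr| \le \max_\sigma |\Delta(\sigma)|$ (obtained by squeezing $\Delta$ between $\pm\max|\Delta|$) together with taking expectations yields
\[
|\e \hat F^{\mathrm{pert}}_{N,x}(t) - \e F_N(t)| \le \frac{1}{N}\,\e \max_{\sigma\in\{\pm1\}^N}|\Delta_x(\sigma)|.
\]

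I would then estimate the two summands of $\Delta_x$ separately using a standard Gaussian maximum inequality. The normalization coefficient satisfies $|\sqrt{t}(1/\sqrt{N+k} - 1/\sqrt{N})| = O(N^{-3/2})$ uniformly in $t\in[0,1]$, while the $\sigma$-indexed Gaussian process $\sum_{ij} J_{ij}\sigma_i\sigma_j$ has variance bounded by $(\max_{s,s'}\Delta^2_{ss'})N^2$ uniformly in $\sigma$. Since $|\{\pm 1\}^N|=2^N$, the Gaussian maximum inequality gives $\e \max_\sigma|\sum_{ij}J_{ij}\sigma_i\sigma_j| \le C N^{3/2}$, so this term contributes $O(1/N)$ after dividing by $N$. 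For the perturbation, the covariance of $\tilde h_{N,x}$ in \cite[Eq.~(27)]{Pan15} is weighted by an $\ell^2$-summable sequence in $p$, so $\e \tilde h_{N,x}(\sigma)^2$ is bounded uniformly in $\sigma$ and in $x\in[1,2]^{\mathbb N\otimes\mathbb N}$; hence $\e\max_\sigma|\tilde h_{N,x}(\sigma)| \le C\sqrt{N}$ and the contribution is $s_N\cdot C\sqrt{N}/N = C N^{\gamma-1/2}$, which vanishes because $\gamma<1/2$. Both estimates are uniform in $t$ and $x$, proving the first assertion.

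For the second assertion, the two-replica Hamiltonian discrepancy equals $\Delta_x(\sigma^1)+\Delta_x(\sigma^2)$, and the constraint $\{R_{12}=u\}$ does not affect the elementary pointwise inequality. Applying it to the restricted partition function gives
\[
|\hat\psi^{\mathrm{pert}}_{N,x}(t,u) - \psi_N(t,u)| \le \frac{2}{N}\,\e \max_\sigma |\Delta_x(\sigma)|,
\]
and the bound established above vanishes uniformly in $u\in[-1,1]^{\mathscr S}$, $t\in[0,1]$, and $x\in[1,2]^{\mathbb N\otimes\mathbb N}$.

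The calculation is essentially routine; the only point requiring attention is the uniform-in-$x$ boundedness of $\e\tilde h_{N,x}(\sigma)^2$, which follows directly from the explicit construction of $\tilde h_{N,x}$ in \cite{Pan15}, where the $p$-spin components are weighted so that the total variance is controlled by a convergent geometric series independent of $x\in[1,2]^{\mathbb N\otimes\mathbb N}$.
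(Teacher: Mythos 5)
Your proof is correct, but it takes a genuinely different route from the paper's. The paper splits the discrepancy into two pieces and controls each at the level of variances via Jensen-type (smart-path) bounds: the perturbation $s_N\tilde h_{N,x}$ is absorbed using \cite[Inequality (29)]{Pan15}, giving an error of order $s_N^2/N = N^{2\gamma-1}$, and the change of normalization from $\sqrt{N}$ to $\sqrt{N+k}$ is handled by the distributional identity $H^{\mathrm{pert}}_{N,t,x}\stackrel{d}{=}\hat H^{\mathrm{pert}}_{N,t,x}+y_{N,t}$ with an auxiliary independent Gaussian process $y_{N,t}$ of uniformly bounded variance, again squeezed between two Jensen bounds. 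You instead couple the two Hamiltonians on the same disorder and use the crude pathwise bound $|\log Z_1-\log Z_2|\le\max_\sigma|\Delta_x(\sigma)|$ followed by the Gaussian maximal inequality over the $2^N$ configurations. This costs an extra factor of order $\sqrt{N}$ relative to the variance-level argument (your perturbation error is $N^{\gamma-1/2}$ versus the paper's $N^{2\gamma-1}$), but both vanish under exactly the same condition $\gamma<1/2$, and the renormalization term comes out as $O(1/N)$ either way. Your approach is more elementary and self-contained (it does not invoke Panchenko's inequality or the distributional decomposition); the paper's is sharper in rate and reuses machinery already cited elsewhere. All the uniformity claims you need (in $t$, $x$, and $u$) do hold: the coefficient $\sqrt{t}(1/\sqrt{N+k}-1/\sqrt{N})$ is uniformly $O(N^{-3/2})$, and $\sup_{x,\sigma}\e\tilde h_{N,x}(\sigma)^2\le 4$ is exactly the bound the paper itself uses in the proof of Lemma~\ref{lem: cost of free energy implies unlikely}.
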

\begin{proof}
Define an intermediate Hamiltonian $ H_{N,t,x}^{\mathrm{pert}}(\sigma)=    H_{N,t} (\sigma) + s_N \tilde h_{N,x} (\sigma)$ and the corresponding free energies
\begin{align*}
        \phi_{N,x}^{\mathrm{pert}}(t) &= \frac{1}{N} \e \log \sum_{\sigma\in \{\pm 1\}^N} \exp(H_{N,t}^{\mathrm{pert}}(\sigma)), 
 \\\psi_{N,x }^{\mathrm{pert}} (t,u)&= \frac{1}{N}\e \log \sum_{R_{12 }= u} \exp(H_{N,t,x}^{\mathrm{pert}}(\sigma^1)+H_{N,t,x}^{\mathrm{pert}}(\sigma^2)).
\end{align*}
By \cite[Inequality (29)]{Pan15},  Jensen's inequality and $s_N=o(N^{1/2})$ imply \begin{align}
    \limsup_{N\to\infty}\sup_{0\le t \le 1} \sup_{x\in [1,2]^{\mathbb N\otimes\mathbb N}} |\phi_{N,x}^{\mathrm{pert}}(t) -\e F_{N}(t)|&=0, \label{eq: limiting perturbed free energy}
    \\\limsup_{N\to\infty}\sup_{0\le t \le 1}\sup_{u\in [-1,1]^{\mathscr S}}\sup_{x\in [1,2]^{\mathbb N\otimes\mathbb N}}|\psi_{N,x}^{\mathrm{pert}}(t,u) -\psi_{N}(t,u)| &= 0. \label{eq: limiting perturbed constrained free energy}
\end{align}
Fix $t\in [0,1]$. 
Define a mean-zero Gaussian process \[y_{N,t}(\sigma)= \frac{\sqrt{kt}}{\sqrt{N(N+k)}}\sum_{1\le i,j\le N}J'_{ij}\sigma_i\sigma_j, \quad \sigma\in\{\pm 1\}^N,\] where $J'$ is a copy of $J$, independent of everything else.
Then, as processes on $\{\pm 1\}^N$, we can write $(H_{N,t,x}^{\mathrm{pert}}(\sigma))_{\sigma \in \{\pm 1\}^N } \stackrel{d}{=} (\hat H_{N,t,x}^{\mathrm{pert}}(\sigma) +y_{N,t}(\sigma))_{\sigma \in \{\pm 1\}^N}$; see also \cite[Equation (60)]{Pan15}.
Note that $y_{N,t}$ is of a smaller order   \[\sup_{N,t,\sigma}\e y_{N,t}(\sigma)^2 <\infty.\] 
On the other hand, by Jensen's inequality, \[
    \e \log \sum_{\sigma} \exp H_{N,t,x}^{\mathrm{pert}}(\sigma) 
    \le \e \log \sum_{\sigma} \exp \hat H_{N,t,x}^{\mathrm{pert}}(\sigma) 
    \le  \e \log \sum_{\sigma} \exp H_{N,t,x}^{\mathrm{pert}}(\sigma) +   \sup_{N,t,\sigma}\e y_{N,t}(\sigma)^2.\]
The last two displays together imply \[\limsup_{N\to\infty} \sup_{0\le t\le 1}\sup_{x\in [1,2]^{\mathbb N\otimes\mathbb N}} | \e \hat F^{\mathrm{pert}}_{N,x}(t) - \phi_{N,x}^{\mathrm{pert}}(t)| =0.\]
   Combining this with \eqref{eq: limiting perturbed free energy}, we arrive at the first assertion of the lemma.
   
   The second assertion of the lemma can be verified similarly, using the same process $y_{N,t}$ and \eqref{eq: limiting perturbed constrained free energy}.
 
\end{proof}

Define $B_N$ and $\tilde B_N$ to be versions of $B$ in which  $\Lambda\Delta^2 \Lambda$ is replaced with $\Lambda_N \Delta^2 \Lambda_N$ and $\Lambda_N\Delta^2 \Lambda$, respectively.
The following uniform convergences are immediate from their definitions:
\begin{equation}\label{eq: B_N converge uniformly}
   \begin{aligned}
   &\lim_{N\to\infty} \sup_{x,y\in[-1,1]^{\mathscr S}}|B_N(x,y)-B(x,y)|\le \lim_{N\to\infty}\sum_{s,t\in\mathscr S}\Delta^2_{st}|\lambda_{s,N}\lambda_{t,N}-\lambda_s\lambda_t| = 0,   
   \\& \lim_{N\to\infty} \sup_{x,y\in[-1,1]^{\mathscr S}}|\tilde B_N(x,y)-B(x,y)|\le \lim_{N\to\infty}\sum_{s,t\in\mathscr S}\Delta^2_{st}|\lambda_{s,N}-\lambda_s|\lambda_t = 0.
   \end{aligned}
\end{equation}
Throughout the rest of the article, we use \( o(1) \) to denote a quantity that vanishes as \( N \to \infty \), but which may depend on other parameters.

We carry out two straightforward calculations about the interpolated free energies.
\begin{lemma}\label{lem: RS equals perturbed free energy when t=0}
    For any $x \in [1,2]^{\mathbb N\otimes \mathbb N}$, we have  \[\e \hat F_{N,x}^{\mathrm{pert}}(0)= \mathsf{RS}^*(0)  +o(1),\] where $o(1)$ is uniform in $x$.
    In particular, the same holds if we replace $x$ by uniform $U$.
\end{lemma}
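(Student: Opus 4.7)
The plan is to reduce to the unperturbed Hamiltonian $H_{N,0}$ via Lemma~\ref{lem: hat in free energies}, compute $\mathbb{E}F_N(0)$ exactly by decoupling sites, and then pass $N\to\infty$ using $\lambda_{s,N}\to\lambda_s$.

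First, by the uniform estimate already established in Lemma~\ref{lem: hat in free energies}, we have
\[
\sup_{x\in[1,2]^{\mathbb N\otimes\mathbb N}} \bigl|\mathbb{E}\hat F^{\mathrm{pert}}_{N,x}(0) - \mathbb{E}F_N(0)\bigr| = o(1),
\]
so it suffices to show $\mathbb{E}F_N(0) = \mathsf{RS}^*(0) + o(1)$ (with no $x$-dependence). At $t=0$, the interaction term $N^{-1/2}\sum J_{ij}\sigma_i\sigma_j$ in \eqref{eq: interpolating Hamiltonian} vanishes, so $H_{N,0}(\sigma)$ becomes a sum of single-site terms:
\[
H_{N,0}(\sigma) = \sum_{s\in\mathscr S}\bigl(2(\Delta^2\Lambda q^*)_s\bigr)^{1/2}\sum_{i\in I_s}z_i\sigma_i + \sum_{1\le i\le N}h_i\sigma_i.
\]

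Second, the partition function factorizes over sites: summing $\sigma_i\in\{\pm 1\}$ independently gives
\[
F_N(0) = \frac{1}{N}\sum_{s\in\mathscr S}\sum_{i\in I_s}\log\Bigl(2\ch\bigl( (2(\Delta^2\Lambda q^*)_s)^{1/2}z_i + h_i\bigr)\Bigr).
\]
For each $i\in I_s$, the random variable $(2(\Delta^2\Lambda q^*)_s)^{1/2}z_i + h_i$ is a centered Gaussian with variance $\tau_s^2 + 2(\Delta^2\Lambda q^*)_s$, so it is equal in distribution to $z\sqrt{\tau_s^2 + 2(\Delta^2\Lambda q^*)_s}$ for $z$ standard Gaussian. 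Taking expectations and using $|I_s|/N = \lambda_{s,N}$ yields
\[
\mathbb{E}F_N(0) = \log 2 + \sum_{s\in\mathscr S}\lambda_{s,N}\,\mathbb{E}\log\ch\bigl(z\sqrt{\tau_s^2 + 2(\Delta^2\Lambda q^*)_s}\bigr).
\]

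Third, sending $N\to\infty$ and using $\lambda_{s,N}\to\lambda_s$ together with the boundedness of each $s$-term (which is finite by $|\log\ch x|\le |x|$ and Gaussianity) gives
\[
\lim_{N\to\infty}\mathbb{E}F_N(0) = \log 2 + \sum_{s\in\mathscr S}\lambda_s\,\mathbb{E}\log\ch\bigl(z\sqrt{\tau_s^2 + 2(\Delta^2\Lambda q^*)_s}\bigr) = \mathsf{RS}^*(0),
\]
by the definition \eqref{eq: interpolated approximating RS functional} evaluated at $t=0$. Combining this with the first step finishes the proof, and since the $o(1)$ from Lemma~\ref{lem: hat in free energies} is uniform in $x$, so is the overall error. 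There is no real obstacle here — the lemma is essentially a routine compatibility check for the interpolation endpoint, and the final assertion about $U$ is immediate since the bound is $x$-uniform.
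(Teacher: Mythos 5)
Your proposal is correct and follows essentially the same route as the paper: reduce to the unperturbed endpoint via Lemma~\ref{lem: hat in free energies}, compute $\e F_N(0)$ exactly by factorizing the partition function over sites, and pass to the limit using $\lambda_{s,N}\to\lambda_s$. The only difference is the order of the two steps, which is immaterial.
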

\begin{proof}
     In the absence of the perturbing Hamiltonian $\tilde h_{N,U}$, we can explicitly calculate \begin{align*}
        \e F_{N}(0)&= \frac{1}{N} \e \log \sum_{\sigma} \exp\Bigl(\sum_{s\in\mathscr S}((2\Delta^2\Lambda q^*)_s)^{1/2} \sum_{i\in I_s}z_i\sigma_i
     +\sum_{1\le i\le N}h_{i} \sigma_i \Bigr)
     \\&= \log 2 + \frac{1}{N} \e \log \frac{1}{2^N}\sum_{\sigma} \prod_{1\le i\le N} \exp \Bigl(  \sigma_{i} \bigl(z_{i}   ((2\Delta^2\Lambda q^*)_{s(i)})^{1/2}
     +h_{i} \bigr) \Bigr)
     \\&= \log 2 + \frac{1}{N}\sum_{1\le i\le N} \e \log  \ch  \Bigl(  z_i  ((2\Delta^2\Lambda q^*)_{s(i)})^{1/2}
     +h_{i}  \Bigr)
     \\&= \log 2 + \sum_{s \in \mathscr S}\lambda_{s,N}  \e \log  \ch  \bigl(  z    ((2\Delta^2\Lambda q^*)_{s} +\tau^2_s)^{1/2}
      \bigr)
     \\&=   \mathsf{RS}^*(0) +o(1).
    \end{align*} 
    Combining the preceding display with Lemma~\ref{lem: hat in free energies}, we obtain \[\limsup_{N\to\infty}\sup_{x\in [1,2]^{\mathbb N\otimes\mathbb N}}|\mathsf{RS}^*(0)-\e \hat F_{N,x}^{\mathrm{pert}}(0)|=0.\]
\end{proof}

\begin{lemma}    \label{lem: derivative of interpolating free energy: hat version}
For $0< t< 1$ and $x\in [1,2]^{\mathbb N\otimes \mathbb N}$, \[2\frac{d}{dt} \e \hat F_{N,x}^{\mathrm{pert}}(t) = B(\1-q^*) -\e\biglla B(R_{12}-q^*)\bigrra_{N,t,x}^{\mathrm{hat,pert}}+o(1). \] Here, the vanishing quantity $o(1)$ is uniform in $t$ and $x.$
In particular, the same holds if we replace $x$ by uniform $U$.
\end{lemma}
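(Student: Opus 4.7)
The plan is to carry out the standard Gaussian interpolation/integration-by-parts computation, then rearrange algebraically. Differentiating through the $\log$, one obtains
\[
\frac{d}{dt}\e\hat F_{N,x}^{\mathrm{pert}}(t)=\frac{1}{N}\e\Biglla\frac{d}{dt}\hat H_{N,t,x}^{\mathrm{pert}}(\sigma)\Bigrra_{N,t,x}^{\mathrm{hat,pert}},
\]
since the perturbation $s_N\tilde h_{N,x}$ and the external field $\sum_i h_i\sigma_i$ carry no $t$-dependence. The $t$-derivative of \eqref{eq: hat pert interpolating Hamiltonian} therefore splits into the disorder part $(2\sqrt{t}\sqrt{N+k})^{-1}\sum_{i,j}J_{ij}\sigma_i\sigma_j$ and the cavity-field part $-(2\sqrt{1-t})^{-1}\sum_{s}(2(\Delta^2\Lambda q^*)_s)^{1/2}\sum_{i\in I_s}z_i\sigma_i$.

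For the disorder part, Gaussian IBP in each independent $J_{ij}$ (with $\e J_{ij}^2=\Delta^2_{s(i)s(j)}$) gives
\[
\e\bigllaJ_{ij}\sigma_i\sigma_j\bigrra=\Delta^2_{s(i)s(j)}\frac{\sqrt{t}}{\sqrt{N+k}}\bigl(1-\e\bigllaR_{12,s(i)}R_{12,s(j)}\cdot N^0\bigrra\bigr),
\]
where, after summing and recognizing $\sum_{i,j}\Delta^2_{s(i)s(j)}=N^2 B_N(\1)$ and $\sum_{i,j}\Delta^2_{s(i)s(j)}\sigma_i^1\sigma_j^1\sigma_i^2\sigma_j^2=N^2 B_N(R_{12})$, the contribution to $N^{-1}\frac{d}{dt}\e\hat F^{\mathrm{pert}}_{N,x}$ becomes $\tfrac{N}{2(N+k)}\bigl(B_N(\1)-\e\lla B_N(R_{12})\rra_{N,t,x}^{\mathrm{hat,pert}}\bigr)$. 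For the cavity-field part, IBP in the i.i.d.\ standard Gaussians $z_i$ yields $\e\lla z_i\sigma_i\rra=\sqrt{1-t}\,(2(\Delta^2\Lambda q^*)_{s(i)})^{1/2}(1-\e\lla R_{12,s(i)}\rra)$, and organizing by species one obtains the contribution $-\tilde B_N(\1-\e\lla R_{12}\rra_{N,t,x}^{\mathrm{hat,pert}},\,q^*)$. The factors $\sqrt{t}$ and $\sqrt{1-t}$ produced by IBP cancel exactly with the $(\sqrt{t})^{-1},(\sqrt{1-t})^{-1}$ coming from $\frac{d}{dt}$, so the resulting expression is in fact uniformly bounded for $t\in(0,1)$.

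Putting the two contributions together,
\[
2\frac{d}{dt}\e\hat F_{N,x}^{\mathrm{pert}}(t)=\frac{N}{N+k}\bigl(B_N(\1)-\e\bigllaB_N(R_{12})\bigrra\bigr)-2\tilde B_N\bigl(\1-\e\lla R_{12}\rra,\,q^*\bigr).
\]
The two error contributions, $\bigl|\tfrac{N}{N+k}-1\bigr|=O(1/N)$ and the uniform bounds $\sup|B_N-B|+\sup|\tilde B_N-B|\to 0$ from \eqref{eq: B_N converge uniformly}, are deterministic and independent of $t\in[0,1]$ and of the deterministic array $x$; since $R_{12}\in[-1,1]^{\mathscr S}$ and $q^*$ is a fixed vector, they indeed contribute an $o(1)$ that is uniform in $t$ and $x$. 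Replacing $B_N,\tilde B_N$ by $B$ and $\tfrac{N}{N+k}$ by $1$ therefore yields
\[
2\frac{d}{dt}\e\hat F_{N,x}^{\mathrm{pert}}(t)=B(\1)-\e\bigllaB(R_{12})\bigrra_{N,t,x}^{\mathrm{hat,pert}}-2B\bigl(\1-\e\bigllaR_{12}\bigrra_{N,t,x}^{\mathrm{hat,pert}},\,q^*\bigr)+o(1).
\]
A final algebraic rearrangement — using bilinearity, symmetry of $B$, and the identity $B(\1-q^*)-B(R_{12}-q^*)=B(\1)-B(R_{12})-2B(\1-R_{12},q^*)$ — recovers the stated formula. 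The case of random $U\in[1,2]^{\mathbb N\otimes\mathbb N}$ follows by taking the expectation in $U$, since the $o(1)$ bound is uniform in $x$. There is no serious obstacle here; the only point requiring care is the bookkeeping that shows the $t$-singularities in $\partial_t\hat H$ cancel after IBP, so that the error term is genuinely uniform on $(0,1)$.
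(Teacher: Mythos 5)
Your proposal is correct and follows essentially the same route as the paper: direct differentiation of the free energy, Gaussian integration by parts on the disorder and cavity-field terms (with the $\sqrt{t}$, $\sqrt{1-t}$ cancellations), identification of the resulting sums with $B_N$ and $\tilde B_N$, and the final rearrangement via $B(\1-q^*)-B(R_{12}-q^*)=B(\1)-B(R_{12})-2B(\1-R_{12},q^*)$ together with the uniform convergence in \eqref{eq: B_N converge uniformly}. The only cosmetic issue is that your displayed IBP identity for a single pair $(i,j)$ is stated loosely (the replica term is $\e\lla\sigma^1_i\sigma^1_j\sigma^2_i\sigma^2_j\rra$, with the overlaps appearing only after summation), but you acknowledge this and the summed computation is right.
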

\begin{proof}
 A direct differentiation yields \begin{align*}
        2N \frac{d}{dt}\e \hat F_{N,x}^{\mathrm{pert}} (t)&= \e \Biglla \frac{1}{\sqrt{(N+k) t}} \sum_{1\le i,j\le N} J_{ij}  \sigma_i\sigma_j \Bigrra_{N,t,x} ^{\mathrm{hat,pert}}
         \\&\quad - \e \Biglla   \frac{1}{\sqrt{1-t}}\sum_{1\le i\le N} z_{i} (2(\Delta^2\Lambda q^*)_{s(i)})^{1/2}\sigma_i \Bigrra_{N,t,x} ^{\mathrm{hat,pert}}.
    \end{align*}
    By Gaussian IBP, 
    \begin{align*}
    2 N \frac{d}{dt}\e \hat F_{N,x}^{\mathrm{pert}}(t) &=   \frac{1}{N+k}\e \Biglla\sum_{1\le i,j\le N}\bigl(1- \sigma^1_i\sigma^1_j \sigma^2_i\sigma^2_j\bigr) \Delta^2_{s(i)s(j)}\Bigrra_{N,t,x}^{\mathrm{hat,pert}}
    \\&\quad- 2\e \Biglla \sum_{1\le i\le N} (1- \sigma^1_i \sigma^2_i) (\Delta^2 \Lambda q^*)_{s(i)} \Bigrra_{N,t,x}^{\mathrm{hat,pert}}
    \\&=\frac{N^2}{N+k}\e \Biglla \sum_{s,s' \in \mathscr S} \lambda_{s,N} \Delta^2_{ss'} \lambda_{s',N} \bigl(1-(R_{12})_s (R_{12})_{s'}\bigr) \Bigrra_{N,t,x}^{\mathrm{hat,pert}}
    \\&\quad- 2N\e \Biglla \sum_{s \in \mathscr S} \lambda_{s,N} (\Delta^2\Lambda q^*)_{s} (1-(R_{12})_s)   \Bigrra_{N,t,x}^{\mathrm{hat,pert}}
\end{align*}
Rearranging, \eqref{eq: B_N converge uniformly} implies \begin{align*}
    2\frac{d}{dt} \e \hat F_{N,x}^{\mathrm{pert}}(t)  &=\e\biglla  B(\1) - B(R_{12}) -2 B(\1, q^*) +2 B(R_{12}, q^*)\bigrra_{N,t,x}^{\mathrm{hat,pert}}+o(1)
    \\&= \e\biglla  B(\1)-2 B(\1, q^*) +B(q^*) - B(R_{12})  +2 B(R_{12}, q^*) -B(q^*)\bigrra_{N,t,x}^{\mathrm{hat,pert}}+o(1)
    \\&= \e\biglla  B(\1-q^*) - B(R_{12}-q^*)\bigrra_{N,t,x}^{\mathrm{hat,pert}}+o(1)
    \\&= B(\1-q^*) -\e\biglla B(R_{12}-q^*)\bigrra_{N,t,x}^{\mathrm{hat,pert}} +o(1).
\end{align*}

\end{proof}

The following is a simple consequence of the concentration of Gaussian measure.
\begin{lemma}[Adaptation of Proposition 13.4.3, \cite{talagrand2004}]\label{lem: cost of free energy implies unlikely}
    Let $t\in[0,1]$, $u\in[-1,1]^{\mathscr S}$, $x\in [1,2]^{\mathbb N\otimes \mathbb N}$, and $N\in\mathbb N$.
    If $\hat \psi_{N,x}^{\mathrm{pert}}(t,u)\le 2\e \hat F_{N,x}^{\mathrm{pert}}(t) -\eps$ for some $\eps>0$, then \[\e (\hat G_{N,t,x}^{\mathrm{pert}})^{\otimes 2} (R_{12}=u) \le Ce^{-CN}\] for some constant $C=C(\eps,\Lambda, \Delta^2,\tau^2)>0$ independent of $(t,x,u)$ in particular.
\end{lemma}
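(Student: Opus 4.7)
Set
\[
A_N \;:=\; \tfrac{1}{N}\log \sum_{R_{12}=u}\exp\bigl(\hat H_{N,t,x}^{\mathrm{pert}}(\sigma^1)+\hat H_{N,t,x}^{\mathrm{pert}}(\sigma^2)\bigr),
\qquad
B_N \;:=\; \tfrac{1}{N}\log \sum_{\sigma}\exp \hat H_{N,t,x}^{\mathrm{pert}}(\sigma),
\]
so that $\e A_N = \hat\psi_{N,x}^{\mathrm{pert}}(t,u)$, $\e B_N = \e\hat F_{N,x}^{\mathrm{pert}}(t)$, and
\[
\e (\hat G_{N,t,x}^{\mathrm{pert}})^{\otimes 2}(R_{12}=u) \;=\; \e \exp\bigl(N(A_N-2B_N)\bigr).
\]
The hypothesis reads $\e(A_N-2B_N) \le -\eps$, while the deterministic bound $A_N - 2B_N \le 0$ holds because $\exp(N(A_N-2B_N))$ equals the probability $(\hat G_{N,t,x}^{\mathrm{pert}})^{\otimes 2}(R_{12}=u)\in[0,1]$. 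These two inequalities are the only structural facts I will use.

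The main ingredient is Gaussian concentration for $A_N - 2B_N$. Viewing $A_N$ and $B_N$ as functions of the underlying Gaussian variables $(J_{ij},z_i,h_i)$ together with the Gaussians defining $\tilde h_{N,x}$, a Gibbs-measure differentiation together with Cauchy--Schwarz yields $\sum_g(\partial_g B_N)^2 \le N^{-2}\sup_\sigma \e \hat H_{N,t,x}^{\mathrm{pert}}(\sigma)^2$, and analogously for $A_N$. Each of the first three summands of \eqref{eq: hat pert interpolating Hamiltonian} has per-site variance of order $1$, so their total variance is $O(N)$; the perturbation contributes only $s_N^2 \sup_\sigma \e\tilde h_{N,x}(\sigma)^2 = O(N^{2\gamma}) = o(N)$ precisely because $\gamma<1/2$. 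Hence there exists $C=C(\Lambda,\Delta^2,\tau^2)>0$, independent of $(t,x,u,N)$, such that the Lipschitz constants of both $A_N$ and $B_N$ are at most $C N^{-1/2}$. By the Borell--TIS inequality,
\[
\p\bigl(A_N-2B_N - \e(A_N-2B_N) > \tfrac{\eps}{2}\bigr) \;\le\; \exp(-cN\eps^2)
\]
for some $c = c(\Lambda,\Delta^2,\tau^2)>0$.

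Splitting the expectation along this concentration event finishes the argument. On the event $\{A_N - 2B_N - \e(A_N-2B_N)\le \eps/2\}$ the hypothesis gives $A_N - 2B_N \le -\eps/2$, hence the integrand $\exp(N(A_N-2B_N))$ is at most $e^{-N\eps/2}$; on the complementary event, whose probability is at most $\exp(-cN\eps^2)$, the deterministic upper bound $A_N - 2B_N\le 0$ bounds the integrand by $1$. Adding the two contributions yields
\[
\e\exp\bigl(N(A_N-2B_N)\bigr) \;\le\; e^{-N\eps/2} + \exp(-cN\eps^2) \;\le\; Ce^{-CN}
\]
for a suitable $C = C(\eps,\Lambda,\Delta^2,\tau^2)>0$. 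The only delicate step is the uniform (in $x$) Lipschitz bound in the second paragraph; the choice $\gamma<1/2$ in the definition of $s_N$ is tailored exactly to absorb the perturbation into the $O(N^{-1/2})$ Lipschitz scale. Beyond this routine but bookkeeping-heavy estimate, the proof is a direct application of Gaussian concentration, mirroring the cited Proposition 13.4.3.
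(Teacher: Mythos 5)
Your proof is correct and follows exactly the argument the paper invokes: it writes the Gibbs probability as $\exp(N(A_N-2B_N))$, applies Gaussian concentration to the two free energies with Lipschitz constant $O(N^{-1/2})$ (checking, as the paper does, that $N^{-1}\sup_{t,x,\sigma}\e \hat H^{\mathrm{pert}}_{N,t,x}(\sigma)^2$ is uniformly bounded since $s_N^2=o(N)$), and splits the expectation on the concentration event using the deterministic bound $A_N-2B_N\le 0$. This is precisely the proof of Talagrand's Proposition 13.4.3 together with the uniformity check that constitutes the paper's only added content, so no further comparison is needed.
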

\begin{proof}
    The proof is identical to the proof of \cite[Proposition 13.4.3]{talagrand2004}.
    The only nontriviality is  to check that  $C$ is independent of $x$ and $t$.
    To this end, from Gaussian concentration,  it suffices to show that  \[\limsup_{N\to\infty}\frac{1}{N} \sup_{t,x,\sigma} \e \hat H^{\mathrm{pert}}_{N,t,x}(\sigma)^2 <\infty.\]
    Since $\sup_{x\in [1,2]^{\mathbb N\otimes \mathbb N}}\e \tilde h_{N,x}(\sigma)^2\le 4$ and $s_N=o(N^{1/2})$, we have    
    \begin{align*}
    N^{-1}\e \hat H^{\mathrm{pert}}_{N,t,x}(\sigma)^2 &\le t  \sum_{s,s'\in\mathscr S } \lambda_{s,N}\Delta^2_{s,s'}\lambda_{s',N} +(1-t)  \sum_{s\in\mathscr S}(2 \Delta^2\Lambda q^*)_s\lambda_{s,N} + \sum_{s\in\mathscr S}\tau^2_s\lambda_{s,N}+4N^{-1}s_N^2
    \\&\le c+o(1)
\end{align*} 
for some $c=c(\Lambda, \Delta^2,\tau^2)>0$ and a vanishing quantity $o(1)$, both independent of $t, x,\sigma$.
This finishes the proof.
\end{proof}

For $r\ge 0$, define the axis-aligned bounding box of  any bounded set $S\subseteq \mathbb R^{\mathscr S}$ by \[\operatorname{Box}S= \bigl\{v\in \mathbb R^{\mathscr S}: \forall i,\; \inf_{x \in S}x_i\le  v_i\le \sup_{x\in S}x_i\bigr\}.\]
For a positive-definite $\Delta^2$, we define the ratio \begin{equation}\label{eq: geometric condition for Delta^2}
    R_{\Delta^2,\Lambda} = \inf\bigl\{ R\ge 1: \operatorname{Box} \{v\in \mathbb R^{\mathscr S}: B(v)\le 1 \} \subseteq \{v\in \mathbb R^{\mathscr S}: B(v)\le R\}\bigr\}. 
\end{equation}
In words, $R_{\Delta^2,\Lambda}$ is the scaling factor between two concentric ellipsoids (derived from the quadratic form associated to $\Delta^2$) such that the axis-aligned bounding box of the smaller ellipsoid just fits inside the larger one.  
We require $\Delta^2$ to be positive-definite to ensure that the ellipsoid is non-degenerate and hence $R_{\Delta^2,\Lambda}<\infty.$


We are now in a position to prove the main goal of this section.
\begin{proof}[\bf Proof of Proposition~\ref{prop: sufficient condition for RS}]
We divide the proof into two steps.

\vspace{0.5em}
\noindent {\bf Step 1.}
Our first step is to show that there exist constants $K=K( \Lambda,\Delta^2,\tau^2)>0$ such that for all $t\in [0,1]$, \begin{equation}\label{eq: step 1}
    \limsup_{N\to\infty}\e \lla B(R_{12}-q^*)\rra_{Nk,t,U}^{\mathrm{hat,pert}} \le K(\mathsf{RS}^*(t) - \lim_{N\to\infty}\e \hat  F_{Nk,U}^{\mathrm{pert}}(t) ).
\end{equation}
To this end, fix $\eps>0$.
By Lemma~\ref{lem: hat in free energies}, there exists an absolute constant $N_0>0$ such that for all $N\ge N_0$, 
\[\sup_{t\in[0,1]}\sup_{x\in [1,2]^{\mathbb N\otimes \mathbb N}}| \e \hat F_{N,U}^{\mathrm{pert}}(t) -\e \hat F_{N,x}^{\mathrm{pert}}(t)|\le \eps.\]
    If $B(u-q^*) \ge 2C^{-1}(\mathsf{RS}^*(t) - \e \hat  F_{N,U}^{\mathrm{pert}}(t) +2 \eps)$ for some $u\in \mathsf T$, $t\in[0,1]$, and $N\ge N_0$, then the assumption of the lemma, the preceding display, and Lemma~\ref{lem: hat in free energies} imply that, for some $o(1)$ quantity  uniform in $u$ and $x$ but may depend on $(t,\Lambda,\Delta^2,\tau^2)$, \[\hat \psi_{N,x}^{\mathrm{pert}}(t,u) \le 2 \e \hat  F_{N,x}^{\mathrm{pert}}(t)-2\eps +o(1), \quad \forall x\in [1,2]^{\mathbb N\otimes \mathbb N}. \] 
    Lemma~\ref{lem: cost of free energy implies unlikely} then yields, for all large $N\ge  N_1(t,\eps, \Lambda,\Delta^2,\tau^2) \in\mathbb N$, we have     \begin{equation}\label{eq: decay when a cost}
        \sup_{x\in [1,2]^{\mathbb N\otimes \mathbb N}}\e (\hat G_{N,t,x}^{\mathrm{pert}})^{\otimes 2} (R_{12}=u) \le C'e^{-C'N}
    \end{equation}
    for some constant $C'=C'(\eps, \Lambda, \Delta^2,\tau^2)>0$.
    Now, recall  the definition \eqref{eq: geometric condition for Delta^2} and define \begin{align*}
          \mathsf{E}_1&=\{u\in [-1,1]^{\mathscr S}: B(u-q^*)\ge 2C^{-1}(\mathsf{RS}^*(t) - 
    \lim_{N\to\infty}\e \hat  F_{N,U}^{\mathrm{pert}}(t) +3\eps)\},
    \\\mathsf{E}_2&=\{u\in [-1,1]^{\mathscr S}: B(u-q^*)\ge 2R_{\Delta^2,\Lambda}C^{-1}(\mathsf{RS}^*(t) - 
    \lim_{N\to\infty}\e \hat  F_{N,U}^{\mathrm{pert}}(t) + 4\eps)\}.    
      \end{align*}
    Note that $\mathsf{E}_1\supset \mathsf{E}_2$ and they depend on $t,\eps.$
    There exists $N_2=N_2(t,\eps, \Lambda,\Delta^2,\tau^2)\in\mathbb N$ such that For all $N\ge N_2$, we can estimate \begin{align}
    \sup_{x\in [1,2]^{\mathbb N\otimes \mathbb N}}\e (\hat G_{N,t,x}^{\mathrm{pert}})^{\otimes 2} ( R_{12}\in \mathsf{E}_1 \cap \mathsf T)  &\le \sum_{u\in \mathsf{E}_1 \cap \mathsf T} \sup_{x\in [1,2]^{\mathbb N\otimes \mathbb N}}\e (\hat G_{N,t,x}^{\mathrm{pert}})^{\otimes 2} (R_{12}=u)\nonumber
    \\&\le \sum_{u}\1_{ \{u \in \prod_{s\in\mathscr S}\{ k/ |I_s|: k\in\mathbb Z, \,  |k|\le  |I_s| \} \}} C'e^{-C'N}  \nonumber
    \\&\le (2N+1)^{|\mathscr S|} C'e^{-C'N}, \label{eq: far away but controllable}
    \end{align} where the second inequality is due to \eqref{eq: decay when a cost}.

    Next, we claim that  \begin{equation}\label{eq: far away and non-monotone}
        \forall t\in [0,1],\quad \limsup_{N\to\infty}  \e (\hat G_{Nk,t,U}^{\mathrm{pert}})^{\otimes 2} (R_{12}\in \mathsf{E}_2) =0.
    \end{equation}
Toward a contradiction, fix $t\in [0,1]$ and suppose there exists $\eta>0$ such that \[\limsup_{N\to\infty}  \e  (\hat G_{Nk,t,U}^{\mathrm{pert}})^{\otimes 2} (R_{12}\in \mathsf{E}_2 ) \ge \eta.\] 
By considering a subsequence if necessary, we may assume the limit superior is achieved as a limit.
Recall the event $A_{N,t}$ in \eqref{eq: choosing nonrandom x}.
Since $\lim_{N\to\infty}\p_U (A_{N,t})=1$, we have for sufficiently large $N$, \[ \e_U [ \1_{ \{U\in A_{N,t}\}} \e (\hat G^{\mathrm{pert}}_{Nk,t,U})^{\otimes2}(R_{12 }\in \mathsf{E}_2)]\ge \eta/2\] so that  there exists $x_{N,t}\in A_{N,t}$ for large enough $N$ such that \begin{equation}\label{eq: far away contradiction}
    \e (\hat G^{\mathrm{pert}}_{Nk,t,x_{N,t}})^{\otimes2}(R_{12 }\in \mathsf{E}_2)\ge \eta/2.
\end{equation}
On the other hand, by the compactness of the space of probability measures on $[-1,1]^{\mathscr S}$, there exists a subsequential weak  limit $\nu$ of the sequence of probability measures $(\e (\hat G^{\mathrm{pert}}_{Nk,t,x_{N,t}})^{\otimes2}(R_{12 }\in \cdot ))_{N\ge 1}$.
(In fact, by Lemma~\ref{lem: Talagrand positivity}, it holds that $\operatorname{supp}(\nu)\subseteq [0,1]^{\mathscr S}$.)
Denote the interior of a set $A\subseteq \mathbb R^{\mathscr S}$ by $\operatorname{Int}(A)$.
By  the Portmanteau theorem, we have \[\nu ( \operatorname{Int}(\mathsf{E}_1 \cap \mathsf{T}))\le \limsup_{N\to\infty} \e (\hat G^{\mathrm{pert}}_{Nk,t,x_{N,t}})^{\otimes2}(R_{12 }\in \operatorname{Int}(\mathsf{E}_1 \cap \mathsf{T}))=0,  \] where the equality is due to \eqref{eq: far away but controllable}.
By \eqref{eq: consequence of ASS scheme}, $\nu$ is a Parisi measure, so  Proposition \ref{prop: min and max of support} implies \[\{q^\nu_{\text{min}},q^\nu_{\text{max}}\}\subseteq  \{q^*\} \cup \operatorname{Int} (\mathsf T).\]
The last two displays show, using $\operatorname{Int}(A\cap B)=\operatorname{Int}(A)\cap \operatorname{Int}(B)$, \[\{q^\nu_{\text{min}},q^\nu_{\text{max}}\} \subseteq \bigl( \{q^*\} \cup \operatorname{Int} (\mathsf T)\bigr)\setminus \operatorname{Int}(\mathsf{E}_1 \cap \mathsf{T})= \{q^*\}\cup\bigl( \operatorname{Int}(\mathsf T)\setminus \operatorname{Int}(\mathsf{E}_1) \bigr). \]
In turn, this implies $\operatorname{supp}(\nu) \cap \mathsf{E}_2 =\emptyset$ by \eqref{eq: geometric condition for Delta^2}. Equivalently, we have \[\nu(\mathsf{E}_2)=0.\]
From \eqref{eq: far away contradiction}, since $\mathsf{E}_2$ is a closed set, the Portmanteau theorem implies \[\nu(\mathsf{E}_2)\ge \liminf_{N\to\infty}\e (\hat G^{\mathrm{pert}}_{Nk,t,x_{N,t}})^{\otimes2}(R_{12 }\in \mathsf{E}_2)\ge \eta/2.\]
The last two displays contradict each other, thereby proving the claim.

    From a trivial bound $|B(R_{12}-q^*)|\le 4 \max_{i,j} (\Lambda\Delta^2 \Lambda)_{ij} \equalscolon M <\infty$, we can bound
    \begin{equation} \label{eq: upper bound for expected quadratic form}
    \begin{aligned}
        \e \lla B(R_{12}-q^*)\rra_{Nk,t,U}^{\mathrm{hat,pert}} 
        \le \e \lla B(R_{12}-q^*)\1_{\{R_{12}\in \mathsf{E}_2^c\}}\rra_{Nk,t,U}^{\mathrm{hat,pert}} +M\e (\hat G_{Nk,t,U}^{\mathrm{pert}})^{\otimes 2} (R_{12}\in \mathsf{E}_2).
    \end{aligned}
    \end{equation}
    The first term on the right-hand side of \eqref{eq: upper bound for expected quadratic form} is bounded by $2R_{\Delta^2,\Lambda}C^{-1}(\mathsf{RS}^*(t) -  \lim_{N\to\infty} \e \hat  F_{Nk,U}^{\mathrm{pert}}(t) \\+ 4\eps)  $.
    By \eqref{eq: far away and non-monotone}, the second term  vanishes as $N\to\infty$.
    As a result, for all $0\le t\le 1$, \begin{equation*}
       \limsup_{N\to\infty} \e \lla B(R_{12}-q^*)\rra_{Nk,t,U}^{\mathrm{hat,pert}}\le 2R_{\Delta^2,\Lambda}C^{-1}(\mathsf{RS}^*(t)-\lim_{N\to\infty} \e \hat  F_{Nk,U}^{\mathrm{pert}}(t)+4\eps).
    \end{equation*} 
The proof of \eqref{eq: step 1} is complete by sending $\eps \to 0$.
    

\vspace{0.5em}
\noindent {\bf Step 2.} 
The next step is to deduce replica symmetry from Step 1.
Fix $0\le t\le 1.$
    From Lemma~\ref{lem: derivative of interpolating free energy: hat version} and Equation \eqref{eq: interpolated approximating RS functional}, 
    \begin{equation}\label{eq: derivative: Gronwall}
        (\mathsf{RS}^*(t)-\e \hat  F_{N,U}^{\mathrm{pert}}(t))'= \frac{1}{2}\e \lla B(R_{12}-q^*)\rra_{N,t,U}^{\mathrm{hat,pert}} +o(1) .
    \end{equation}
    For $0\le t\le 1,$ denote $g_N(t)=\mathsf{RS}^*(t)-\e \hat  F_{N,U}^{\mathrm{pert}}(t)$ and $g(t)=\lim_{N\to\infty}g_N(t)$ which exists by Lemma~\ref{lem: hat in free energies}.
    Combining  \eqref{eq: derivative: Gronwall} and \eqref{eq: step 1} yields \begin{equation}\label{eq: limiting derivative: Gronwall}
        \limsup_{N\to\infty}g'_{Nk}(t)\le Kg(t).
    \end{equation}
    Observe that $\sup_{N\ge 1} \sup_{0\le t\le 1}|g'_N(t)|<\infty$
     by \eqref{eq: derivative: Gronwall}
     and, therefore, $g$ is continuous as a uniform limit of continuous functions by invoking the Arzel\`a--Ascoli's theorem.
     Note that $ g(0)=0\le g$ by Lemma~\ref{lem: RS equals perturbed free energy when t=0} and   \eqref{eq: derivative: Gronwall}.
     Sending $N\to\infty$ in $g_{Nk}(t)-g_{Nk}(0)=\int_0^t g'_{Nk}(x) dx$, we have \begin{align*}
         0\le g(t)\le \limsup_{N\to\infty}\int_0^t g'_{Nk}(x) dx \le \int_0^t  \limsup_{N\to\infty} g'_{Nk}(x) dx\le  K\int_0^t g(x)dx .
     \end{align*} where, respectively, the third and last inequalities are due to the reverse Fatou's inequality (for bounded functions on a compact domain $[0,t]$) and \eqref{eq: limiting derivative: Gronwall}.
    Applying the integral form of Gr\"onwall's inequality in the preceding display, we obtain $g(1)=0.$
    The proof is complete by noticing that $\lim_{N\to\infty} \e \hat  F_{N,U}^{\mathrm{pert}}(1) =\lim_{N\to\infty}\e F_{N}(1)$ by Lemma~\ref{lem: hat in free energies}.
\end{proof}

\section{Establishing a free energy cost for the coupled free energy}\label{sec: Two-dimensional Guerra--Talagrand bound}

We implement Step 3 of the proof sketch in this section and establish the following  version of \eqref{eq: free energy cost: overview}.
For $q^*=q^*(\Delta^2,\tau^2)$, define \begin{equation*}
    \widetilde{\mathsf T}  = \{u\in[0,\infty)^{\mathscr S}:  \Delta^2\Lambda (u-q^*)\gee \0 \text{ or }  \Delta^2\Lambda (u- q^*)\lee \0  \}. 
\end{equation*}
Recall the notations in \eqref{eq: interpolated approximating RS functional} and \eqref{eq: definition of constrained free energy}.
\begin{lemma}\label{lem: GT bound}
Let $\Delta^2$ be irreducible.
Suppose $\rho(\Gamma\Delta^2\Lambda)<1/2$.
There exists a constant $C=C(\Lambda,\Delta^2, \tau^2)>0$ such that for any $u\in \widetilde{\mathsf T}$ and sufficiently large $N\in \mathbb N$,
\[ \psi_N(1,u)\le  2\mathsf{RS}^*(1)- \frac{C}{\rho(\Delta^2\Lambda)} B(u-q^*) +o(1),\] where $o(1)$ is uniform in $u$.
Moreover, the constant $C$ is bounded away from zero under the change of the parameters \[\inf_{t\in[0,1]}C(\Lambda, t\Delta^2, \tau^2+(1-t)2\Delta^2 \Lambda q^*(\Delta^2,\tau^2))>0.\]
    
\end{lemma}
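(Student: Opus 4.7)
The plan is to feed the multi-species Guerra--Talagrand 1-RSB bound (Lemma~\ref{lem: constrained 1RSB bound}) into the overlap-constrained coupled partition function, take a Taylor expansion of the resulting deterministic upper bound around $u = q^*$, and harvest the negative quadratic correction from the AT hypothesis $\rho(\Gamma\Delta^2\Lambda) < 1/2$. The 1-RSB GT bound requires a pair $(q_1,q_2)$ monotone in the $\Delta^2\Lambda$-order, which is precisely what the condition $u \in \widetilde{\mathsf T}$ supplies: either $\Delta^2\Lambda u \gee \Delta^2\Lambda q^*$ or $\Delta^2\Lambda u \lee \Delta^2\Lambda q^*$. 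I would feed in $\{q_1,q_2\} = \{q^*, u\}$ in the correct order, together with a breaking weight $m \in [0,1]$, producing a deterministic upper bound $\varphi(u,m)$ on $\psi_N(1,u) + o(1)$; it then suffices to control $\min_{m\in[0,1]}\varphi(u,m)$ from above.

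Next, I would analyze $\varphi$ near $q^*$. At $u = q^*$ the 1-RSB bound collapses to the RS one, so $\varphi(q^*,m) = 2\mathsf{RS}^*(1)$ identically in $m$, and the first-order variation in $u$ at $q^*$ vanishes because $q^*$ satisfies the fixed-point equation~\eqref{eq: fixed-point equation in RS}. Using Gaussian integration by parts as in the derivation of~\eqref{eq: right derivative of RS} but now on two coupled replicas, the Hessian of $\varphi(\cdot,m)$ at $q^*$, evaluated on $v = u - q^*$, takes the form
\begin{equation*}
    v^\intercal \Lambda\Delta^2\Lambda \bigl(2\, c(m)\, \Gamma\,\Delta^2\Lambda - I\bigr) v + o(|v|^2),
\end{equation*}
where $c(m) \in (0,1]$ comes from the two-replica one-site second derivative. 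Since $\Lambda^{1/2}\Gamma\Delta^2\Lambda^{1/2}$ is similar to the symmetric matrix $\Gamma^{1/2}\Lambda^{1/2}\Delta^2\Lambda^{1/2}\Gamma^{1/2}$ with spectral radius $\rho(\Gamma\Delta^2\Lambda) < 1/2$, choosing $m$ so that $c(m)$ is close to $1$ makes the quadratic form uniformly negative-definite with eigenvalues bounded below by a constant multiple of $1 - 2\rho(\Gamma\Delta^2\Lambda) > 0$. This yields a local bound $\varphi(u, m^*(u)) \le 2\mathsf{RS}^*(1) - (c_0/\rho(\Delta^2\Lambda))\,B(v)$ for $u$ in a neighborhood of $q^*$.

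To promote this to a global bound on $\widetilde{\mathsf T}$, I would argue by compactness: outside any fixed neighborhood of $q^*$, the gap $2\mathsf{RS}^*(1) - \min_m \varphi(u,m)$ is strictly positive because the 1-RSB bound genuinely improves over RS whenever $u \ne q^*$, while $B(u-q^*)$ is uniformly bounded on $[0,1]^{\mathscr S}$. Patching this global gap with the local quadratic control yields a single constant $C$. Uniformity in the interpolation parameter $t \in [0,1]$ then follows from Lemma~\ref{lem: stability of q^*}: the interpolated profile $(t\Delta^2, \tau^2 + (1-t)2\Delta^2\Lambda q^*)$ has the same fixed point $q^*$, and hence the same $\Gamma$ from~\eqref{eq: diagonal operator}, so $\rho(\Gamma \cdot t\Delta^2\Lambda) = t\,\rho(\Gamma\Delta^2\Lambda) \le \rho(\Gamma\Delta^2\Lambda) < 1/2$ for all $t \in [0,1]$, and the constant $C$, depending continuously on this spectral data, stays uniformly positive.

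The main obstacle is the Hessian step: the multi-species 1-RSB bound mixes species through both $\Lambda\Delta^2\Lambda$ and the nested Gaussian expectations defining $X^s_l$, so identifying $\Gamma$ as the exact diagonal matrix governing the second-order correction requires a careful unwrapping of the $(q_1,q_2,m)$-derivatives of $X^s_0$ at the RS value and then an application of Gaussian IBP to produce $\e\,\sch^4$ as the relevant coefficient. Once this is in place, the remaining argument is essentially spectral linear algebra on $\Lambda^{1/2}\Delta^2\Lambda^{1/2}$ and its symmetric conjugate with $\Gamma^{1/2}$.
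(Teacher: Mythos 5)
There is a genuine gap, and it sits at the heart of your argument. Your deterministic upper bound $\varphi(u,m)$ omits the vector parameter $b\in\mathbb R^{\mathscr S}$ appearing in the Guerra--Talagrand bound of Lemma~\ref{lem: constrained 1RSB bound}, and that parameter is precisely where the negative quadratic correction comes from. At $b=\0$ the paper shows (Lemmas~\ref{lem: m=1/2 less than 2RS} and~\ref{lem: m=0 2RS}) that the bound is only $\le 2\mathsf{RS}^*(1)+o(1)$, uniformly in $u$, with no gap at all; varying $m$ alone does not produce one. Consequently your claimed Hessian-in-$u$ expansion $v^\intercal\Lambda\Delta^2\Lambda(2c(m)\Gamma\Delta^2\Lambda-I)v$ is not what the function you defined actually does near $q^*$. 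The paper's mechanism is different: it Taylor-expands in $b$ (not in $u$), where the expansion is exact to second order thanks to the Hessian bounds $0\le\partial_{b_l}\partial_{b_k}U_N\le\lambda_{k,N}\1_{k=l}$ (Lemmas~\ref{lem: Hess in b when m=1/2} and~\ref{lem: Hess in b: m=0}), computes the gradient $W_N(u)=\nabla_b U_N|_{b=\0}$ exactly via the fundamental theorem of calculus along the segment from $q^*$ to $u$, obtaining $W_N(u)=(2\hat\Gamma(u)\Lambda_N\Delta^2\Lambda-\Lambda_N)(u-q^*)$ with $\0\slee\hat\Gamma(u)\lee\Gamma$ (this entrywise domination is where the symmetric decreasing density of the centered Gaussian field enters), and then optimizes $b=-\tfrac12\Lambda_N^{-1}W_N(u)$ to get $\psi_N(1,u)\le 2\mathsf{RS}^*(1)-\tfrac14 W_N(u)^\intercal\Lambda_N^{-1}W_N(u)+o(1)$. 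The condition $\rho(\Gamma\Delta^2\Lambda)<1/2$ then makes $I-2\hat\Gamma(u)\Lambda_N^{1/2}\Delta^2\Lambda\Lambda_N^{-1/2}$ uniformly invertible, giving a quadratic lower bound on the correction that is \emph{global} in $u\in\widetilde{\mathsf T}$.

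Your globalization step is the second problem: you assert that ``the 1-RSB bound genuinely improves over RS whenever $u\ne q^*$'' and propose to patch this with compactness. But that assertion is essentially the statement of the lemma for $u$ bounded away from $q^*$, and nothing in your setup supplies it; with $b=\0$ the available bounds are not strict. The paper never needs a local-versus-global split precisely because the integral representation of $W_N(u)$ and the bound $\hat\Gamma(u)\lee\Gamma$ hold for every $u$ in the relevant half of $\widetilde{\mathsf T}$. Your remark on uniformity in $t$ via Lemma~\ref{lem: stability of q^*} is directionally right, though the paper additionally needs a joint-continuity-plus-compactness argument (Lemma~\ref{lem: inf of jointly continuous is continuous}) to conclude $\inf_{t\in[0,1]}C(t)>0$, since $C$ is defined as an infimum of smallest singular values over a $t$-dependent family of matrices rather than as a function of the spectral radius alone.
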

It is immediate that $\mathsf T \subseteq \widetilde{\mathsf T}$, where $\mathsf T$  was defined in \eqref{eq: definition of T}.
We will apply the bound only on $\mathsf T$; nevertheless, we provide a proof in full generality on $\widetilde{\mathsf T}$ utilizing the following GT 1-RSB bound, whose proof is omitted as it is identical to the $1$-dimensional case in \cite[Theorem 13.5.1]{Talagrand2013vol2}. 
\begin{lemma}[Guerra--Talagrand 1-RSB bound]\label{lem: constrained 1RSB bound}
Consider two vectors $q_1,q_2\in [0,\infty)^{\mathscr S}$ such that $\Delta^2 \Lambda q_1 \lee \Delta^2 \Lambda  q_2.$
Define the Gaussian vectors \[  Y_{s, j}= z_{s,j} (2(\Delta^2\Lambda q_{1})_s)^{1/2} +z'_{s,j} (2(\Delta^2 \Lambda (q_{2}-q_{1}))_s)^{1/2} +h_s, \quad  s \in \mathscr S, \, j\in\{1,2\},\]
where $h_s$ are centered Gaussian random variables with variances $\e h_s^2=\tau^2_s$, and  $z_{s,j},z_{s,j}'$ are the standard Gaussian random variables independent among different species and such that $c_s=\e z_{s,1} z_{s,2}$, $c_s'=\e z_{s,1}' z_{s,2}'$, and the sequences $(z_{s,j})_{s \in \mathscr S, 1\le j\le 2}$, $(z_{s,j}')_{s \in \mathscr S, 1\le j\le 2}$, $(h_s)_{s\in\mathscr S}$ are independent.
Let $u\in [-1,1]^{\mathscr S}$ be given by \[u_s=c_s(q_{1})_s + c_s'(q_{2}-q_{1})_s, \quad s \in \mathscr S.\]
For any $0\le m\le 1$ and any $b\in\mathbb R^{\mathscr S}$, we have
    \begin{align*}
        \psi_{N} (1,u)&\le 2\log 2 +  B(\1-q_{2}) -m 
        \bigl( B(q_{2}) -B(q_{1})+ B(u) -B(c\circ q_{1})\bigr)
        \\& - \sum_{s \in \mathscr S}\lambda_{s,N} b_s u_s 
        \\&+\frac{1}{m}\sum_{\substack{s \in \mathscr S}} \lambda_{s,N} \e\log \e'(\ch Y_{s,1}\ch Y_{s,2} \ch b_s +\sh Y_{s,1} \sh Y_{s,2} \sh b_s)^m
        \\&+o(1),
    \end{align*} where $o(1)$ is uniform in the parameters $m,b,c,c',q_1,q_2,\tau^2$ and in particular $u.$
\end{lemma}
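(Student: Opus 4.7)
The plan is to apply Lemma~\ref{lem: constrained 1RSB bound} with a carefully tuned family of parameters and extract the quadratic cost by Taylor expansion about $u=q^*$, following the single-species argument of \cite[Chapter 13]{Talagrand2013vol2} adapted by \cite{Che21} to the Gaussian-field setting. By the dichotomy defining $\widetilde{\mathsf{T}}$ and the symmetry of Lemma~\ref{lem: constrained 1RSB bound} under swapping $q_1\leftrightarrow q_2$, it suffices to treat the case $\Delta^2\Lambda(u-q^*)\gee \0$. I then set $q_1=q^*$ and $q_2=u$, which satisfies the hypothesis $\Delta^2\Lambda q_1\lee\Delta^2\Lambda q_2$, and I pick the correlations $c_s=c_s'=1$ for all $s\in\mathscr S$, which trivially enforces the constraint $u_s=c_s(q^*)_s+c_s'(u_s-(q^*)_s)$ and collapses $(Y_{s,1},Y_{s,2})$ to a single Gaussian $Y_s(u)$ of variance $\tau_s^2+2(\Delta^2\Lambda u)_s$. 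The RHS of the GT bound then reduces to a function $\Phi_N(m,b,u)$ of only the RSB parameter $m\in(0,1]$ and the Lagrange multiplier $b\in\mathbb R^{\mathscr S}$.

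The core of the argument is to pick $b=b^*(m)$ so that $u=q^*$ is a critical point of $u\mapsto \Phi_N(m,b^*(m),u)$. By Gaussian integration by parts, the critical-point equation reduces to one solved, at leading order in $m$, by the RS fixed-point equation \eqref{eq: fixed-point equation in RS} satisfied by $q^*$ (Proposition~\ref{prop: infimum of RS functional for positive-definite} and Lemma~\ref{lem: infimum of RS: semidefinite}), giving $b^*_s(m)=O(m)$. At this choice one verifies $\Phi_N(m,b^*(m),q^*) = 2\mathsf{RS}^*(1)+o(1)$ uniformly in $m\in(0,1]$ via \eqref{eq: interpolated approximating RS functional}. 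A second-order expansion in $v:=u-q^*$ then yields
\[\Phi_N(m,b^*(m),u)=2\mathsf{RS}^*(1)+\tfrac12 v^{\intercal}H(m)v + O(\|v\|^3)+o(1),\]
where a further Gaussian integration by parts identifies $H(m)=-4m\,\Lambda\Delta^2\Lambda(I-2\Gamma\Delta^2\Lambda)\Lambda+O(m^2)$ with $\Gamma$ as in \eqref{eq: diagonal operator}. The negative sign is forced by the $-2m(B(u)-B(q^*))$ term of Lemma~\ref{lem: constrained 1RSB bound} dominating the contribution of the partition-function expansion near $q^*$.

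Under the strict AT condition $\rho(\Gamma\Delta^2\Lambda)<1/2$, standard spectral manipulation (using $\rho(\Gamma\Delta^2\Lambda)=\rho(\Gamma^{1/2}\Delta^2\Lambda\Gamma^{1/2})$ after conjugation) shows that the minimum singular value of $I-2\Gamma\Delta^2\Lambda$, measured in the $\Lambda$-weighted inner product, is at least $1-2\rho(\Gamma\Delta^2\Lambda)>0$, and consequently $v^{\intercal}\Lambda\Delta^2\Lambda(I-2\Gamma\Delta^2\Lambda)\Lambda v \ge c_0(1-2\rho(\Gamma\Delta^2\Lambda))\,B(v)/\rho(\Delta^2\Lambda)$ for some $c_0=c_0(\Lambda,\Delta^2)>0$; the factor $1/\rho(\Delta^2\Lambda)$ arises from the comparison $B(v)\le \rho(\Delta^2\Lambda)\|\Lambda^{1/2}v\|^2$. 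Combining, $v^{\intercal}H(m)v \le -c(m)\,B(u-q^*)/\rho(\Delta^2\Lambda)$ with $c(m)>0$ for all sufficiently small $m$. Fixing an optimal $m=m(\Lambda,\Delta^2,\tau^2)>0$ absorbs the cubic Taylor error into the $o(1)$ term uniformly in $u\in[0,1]^{\mathscr S}$ (which is compact), which yields the claimed bound with constant $C=C(\Lambda,\Delta^2,\tau^2)>0$.

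Finally, the uniformity $\inf_{t\in[0,1]} C(\Lambda,t\Delta^2,\tau^2+(1-t)2\Delta^2\Lambda q^*(\Delta^2,\tau^2))>0$ is immediate from Lemma~\ref{lem: stability of q^*}: along the interpolation, $q^*$ and hence $\Gamma$ are constant, while $\rho(\Gamma\cdot t\Delta^2\Lambda)=t\,\rho(\Gamma\Delta^2\Lambda)<1/2$ uniformly in $t\in[0,1]$; consequently the quadratic-form estimate above is uniform. The main obstacle will be the explicit Gaussian integration by parts identifying the leading order of $H(m)$ with the AT operator $I-2\Gamma\Delta^2\Lambda$, together with the spectral comparison producing the factor $1/\rho(\Delta^2\Lambda)$; both are tedious but mechanical, and the multi-species structure enters only through the replacement of scalars by $|\mathscr S|$-vectors.
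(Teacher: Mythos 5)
There is a fundamental mismatch: your proposal does not prove the stated lemma, it \emph{uses} it. The statement to be proved is the Guerra--Talagrand 1-RSB interpolation inequality itself, i.e.\ the upper bound on the overlap-constrained coupled free energy $\psi_N(1,u)$ in terms of the functional involving $m$, $b$, the correlated cavity fields $Y_{s,1},Y_{s,2}$, and the quadratic forms $B(\cdot)$. Your first sentence already says ``apply Lemma~\ref{lem: constrained 1RSB bound} with a carefully tuned family of parameters,'' and everything that follows (choosing $q_1=q^*$, $q_2=u$, $c=c'=\1$, tuning $b$, Taylor-expanding in $u-q^*$, and invoking $\rho(\Gamma\Delta^2\Lambda)<1/2$) is a derivation of the downstream free-energy-cost estimate --- essentially the content of Lemma~\ref{lem: GT bound} and its supporting Lemmas~\ref{lem: m=1/2 less than 2RS}--\ref{lem: Hess in b: m=0} --- not of the bound itself. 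Nothing in your write-up establishes the interpolation inequality: there is no interpolating Hamiltonian for the coupled two-replica system constrained to $R_{12}=u$, no computation of the derivative along the interpolation, no argument that the error term has a sign, and no use of the hypothesis $\Delta^2\Lambda q_1\lee\Delta^2\Lambda q_2$ (which is what makes the increments $(2(\Delta^2\Lambda(q_2-q_1))_s)^{1/2}$ well-defined and the monotone 1-RSB structure admissible in the multi-species setting).

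For the record, the paper does not spell out a proof either: it states that the argument is identical to the one-dimensional case, Talagrand's Theorem 13.5.1 in \cite{Talagrand2013vol2}, i.e.\ a Guerra-type Gaussian interpolation between the constrained coupled system and the decoupled cavity system with the Lagrange term $\sum_s\lambda_{s,N}b_s u_s$, organized with one level of replica symmetry breaking at parameter $m$. A correct blind proof would have to reproduce that interpolation (adapted so that the species-wise variances $2(\Delta^2\Lambda q_1)_s$ and $2(\Delta^2\Lambda(q_2-q_1))_s$ replace the scalar quantities, and checking that the positive-semidefiniteness of the relevant covariance differences still yields the correct sign of the derivative). Separately, even read as an alternative route to Lemma~\ref{lem: GT bound}, your small-$m$ Taylor-expansion scheme differs from the paper, which works with the exact choices $m=1/2$ (for $\Delta^2\Lambda(u-q^*)\gee\0$) and $m=0$ (for the reverse direction) together with exact Hessian bounds in $b$, and that choice is what gives control that is uniform in $u$ over the whole box rather than only near $q^*$; your $O(\|v\|^3)$ error cannot simply be ``absorbed into $o(1)$ uniformly in $u$'' since it is not small when $u$ is far from $q^*$.
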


Denote the upper bound in the previous lemma by \[U_N=U_N(c,c';q_1,q_2;m, b).\]
We explore various properties of this upper bound, which is done in the next subsection.

\subsection{Controlling the 1-RSB upper bound}
In the following two lemmas, we choose $m=1/2$.
\begin{lemma}\label{lem: m=1/2 less than 2RS}
    Choose the parameters $ b=\0$, $m=1/2$, $c, c'\in [-1,1]^{\mathscr S}$, $q_1=q^*$, $q_2$ satisfying $\Delta^2\Lambda(q_2-q^*)\gee \0$, and $u=c\circ q^* +c'\circ (q_2-q^*)$.
    Suppose either $q_2=q^*$ or $c=c'=\pm \1$.
    Then, it holds that \begin{equation*}
        U_N(c,c';q^*,q_2;1/2, \0) \le 2\mathsf{RS}^*(1)+o(1),
    \end{equation*} where $o(1)$ is uniform in $c,c',q_2$, in particular $u.$

\end{lemma}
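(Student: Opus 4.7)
The plan is to substitute the specified parameters $b=\0$, $m=1/2$, $q_1=q^*$ into the Guerra--Talagrand bound from Lemma~\ref{lem: constrained 1RSB bound} and show that the resulting expression reduces to $2\mathsf{RS}^*(1)+o(1)$ through (i) a Cauchy--Schwarz decoupling of the two-replica cosh product in the inner expectation $\e'$, followed by (ii) a purely bilinear-algebraic identification of the remaining quadratic terms with $B(\1-q^*)$.

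For step (i), write $\alpha_s=(2(\Delta^2\Lambda q^*)_s)^{1/2}$ and $\beta_s=(2(\Delta^2\Lambda(q_2-q^*))_s)^{1/2}$, so that $Y_{s,j}=z_{s,j}\alpha_s+z'_{s,j}\beta_s+h_s$. With $b=\0$ the $\sh$-terms drop out, and Cauchy--Schwarz in $\e'$ yields
\[\e'(\ch Y_{s,1}\ch Y_{s,2})^{1/2}\le(\e'\ch Y_{s,1})^{1/2}(\e'\ch Y_{s,2})^{1/2}.\]
Since each $z'_{s,j}$ has standard Gaussian marginal law, $\e'\ch Y_{s,j}=\ch(z_{s,j}\alpha_s+h_s)\,e^{\beta_s^2/2}$. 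Taking the outer $\log$ and $\e$, and using the symmetry of $h_s$ together with $\ch(-x)=\ch(x)$ to identify the marginal law of $z_{s,j}\alpha_s+h_s$ as $z\sqrt{\tau_s^2+2(\Delta^2\Lambda q^*)_s}$, one obtains for each $s\in\mathscr S$
\[\e\log\e'(\ch Y_{s,1}\ch Y_{s,2})^{1/2}\le \e\log\ch\bigl(z\sqrt{\tau_s^2+2(\Delta^2\Lambda q^*)_s}\bigr)+(\Delta^2\Lambda(q_2-q^*))_s.\]
In each of the three configurations allowed by the lemma ($q_2=q^*$; $c=c'=\1$; $c=c'=-\1$) the Cauchy--Schwarz step is actually an equality, but the inequality direction is all that matters.

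For step (ii), multiplying by $2\lambda_{s,N}$ and summing over $s$ yields the $\e\log\ch$-contribution to $2\mathsf{RS}^*(1)$ plus $2\tilde B_N(\1,q_2-q^*)=2B(\1,q_2-q^*)+o(1)$ by \eqref{eq: B_N converge uniformly}, uniformly in the parameters. Inserting this into the explicit formula for $U_N$ and comparing with the expression $2\mathsf{RS}^*(1) = 2\log 2 + B(\1-q^*) + 2\sum_s\lambda_s\e\log\ch(z\sqrt{\tau_s^2+2(\Delta^2\Lambda q^*)_s})$, the required bound $U_N\le 2\mathsf{RS}^*(1)+o(1)$ reduces, via the identity $B(\1-q_2)-B(\1-q^*)+2B(\1,q_2-q^*)=B(q_2)-B(q^*)$, to the algebraic inequality $B(q_2)-B(q^*)\le B(u)-B(c\circ q^*)$.

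Finally, this algebraic inequality is verified case-by-case: when $q_2=q^*$ both sides vanish; when $c=c'=\1$ one has $u=q_2$ and $c\circ q^*=q^*$, so both sides coincide; and when $c=c'=-\1$ one has $u=-q_2$ and $c\circ q^*=-q^*$, in which case the evenness $B(-v)=B(v)$ of the quadratic form $B$ again produces equality. The uniformity of the $o(1)$ in $(c,c',q_2)$ is inherited directly from Lemma~\ref{lem: constrained 1RSB bound} and \eqref{eq: B_N converge uniformly}. There is no real obstacle beyond careful bookkeeping of the symmetric bilinear form $B$; the only probabilistic content is one Cauchy--Schwarz together with the moment formula $\e e^{\beta z'}=e^{\beta^2/2}$.
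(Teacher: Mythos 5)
Your proof is correct and follows essentially the same route as the paper: substitute the parameters into the GT bound, apply Cauchy--Schwarz under $\e'$ to get $\e'\ch Y_{s,j}=\ch(h_s+z_{s,j}(2(\Delta^2\Lambda q^*)_s)^{1/2})\,e^{(\Delta^2\Lambda(q_2-q^*))_s}$, and reduce everything to the quadratic-form inequality $B(q_2)+B(c\circ q^*)\le B(u)+B(q^*)$, which you then verify case by case exactly as the paper leaves to the reader. One harmless inaccuracy: your parenthetical claim that the Cauchy--Schwarz step is an equality in the $c=c'=-\1$ configuration is not right in general (the two factors $\sqrt{\ch Y_{s,1}}$ and $\sqrt{\ch Y_{s,2}}$ are not proportional there), but as you say only the inequality direction is used.
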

\begin{proof}
    By directly plugging in the parameters, \begin{align}
        U_N(c,c';q^*,q_2;1/2, 0)&= 2\log 2 +B(\1-q_2) -  2^{-1}(B(q_2)-B(q^*) -B(c\circ q^*)+B(u)) \nonumber\\&\quad +2\sum_{s}\lambda_{s,N} \e\log \e' \sqrt{\ch Y_{s,1}\ch Y_{s,2}} +o(1). \label{eq: m=1/2 upper bound: 0}
    \end{align}
    By the Cauchy--Schwarz inequality, for any $s \in \mathscr S$, \begin{align*}
        \e' \sqrt{\ch Y_{s,1} \ch Y_{s,2}}&\le \sqrt{\e' \ch Y_{s,1}\e' \ch Y_{s,2}}
        \\&= \exp( (\Delta^2 \Lambda (q_2-q^*))_s) \cdot \prod_{j=1,2} \sqrt{\ch (h_s +z_{s,j} (2 (\Delta^2\Lambda q^*)_s)^{1/2})}. \nonumber
    \end{align*} 
    Combining the previous display with \eqref{eq: m=1/2 upper bound: 0} and using that $z_{s,1}\stackrel{d}{=}z_{s,2}$, the desired result would follow from \eqref{eq: B_N converge uniformly} and $\lim_{N\to\infty}\lambda_{s,N} = \lambda_s$, provided that \begin{equation}\label{eq: m=1/2 upper bound: 1}
        2B(\1-q_2) - (B(q_2)-B(q^*)-B(c\circ q^*)+B(u))+ 4B(\1, q_2-q^*)\le 2B(\1-q^*).
    \end{equation}
    Expanding the quadratic forms with the identity $B(a+b)=B(a)+2B(a,b)+B(b)$ for $a,b\in [-1,1]^{\mathscr S}$,  \eqref{eq: m=1/2 upper bound: 1} is equivalent to \begin{equation*}
        B(q_2)+B(c\circ q^*)\le B(u)+B(q^*).
    \end{equation*} 
    It is easy to check that either of the assumptions of the Lemma~imply the preceding display.

\end{proof}


\begin{lemma}\label{lem: Hess in b when m=1/2}
    For any $u\in [0,\infty)^{\mathscr S}$ such that  $\Delta^2\Lambda (u-q^*)\gee \0$ and $ b\in\mathbb R^{\mathscr S}$, \[0\le \partial_{b_l}\partial_{b_k} U_N (\1,\1; q^*,u; 1/2, b) \le \lambda_{k,N} \1_{k=l}. \]
\end{lemma}
\begin{proof}
Since $c=c'=\1,$ we may write $Y_k\colonequals Y_{k,1}=Y_{k,2}$ for any $k\in\mathscr S.$
Denote  \begin{align*}
    N_k=N_k( b_k)&=\ch^2 Y_{k} \sh b_k +\sh^2 Y_{k} \ch  b_k,
    \\D_k=D_k( b_k)&=\ch^2 Y_{k}\ch b_k +\sh^2 Y_{k}  \sh  b_k.
\end{align*}
Note $D_k>0$ since $\ch>|\sh|.$
They satisfy the relations $N'=D$ and $D'=N$.
A direct differentiation yields \begin{equation*}
    \partial_{ b_k} U_N (\1,\1; q^*, u; 1/2, b) = \lambda_{k,N}  \e \frac{\e' N_k D_k^{-1/2}}{\e' D_k^{1/2}} -\lambda_{k,N} u_k,
\end{equation*} which shows
$\partial_{ b_l}\partial_{ b_k} U_N (\1,\1; q^*, u; 1/2, b)=0$ whenever $k\ne l$.
A further differentiation and some algebra yields \begin{equation}\label{eq: Hess in lambda when m=1/2}
    {\lambda_{k,N}}^{-1}\partial_{ b_k}\partial_{ b_k} U (\1,\1; q^*,u; 1/2, b)= 1- \frac{1}{2}\e \Bigl( \frac{\e' N_k^2 D_k^{-2} D_k^{1/2} }{\e' D_k^{1/2}} +  \Bigl(\frac{\e' N_k D_k^{-1} D_{k}^{1/2}}{\e' D_k^{1/2}}\Bigr)^2\Bigr) .
\end{equation}
By an elementary formula for hyperbolic tangents, we can write \[\frac{N_k}{D_k}= \frac{\hth  b_k + \hth^2 Y_{k}}{1+\hth^2 Y_{k}\hth  b_k}=\hth( b_k+\text{arctanh}(\hth^2 Y_{k})).\]
Note that $D_k= \ch^2 (Y_k) \ch ( b_k) (1+\hth^2 (Y_k) \hth( b_k))>0$ and, hence, the factors $D_k^{1/2}$ in \eqref{eq: Hess in lambda when m=1/2} can be regarded as a change of densities. 
Now, the preceding display shows $|N_k/D_k|\le 1$, from which it is easy to see that $0\le \partial_{ b_k}\partial_{ b_k} U (\1,\1; q^*,u; 1/2, b)|\le \lambda_{k,N}.$
This completes the proof.
\end{proof}

Parallel results for $m=0$ is provided in the next two lemmas.

\begin{lemma}\label{lem: m=0 2RS}
For $m=0$, $b=\0$, and $q_2=q^*$, \[U_N(c,c'; q_1, q^*;0,\0)= 2\mathsf{RS}^*(1)+o(1),\] where $o(1)$ is uniform in $c,c',q_1$, in particular $u.$
\end{lemma}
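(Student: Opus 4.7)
The plan is to substitute the specified parameters directly into the Guerra--Talagrand bound of Lemma~\ref{lem: constrained 1RSB bound} and recognize the result as $2\mathsf{RS}^*(1)$ up to negligible error. First, setting $b=\0$, $m=0$, $q_2=q^*$, the terms $-m(B(q_2)-B(q_1)+B(u)-B(c\circ q_1))$ and $-\sum_s \lambda_{s,N} b_s u_s$ vanish identically, so only $2\log 2$, $B(\1-q^*)$, and the last sum remain. The expression $\frac{1}{m}\log \e'(\ch Y_{s,1}\ch Y_{s,2})^m$ at $m=0$ is interpreted in the usual way via
\[
\lim_{m\to 0^+}\frac{1}{m}\log \e'(\ch Y_{s,1}\ch Y_{s,2})^m
= \e'\bigl[\log \ch Y_{s,1}\bigr] + \e'\bigl[\log \ch Y_{s,2}\bigr].
\]

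The key observation is an invariance of the marginals: since $q_2=q^*$, the random variable
\[
Y_{s,j} = z_{s,j}\sqrt{2(\Delta^2\Lambda q_1)_s} + z_{s,j}'\sqrt{2(\Delta^2\Lambda(q^*-q_1))_s} + h_s
\]
has total variance $2(\Delta^2\Lambda q^*)_s + \tau_s^2$, independent of the choice of $q_1$, $c$, $c'$ (which only affect the correlation between $Y_{s,1}$ and $Y_{s,2}$, not their marginals). Thus for each $s\in\mathscr S$ and $j\in\{1,2\}$,
\[
\e\e'\bigl[\log \ch Y_{s,j}\bigr] = \e\log \ch\bigl(z\sqrt{\tau_s^2 + 2(\Delta^2\Lambda q^*)_s}\bigr).
\]

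Substituting back, I obtain
\[
U_N(c,c';q_1,q^*;0,0) = 2\log 2 + B(\1-q^*) + 2\sum_{s\in\mathscr S}\lambda_{s,N}\,\e\log \ch\bigl(z\sqrt{\tau_s^2+2(\Delta^2\Lambda q^*)_s}\bigr) + o(1),
\]
which matches $2\mathsf{RS}^*(1)$ by the definition \eqref{eq: interpolated approximating RS functional}, using $\lim_{N\to\infty}\lambda_{s,N}=\lambda_s$ to absorb the discrepancy into $o(1)$. Uniformity in $c,c',q_1$ is automatic because, after setting $m=b=\0$ and $q_2=q^*$, none of these parameters appear in the remaining expression. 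There is no real obstacle here---this lemma merely records a baseline identity that pairs with Lemma~\ref{lem: Hess in b when m=1/2} (or its $m=0$ analogue) to drive the $b$-perturbation argument producing the quadratic cost in $B(u-q^*)$ for Lemma~\ref{lem: GT bound}.
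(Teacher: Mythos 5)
Your proposal is correct and follows essentially the same route as the paper: substitute $m=b=0$, $q_2=q^*$ into the Guerra--Talagrand bound, observe that each $Y_{s,j}$ has marginal law $h_s+z\sqrt{2(\Delta^2\Lambda q^*)_s}$ regardless of $c,c',q_1$, and match the resulting expression with $2\mathsf{RS}^*(1)$. The paper's proof is just a terser version of the same computation.
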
 
\begin{proof}
Note that $Y_{k,1}\stackrel{d}{=}Y_{k,2}\stackrel{d}{=} h_k+z(2 (\Delta^2\Lambda q^*)_k)^{1/2}$ for a standard Gaussian $z$.
Then, a straightforward calculation shows
\begin{align*}
    U_N(c,c'; q_1, q^*;0,0)&= 2\log 2+ B(\1-q^*) +\sum_{k \in \mathscr S}\lambda_{k,N}\e\log \ch Y_{k,1}\ch Y_{k,2}+o(1) 
    \\&=2\mathsf{RS}^*(1)+o(1).
\end{align*} 
\end{proof}

\begin{lemma}\label{lem: Hess in b: m=0}
    For   any $u\in [0,1]^{\mathscr S}$ such that  $\Delta^2\Lambda (u-q^*)\lee \0$  and $ b\in\mathbb R^{\mathscr S}$, \[0\le \partial_{ b_l}\partial_{ b_k} U_N (\1,0; u,q^*; 0, b)\le \lambda_{k,N} \1_{k=l}. \]
\end{lemma}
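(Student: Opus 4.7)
The plan is to mimic the proof of Lemma~\ref{lem: Hess in b when m=1/2}, but in the simpler $m=0$ regime. First I would substitute the parameters $c=\1$, $c'=0$, $q_1=u$, $q_2=q^*$, $m=0$ into the definition of $U_N$ from Lemma~\ref{lem: constrained 1RSB bound}. The factor $\frac{1}{m}\log\e'(\cdots)^m$ is interpreted by the limit
\[
\lim_{m\to 0}\frac{1}{m}\log \e' X^m = \e'\log X,
\]
so that the $b$-dependent part of $U_N$ becomes
\[
-\sum_{s\in\mathscr S}\lambda_{s,N} b_s u_s + \sum_{s\in\mathscr S}\lambda_{s,N}\,\e\,\e'\log\bigl(\ch Y_{s,1}\ch Y_{s,2}\ch b_s + \sh Y_{s,1}\sh Y_{s,2}\sh b_s\bigr).
\]
The assumption $\Delta^2\Lambda(u-q^*)\lee \0$ is precisely what is needed to make $(2(\Delta^2\Lambda(q^*-u))_s)^{1/2}$ real, so $Y_{s,1},Y_{s,2}$ are well defined; with $c_s=1$ they share the same $z_{s,1}$-component, while $c'_s=0$ makes them conditionally independent in the $z'$-variables.

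Since each summand above depends only on $b_s$, the mixed partials $\partial_{b_l}\partial_{b_k}U_N$ vanish whenever $k\ne l$. For the diagonal term, define
\[
N_k=\ch Y_{k,1}\ch Y_{k,2}\sh b_k + \sh Y_{k,1}\sh Y_{k,2}\ch b_k,\qquad D_k=\ch Y_{k,1}\ch Y_{k,2}\ch b_k + \sh Y_{k,1}\sh Y_{k,2}\sh b_k,
\]
which satisfy $N_k'=D_k$, $D_k'=N_k$ in $b_k$, and $D_k>0$. A direct differentiation gives
\[
\partial_{b_k} U_N = -\lambda_{k,N} u_k + \lambda_{k,N}\,\e\,\e'\frac{N_k}{D_k},\qquad \partial_{b_k}^2 U_N = \lambda_{k,N}\,\e\,\e'\Bigl(1-\Bigl(\frac{N_k}{D_k}\Bigr)^{\!2}\Bigr).
\]

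The crux is to verify $|N_k/D_k|\le 1$. Dividing numerator and denominator by $\ch Y_{k,1}\ch Y_{k,2}\ch b_k$ yields the compact form
\[
\frac{N_k}{D_k} = \frac{a+t}{1+at},\qquad a = \hth Y_{k,1}\hth Y_{k,2},\;\; t=\hth b_k,
\]
with $|a|<1$ almost surely and $|t|<1$. The algebraic identity
\[
(1+at)^{2}-(a+t)^{2} = (1-a^{2})(1-t^{2})\ge 0,
\]
together with $1+at>0$, gives $|N_k/D_k|\le 1$. Consequently $0\le 1-(N_k/D_k)^{2}\le 1$, which after taking expectations yields $0\le \partial_{b_k}^2 U_N\le \lambda_{k,N}$, as required.

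There is no real obstacle; the only technical nuisance is justifying the exchange of differentiation and the $\e,\e'$ expectations, which is standard because $|N_k/D_k|\le 1$ uniformly and $\log D_k$ is dominated by the quadratic $\e Y_{k,j}^{2}+|b_k|$. Notice also that the direction of the inequality $\Delta^2\Lambda(u-q^*)\lee \0$ is used \emph{only} to guarantee $(\Delta^2\Lambda(q^*-u))_s\ge 0$; the algebraic bound $|N_k/D_k|\le 1$ itself is universal, which parallels the structure of Lemma~\ref{lem: Hess in b when m=1/2} with the $D_k^{1/2}$ reweighting replaced by the simpler $\e'$ expectation arising from the $m\to 0$ limit.
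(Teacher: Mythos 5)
Your proof is correct and follows essentially the same route as the paper: compute $\partial_{b_k}U_N$, note the off-diagonal Hessian entries vanish since each summand depends only on one $b_s$, use $N_k'=D_k$, $D_k'=N_k$ to get $\lambda_{k,N}^{-1}\partial_{b_k}^2 U_N = 1-\e (N_k/D_k)^2$, and bound $|N_k/D_k|\le 1$. The only cosmetic difference is that the paper establishes $|N_k/D_k|\le 1$ via the hyperbolic tangent addition formula $N_k/D_k=\hth(b_k+\operatorname{arctanh}(\hth Y_{k,1}\hth Y_{k,2}))$, whereas you use the equivalent algebraic identity $(1+at)^2-(a+t)^2=(1-a^2)(1-t^2)$.
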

\begin{proof}
A direct differentiation yields \begin{equation*}
    \partial_{ b_k} U (\1,0; u,q^*; 0, b) =  \lambda_{k,N} \e \frac{\ch Y_{k,1}\ch Y_{k,2} \sh b_k +\sh Y_{k,1} \sh Y_{k,2} \ch  b_k}{\ch Y_{k,1}\ch Y_{k,2} \ch b_k +\sh Y_{k,1} \sh Y_{k,2} \sh  b_k} -\lambda_{k,N} u_k,
\end{equation*} which shows $\partial_{ b_l}\partial_{ b_k} U (\1,0; u,q^*; 0, b)=0$ whenever $k\ne l.$

Denote the numerator and the denominator of the previous display by \begin{align*}
    N=N( b_k)&=\ch Y_{k,1}\ch Y_{k,2} \sh b_k +\sh Y_{k,1} \sh Y_{k,2} \ch  b_k,
    \\D=D( b_k)&=\ch Y_{k,1}\ch Y_{k,2} \ch b_k +\sh Y_{k,1} \sh Y_{k,2} \sh  b_k.
\end{align*}
They satisfy the relations $N'=D$ and $D'=N$, which show \[\lambda_{k,N}^{-1}\partial_{ b_k}\partial_{ b_k} U (\1,0; u,q^*; 0, b)= \e \frac{N'D-ND'}{D^2}=\e\frac{D^2-N^2}{D^2}=1-\e \frac{N^2}{D^2}.\]
By an elementary formula for hyperbolic tangents, we can write \[\frac{N}{D}= \frac{\hth  b_k + \hth Y_{k,1} \hth Y_{k,2}}{1+\hth Y_{k,1} \hth Y_{k,2}\hth  b_k}=\hth( b_k+\text{arctanh}(\hth Y_{k,1} \hth Y_{k,2})),\] which finishes the proof. 
\end{proof}


\subsection{Proof of Lemma~\ref{lem: GT bound}}

We begin with an elementary lemma from analysis which we omit the proof.
\begin{lemma}\label{lem: inf of jointly continuous is continuous}
    Let $A$ and $B$ be metric spaces.
    Assume  $B$ is compact.
    Consider a jointly continuous function $g: A\times B \to \mathbb R$.
    Then, the function $f:A\to\mathbb R$ given by $f(a)=\inf_{b\in B}g(a,b)$ is continuous.
\end{lemma}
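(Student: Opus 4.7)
The plan is to reduce the continuity of $f$ to the statement that for each $a_0\in A$ and each $\varepsilon>0$ there is a neighborhood $U$ of $a_0$ such that $|g(a,b)-g(a_0,b)|<\varepsilon$ for all $a\in U$ and all $b\in B$ simultaneously (a ``tube lemma''-style uniform estimate). Once such a $U$ is in hand, taking infima over $b\in B$ inside the inequality $g(a_0,b)-\varepsilon<g(a,b)<g(a_0,b)+\varepsilon$ yields $|f(a)-f(a_0)|\le \varepsilon$, which is exactly continuity of $f$ at $a_0$.

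The key step, and the only nontrivial one, is the uniform estimate. I would fix $a_0\in A$ and $\varepsilon>0$ and proceed by a standard open-cover argument. For each $b\in B$, joint continuity of $g$ at $(a_0,b)$ produces open neighborhoods $U_b\subseteq A$ of $a_0$ and $V_b\subseteq B$ of $b$ with $|g(a,b')-g(a_0,b)|<\varepsilon/2$ whenever $(a,b')\in U_b\times V_b$. The family $\{V_b\}_{b\in B}$ covers the compact set $B$, so one can extract a finite subcover $V_{b_1},\dots,V_{b_n}$; set $U\colonequals \bigcap_{i=1}^n U_{b_i}$, which is an open neighborhood of $a_0$.

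For any $a\in U$ and $b\in B$, pick $i$ with $b\in V_{b_i}$. Then both $(a,b)$ and $(a_0,b)$ lie in $U_{b_i}\times V_{b_i}$, so the triangle inequality gives
\[
|g(a,b)-g(a_0,b)|\le |g(a,b)-g(a_0,b_i)|+|g(a_0,b_i)-g(a_0,b)|<\varepsilon.
\]
Since $B$ is compact and $g(a_0,\cdot)$ is continuous, the infima defining $f(a)$ and $f(a_0)$ are attained, and the pointwise bound $|g(a,b)-g(a_0,b)|<\varepsilon$ transfers to $|f(a)-f(a_0)|\le \varepsilon$.

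I do not foresee any real obstacle; the only subtlety is that $A$ is only assumed to be a metric space (not locally compact), so one cannot invoke uniform continuity of $g$ on a compact neighborhood. The finite subcover construction above sidesteps this issue by exploiting compactness of $B$ alone, which is precisely the hypothesis available.
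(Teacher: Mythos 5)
Your proof is correct and is exactly the standard tube-lemma argument that the paper has in mind when it omits the proof of this elementary lemma. One minor remark: the attainment of the infima is not actually needed at the end — the two-sided bound $g(a_0,b)-\varepsilon<g(a,b)<g(a_0,b)+\varepsilon$ for all $b\in B$ already yields $|f(a)-f(a_0)|\le\varepsilon$ by taking infima over $b$ on each side, whether or not they are attained.
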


We first prove one half of the lemma.

\begin{lemma}\label{lem: GT bound: u>q^*}
Lemma~\ref{lem: GT bound} holds   all $u\gee \0$ such that $\Delta^2\Lambda (u-q^*)\gee \0$.
\end{lemma}

\begin{proof}
If $u\in [0,\infty)^{\mathscr S}\setminus [0,1]^{\mathscr S}$, then it is clear form the definition in  \eqref{eq: definition of constrained free energy}  that $\psi_{N}(1,u)=-\infty$, which trivially verifies the lemma.
We may therefore assume $\0\lee u\lee \1.$

Suppose $c=c'=\1$, which implies $q_2=u$ and $ Y_1=Y_2= (h_s+z_{s} (2(\Delta^2\Lambda q^*)_s)^{1/2}+z_{s}' (2(\Delta^2\Lambda (u-q^*))_s)^{1/2})_{s \in \mathscr S}\equalscolon Y$. 
We compute the gradient \[W_N(u)\colonequals \nabla_{ b} U_N(\1,\1; q^*,u; 1/2, b)|_{ b =0} = -\Lambda_N u +\Lambda_N \e \frac{\e' \hth^2 Y \ch Y}{\e' \ch Y}, \] where the operations on the right-hand side are understood entrywise.
We will first establish an integral representation of  $W_N(u)$.
To do that, let $\Gamma(u)$ be a diagonal matrix with its $ll$'th  entry \begin{equation}\label{eq: diagonal matrix at u}
     \e \frac{ \e' \sch^4 Y_l \ch Y_l}{\e'\ch Y_l}. 
\end{equation}
For $0\le t\le 1$, consider the interpolation $u_t\colonequals tu +(1-t) q^*$ and define a diagonal matrix \[\hat \Gamma(u)= \int_0^1\Gamma(u_t) dt,\] where the integration is done entrywise.

We claim that \begin{equation}\label{eq: integral representation of W: m=1/2}
    W_N(u) =  \bigl(2 \hat \Gamma(u)  \;  \Lambda_N \Delta^2 \Lambda- \Lambda_N\bigr)(u-q^*).
\end{equation}
We first assume that $\Delta^2\Lambda (u-q^*)\sgee \0$. 
Since \[\partial_{u_k} Y_l =   (\Delta^2\Lambda)_{lk} \frac{z_{l}'}{\sqrt{2 (\Delta^2\Lambda (u-q^*))_l}},\] we have $\e Y_{l} \partial_{u_k} Y_l= (\Delta^2\Lambda)_{lk}$ for any $k,l\in \mathscr S$.
By Gaussian IBP,    \begin{align*}
    \partial_{u_k}W_N(u)_l&= - \lambda_{l,N}\1_{k=l}+ \lambda_{l,N} \e \bigl( \frac{\e' \partial_{u_k} Y_l   (\hth^2 \ch)'(Y_l) }{\e' \ch Y_l}- \frac{\e' \hth^2(Y_l)\ch (Y_l)}{(\e' \ch Y_l)^2}\e' \partial_{u_k}Y_l \sh(Y_l) \bigr) \nonumber
    \\&=-\lambda_{l,N}\1_{k=l}+ \lambda_{l,N} (\Delta^2\Lambda )_{lk} \e \Bigl( \frac{\e'   (\hth^2 \ch)''(Y_l) }{\e' \ch Y_l}- \frac{\e' \hth^2(Y_l)\ch (Y_l)}{\e' \ch Y_l}\Bigr) \nonumber
    \\&=-\lambda_{l,N}\1_{k=l}+  (\Lambda_N\Delta^2\Lambda )_{lk} 2\Gamma(u)_l. 
\end{align*}
Now, \eqref{eq: integral representation of W: m=1/2} follows by applying the fundamental theorem of calculus to each function $t\mapsto W_N(u_t)_l$ for $l \in \mathscr S$, invoking that $W_N(u_0)=W_N(q^*)=\0$.
The general case $\Delta^2\Lambda (u-q^*)\gee \0$ can be dealt with approximation as follows. 
Let  $\0\lee u\lee \1$  be such that $\Delta^2\Lambda(u-q^*)\gee \0$, and for all $t\in[0,1]$, define $w_t= tu +(1-t)\1$.
By the irreducibility of $\Delta^2$, we have $\Delta^2\Lambda (w_t-q^*)= t\Delta^2\Lambda (u-q^*)+ (1-t)\Delta^2\Lambda \1 \sgee \0$ for all $t\in [0,1)$, so \eqref{eq: integral representation of W: m=1/2} holds for $w_t$ for all $t\in [0,1)$.
Since  $\lim_{t\to 1}w_t =u$, the continuity of both sides of \eqref{eq: integral representation of W: m=1/2} yields the desired result at $u$.
This finishes the proof of the above claim.

By the FKG inequality, $\e' \sch^4 |Y_l| \ch |Y_l|\le \e' \sch^4 |Y_l| \e' \ch |Y_l| $ and, hence, \eqref{eq: diagonal matrix at u} is bounded by $ \e \sch^4 Y_l$.
Moreover, since $h_l$ has a decreasing and symmetric density, the assumption $ \Delta^2\Lambda (u-q^*)\gee \0$ implies \begin{equation*}
    \e \sch^4Y_l =\e\sch^4 (h_l+ z(2 (\Delta^2\Lambda u)_l)^{1/2}) \le \e\sch^4 (h_l+ z(2 (\Delta^2\Lambda q^*)_l)^{1/2}) 
\end{equation*} and hence \begin{equation}\label{eq: consequence of symmetric decreasing density}
    \0 \slee \Gamma(u)\lee \Gamma (q^*)=\Gamma,
\end{equation} where $\Gamma$ was defined in \eqref{eq: diagonal operator}.
Note that \eqref{eq: consequence of symmetric decreasing density} immediately implies \begin{equation}\label{eq: entrywise monotonicity of Gamma}
    \0\slee \hat \Gamma (u) \lee \Gamma .
\end{equation}
Below, we establish two claims about these matrices.

\textit{Claim 1:} $\rho( \hat \Gamma (u) \Lambda_N^{1/2} \Delta^2 \Lambda \Lambda_N^{-1/2})= \rho(\hat \Gamma (u) \Delta^2 \Lambda)$.

\noindent\textit{Proof of Claim 1.}
Diagonal matrices commute, so $\rho( \hat \Gamma (u) \Lambda_N^{1/2} \Delta^2\Lambda \Lambda_N^{-1/2} )= \rho(  \Lambda_N^{1/2} \hat \Gamma (u) \Delta^2\Lambda \Lambda_N^{-1/2})$.
Moreover, similar matrices have the same eigenvalues.
The proof is complete by combining these facts. $\qed$

\textit{Claim 2:} $\rho(\hat \Gamma (u) \Delta^2 \Lambda)\le \rho( \Gamma  \Delta^2 \Lambda)$.

\noindent\textit{Proof of Claim 2.} From \eqref{eq: entrywise monotonicity of Gamma}, we deduce that, for any $n\in\mathbb N$, $0\le  (\hat \Gamma (u) \Delta^2 \Lambda)^n \le (\Gamma \Delta^2 \Lambda)^n$ entrywise and, therefore, $\|(\hat \Gamma (u) \Delta^2 \Lambda)^n\| \le \|(\Gamma \Delta^2 \Lambda)^n\|$ where $\|\cdot\|$ is the Frobenius norm. 
Invoking  Gelfand's formula \cite[Theorem 10.13]{Rud91}, the proof is finished by taking the $n$'th root and sending $n\to\infty$. $\qed$

By Taylor's theorem, Lemma~\ref{lem: m=1/2 less than 2RS} and Lemma~\ref{lem: Hess in b when m=1/2} imply that for some $o(1)$ uniform in $u$, \begin{equation*}
     U_N(\1, \1;q^*,u; 1/2, b) \le 2\mathsf{RS}^*(1) +  W_N(u)^\intercal b +  b^\intercal \Lambda_N b +o(1)
\end{equation*}
Plugging in $ b=-2^{-1}\Lambda_N^{-1}W_N(u)$, we have \begin{equation}\label{eq: Taylor: inf_lambda: m=1/2}
    \psi_N(1,u)\le  \inf_ b U_N(\1, \1;q^*, u; 1/2, b)\le 2\mathsf{RS}^*(1) - 4^{-1} W_N(u)^\intercal \Lambda_N^{-1} W_N(u) +o(1).
\end{equation} 
Using \eqref{eq: integral representation of W: m=1/2}, we can calculate the second term on the right-hand side of the preceding display \begin{align}
    W_N(u)^\intercal \Lambda_N^{-1} W_N(u) &= (u-q^*)^\intercal (2 \Lambda \Delta^2\Lambda_N \hat \Gamma(u) -\Lambda_N)\Lambda_N^{-1}(2 \hat \Gamma(u) \Lambda_N \Delta^2\Lambda  -\Lambda_N)(u-q^*)\nonumber
    \\& \hspace{-1em}= (u-q^*)^\intercal \Lambda_N^{1/2} (I-2 \Lambda_N^{-1/2}\Lambda \Delta^2\Lambda_N^{1/2} \hat \Gamma(u)) (I-2 \hat \Gamma(u) \Lambda_N^{1/2} \Delta^2\Lambda \Lambda_N^{-1/2})\Lambda_N^{1/2}(u-q^*).\label{eq: complicated quadratic form}
\end{align}
Denote the smallest singular value of a  matrix $A$ by $s_{\text{min}}(A).$
For convenience, let us denote \[M_N(u)=2 \hat \Gamma(u) \Lambda_N^{1/2} \Delta^2\Lambda \Lambda_N^{-1/2}.\]
Claim 1, Claim 2, and the assumption $\rho(\Gamma \Delta^2\Lambda)<1/2$ imply that  $I-M_N(u)$ is invertible for each $N\in\mathbb N$ and, hence, $s_{\text{min}}(I-M_N(u))>0$ for any $N\in\mathbb N.$
Moreover, with the operator norm for matrices, $u\mapsto M_N(u)$ is obviously continuous  and $s_{\text{min}}$ is continuous by Weyl's inequality.
As \begin{equation}\label{eq: definition of T: half of it}
    T\colonequals \{u\in[0,1]^{\mathscr S}: \Delta^2\Lambda(u-q^*)\gee \0\}
\end{equation} is compact, these imply that, for any $N$, \begin{equation}\label{eq: definition of constant C_N}
    C_N\colonequals \inf_{u \in T}\bigl(s_{\text{min}}(I-M_N(u))\bigr)^2>0.
\end{equation}
Now,  \eqref{eq: complicated quadratic form} can be bounded below by \begin{equation}\label{eq: lower bound for quadratic form}
    C_N\|\Lambda_N^{1/2}(u-q^*)\|_2^2.
\end{equation}
Combining \eqref{eq: lower bound for quadratic form} with \eqref{eq: Taylor: inf_lambda: m=1/2}, we have for some $o(1)$ uniform in $u$,   \begin{equation*}
    \psi_N(1,u)\le \inf_ b U_N(\1, \1;q^*, u; 1/2, b)\le 2\mathsf{RS}^*(1) - 4^{-1} C_N\|\Lambda_N^{1/2}(u-q^*)\|_2^2 +o(1).
\end{equation*} 
On the other hand, since $\Delta^2$ is symmetric, $B_N(v)= v^\intercal \Lambda_N\Delta^2\Lambda_N v \le \rho(\Lambda_N^{1/2}\Delta^2\Lambda_N^{1/2})\|\Lambda_N^{1/2}v\|_2^2$ for $v=u-q^*.$
Together with the last display, we have for large enough $N$, \begin{equation*}
    \psi_N(1,u)\le  \inf_ b U_N(\1, \1;q^*, u; 1/2, b) \le 2\mathsf{RS}^*(1)- \frac{C_N}{4 \rho(\Delta^2\Lambda_N)} B_N(u-q^*) +o(1),
\end{equation*} again using the fact that $\rho(\Lambda_N^{1/2}\Delta^2\Lambda_N^{1/2})=\rho(\Lambda_N^{1/2}\Delta^2\Lambda_N\Lambda_N^{-1/2})=\rho(\Delta^2\Lambda_N).$
Now, in view of \eqref{eq: B_N converge uniformly}, the proof of the first part of the lemma will be complete once we verify that $C_N$ converges as $N\to\infty$.
(It is already  clear that the denominator converges, i.e.,  $\lim_{N\to\infty}\rho(\Delta^2 \Lambda_N)=\rho(\Delta^2 \Lambda)$ by the continuity of spectral radius.)
To this end, as a consequence of min-max formula for singular values,
we obtain a variational formula for \eqref{eq: definition of constant C_N} \begin{equation}\label{eq: variational formula of smallest singular value}
   C_N=\inf_{u\in T} \inf_{\|x\|_2=1} \|(I-M_N(u))x\|_2^2,
\end{equation} from which it suffices to establish the uniform convergence of $\|(I-M_N(u))x\|_2$ in $u$ and $x$ as $N\to\infty.$
With the obvious definition of $M(u)=\lim_{N\to\infty} M_N(u) = 2\hat \Gamma(u) \Lambda^{1/2}\Delta^2  \Lambda^{1/2}$, we estimate using the triangle inequality \begin{align*}
    \bigl|\|(I-M_N(u))x\|_2-\|(I-M(u))x\|_2
    \bigr| &\le \| (M_N(u)-M(u))x\|_2\le \|M_N(u)-M(u)\|_2 \|x\|_2.
\end{align*}
By the submultiplicative property of the matrix norm $\|\cdot \|_2$, \begin{align*}
    2^{-1}\|M_N(u)-M(u)\|_2&= \|\hat \Gamma(u) \Lambda_N^{1/2}\Delta^2 \Lambda\Lambda_N^{-1/2} - \hat \Gamma(u) \Lambda^{1/2}\Delta^2 \Lambda^{1/2}\|_2
    \\&\le \|\hat \Gamma(u)\|_2 \cdot \| \Lambda_N^{1/2}\Delta^2 \Lambda^{1/2}\Lambda_N^{-1/2}-\Lambda^{1/2}\Delta^2\|_2 \cdot \|\Lambda^{1/2}\|_2.
\end{align*}
Since  $\|\hat \Gamma(u)\|_2\le \max_{s\in\mathscr S}\Gamma_{ss}$ by \eqref{eq: entrywise monotonicity of Gamma},  we obtain the desired uniform convergence from the last two displays.

Finally, it remains to establish the asserted stability under the change of parameters.
From  \eqref{eq: variational formula of smallest singular value} and the previous uniform convergence of $C_N$, we can write the quantity $\lim_{N\to\infty}C_N$ as  \[C=C(\Lambda, \Delta^2, \tau^2 )= \inf_{u\in T} \inf_{\|x\|_2=1} \Bigl\|\Bigl(I-2\int_0^1 \Gamma( wu+(1-w)q^*) dw \;\Lambda^{1/2}\Delta^2\Lambda^{1/2} \Bigr)x\Bigr\|_2^2,\] where we recall that (cf. \eqref{eq: diagonal matrix at u}) $\Gamma(u)$ is a diagonal matrix with entries \[\Gamma(u)_{ss}= \e \frac{\e' \sch^3 Y_s }{\e' \ch Y_s} \quad \text{for} \quad Y_s=z_{s} (\tau^2_s+ 2(\Delta^2\Lambda q^*)_s)^{1/2}+z_{s}' (2(\Delta^2\Lambda (u-q^*))_s)^{1/2} \text{ and } s\in\mathscr S.\] 
Let $t\in[0,1]$ and consider the change of variance profile $\Delta^2 \to t\Delta^2$ and $\tau^2 \to \tau^2 +(1-t)2\Delta^2\Lambda q^*$.
By Lemma~\ref{lem: stability of q^*}, $q^*$ is invariant under this change of variance profile.
In contrast,  the random vector $Y$ in the preceding display becomes $Y(t,u)\colonequals (z_{s} (\tau^2_s+  2(\Delta^2\Lambda q^*)_s)^{1/2}+z_{s}' \sqrt{t}(2(\Delta^2\Lambda (u-q^*))_s)^{1/2})_{s\in\mathscr S}$, and we can
define a function $D: [0,1]\times \{u\in [0,1]^{\mathscr S}: \Delta^2\Lambda(u-q^*)\gee \0\} \times \{x \in\mathbb R ^{\mathscr S}:\|x\|_2=1\}\to \mathbb R$ given by  \[D(t,u,x)=\Bigl\|\Bigl(I-2 \text{diag}\Bigl( \int_0^1\e \frac{\e' \sch^3 Y(t,wu+(1-w)q^*)_s }{\e' \ch Y(t,wu+(1-w)q^*)_s} \; dw:s\in\mathscr S \Bigr)  \Lambda^{1/2}\Delta^2\Lambda^{1/2} \Bigr)x\Bigr\|_2^2 \] so that \[C(t)\colonequals C(\Lambda,t\Delta^2,\tau^2+(1-t)2\Delta^2\Lambda q^*(\Delta^2,\tau^2))= \inf_{u\in T}\inf_{\|x\|_2=1} D(t,u,x).\]
In view of Lemma~\ref{lem: inf of jointly continuous is continuous} and the compactness of the set  $T \times \{x \in\mathbb R ^{\mathscr S}:\|x\|_2=1\}$, the function $C(t)$ is continuous whenever $D$ is jointly continuous.
Consequently, in order to verify the desired result \[\inf_{t\in[0,1]}C(t)>0,\] it suffices to check that $C(t)>0$ for each $t\in[0,1]$ and that $D$ is jointly continuous.
Indeed, we can easily repeat the argument from \eqref{eq: consequence of symmetric decreasing density} to \eqref{eq: definition of constant C_N}, and conclude $C(t)>0$ for each $t\in[0,1].$ 
Moreover, it is straightforward to see that $D$ is jointly continuous, as all the operations involved in the definition of $D$ respect continuity.
This concludes the proof.

\end{proof}

The following is the other half of the lemma.

\begin{lemma}\label{lem: GT bound: u<q^*}
Lemma~\ref{lem: GT bound} holds for all $u\gee\0$ such that  $\Delta^2\Lambda(u-q^*)\lee \0.$
    
\end{lemma}
\begin{proof}    
As explained in the beginning of the proof of Lemma~\ref{lem: GT bound: u>q^*}, we may assume $\0\lee u\lee \1.$

Consider the 1RSB upper bound $U_N=U_N(c,c';q_1,q^*;m, b)$.
Set the parameters $c=\1$, $c'=0$ so that $u=q_1$ and $z_{k,1}=z_{k,2}\equalscolon z_k.$
Consider the gradient evaluated at $ b=0$ \begin{equation*}
    W_N(u)\colonequals \nabla_ b  U (\1,0; u,q^*; 0, b)|_{ b=0}= -\Lambda_{N} u+ \Lambda_{N} \e \hth Y_{1} \hth Y_{2},
\end{equation*} where, on the right-hand side, the operations are understood entrywise for the correlated random vectors $Y_1$ and $Y_2$ taking values in $\mathbb R^{\mathscr S}$, whose entries are $(Y_j)_l = z_l (2 (\Delta^2\Lambda u)_l)^{1/2} + z_{l,j}' (2 (\Delta^2\Lambda (q^*-u))_l)^{1/2}+h_l$ for $j=1,2$ and $l \in \mathscr S$.

The rest of the proof is identical to the proof of Lemma~\ref{lem: GT bound: u>q^*}; we only sketch the necessary modifications.
To obtain an analogue of \eqref{eq: integral representation of W: m=1/2}, we first assume that  $\0\slee \Delta^2\Lambda u \slee \Delta^2\Lambda q^*.$
Since \[\partial_{u_k} (Y_j)_l =   (\Delta^2\Lambda)_{lk} \Bigl(\frac{z_l }{\sqrt{2 (\Delta^2\Lambda u)_l}}-\frac{z_{l,j}'}{\sqrt{2 (\Delta^2\Lambda (q^*-u))_l}}\Bigr),\] we have $\e (Y_j)_{l} \partial_{u_k} (Y_j)_l=0$ and $\e (Y_j)_{l} \partial_{u_k} (Y_{3-j})_l=   (\Delta^2\Lambda)_{lk}$ for any $j=1,2$ and $k,l\in \mathscr S$.
By Gaussian IBP,    \begin{align*}
    \partial_{u_k}(W_N(u))_l&= -\lambda_{l,N}\1_{k=l}+ \lambda_{l,N}\sum_{j=1,2} \e \partial_{u_k} (Y_j)_l  \sch^2 (Y_j)_l  \hth (Y_{3-j})_l  \nonumber
    \\&=-\lambda_{l,N}\1_{k=l}+ \lambda_{l,N}(\Delta^2\Lambda)_{lk} \sum_{j=1,2}  \e   \sch^2 (Y_j)_l  \sch^2 (Y_{3-j})_l \nonumber
    \\&=-\lambda_{l,N}\1_{k=l}+  2 (\Lambda_N\Delta^2\Lambda)_{lk} \e   \sch^2 (Y_1)_l  \sch^2 (Y_{2})_l. 
\end{align*}
If we define \[\Gamma(u)= \text{diag}( \e \sch^2(Y_1)_l \sch^2(Y_2)_l: l \in \mathscr S),\]
we obtain an analogue of \eqref{eq: integral representation of W: m=1/2} under the current assumption $\0\slee \Delta^2\Lambda u \slee \Delta^2\Lambda q^*.$
The general case $\0\lee \Delta^2\Lambda u \lee \Delta^2\Lambda q^*$ can be approximated as follows.
If $q^*=\0$, then $\Delta^2 \Lambda u=\0$ and $B(u-q^*)=0$, so that the lemma is trivially true from $\psi_N(1,u)\le  2\e F_N(1)\le 2\mathsf{RS}^*(1)+o(1)$.
The remaining case is  $q^*\in (0,1)^{\mathscr S}$, in view of Proposition~\ref{prop: uniqueness of fixed-point}.
We  choose $\0 \slee v\slee q^*$ so that $\0 \slee \Delta^2 \Lambda v\slee \Delta^2\Lambda q^*$ by the irreducibility of $\Delta^2$, and define $w_t =tu +(1-t)v$ for $t\in [0,1]$.
One can easily check  $\Delta^2\Lambda w_t \sgee \0 $ and $\Delta^2\Lambda(q^*-w_t)\sgee \0$ for all $t\in[0,1)$.
Now, we approximate $u$ by $w_t$ as $t\to 1$, and use the continuity of both sides of \eqref{eq: integral representation of W: m=1/2} to validate it in this general case.

For any $l\in\mathscr S$, we have $(Y_1)_l\stackrel{d}{=}(Y_2)_l \stackrel{d}{=}2 (\Delta^2\Lambda q^*)_l z +h_l$ for some standard normal random variable $z$, so by the Cauchy--Schwarz inequality, \begin{equation}\label{eq: control of Gamma for u<q^*}
    0\slee  \Gamma(u)\lee  \Gamma(q^*)=\Gamma,
\end{equation}where $\Gamma$ was defined in \eqref{eq: diagonal operator}. 
To finish the proof, we use, respectively, Lemma~\ref{lem: m=0 2RS},  Lemma~\ref{lem: Hess in b: m=0}, and \eqref{eq: control of Gamma for u<q^*} in place of Lemma~\ref{lem: m=1/2 less than 2RS}, Lemma~\ref{lem: Hess in b when m=1/2}, and  \eqref{eq: consequence of symmetric decreasing density}.
Of course, we need to  reverse the inequality in the definition \eqref{eq: definition of T: half of it}.

\begin{proof}[\bf Proof of Lemma~\ref{lem: GT bound}]
      The proof is obtained by combining Lemmas~\ref{lem: GT bound: u>q^*} and \ref{lem: GT bound: u<q^*}.
\end{proof}

\end{proof}

\section{Proof of Theorem \ref{thm: main theorem}}\label{sec: Proof of main theorem}
We start with the proof of the presence of RSB solution above the AT surface.
Recall the following.
\begin{theorem}[Theorem 1.15, \cite{DW21}]\label{thm: RSB}
Assume $\Delta^2$ is positive-definite and $\tau^2\sgee \0.$
If $\rho(\Gamma \Delta^2\Lambda)  >1/2$, then \[\lim_{N\to\infty }\e F_N < \inf_{q\in [0,1]^{\mathscr S}}\mathsf{RS}(q).\]    
\end{theorem}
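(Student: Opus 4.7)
The plan is Toninelli's perturbative argument adapted to the multi-species setting: invoke the Parisi formula as an upper bound, $\lim_{N\to\infty} \e F_N \le \inf_{r,(q_l),(\zeta_l)} \mathscr{P}((q_l),(\zeta_l))$, and exhibit an explicit 1-RSB ansatz whose value is strictly below $\mathsf{RS}(q^*)$. By Proposition~\ref{prop: infimum of RS functional for positive-definite} together with $\tau^2 \sgee \0$, we have $q^* \in (0,1)^{\mathscr{S}}$ and $\mathsf{RS}(q^*) = \inf_{q\in[0,1]^{\mathscr S}} \mathsf{RS}(q)$, so it suffices to produce such a 1-RSB ansatz.

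Take $r=3$ with $\zeta_0 = 0$, $\zeta_1 = m$, $\zeta_2 = 1$ for a small $m \in (0,1)$, and
\[
q_0 = \0, \quad q_1 = q^* - a\,\epsilon\, v, \quad q_2 = q^* + b\,\epsilon\, v, \quad q_3 = \1,
\]
where $v \in \mathbb{R}^{\mathscr{S}}$ is a direction, $a, b \ge 0$ are weights to be chosen, and $\epsilon > 0$ is small enough that $\0 \lee q_1 \lee q_2 \lee \1$. Let $\Phi(\epsilon) := \mathscr{P}((q_l),(\zeta_l))$. Since the 1-RSB ansatz collapses to the RS ansatz when $q_1 = q_2$, we have $\Phi(0) = \mathsf{RS}(q^*)$, and the strategy is to show $\Phi(\epsilon) < \mathsf{RS}(q^*)$ for a well-chosen $(v, m, a, b, \epsilon)$.

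The heart of the matter is the Taylor expansion
\[
\Phi(\epsilon) = \mathsf{RS}(q^*) + \epsilon\, A(v; m,a,b) + \tfrac12\,\epsilon^2\, Q(v; m,a,b) + O(\epsilon^3).
\]
The linear coefficient $A$ splits into a term proportional to $v^\intercal \nabla \mathsf{RS}(q^*)$, which vanishes by the fixed-point equation \eqref{eq: fixed-point equation in RS} together with Lemma~\ref{lem: infimum of RS}, plus a term linear in $(b-a)$ that is killed by an appropriate choice $a = a(m)$, $b = b(m)$. The quadratic coefficient $Q$ is then evaluated via Gaussian IBP on the outer $h$-expectation and the inner $z_k$-expectations in \eqref{eq: inductive definition}, and a careful $m \to 0^+$ analysis yields an identity of the schematic form
\[
Q(v; m) = c(m)\, v^\intercal \Lambda \Delta^2 \Lambda\,(I - 2 \Gamma \Delta^2 \Lambda)\, v + O(m),
\]
with $c(m) > 0$ and $\Gamma$ the diagonal matrix from \eqref{eq: diagonal operator}. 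To produce a negative direction, take $v$ to be the Perron eigenvector of $\Gamma \Delta^2 \Lambda$: since $\Gamma$ and $\Lambda$ are positive diagonal matrices and $\Delta^2$ is irreducible, the product $\Gamma \Delta^2 \Lambda$ is entrywise non-negative and irreducible, so Perron–Frobenius supplies $v \sgee \0$ with $\Gamma \Delta^2 \Lambda v = \rho(\Gamma \Delta^2 \Lambda) v$. Substituting yields a positive multiple of $\bigl(1 - 2\rho(\Gamma \Delta^2 \Lambda)\bigr)\, v^\intercal \Lambda \Delta^2 \Lambda v$, which is strictly negative under the hypothesis $\rho(\Gamma \Delta^2 \Lambda) > 1/2$, using $v^\intercal \Lambda \Delta^2 \Lambda v > 0$ by positive-definiteness of $\Delta^2$. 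Choosing $m$ first and then $\epsilon$ small enough gives $\Phi(\epsilon) < \mathsf{RS}(q^*)$, and the Parisi upper bound concludes.

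The main obstacle is the quadratic bookkeeping in the multi-species setting: the iterated log-partition in \eqref{eq: inductive definition} couples derivatives across species through $\Delta^2 \Lambda$, so extracting the clean tensor $(I - 2\Gamma \Delta^2 \Lambda)$ requires careful, repeated Gaussian IBP on both the outer $h$-layer and the inner $z_k$-layers, together with a small-$m$ expansion delicate enough that degeneracies at $m = 0$ (where the 1-RSB parameters become degenerate) do not obscure the sign of $Q$. Once this structural identity is isolated, the Perron–Frobenius eigenvector of the non-negative irreducible matrix $\Gamma \Delta^2 \Lambda$ is the natural witness of the instability, and the argument closes in a manner parallel to the single-species calculation of \cite{Ton02}.
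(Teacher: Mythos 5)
The paper does not actually prove Theorem~\ref{thm: RSB}: it is quoted verbatim from Dey--Wu \cite{DW21} and used as a black box, the only new content in this direction being Lemma~\ref{lem: RSB refined}, which re-runs the argument of that reference under weaker hypotheses. Your outline is precisely the Toninelli-style perturbative argument that the cited reference executes --- Parisi upper bound, a two-atom ansatz split around $q^*$, vanishing first-order term, and a second-order coefficient carrying the factor $1-2\rho(\Gamma\Delta^2\Lambda)$ detected along a Perron direction --- so the approach matches the source exactly.

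Two caveats. First, the decisive step, namely the identity $Q(v;m)=c(m)\,v^\intercal\Lambda\Delta^2\Lambda(I-2\Gamma\Delta^2\Lambda)v+O(m)$ with $c(m)>0$, is asserted rather than derived. That computation (iterated Gaussian integration by parts through the recursion \eqref{eq: inductive definition}, with the degeneracy at $m=0$ controlled) is the entire technical content of the theorem, so as written this is a correct outline of \cite{DW21} rather than a proof. Second, a small imprecision in the first-order analysis: at $\eps=0$ the ansatz degenerates to $\delta_{q^*}$, and since $q^*$ solves the fixed-point equation \eqref{eq: fixed-point equation in RS}, the gradient of $\mathscr P$ in each atom already vanishes there (cf.\ \eqref{eq: derivative of MSK Parisi functional} with $a_1=a_2=q^*$); no choice of $a,b$ is needed to kill the linear term. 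The constraint one actually imposes on $a,b$ (keeping the barycenter $m q_1+(1-m)q_2$ at $q^*$, i.e.\ $ma=(1-m)b$) serves instead to produce the clean quadratic form you display, and this should be said explicitly, since otherwise the $O(\eps^2)$ coefficient acquires extra terms that obscure the sign analysis.
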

We can extend this to any  irreducible positive-semidefinite $\Delta^2$ and $\tau^2\gee \0$.

\begin{lemma}\label{lem: RSB refined}
   Theorem \ref{thm: RSB} holds under the weaker assumption of irreducible positive-semidefinite $\Delta^2$ and $\tau^2\gee \0$.
\end{lemma}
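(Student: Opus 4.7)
The strategy is a perturbation-approximation argument that reduces Lemma~\ref{lem: RSB refined} to Theorem~\ref{thm: RSB}. First observe that the hypothesis $\rho(\Gamma\Delta^2\Lambda)>1/2$ forces $q^*\in(0,1)^{\mathscr S}$: if $q^*=\0$ then $\Gamma=I$, so $\rho(\Delta^2\Lambda)>1/2$, which by Proposition~\ref{prop: uniqueness of fixed-point} contradicts $q^*=\0$. Approximate the variance profile by
\[\Delta^2_n\colonequals \Delta^2+n^{-1}I,\qquad \tau^2_n\colonequals \tau^2+n^{-1}\1.\]
Each $\Delta^2_n$ is irreducible and positive-definite and each $\tau^2_n\sgee\0$, so Theorem~\ref{thm: RSB} is directly applicable to $(\Delta^2_n,\tau^2_n)$ as soon as $\rho(\Gamma_n\Delta^2_n\Lambda)>1/2$, where $\Gamma_n\colonequals\Gamma(\Delta^2_n,\tau^2_n)$.

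Since $\tau^2_n\ne\0$, Lemma~\ref{lem: q^* monotone and continuous} yields $q^*_n\colonequals q^*(\Delta^2_n,\tau^2_n)\to q^*$. The entries of $\Gamma_n$ are uniformly bounded, smooth Gaussian integrals of $q^*_n$ and $(\Delta^2_n,\tau^2_n)$, so dominated convergence gives $\Gamma_n\to\Gamma$, and continuity of the spectral radius in turn yields $\rho(\Gamma_n\Delta^2_n\Lambda)\to\rho(\Gamma\Delta^2\Lambda)>1/2$. Hence for every sufficiently large $n$, Theorem~\ref{thm: RSB} provides
\[\lim_{N\to\infty}\e F_N^{(n)}<\inf_{q\in[0,1]^{\mathscr S}}\mathsf{RS}_n(q)=\mathsf{RS}_n(q^*_n),\]
where $F_N^{(n)}$ is the free energy attached to the variance profile $(\Delta^2_n,\tau^2_n)$.

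The main obstacle is promoting this strict inequality to the limit $n\to\infty$: the Lipschitz continuity of both the Parisi formula and the RS infimum in $(\Delta^2,\tau^2)$ established in Appendix~\ref{sec: Proof of continuity of Parisi formula in model parameters} would by itself only yield the weak inequality $\lim_N\e F_N\le \inf_q \mathsf{RS}(q)$ after passing to the limit. To upgrade to strict inequality, I would open up Dey--Wu's proof of Theorem~\ref{thm: RSB} and extract Toninelli's perturbative $2$-RSB probe: for each large $n$ one constructs a test measure $\mu_n^\eps$, parametrized by a small $\eps>0$ and built from a Perron eigenvector of $\Gamma_n\Delta^2_n\Lambda$ evaluated at $q^*_n$, which satisfies a quantitative bound
\[\mathscr P_n(\mu_n^\eps)\le \mathsf{RS}_n(q^*_n)-C_n\,\eps^4+O(\eps^5),\]
with $C_n>0$ depending continuously on $(\Delta^2_n,\tau^2_n,q^*_n,\Gamma_n,\rho(\Gamma_n\Delta^2_n\Lambda)-1/2)$ and the chosen eigenvector. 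By the convergences already established, the constants $C_n$ admit a uniform positive lower bound $C>0$ for $n$ large, and the probes $\mu_n^\eps$ converge weakly to a well-defined limiting probe $\mu^\eps$ attached to $(\Delta^2,\tau^2)$. Combining the Parisi upper bound $\lim_N\e F_N^{(n)}\le \mathscr P_n(\mu_n^\eps)$ with the Lipschitz continuity of $\mathscr P$ in the parameters and of $\nu\mapsto\mathscr P(\nu)$ in $\nu$, we can send $n\to\infty$ at a fixed small $\eps>0$ to obtain
\[\lim_{N\to\infty}\e F_N\le \mathscr P(\mu^\eps)\le \mathsf{RS}(q^*)-\tfrac{C}{2}\eps^4<\inf_{q\in[0,1]^{\mathscr S}}\mathsf{RS}(q),\]
which is the required RSB statement. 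The delicate point throughout is verifying that the perturbative constants in Toninelli's construction depend only on quantities we have shown to converge, so that they remain bounded away from zero along the approximation; a careful inspection of the RSB probe in \cite{DW21} (which was written for positive-definite $\Delta^2$ and nonzero deterministic external field, but whose construction is invariant under replacing the deterministic field by a Gaussian one with the matching variance profile $(\Delta^2,\tau^2+2\Delta^2\Lambda q^*)$) should confirm this.
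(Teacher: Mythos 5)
Your proposal is correct in outline but takes a genuinely different, and heavier, route than the paper. The paper proves Lemma~\ref{lem: RSB refined} directly, with no approximation: it observes that the only two places where Dey--Wu's perturbative argument uses positive-definiteness and $\tau^2\sgee\0$ are (i) the existence of a strictly positive Perron eigenvector of $(\Lambda\Gamma)^{1/2}\Delta^2(\Lambda\Gamma)^{1/2}$, which irreducibility supplies via Perron--Frobenius, and (ii) the identification of $q^*$ as a minimizer of the RS functional satisfying the fixed-point equation, which is exactly Lemma~\ref{lem: infimum of RS: semidefinite}; with these two ingredients the proof of \cite[Theorem 1.15]{DW21} is repeated verbatim. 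Your approximation by $(\Delta^2+n^{-1}I,\tau^2+n^{-1}\1)$ is sound up to the point you yourself flag: strict inequality does not survive $n\to\infty$, so you are forced to reopen Dey--Wu's proof anyway to extract a quantitative gap with constants uniform in $n$ (convergence of $q^*_n$, $\Gamma_n$, the Perron vectors, and the spectral radii does make this feasible, since the Toninelli-type expansion coefficients are explicit continuous functions of these data). But once you are inspecting the interior of that proof, the approximation layer buys you nothing --- you may as well verify, as the paper does, that the argument runs directly under irreducibility plus Lemma~\ref{lem: infimum of RS: semidefinite}. So the approach works, but the decisive step in both routes is the same inspection of \cite{DW21}, and yours packages it inside an unnecessary limiting procedure whose uniformity claims would themselves need to be checked carefully.
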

\begin{proof}

    Since $\Delta^2$ is irreducible, the matrix $(\Lambda \Gamma)^{1/2}\Delta^2(\Lambda \Gamma)^{1/2}\gee \0$ is also irreducible, and we can find the Perron eigenvector with all entries positive; see \cite[Equation (8.3.8)]{Mey00_book}.
    Thanks to Lemma~\ref{lem: infimum of RS: semidefinite}, we can finish the proof by exactly repeating the arguments in the proof of  \cite[Theorem 1.15]{DW21}.
\end{proof}

We are now in a position to prove the main theorem.

\begin{proof}[\bf{Proof of Theorem \ref{thm: main theorem}}]

By Lemma~\ref{lem: RSB refined}, it remains to consider the case   $\rho(\Gamma\Delta^2\Lambda)\le 1/2$, which we divide into three steps.

\vspace{0.5em}
\noindent \textbf{Step 1. $\rho(\Gamma \Delta^2 \Lambda)<1/2$, irreducible, positive-definite $\Delta^2$, and $\tau^2\sgee\0$.}
 To establish the theorem in this case, it suffices to validate the assumption of  Proposition~\ref{prop: sufficient condition for RS}, which we recall for convenience: for any $0\le t\le 1$, there exist a constant \(C = C(\Lambda,\Delta^2,\tau^2) > 0\) and an \(o(1)\) quantity which is uniform in \(u \in \mathsf{T}\) (but may depend on $t$) such that \begin{equation}\label{eq: desired result}
   \psi_N(t,u)\le 2\mathsf{RS}^*(t) -C B(u-q^*(\Delta^2,\tau^2))+o(1) \quad \text{for all } u\in\mathsf T, \, \, 0\le t\le 1.
\end{equation}
To this end, fix $t\in [0,1]$ and consider a change of variance profile $\Delta^2 \rightsquigarrow t\Delta^2$ and $\tau^2 \rightsquigarrow \tau^2+ (1-t) 2\Delta^2 \Lambda q^*(\Delta^2,\tau^2)$.
By Lemma~\ref{lem: stability of q^*}, we can unambiguously write $q^*$, and from this, one can check that $\Gamma$ is invariant under this change of variance profile.
On the contrary, it is easy to check the following changes \begin{equation*}
    \psi_N(1,u) \rightsquigarrow \psi_N(t,u), \quad 2\mathsf{RS}^*(1)\rightsquigarrow2\mathsf{RS}^*(t), \quad B(u-q^*)\rightsquigarrow tB(u-q^*).
\end{equation*} 
In particular, $\rho(\Gamma \Delta^2\Lambda) \rightsquigarrow t \rho(\Gamma \Delta^2\Lambda)<1/2$ whereas the ratio $B(u-q^*)/ \rho(\Delta^2\Lambda)$ does not change.
Consequently, under the new variance profile,  Lemma~\ref{lem: GT bound} implies  \eqref{eq: desired result} as desired.
Here, we emphasize  the constant $C$ remains bounded away from zero uniformly in $t\in[0,1]$---this is precisely guaranteed by Lemma~\ref{lem: GT bound}.

\vspace{0.5em}
\noindent\textbf{Step 2. $\rho(\Gamma \Delta^2 \Lambda)<1/2$, irreducible, positive-semidefinite $\Delta^2$.}
Consider $(\Delta^2_n,\tau^2_n)_{n\ge 1}$ in \eqref{eq: approximating Delta^2} and denote the diagonal matrix with entries \eqref{eq: diagonal operator} under the variance profile $(\Delta^2_n,\tau^2_n)$ by $\Gamma_n$.
By the continuity of the spectral radius, we have $\lim_{n\to\infty}\rho(\Gamma_n \Delta^2_n\Lambda) =\rho(\Gamma \Delta^2\Lambda)<1/2.$
Moreover, $\Delta^2_n$ is irreducible and positive-definite, and $\tau^2_n\sgee \0$, so we can apply Step 1 to the variance profile $(\Delta^2_n,\tau^2_n)$ and conclude that the model is replica symmetric for sufficiently large $n$.
Now, we can combine Lemma~\ref{lem: free energy Lipschitz} and the convergence in \eqref{eq: extending to non-negative entries: 0} to obtain the desired result by sending $n\to\infty.$

\vspace{0.5em}
\noindent\textbf{Step 3. $\rho(\Gamma \Delta^2 \Lambda)=1/2$, irreducible, positive-semidefinite $\Delta^2$.}
In this case, we fix $\Delta^2$ and approximate $\tau^2$ by $\tau^2_n\colonequals \tau^2 +n^{-1}\1$.
Denote the diagonal matrix with entries \eqref{eq: diagonal operator} under the variance profile $(\Delta^2,\tau^2_n)$ by $\Gamma_n$.
Item (ii) in Lemma~\ref{lem: q^* monotone and continuous} ensures $\Gamma_n\slee \Gamma$.

We claim that $\rho(\Gamma_n\Delta^2\Lambda)<\rho(\Gamma\Delta^2\Lambda)=1/2$ for all $n\in\mathbb N.$
To this end, fix $n\in\mathbb N$.
Irreducibility of $\Delta^2$ implies $\Gamma_n\Delta^2\Lambda$ is also irreducible, so it has a Perron vector $x_n\sgee \0$.
Since $\Gamma_n\slee \Gamma$, it holds that $ \rho( \Gamma_n\Delta^2\Lambda)x_n=\Gamma_n\Delta^2\Lambda x_n \slee \Gamma\Delta^2\Lambda x_n $ and, therefore, \[  \rho( \Gamma_n\Delta^2\Lambda)< \min_{s\in\mathscr S} \frac{(\Gamma\Delta^2\Lambda x_n)_s}{(x_n)_s} \le \rho(\Gamma\Delta^2\Lambda),\] where the last inequality is due to the Collatz--Wielandt formula (see, e.g., \cite[(8.3.3)]{Mey00_book}) applied to the irreducible matrix $\Gamma \Delta^2 \Lambda \gee \0$.
This validates the claim.

Thanks to this claim, we can apply Step 2 so that the model is replica symmetric at $(\Delta^2,\tau^2_n)$.
By the continuity in Lemma~\ref{lem: q^* monotone and continuous}, $q^*_n=q^*_n(\Delta^2,\tau^2_n)$ converges to $q^*=q^*(\Delta^2,\tau^2)$ as $n\to\infty$ and we obtain \eqref{eq: extending to non-negative entries: 0} by a trivial modification of the proof of Lemma~\ref{lem: infimum of RS: semidefinite}.
On the other hand, the free energy also converges as $n\to\infty$ by  Lemma~\ref{lem: free energy Lipschitz}, which finishes the proof.


\end{proof}

\appendix


\section{Extension of  the Parisi functional}\label{sec: Extension of Parisi functional}
In this appendix, we extend the Parisi functional to a larger domain that contains continuous measures as well, and validate Proposition~\ref{prop: support of Parisi measure}.
The results  are presented for centered Gaussian external field, but it should be noted that they all hold for deterministic external fields with necessary modifications.
The proofs of this appendix are straightforward extensions of those of the single-species SK model.

The first result of this appendix is the extension of the Parisi functional to arbitrary measures on $[0,1]^{\mathscr S}$ with totally ordered supports.
Before stating the result, let us first provide relevant definitions.
Denote the $L^p$ norm on $\mathbb R^{\mathscr S}$ by $\|\cdot \|_p$ for $1\le p\le \infty$. 
For any probability measures $\mu$ and $\nu$ on $[0,1]^{\mathscr S}$, define the $1$-Wasserstein distance by $W_1(\mu,\nu)= \inf_{X\sim \mu,\; Y\sim \nu}\e \|X-Y\|_1$, which metrizes the weak topology.
Recall $\mathcal M^{\uparrow}$ and $\mathcal M^\uparrow_d$ defined in Section~\ref{sec: Support of Parisi measures}.

The following confirms the Lipshitz continuity of the Parisi functional on $\mathcal M^\uparrow_d.$
\begin{proposition}\label{prop: Lipschitz continuity of Parisi functional}
For any $\mu_1,\mu_2\in\mathcal M^\uparrow_d$, \[|\mathscr P(\mu_1) -\mathscr P(\mu_2)|\le \|\Lambda \Delta^2 \Lambda\|_\infty W_1(\mu_1,\mu_2).    \]
\end{proposition}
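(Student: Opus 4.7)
The plan is to linearly interpolate between $\mu_1$ and $\mu_2$ and bound the derivative of $\mathscr P$ along the path. First, by inserting atoms with zero mass into \eqref{eq: zeta sequence}--\eqref{eq: q sequence} if necessary, we may represent $\mu_1,\mu_2$ on a common refined partition $(\zeta_l)_{l=-1}^r$ with atoms $(q_l^{(1)})_l$, $(q_l^{(2)})_l$ of matched weights $\zeta_l-\zeta_{l-1}$. Pairing atoms in accordance with the total orderings of the supports guarantees that the linear path $q_l(t)\colonequals (1-t)q_l^{(1)}+t q_l^{(2)}$ remains totally ordered in $l$, so the induced measure $\mu_t\in \mathcal{M}^\uparrow_d$ for all $t\in[0,1]$, and the fundamental theorem of calculus yields
$$\mathscr P(\mu_2)-\mathscr P(\mu_1) = \int_0^1 \sum_l (q_l^{(2)}-q_l^{(1)})^\intercal \nabla_{q_l}\mathscr P(\mu_t)\,dt.$$

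The core of the argument is an explicit formula for $\nabla_{q_l}\mathscr P$. Differentiating the quadratic part of \eqref{eq: Parisi functional for MSK} using $Q_l=q_l^\intercal \Lambda\Delta^2\Lambda q_l$ contributes $(\zeta_l-\zeta_{l-1})\lambda_s(\Delta^2\Lambda q_l)_s$ to $\partial_{q_l^s}\mathscr P$, while the chain rule through $Q_l^{s'}=2(\Delta^2\Lambda q_l)_{s'}$ (noting that among the $Q^{s'}_k$'s only $Q_l^{s'}$ depends on $q_l$) produces the term $2\lambda_s\sum_{s'}\lambda_{s'}\Delta^2_{s's}\,\partial_{Q_l^{s'}}\e_h X_0^{s'}$. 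The key Parisi derivative identity is
$$\partial_{Q_l^{s'}}\e_h X_0^{s'} = -\tfrac12(\zeta_l-\zeta_{l-1})\,\e_h\Psi_\mu(h,q_l)_{s'},$$
to be obtained by iterated Gaussian integration by parts on \eqref{eq: inductive definition}--\eqref{eq: terminal X} in complete analogy with the single-species case \cite[Proposition 1-(ii)]{AC15_PTRF}; consistency with the stationarity equation of Proposition~\ref{prop: support of Parisi measure} provides a sanity check, since zeroing $\partial_{q_l^s}\mathscr P$ at a Parisi minimizer recovers $q_l=\e_h\Psi_\mu(h,q_l)$ when $\Delta^2$ is positive-definite. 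Substituting, the two contributions combine into
$$\partial_{q_l^s}\mathscr P(\mu_t) = (\zeta_l-\zeta_{l-1})\,\lambda_s\,\bigl(\Delta^2\Lambda\bigl(q_l(t)-\e_h\Psi_{\mu_t}(h,q_l(t))\bigr)\bigr)_s.$$

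Since $\e_h\Psi_{\mu_t}(h,q_l(t))\in[0,1]^{\mathscr S}$ by \eqref{eq: Psi for discrete measure} (its entries are $W$-weighted averages of $\tanh^2\le 1$), the vector $q_l(t)-\e_h\Psi_{\mu_t}(h,q_l(t))$ lies entrywise in $[-1,1]$; hence
$|\partial_{q_l^s}\mathscr P(\mu_t)|\le (\zeta_l-\zeta_{l-1})(\Lambda\Delta^2\Lambda\1)_s \le (\zeta_l-\zeta_{l-1})\|\Lambda\Delta^2\Lambda\|_\infty.$
Plugging into the integral expression and applying H\"older's inequality entrywise gives
$|\mathscr P(\mu_2)-\mathscr P(\mu_1)|\le \|\Lambda\Delta^2\Lambda\|_\infty \sum_l(\zeta_l-\zeta_{l-1})\|q_l^{(2)}-q_l^{(1)}\|_1.$
Because this inequality is valid for every common refinement and every pairing, taking the infimum over all such discrete couplings yields the claimed $\|\Lambda\Delta^2\Lambda\|_\infty\, W_1(\mu_1,\mu_2)$, since discrete couplings exhaust the Kantorovich infimum for discrete measures. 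The main obstacle is the Parisi derivative identity itself: while the single-species analog is routine Gaussian IBP on the cascade \eqref{eq: inductive definition}, the multi-species version requires careful tracking of how each weight $W_l^{s'}$ depends on the entire $(Q_k^{s'})_k$-row while being independent of the other species, and of the cancellation that extracts the factor $(\zeta_l-\zeta_{l-1})$ cleanly without producing off-diagonal cross terms that would spoil the constant $\|\Lambda\Delta^2\Lambda\|_\infty$.
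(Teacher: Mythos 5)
Your interpolation, the gradient formula $\partial_{q_l^s}\mathscr P = (\zeta_l-\zeta_{l-1})\bigl(\Lambda\Delta^2\Lambda(q_l-\e_h\Psi_{\mu}(h,q_l))\bigr)_s$, and the entrywise bound are exactly the paper's argument (the derivative identity you defer to ``iterated Gaussian IBP'' is correctly stated and is proved in the paper via the Ruelle cascade representation, so that omission is acknowledged rather than wrong). The genuine gap is in your last step. You claim the bound ``is valid for every common refinement and every pairing'' and then infimize over all discrete couplings to reach $W_1(\mu_1,\mu_2)$. But your derivation only works for the \emph{monotone} pairing: for a non-monotone pairing the interpolated sequence $q_l(t)$ need not be nondecreasing in $l$, so $Q_l^s(t)-Q_{l-1}^s(t)$ can be negative and the square roots in \eqref{eq: terminal X} — hence $\mathscr P(\mu_t)$ itself — are undefined along the path. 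So what you actually prove is $|\mathscr P(\mu_1)-\mathscr P(\mu_2)|\le \|\Lambda\Delta^2\Lambda\|_\infty\sum_l(\zeta_l-\zeta_{l-1})\|q_l^{(1)}-q_l^{(2)}\|_1$, i.e.\ the bound with the cost of the monotone coupling, and you still owe the statement that this particular coupling attains the Kantorovich infimum for the $\ell^1$ cost.

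This is not automatic in dimension $|\mathscr S|>1$: it is a fact about two measures whose supports are each totally ordered under $\lee$. The paper closes this by proving that the monotone matching is $L^1$-cyclically monotone (Lemma~\ref{lem: cyclically monotone}) and invoking the characterization of optimal couplings via cyclically monotone supports; alternatively one can argue that the $\ell^1$ cost decouples coordinatewise and the monotone coupling simultaneously realizes the one-dimensional optimal transport in every coordinate. Either way, an argument for the optimality of the monotone coupling must be supplied; without it your inequality has the (possibly larger) monotone-coupling cost in place of $W_1(\mu_1,\mu_2)$.
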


To extend the Parisi functional to all of $\mathcal M^\uparrow$, it remains to observe the following.
\begin{lemma} \label{lem: closure of discrete with totally ordered support}
    Under the weak topology, the closure of $\mathcal M^\uparrow_d$ is equal to $\mathcal M^\uparrow.$
    In particular, $\mathcal M^\uparrow$ is compact.
\end{lemma}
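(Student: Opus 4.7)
The plan is to prove two things: (i) $\mathcal M^\uparrow_d$ is dense in $\mathcal M^\uparrow$, and (ii) $\mathcal M^\uparrow$ is closed in the weak topology. Together these give the claimed equality, and the ``in particular'' part follows at once, since the ambient space of Borel probability measures on the compact set $[0,1]^{\mathscr S}$ is weakly compact (by Prokhorov's theorem, tightness being automatic), so any closed subset is compact as well.

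For closedness, the central observation is that incomparability under $\lee$ is an \emph{open} condition on $[0,1]^{\mathscr S}\times[0,1]^{\mathscr S}$: if $x$ and $y$ are incomparable, then there exist $s,t\in\mathscr S$ with $x_s<y_s$ and $x_t>y_t$ strictly, and both strict inequalities persist under small perturbations of $x$ and $y$. Given this, if $\mu_n\stackrel{w}{\to}\mu$ with each $\mu_n\in\mathcal M^\uparrow$, and if by contradiction $\mu$ had incomparable points $x^*,y^*\in\operatorname{supp}(\mu)$, one picks open neighborhoods $U\ni x^*$ and $V\ni y^*$ such that every pair $(u,v)\in U\times V$ is incomparable, with $\mu(U),\mu(V)>0$ by the definition of support. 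The Portmanteau theorem yields $\liminf_n \mu_n(U)\ge\mu(U)>0$ and similarly for $V$, so for all large $n$ the support of $\mu_n$ meets both $U$ and $V$, producing a pair of incomparable points in $\operatorname{supp}(\mu_n)$---a contradiction.

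For density, given $\mu\in\mathcal M^\uparrow$ and $\varepsilon>0$, I would partition $[0,1]^{\mathscr S}$ into finitely many Borel cells $E_1,\dots,E_k$ of diameter at most $\varepsilon$ (for instance, by a uniform grid in each coordinate). For each $i$ with $\mu(E_i)>0$, pick a representative $x_i\in E_i\cap\operatorname{supp}(\mu)$, which is nonempty precisely because $\mu(E_i)>0$, and set $\mu_\varepsilon=\sum_{i:\mu(E_i)>0}\mu(E_i)\delta_{x_i}$. By construction $\operatorname{supp}(\mu_\varepsilon)\subseteq \operatorname{supp}(\mu)$, which is totally ordered, so $\mu_\varepsilon\in\mathcal M^\uparrow_d$; and the natural coupling that sends each $y\in E_i$ to $x_i$ gives $W_1(\mu,\mu_\varepsilon)\le\varepsilon$ (up to a fixed constant depending on the choice of norm), so $\mu_\varepsilon\stackrel{w}{\to}\mu$ as $\varepsilon\downarrow 0$.

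The only subtle point, and thus the main (minor) obstacle, lies in the density step: the representative $x_i$ must be drawn from $E_i\cap\operatorname{supp}(\mu)$ rather than from an arbitrary point of $E_i$, since this is precisely what keeps the support of the approximant inside the totally ordered set $\operatorname{supp}(\mu)$. Everything else in both steps is soft topology, and the two halves fit together cleanly to deliver the equality and the compactness assertion.
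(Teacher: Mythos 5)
Your proposal is correct and follows essentially the same route as the paper: the closedness half is the identical Portmanteau argument (incomparability is an open condition, so two neighborhoods of incomparable support points would force incomparable points into the support of $\mu_n$ for large $n$), and the density half realizes the paper's terse claim that one can choose discrete approximants with $\operatorname{supp}(\mu_n)\subseteq\operatorname{supp}(\mu)$. Your cell-partition construction with representatives drawn from $E_i\cap\operatorname{supp}(\mu)$ (nonempty since $\mu(E_i)=\mu(E_i\cap\operatorname{supp}(\mu))$) is a valid and slightly more explicit way to produce exactly such approximants.
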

\begin{proof}
    For any $\mu\in\mathcal M^\uparrow$, we can find a sequence of discrete probability measures $(\mu_n)_{n\ge 1}$ that weakly converges to $\mu$, and such that $\cup_{n\ge 1}\operatorname{supp}(\mu_n)\subseteq \operatorname{supp}(\mu).$
    In particular, $\operatorname{supp}(\mu_n)$ is totally ordered for any $n\ge 1.$
    This proves half of the lemma.
    
    Conversely, suppose there exists a sequence of discrete measures $(\mu_n)_{n\ge 1}$ with totally ordered support that converges weakly to some $\mu.$
    We need to show that $\operatorname{supp}(\mu)$ is totally ordered.
    Suppose otherwise. 
    We can find two points $x,y\in \operatorname{supp}(\mu)\cap [0,1]^{\mathscr S}$ such that, without loss of generality, $x_1> y_1$ and $x_2< y_2.$
    There exist open sets $B_x \ni x$ and $B_y\ni y$  such that $B_x\cap B_y =\emptyset$, and $z_1>w_1$ and $z_2<w_2$ whenever $z\in B_x$ and $w\in B_y.$
    By the Portmanteau theorem, since $\mu_n\stackrel{w}{\to} \mu$ as $n\to\infty$, \begin{equation*}
        \liminf_{n\to\infty} \mu_{n}(B_x) \ge  \mu(B_x)>0\quad   \text{and} \quad \liminf_{n\to\infty} \mu_{n}(B_y) \ge \mu (B_y)>0,
    \end{equation*} so we can find a large $m$ such that $\mu_{m}(B_x)\wedge \mu_{m}(B_y)>0$.
    This is a contradiction to $\operatorname{supp}(\mu_{m})$ being totally ordered.
\end{proof}

    This extension was initially carried out by Guerra \cite{Gue03} in the setting of the SK model, and  generalizations in various models have been established, e.g., vector spin models \cite[Lemma~2.10]{Che23},  multi-species mixed $p$-spin spherical models \cite[Theorem 1.5]{BS22}, and non-convex multi-species models \cite[Lemma~4.10]{Che24}.
As a consequence of the extension, minimizers are guaranteed to exists; we say  $\mu\in\mathcal M^\uparrow$ is a Parisi measure if \[\mathscr P(\mu)=\inf_{\nu\in\mathcal M^\uparrow}\mathscr P(\nu).\]

Having extended the Parisi functional, we can now state the second result of this appendix on the  directional derivative of the Parisi functional, which serves as a tool for the proof of Proposition~\ref{prop: support of Parisi measure}.
    \begin{proposition}\label{prop: derivative of push forward of Parisi functional}
       Suppose $\Delta^2\1 \sgee \0$.  For any $\mu\in\mathcal M^\uparrow$ and any measurable function $a: \operatorname{supp}(\mu) \to \mathbb R^{\mathscr S}$ such that $\0\lee p_1+ a(p_1)\lee p_2+ a(p_2)\lee \1$ whenever $p_1\lee p_2$ are in $ \operatorname{supp}(\mu)$,  \begin{equation*}
        \frac{d}{dt}\mathscr P(\mu_t)\Bigr|_{t=0^+}= \int_{\operatorname{supp}(\mu)}   a(q)^\intercal \Lambda \Delta^2 \Lambda (q-\e_h\Psi_\mu(h,  q))  \,d\mu(q),
    \end{equation*} where the left-hand side is the right derivative at $t=0$, and $\mu_t$ for $0\le t\le 1$ is the push-forward measure of $\mu$ under the map  $q\mapsto q+ ta(q)$.
    \end{proposition}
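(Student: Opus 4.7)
The plan is to establish the identity first for discrete $\mu \in \mathcal{M}^\uparrow_d$ via explicit differentiation, and then extend to arbitrary $\mu \in \mathcal{M}^\uparrow$ by density. For the extension, I would approximate $\mu$ by a sequence $\mu^{(n)} \in \mathcal{M}^\uparrow_d$ with atoms in $\operatorname{supp}(\mu)$ and $W_1(\mu^{(n)}, \mu) \to 0$, which is easy to arrange since $\operatorname{supp}(\mu)$ is totally ordered (and in fact lies on a Lipschitz curve by \cite[Theorem 4]{Pan15}). The push-forwards $\mu^{(n)}_t$ satisfy $W_1(\mu^{(n)}_t, \mu_t) \to 0$ uniformly in $t \in [0,1]$, so Proposition~\ref{prop: Lipschitz continuity of Parisi functional} yields $\mathscr{P}(\mu^{(n)}_t) \to \mathscr{P}(\mu_t)$ uniformly in $t$. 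The integrand on the right-hand side is likewise stable under $W_1$-convergence of $\mu^{(n)}$, via the uniform convergence $\Psi_{\mu^{(n)}} \to \Psi_\mu$ already used below~\eqref{eq: claim 1}. Exchanging limits of difference quotients with limits of measures then reduces the claim to the discrete case.

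For discrete $\mu$ parameterized by \eqref{eq: zeta sequence} and \eqref{eq: q sequence}, regard $\mathscr{P}$ as a function of the atoms $(q_l)$. The perturbation $q_l \mapsto q_l + t a(q_l)$ yields $\tfrac{d}{dt} Q_l(t)|_{t=0} = 2 B(q_l, a(q_l))$ and $\tfrac{d}{dt} Q_l^s(t)|_{t=0} = 2(\Delta^2\Lambda a(q_l))_s$, and the chain rule gives
\[
\nabla_{q_l} \mathscr{P} \;=\; \Lambda \Delta^2 \Lambda \bigl[\, 2 \phi_l + (\zeta_l - \zeta_{l-1}) q_l \,\bigr], \qquad (\phi_l)_s \colonequals \partial_{Q_l^s}(\e_h X_0^s),
\]
where the term $(\zeta_l - \zeta_{l-1}) q_l$ arises from the quadratic piece $-\tfrac{1}{2}\sum_m \zeta_m (Q_{m+1} - Q_m)$, since only $Q_l$ among the $Q_m$'s depends on $q_l$. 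Atoms $q_l$ with $\zeta_l = \zeta_{l-1}$, i.e.\ phantom endpoints at $\0$ or $\1$, drop out of $\sum_l a(q_l)^\intercal \nabla_{q_l}\mathscr{P}$. Matching the resulting expression against the target integrand, the identity to be proved reduces to the single relation
\[
\phi_l \;=\; -\tfrac{1}{2} (\zeta_l - \zeta_{l-1})\, \e_h \Psi_\mu(h, q_l) \qquad \text{for each atom } q_l \in \operatorname{supp}(\mu).
\]

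The main obstacle is to verify this last relation. I would iterate the recursion \eqref{eq: inductive definition} using the martingale identity $\e_{k+1} W_{k+1}^s = 1$ to obtain $\partial_{Q_l^s} X_0^s = \e_{1,\dots,r}\bigl[W_1^s \cdots W_r^s \cdot \tanh(Y_r^s) \cdot \partial_{Q_l^s} Y_r^s\bigr]$, with $W_k^s$ as in Lemma~\ref{lem: weight W} and $\partial_{Q_l^s} Y_r^s = z_l/(2(Q_l^s - Q_{l-1}^s)^{1/2}) - z_{l+1}/(2(Q_{l+1}^s - Q_l^s)^{1/2})$. Gaussian integration by parts in $z_l$ and $z_{l+1}$, carefully tracking their additional appearances inside each $W_k^s$ through $Y_k^s$ for $k \ge l$, collapses the expression, after a telescoping cancellation, into exactly $-\tfrac{1}{2}(\zeta_l - \zeta_{l-1})$ times the nested form on the right-hand side of \eqref{eq: Psi for discrete measure}. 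This is a multi-species adaptation of the scalar computations in \cite[Proposition~3.2]{Tal06} and \cite[Theorem~5]{AC15_PTRF}; the vector-valued nature of $q$ is absorbed entirely into the normalization $(2\Delta^2\Lambda \1)_s$ appearing in \eqref{eq: definition of vector valued map Psi}, which is precisely why the definition of $\Psi_\mu$ takes that form. The hypothesis $\Delta^2 \1 \sgee \0$ is used only to guarantee $(2\Delta^2\Lambda \1)_s > 0$ for every $s$, so that $\Psi_\mu$ is well-defined. The obstacle is therefore combinatorial rather than conceptual: the challenge is clean bookkeeping of the IBP so that the factor $-\tfrac{1}{2}(\zeta_l - \zeta_{l-1})$ and the nested weights both emerge correctly.
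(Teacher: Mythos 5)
Your proposal follows essentially the same route as the paper: reduce to discrete $\mu\in\mathcal M^\uparrow_d$, differentiate $\mathscr P$ in the atoms via the chain rule, and identify $\partial_{Q_l^s}\e_h X_0^s$ with $-\tfrac12(\zeta_l-\zeta_{l-1})\e_h\Psi_\mu(h,q_l)_s$ — the paper's Lemma~\thref{eq: classical calculation for derivative of X^s_0} obtains this identity via the Ruelle probability cascade representation and Gaussian integration by parts rather than your direct iteration of the recursion, but the two computations are standard equivalents and you cite the right sources. The one point you omit (which the paper records explicitly) is that the hypothesis on $a$ together with convexity of the cone $\{p:p\gee\0\}$ forces $\mu_t\in\mathcal M^\uparrow$ for all $t\in[0,1]$, so that $\mathscr P(\mu_t)$ — and hence the left-hand side of the identity — is actually well-defined.
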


The rest of this section is devoted to establishing the above propositions and Proposition~\ref{prop: support of Parisi measure}.
Given $r\ge 1$, let $(v_\alpha)_{\alpha\in\mathbb N^r}$ be the Ruelle probability cascade (RPC) corresponding to the sequence in \eqref{eq: zeta sequence}.
Let $(z_{\beta})_{\beta\in \mathbb N^1 \cup \dots \cup \mathbb N^r}$ be i.i.d. standard Gaussian random variables, independent of RPC, and for each $s\in\mathscr S$, define a Gaussian process \begin{equation}\label{eq: Gaussian process C}
    C^s(\alpha) =\sum_{\beta\in p(\alpha)}z_{\beta} (Q_{|\beta|}^s-Q_{|\beta|-1}^s)^{1/2}, 
\end{equation} where $p(\alpha)\colonequals \{\alpha_1, (\alpha_1,\alpha_2), \dots, (\alpha_1,\dots,\alpha_r)\}$ is the path from the root to $\alpha\in\mathbb N^r$ and $|\beta|$ is such that $\beta\in \mathbb N^{|\beta|}$.
For $\alpha^1,\alpha^2 \in \mathbb N^r$, define $\alpha^1\wedge\alpha^2 = |p(\alpha^1)\cap p(\alpha^2)|.$
The covariance structure of \eqref{eq: Gaussian process C} is easily read off from \eqref{eq: definition of Q}, \begin{equation}\label{eq: covariance of Gaussian process C}
    \e C^s(\alpha^1)C^s(\alpha^2) =  Q_{\alpha^1\wedge \alpha^2}^s=2 \sum_{s' \in \mathscr S} \Delta^2_{ss'}\lambda_{s'} q_{\alpha^1 \wedge\alpha^2 }^{s'}, \quad s \in \mathscr S. 
\end{equation}
It is a standard property of RPC that \begin{equation}\label{eq: standard property of RPC}
    \e_h X_{0}^{s}= \e \log\sum_{\alpha\in\mathbb N^{r}} v_\alpha \ch(h_s+ C^s(\alpha)), \quad s \in \mathscr S.
\end{equation} 

The following differentiation is well-known (see, e.g., \cite[Lemma~3.6]{Tal06} or \cite[Section 14.7]{Talagrand2013vol2}), but we include its proof for completeness. 
\begin{lemma}\label{eq: classical calculation for derivative of X^s_0}
For $s,s'\in\mathscr S$ and $1\le l\le r-1$, 
    \begin{equation}\label{eq: derivative of X^s_0 in Q^s}
        \partial_{Q^s_l} \e_hX^{s'}_0= - \1_{\{s=s'\}} 2^{-1} (\zeta_l-\zeta_{l-1})\e_h\e_{1,\dots, l} W^{s'}_1\dots  W^{s'}_l \bigl(\e_{l+1,\dots,r} W^{s'}_{l+1}\dots W^{s'}_{r} \hth  Y^{s'}_r \bigr)^2,  \end{equation}
    which is understood as a right-derivative if $Q^s_l=Q^s_{l-1}$ and  a left-derivative if $Q^s_l=Q^s_{l+1}$.
    Similarly,
    \[\partial_{q^s_l} \e_hX^{s'}_0= -(\Lambda \Delta^2)_{ss'} (\zeta_l-\zeta_{l-1})\e_h\e_{1,\dots, l} W^{s'}_1\dots  W^{s'}_l \bigl(\e_{l+1,\dots,r} W^{s'}_{l+1}\dots W^{s'}_{r} \hth  Y^{s'}_r \bigr)^2,  \] which is understood as the right-derivative if $q^s_l=q^s_{l-1}$ and a left-derivative if $q^s_l=q^s_{l+1}$.
\end{lemma}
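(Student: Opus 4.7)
The plan begins with the observation that, from the recursive definition \eqref{eq: inductive definition} together with the terminal expression \eqref{eq: terminal X}, the variable $X^{s'}_0$ depends on $(Q^t_m)_{t,m}$ only through $Q^{s'}_1,\ldots,Q^{s'}_r$. Hence $\partial_{Q^s_l}\e_h X^{s'}_0 = 0$ for $s\ne s'$, producing the indicator $\1_{\{s=s'\}}$. For the remainder I fix $s = s'$ and suppress the species superscript.

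Differentiating both sides of $\exp(\zeta_{l-1}X_{l-1}) = \e_l\exp(\zeta_{l-1}X_l)$ yields the transfer identity $\partial X_{l-1} = \e_l[W_l\,\partial X_l]$, which iterates to
\[\partial_{Q_l}X_0 = \e_{1,\ldots,r}\bigl[W_1\cdots W_r\,\hth(Y_r)\,\partial_{Q_l}Y_r\bigr],\qquad \partial_{Q_l}Y_r = \tfrac{z_l}{2\sqrt{Q_l - Q_{l-1}}} - \tfrac{z_{l+1}}{2\sqrt{Q_{l+1} - Q_l}}.\]
I then plan to apply Gaussian integration by parts in $z_l$ and in $z_{l+1}$ separately. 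Writing $D_k := \e_{k+1,\ldots,r}[W_{k+1}\cdots W_r\,\hth(Y_r)]$ (which equals $\partial_{Y_k}X_k$), the key derivative identities are $\partial_{z_m}\log W_k = \zeta_{k-1}\sqrt{Q_m - Q_{m-1}}(D_k - D_{k-1})$ for $k > m$, the boundary case $\partial_{z_m}\log W_m = \zeta_{m-1}\sqrt{Q_m - Q_{m-1}}D_m$ (since $X_{m-1}$ is $z_m$-independent), and $\partial_{z_m}\hth(Y_r) = \sch^2(Y_r)\sqrt{Q_m - Q_{m-1}}$. After the IBP, the $\sch^2 Y_r$ contributions arising on the $z_l$ and $z_{l+1}$ sides exactly cancel, while the remaining terms give rise to the sums $T_l := \zeta_{l-1}D_l + \sum_{k=l+1}^r\zeta_{k-1}(D_k - D_{k-1})$ and $T_{l+1}'' := \zeta_l D_{l+1} + \sum_{k=l+2}^r\zeta_{k-1}(D_k - D_{k-1})$; a direct simplification yields $T_l - T_{l+1}'' = -(\zeta_l - \zeta_{l-1})D_l$. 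Since $D_l$ is $Y_l$-measurable, I can pull it out of $\e_{l+1,\ldots,r}$ to obtain $\partial_{Q_l}X_0 = -\tfrac{1}{2}(\zeta_l - \zeta_{l-1})\,\e_{1,\ldots,l}[W_1\cdots W_l\, D_l^2]$, which after taking $\e_h$ is exactly \eqref{eq: derivative of X^s_0 in Q^s}.

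The second identity follows by the chain rule through $Q^{s'}_l = 2\sum_t\Delta^2_{s't}\lambda_t q^t_l$: we have $\partial_{q^s_l}Q^{s'}_l = 2\Delta^2_{s's}\lambda_s = 2(\Lambda\Delta^2)_{ss'}$ by symmetry of $\Delta^2$, and the indicator in the first identity restricts the chain-rule sum to $t = s'$, producing the $(\Lambda\Delta^2)_{ss'}$ prefactor. The principal technical hurdle is the bookkeeping in the Gaussian IBP step---in particular, tracking the cancellation of the $\sch^2 Y_r$ terms and verifying the collapse $T_l - T_{l+1}'' = -(\zeta_l - \zeta_{l-1})D_l$. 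Degenerate endpoint cases $Q_l = Q_{l\pm 1}$ are handled by treating the partial derivatives as one-sided: since the right-hand side of \eqref{eq: derivative of X^s_0 in Q^s} extends continuously in $(Q_{l-1},Q_l,Q_{l+1})$, the identity is recovered by approximating with strictly positive increments and passing to the one-sided limit.
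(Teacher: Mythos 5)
Your proposal is correct, but it follows a genuinely different route from the paper. The paper proves this lemma through the Ruelle probability cascade representation \eqref{eq: standard property of RPC}: it computes $\partial_{Q_l^s}C^{s'}(\alpha)$, uses the covariance identity $\e[C^{s'}(\alpha^2)\,\partial_{Q_l^s}C^{s'}(\alpha^1)]=\1_{\{s=s'\}}\1_{\{\alpha^1\wedge\alpha^2=l\}}$ in a Gaussian integration by parts over the cascade, and then identifies the resulting sum over pairs with $\alpha^1\wedge\alpha^2=l$ with the $W$-weighted expression by citing Talagrand's Proposition 14.3.2. You instead work directly from the recursion \eqref{eq: inductive definition} via the transfer identity $\partial X_{l-1}=\e_l[W_l\,\partial X_l]$, reduce to $\e_{1,\dots,r}[W_1\cdots W_r\,\hth(Y_r)\,\partial_{Q_l}Y_r]$, and carry out the IBP in $z_l$ and $z_{l+1}$ by hand; the cancellation of the $\sch^2 Y_r$ contributions and the telescoping $T_l-T_{l+1}''=-(\zeta_l-\zeta_{l-1})D_l$ are exactly right, and pulling the $Y_l$-measurable factor $D_l$ out of $\e_{l+1,\dots,r}$ gives the square. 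This is essentially the classical computation of Talagrand's Section 14.7 done from scratch: it is more self-contained (no RPC machinery, no external citation for the final identification) at the cost of heavier bookkeeping, whereas the paper's version outsources that bookkeeping to known cascade facts. Your treatment of the degenerate cases $Q_l^s=Q_{l\pm1}^s$ by one-sided limits and continuity of the right-hand side matches the paper's, and the chain-rule derivation of the second identity (using $\partial_{q_l^s}Q_{l'}^{s'}=2\1_{\{l=l'\}}(\Lambda\Delta^2)_{ss'}$ and the symmetry of $\Delta^2$ and $\Lambda$) is identical to the paper's.
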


\begin{proof}
    Fix $s,s'\in\mathscr S$ and $1\le l\le r-1.$
    Assume $Q^s_{l-1}<Q^s_l < Q^s_{l+1}$.
We will use an RPC representation \eqref{eq: standard property of RPC} of $X^{s'}_0$ to take a derivative.
Recalling the notation in \eqref{eq: Gaussian process C},
\begin{equation}
   \partial_{Q_l^s}C^{s'}(\alpha) 
   =  \1_{\{s=s'\}} \Bigl( \frac{ z_{\alpha_l} }{(Q_{l}^{s'} -Q_{l-1}^{s'})^{1/2}} - \frac{ z_{\alpha_{l+1}} }{(Q_{l+1}^{s'} -Q_{l}^{s'})^{1/2}}\Bigr), \label{eq: partial C}
\end{equation}
From \eqref{eq: Gaussian process C} and \eqref{eq: partial C}, a careful verification shows
\begin{equation}\label{eq: IBP covariance}
    \e  \bigl[ C^{s'}(\alpha^2) \cdot \partial_{Q_l^s}C^{s'}(\alpha^1) \bigr] =  \1_{\{s=s'\}} \1_{\{\alpha^1\wedge\alpha^2=l\}}.
\end{equation}
From \eqref{eq: standard property of RPC}, we have \begin{align*}
    \partial_{Q_l^s} \e_h X_0^{s'} &= \sum_{\alpha\in\mathbb N^r}\e v_\alpha  \partial_{Q_l^s} C^{s'}(\alpha) \frac{\sh(h_{s'}+C^{s'}(\alpha))}{\sum_{\alpha\in\mathbb N^r}v_\alpha \ch(h_{s'}+C^{s'}(\alpha))}
    \\&= - \1_{\{s=s'\}} \sum_{\alpha^1\wedge \alpha^2=l}\e  \frac{v_{\alpha^1} v_{\alpha^2}\sh(h_{s'}+C^{s'}(\alpha^1))\sh(h_{s'}+C^{s'}(\alpha^2))}{\bigl(\sum_{\alpha\in\mathbb N^r}v_\alpha \ch(h_{s'}+C^{s'}(\alpha))\bigr)^2},
\end{align*} where we used Gaussian IBP in the second equality along with \eqref{eq: IBP covariance} and the assumption $l\le r-1$.
Now, we can identify the right-hand side of the preceding display with the desired expression by \cite[Proposition 14.3.2]{Talagrand2013vol2}.
To address the boundary case where either $Q^{s}_l=Q^{s}_{l-1}$ or $Q^{s}_l=Q^{s}_{l+1}$, we observe that the right-hand side of \eqref{eq: derivative of X^s_0 in Q^s} is continuous in $Q^s_l$ up to the boundary, so the one-sided derivatives exist and equal to the right-hand side of \eqref{eq: derivative of X^s_0 in Q^s}.

To verify the other equality, from the definition \eqref{eq: definition of Q}, one can check that for $1\le l, l' \le r-1$ and $s,s'\in\mathscr S$, we have $ 
    \partial_{q_l^s}Q_{l'}^{s'}=2\1_{\{l=l'\}} (\Delta^2 \Lambda)_{s's} = 2\1_{\{l=l'\}} (\Lambda \Delta^2 )_{ss'}$ by the symmetry of $\Delta^2$ and $\Lambda$.
    We can then apply the chain rule.

\end{proof}

As a preliminary work, we show that a certain matching between two monotone sets of points has an optimal cost in $L^1$ metric.
Recall the definition of cyclic monotonicity (see, e.g., \cite[Definition 2.4.2]{FG21}).
\begin{lemma}[$L^1$-cyclic monotonicity]\label{lem: cyclically monotone}
Let $r\ge 1$.
For two totally ordered sequences $q_1\lee\dots \lee q_r$ and $\tilde q_1\lee\dots \lee\tilde  q_r$ in $[0,1]^{\mathscr S}$, we have \[\sum_{1\le i\le r}\|q_i-\tilde q_i\|_1 \le \sum_{1\le i\le r}\|q_{i+1}-\tilde q_i\|_1,\] where we identify $q_{r+1}=q_1$.
\end{lemma}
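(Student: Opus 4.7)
The plan is to reduce to the one-dimensional case and then invoke a standard $L^1$ rearrangement inequality. Since $\|v\|_1=\sum_{s\in\mathscr S}|v_s|$ for $v\in\mathbb R^{\mathscr S}$, both sides of the asserted inequality decompose as sums over species. Moreover, for each fixed $s$ the componentwise ordering $q_i\lee q_{i+1}$ forces $(q_i)_s\le (q_{i+1})_s$, so each coordinate yields an instance of the one-dimensional statement. Hence it suffices to prove: for real sequences $q_1\le\cdots\le q_r$ and $\tilde q_1\le\cdots\le\tilde q_r$,
\[
\sum_{i=1}^r |q_i-\tilde q_i|\;\le\;\sum_{i=1}^r |q_{i+1}-\tilde q_i|,\qquad q_{r+1}:=q_1.
\]

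After reindexing the right-hand side by $j=i+1\pmod r$, it becomes $\sum_j |q_j-\tilde q_{\sigma(j)}|$, where $\sigma$ is the cyclic permutation $\sigma(j)=j-1\pmod r$. So the one-dimensional claim is a special case of the classical $L^1$-rearrangement inequality: for sorted real sequences $(a_i)_{i=1}^r$, $(b_i)_{i=1}^r$ and any permutation $\pi$ of $\{1,\dots,r\}$,
\[
\sum_{i=1}^r |a_i-b_i|\;\le\;\sum_{i=1}^r |a_i-b_{\pi(i)}|.
\]
I will prove this by the standard swap (bubble-sort) argument. If $\pi\ne\mathrm{id}$, choose indices $i<j$ with $\pi(i)>\pi(j)$, and set $\pi'=\pi\circ (i\,j)$. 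Writing $a:=a_i\le a_j=:a'$ and $b:=b_{\pi(j)}\le b_{\pi(i)}=:b'$, the two-point Monge inequality
\[
|a-b|+|a'-b'|\;\le\;|a-b'|+|a'-b|
\]
(a direct case analysis on the relative order of the four reals) gives that swapping weakly decreases the total cost. Iterating removes all inversions, reducing $\pi$ to the identity and proving the inequality.

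I do not expect a genuine obstacle in this proof: it is elementary and self-contained. The only care required is the coordinate-wise reduction (which relies only on the additivity of $\|\cdot\|_1$ and on the fact that the partial order $\lee$ on $[0,1]^{\mathscr S}$ is componentwise) and the bookkeeping identifying the cyclic shift as a specific permutation. The two-point Monge inequality is a short case split and is the only nontrivial analytic input.
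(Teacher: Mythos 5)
Your proof is correct. The analytic engine is the same as the paper's — the two‑point Monge inequality $|a-b|+|a'-b'|\le|a-b'|+|a'-b|$ for $a\le a'$, $b\le b'$, applied after decomposing $\|\cdot\|_1$ coordinatewise — but the combinatorial scaffolding differs. The paper inducts on $r$ and exploits the cyclic structure directly: at step $r+1$ it applies the inductive hypothesis to the first $r$ terms and then invokes the $r=2$ case once more on the pair $(q_1,q_{r+1})$, $(\tilde q_r,\tilde q_{r+1})$. You instead prove the stronger statement that the identity matching is $L^1$-optimal among \emph{all} permutations of sorted sequences, via the standard swap/inversion-removal argument; the cyclic shift is then just one instance. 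The paper's remark after the lemma explicitly notes that its proof also yields the arbitrary-permutation version, so the two approaches land in the same place; yours makes the generality explicit up front and does the coordinatewise reduction once for all $r$ rather than only inside the $r=2$ base case. The one point worth spelling out in your write-up is why the swap iteration terminates — swapping an inverted pair strictly decreases the number of inversions — but this is standard and does not constitute a gap.
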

\begin{remark}
    In fact, the proof indicates that we can have an arbitrary permutation of the indices on the right-hand side, instead of a shift.
\end{remark}
\begin{proof}
The statement is trivial for $r=1$.
To check the statement for $r=2,$ since we are considering the $L^1$ norm, it suffices to assume $|\mathscr S|=1$ and verify that for any real numbers $x_1\le x_2$ and $\tilde x_1\le \tilde x_2$, \[|x_1-\tilde x_1| +|x_2-\tilde x_2| \le |x_2-\tilde x_1| +| x_1-\tilde x_2|.\] 
The inequality is completely elementary to verify using the triangle inequality in various cases, and we leave the details to the reader.

Suppose the statement holds for some $r\ge 2.$
Consider two totally ordered sequences $q_1\lee \dots \lee q_{r+1}$ and $\tilde q_1\lee \dots \lee \tilde  q_{r+1}$ in $[0,1]^{\mathscr S}$, where we identify $q_{r+2}=q_1$.
By the induction hypothesis, \begin{align*}
    \sum_{1\le i\le r+1}\|q_i-\tilde q_i\|_1  &=\|q_{r+1}-\tilde q_{r+1}\|+ \sum_{1\le i\le r}\|q_i-\tilde q_i\|_1   
    \\&\le \|q_{r+1}-\tilde q_{r+1}\|_1+ \|q_1-\tilde q_r\|_1+ \sum_{1\le i\le r-1}\|q_{i+1}-\tilde q_i\|_1
    \\&\le \|q_1-\tilde q_{r+1} \|_1+  \|q_{r+1}-\tilde q_{r}\|_1 +\sum_{1\le i\le r-1}\|q_{i+1}-\tilde q_i\|_1
    \\&= \sum_{1\le i\le r+1}\|q_{i+1}-\tilde q_i\|_1,
\end{align*} where the second inequality follows from the $r=2$ case by considering the sequences $q_1\lee q_{r+1}$ and $\tilde q_r \lee \tilde q_{r+1}$.
This finishes the induction.
\end{proof}

\begin{proof}[\bf Proof of Proposition~\ref{prop: Lipschitz continuity of Parisi functional}]
    We may assume that $\mu_1$ and $\mu_2$ have common $(\zeta_l)_{0\le l\le r}$ in \eqref{eq: zeta sequence} by repeating the vectors $(q_l)_{1\le l\le r}$ with entries as in \eqref{eq: q sequence}.
    We may further assume  that $\zeta_0=0$ and $\zeta_{r-1}=1$ so that  $\operatorname{supp}(\mu_1)= \{q_1\lee\dots \lee q_{r-1}\}$ and $\operatorname{supp}(\mu_2)= \{\tilde q_1\lee\dots\lee \tilde q_{r-1}\}$.
    Linearly interpolate $q_l(t)= (1-t) q_l + t \tilde q_l$ for $1\le l\le r-1$ and $0\le t\le 1.$
    Define \[\phi(t)= \mathscr P ( (q_l(t))_{1\le i\le r}, (\zeta_i)_{1\le i\le r}), \quad 0\le t\le 1.\]
            Let $1\le l\le r-1$ and $s \in \mathscr S$.
Recalling \eqref{eq: definition of Q}, we can calculate $ \partial_{q_l^s}Q_{l'}=  2\1_{\{l=l'\}} (\Lambda\Delta^2\Lambda q_{l'})_s$ and \begin{equation*}
      \partial_{q_l^s} \sum_{l' =0}^{r-1}\zeta_{l'} (Q_{l'+1}-Q_{l'})=  2(\zeta_{l-1} -\zeta_l)  (\Lambda\Delta^2\Lambda q_l)_s  .
\end{equation*}
Together with Lemma~\ref{eq: classical calculation for derivative of X^s_0}, we arrive at the derivative of the Parisi functional \eqref{eq: Parisi functional for MSK}  \begin{equation}\label{eq: derivative of MSK Parisi functional}
    \partial_{q_l^s} \mathscr P ((q_l), (\zeta_l)) =(\zeta_l -\zeta_{l-1} ) \bigl(\Lambda \Delta^2 \Lambda (  q_l-  a_l)\bigr)_s,
\end{equation} 
where \[(a_l)_s \colonequals\e_h\e_{1,\dots, l} W^s_1\dots  W^s_l \bigl(\e_{l+1,\dots,r} W^s_{l+1}\dots W^s_{r} \hth  Y^s_r \bigr)^2 .\]
Then, by the chain rule, \[\phi'(t) =  \sum_{1\le l\le r} (\zeta_l-\zeta_{l-1})  (\tilde q_l -q_l)^\intercal \Lambda \Delta^2\Lambda ( q_l(t) - a_l(t) ),\] where $a_l(t)$ is $a_l$ evaluated at $(q_i(t))_{1\le i\le r-1}$.
   Since $ \0 \lee q_l(t), a_l(t)\lee \1$ for all $t\in[0,1]$, we have
    \[ |\mathscr P(\mu_1)-\mathscr P(\mu_2)| = |\phi(1)-\phi(0)|\le \sup_{0\le t\le 1} |\phi'(t)|\le \|\Lambda\Delta^2\Lambda\|_{\infty}\sum_{1\le l\le r} (\zeta_l-\zeta_{l-1}) \| \tilde q_l-q_l\|_1 .\]
The sum on the right-hand side of the preceding display corresponds to the coupling that assigns $\zeta_l-\zeta_{l-1}$ to $(q_l,\tilde q_l) \in ([0,1]^{\mathscr S})^{\otimes 2}$ for each $1\le l\le r-1.$
By Lemma~\ref{lem: cyclically monotone}, the support of this coupling, namely $\{(q_l,\tilde  q_l): 1\le l\le r\}$, is  $L^1$-cyclically monotone. 
Now, the proof is complete by \cite[Corollary 2.6.8]{FG21} which states that the coupling with a cyclically monotone support is an optimal one achieving $W_1(\mu_1,\mu_2)$.
\end{proof}

We collect some properties of the components of \eqref{eq: definition of vector valued map Psi}.
 \begin{lemma}[Adaptation of \cite{AC15_PTRF}, Proposition 3]\label{lem: properties of Psi}
    Let $\nu$ be probability measure on $[0,1]$ and $h$ be a centered Gaussian random variable. 
    Suppose  $\xi_V(x)=2^{-1}Vx^2$ for some $V>0.$    
    The function $[0,V] \ni w\mapsto \e_h \Gamma_{\nu,\xi_V}(h,V^{-1}w)$ is $[0,1]$-valued, continuously differentiable on $(0,V)$, increasing, and $1$-Lipschitz. 
    \end{lemma}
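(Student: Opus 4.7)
The plan is to reduce everything to the standard properties of the Parisi PDE solution established in \cite{AC15_PTRF} and then simply take expectation in the external field. First I would recall the identification already noted after \eqref{eq: definition of vector valued map Psi}: letting $\Psi_{\rho,V}$ denote the function from \cite[Eq.~(3.28)]{Talagrand2013vol2} associated to a probability measure $\rho$ on $[0,V]$, one has
\[
\Psi_{\nu\circ (\xi_V')^{-1}, V}\bigl(x,\xi_V'(u)\bigr)=\Gamma_{\nu,\xi_V}(x,u), \qquad (x,u)\in\mathbb R\times[0,1].
\]
Since $\xi_V'(u)=Vu$, substituting $w=Vu$ gives the key identification
\[
\e_h\Gamma_{\nu,\xi_V}(h,V^{-1}w)=\e_h\Psi_{\nu\circ (\xi_V')^{-1}, V}(h,w),\qquad w\in[0,V].
\]
The centered Gaussian $h$ plays only the role of a random parameter in the first argument.

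Next, I would invoke the pointwise analogue of the lemma, namely \cite[Proposition~3]{AC15_PTRF} (or equivalently the differentiability and bound properties of the Parisi PDE solution in \cite[Chapter~14]{Talagrand2013vol2}), applied to the measure $\nu\circ(\xi_V')^{-1}$ on $[0,V]$. That reference yields, for each fixed $x\in\mathbb R$: (a) $\Psi_{\nu\circ (\xi_V')^{-1}, V}(x,w)\in[0,1]$ for $w\in[0,V]$; (b) $w\mapsto\Psi_{\nu\circ (\xi_V')^{-1}, V}(x,w)$ is continuously differentiable on $(0,V)$ with $\partial_w\Psi_{\nu\circ (\xi_V')^{-1}, V}(x,w)\in[0,1]$; and (c) $\partial_w\Psi_{\nu\circ (\xi_V')^{-1}, V}(x,w)>0$ on $(0,V)$. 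Properties (a) and (c) transfer directly under $\e_h$, giving the $[0,1]$-valued claim and strict monotonicity. For (b), a dominated convergence argument using the uniform bound $|\partial_w\Psi_{\nu\circ (\xi_V')^{-1}, V}(h,w)|\le 1$ allows differentiation under the expectation,
\[
\frac{d}{dw}\e_h\Psi_{\nu\circ (\xi_V')^{-1}, V}(h,w)=\e_h\,\partial_w\Psi_{\nu\circ (\xi_V')^{-1}, V}(h,w)\in[0,1],
\]
establishing the $1$-Lipschitz property. The continuity of the derivative in $w$, needed for the $C^1$ claim on $(0,V)$, follows from joint continuity of $\partial_w\Psi_{\nu\circ (\xi_V')^{-1}, V}(x,w)$ in $(x,w)$ together with the same dominated convergence bound.

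There is no serious obstacle beyond verifying that the $h$-integrability is automatic: everything is uniformly bounded by $1$, so integrability in $h$ is immediate and interchange of expectation and differentiation is standard. The only care needed is to ensure strict monotonicity survives after integration, which follows from the fact that $\partial_w\Psi_{\nu\circ (\xi_V')^{-1}, V}(h,w)>0$ for almost every $h$, so its expectation is strictly positive on $(0,V)$.
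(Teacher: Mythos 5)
Your proposal is correct and follows essentially the same route as the paper: both reduce to the pointwise regularity and positivity of $\partial_w\Gamma_{\nu,\xi_V}(x,V^{-1}w)$ established in \cite[Proposition 3]{AC15_PTRF} (boundedness in $[0,1]$, continuity in $w$, strict positivity of the derivative) and then pass the expectation $\e_h$ through the derivative by dominated convergence using the uniform bound by $1$. The detour through Talagrand's $\Psi_{\nu,V}$ is a cosmetic reparametrization the paper skips, but it changes nothing of substance.
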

\begin{proof}
Equations (15), (17), and (18) in \cite{AC15_PTRF} immediately implies $0\le \e_h \Gamma_{\nu,\xi_V}(h,V^{-1}w) \le 1$.
It remains to check certain properties of its derivative, provided that it exists.
To this end, note $\xi_V''=V$. 
We can repeat the proof of \cite[Proposition 3]{AC15_PTRF} and show  \begin{enumerate}[label=(\roman*)]
    \item for each $x\in\mathbb R$, the function $ (0,V)\ni w\mapsto \partial_w \Gamma_{\nu,\xi_V}(x,V^{-1}w)$ is well-defined and continuous,
    \item for each $(x,w)\in \mathbb R \times (0,V)$, we have $0< \partial_w \Gamma_{\nu,\xi_V} (x,V^{-1}w)\le 1$ by equations (16), (17), and (20) in \cite{AC15_PTRF}.
\end{enumerate}
The dominated convergence theorem shows, which is applicable due to  (i), (ii), and the mean-value theorem,  that  \[ \frac{d}{dw}\e_h\Gamma_{\nu,\xi_V}(h,V^{-1}w)=  \e_h \partial_w\Gamma_{\nu,\xi_V}(h,V^{-1}w)\in (0,1] \quad \text{for all } w\in (0,V).\]
The continuity of  $w \mapsto \e_h \partial_w \Gamma_{\nu,\xi_V}(h,V^{-1}w)$ also follows from (i), (ii), and the dominated convergence theorem.
This completes the proof.
     
\end{proof}


    \begin{proof}[\bf Proof of Proposition~\ref{prop: derivative of push forward of Parisi functional}]
   This is a routine extension of \cite[Equation (3.34)]{Tal06} (or \cite[Equation (19)]{AC15_PTRF}) to a multi-dimensional setting, and it suffices to check for the case of a discrete measure $\mu\in \mathcal M^\uparrow_d.$
    Suppose $\mu(q_l)= \zeta_l-\zeta_{l-1}$ for some sequences \eqref{eq: zeta sequence} and \eqref{eq: q sequence} with $\zeta_0=0$ and $\zeta_{r-1}=1.$
Combining \eqref{eq: derivative of MSK Parisi functional} and \eqref{eq: Psi for discrete measure}, we can write, for $1\le l\le r-1$,   \begin{equation*}
    \partial_{q_l^s} \mathscr P ((q_l), (\zeta_l)) =(\zeta_l -\zeta_{l-1} ) \bigl(\Lambda \Delta^2 \Lambda (  q_l-  \e_h\Psi_\mu (h, q_l))\bigr)_s,
\end{equation*} from which the proof can be completed by the chain rule in this case.

    Note that, for any $p_1,p_2 \in \operatorname{supp}(\mu)$ with $p_1\lee p_2$, the given assumption on the function $a$ and the fact that the cone $\{p\in \mathbb R^{\mathscr S}: p \gee \0\}$ is convex together imply $\0\lee p_1+ta(p_1) \lee p_2+ta(p_2)\lee \1$ for all $0\le t\le 1$. 
    Consequently, $\mu_t \in\mathcal M^\uparrow$ and $\mathscr P(\mu_t)$ is well-defined for all $ 0\le t\le 1$. 

\end{proof}
We finally proceed to the following proof.
\begin{proof} [\bf Proof of Propostion~\ref{prop: support of Parisi measure}]
First of all, the positive-definiteness of $\Delta^2$ implies that no rows of $\Delta^2$ can be zero and, therefore,  $\Delta^2 \1 \sgee \0$.
     Define a function $a:\operatorname{supp}(\mu) \to \mathbb R^{\mathscr S}$ given by $a(q)= -q+ \e\Psi_\mu(h,  q)$.
    As a consequence of Lemma~\ref{lem: properties of Psi}, we have $a(\1)\lee \0\lee a(\0)$, and for $p_1,p_2 \in \operatorname{supp}(\mu)$ with $p_1\lee p_2 $, \[ p_1 +a(p_1) = \e_h\Psi_\mu(h,  p_1) \lee \e_h\Psi_\mu(h,  p_2) =p_2 +a(p_2).\]
    In particular, $\0 \lee  p_1 +a(p_1) \lee p_2 +a(p_2) \lee \1$ whenever $p_1\lee p_2$ for $p_1,p_2 \in\operatorname{supp}(\mu).$
    As a result, Proposition~\ref{prop: derivative of push forward of Parisi functional} is applicable and yields \[ \frac{d}{dt}\mathscr P(\mu_t)\Bigr|_{t=0^+}= - \int_{\operatorname{supp}(\mu)}   (q-\e_h\Psi_\mu(h,  q))^\intercal \Lambda\Delta^2\Lambda  (q-\e_h\Psi_\mu(h,  q))\,d\mu(q).\]
    Since $\mu$ is a minimizer of $\mathscr P$ and $\Delta^2$ is positive-definite, we deduce from the preceding display that $q=\e_h\Psi_\mu(h, q)$ for all $q\in\operatorname{supp}(\mu).$
   
\end{proof}

\section{Proof of continuity of the Parisi formula in model parameters}\label{sec: Proof of continuity of Parisi formula in model parameters}
We  establish that the limiting free energy and the Parisi functional \eqref{eq: Parisi functional for MSK} are both continuous with respect to the model parameters.
Throughout this appendix, we will always denote the free energy and the Parisi functional corresponding to the parameters $\tilde \Delta^2, \tilde \tau^2$, and $\tilde \Lambda \colonequals \text{diag}(\tilde \lambda_s:s\in\mathscr S)$ by $\tilde F_N$ and $\tilde {\mathscr P}$.

We remark that in the single-species case of the SK model, all the considerations in this section are more than necessary as can be seen from the simple observation that the free energy is jointly convex in the inverse temperature and external field.

\begin{lemma}\label{lem: free energy Lipschitz}
   We have \begin{align*}
        \limsup_{N\to \infty}|\e F_N- \e \tilde F_N| &\le \max_{s,s'\in\mathscr S}|\Delta^2_{ss'}-\tilde \Delta^2_{ss'}| + \max_{s,s'\in\mathscr S} (\Delta^2_{ss'}\vee \tilde \Delta^2_{ss'})  \sum_{s,s'\in\mathscr S} |\lambda_{s}\lambda_{s'}- \tilde \lambda_{s} \tilde \lambda_{s'}|
        \\&\quad +\max_{s\in\mathscr S}|\tau^2_{s}-\tilde \tau^2_{s}| + \max_{s\in\mathscr S} (\tau^2_{s}\vee \tilde \tau^2_{s})  \sum_{s\in\mathscr S} |\lambda_{s} - \tilde \lambda_{s} | .
        \end{align*} 
\end{lemma}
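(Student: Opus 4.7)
The plan is to run a Gaussian interpolation between two independent Hamiltonians, tailored to the fact that the two systems have different species partitions of $[N]$. First, for each $N$, I choose partitions $(I_s)_{s\in\mathscr S}$ and $(\tilde I_s)_{s\in\mathscr S}$ of $[N]$ satisfying $|I_s|/N\to\lambda_s$ and $|\tilde I_s|/N\to\tilde\lambda_s$, aligned greedily so that the set of ``moving'' sites $M_N\colonequals\{i\in[N]:s(i)\ne\tilde s(i)\}$ has cardinality
\[|M_N|/N\,=\,\tfrac12\sum_{s\in\mathscr S}|\lambda_{s,N}-\tilde\lambda_{s,N}|+o(1)\]
(pick $\tilde I_s$ to overlap $I_s$ maximally). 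This is the only combinatorial input needed.

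Then let $H_N,\tilde H_N$ be independent centered Gaussian Hamiltonians with the two variance profiles, each attached to its own partition, and set $H_{N,t}=\sqrt t\,H_N+\sqrt{1-t}\,\tilde H_N$ for $t\in[0,1]$, so that $H_{N,1}\stackrel{d}{=}H_N$ and $H_{N,0}\stackrel{d}{=}\tilde H_N$. Let $F_{N,t}$ denote the corresponding free energy. A standard Gaussian integration by parts (as in Guerra's interpolation) gives
\[\frac{d}{dt}\e F_{N,t}\,=\,\frac{1}{2N}\,\e\biglla D(\sigma,\sigma)-D(\sigma^1,\sigma^2)\bigrra_{N,t},\]
where, with $s(i),\tilde s(i)$ denoting the two species of site $i$,
\[D(\sigma^1,\sigma^2)\,\colonequals\,\frac{1}{N}\sum_{i,j\le N}\bigl(\Delta^2_{s(i)s(j)}-\tilde\Delta^2_{\tilde s(i)\tilde s(j)}\bigr)\sigma^1_i\sigma^2_i\sigma^1_j\sigma^2_j\,+\,\sum_{i\le N}\bigl(\tau^2_{s(i)}-\tilde\tau^2_{\tilde s(i)}\bigr)\sigma^1_i\sigma^2_i\]
is the difference of the covariance kernels of $H_N$ and $\tilde H_N$.

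To bound $D$, I decompose each entry as
\[\Delta^2_{s(i)s(j)}-\tilde\Delta^2_{\tilde s(i)\tilde s(j)}\,=\,(\Delta^2-\tilde\Delta^2)_{s(i)s(j)}+\bigl(\tilde\Delta^2_{s(i)s(j)}-\tilde\Delta^2_{\tilde s(i)\tilde s(j)}\bigr),\]
and likewise $\tau^2_{s(i)}-\tilde\tau^2_{\tilde s(i)}=(\tau^2-\tilde\tau^2)_{s(i)}+(\tilde\tau^2_{s(i)}-\tilde\tau^2_{\tilde s(i)})$. Using $\sum_{s,s'}\lambda_{s,N}\lambda_{s',N}=1$, $\sum_s\lambda_{s,N}=1$, and $|\sigma^1_i\sigma^2_i\sigma^1_j\sigma^2_j|=1$, the first piece contributes at most $\max_{s,s'}|\Delta^2_{ss'}-\tilde\Delta^2_{ss'}|+\max_s|\tau^2_s-\tilde\tau^2_s|$ to both $|D(\sigma,\sigma)|/N$ and $|D(\sigma^1,\sigma^2)|/N$. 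The second piece is nonzero only when $i\in M_N$ or $j\in M_N$, so its contribution to $|D|/N$ is uniformly bounded by a multiple of $(|M_N|/N)\max_{s,s'}(\Delta^2\vee\tilde\Delta^2)_{ss'}+(|M_N|/N)\max_s(\tau^2\vee\tilde\tau^2)_s$.

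Integrating $\tfrac{d}{dt}\e F_{N,t}$ over $[0,1]$ and passing to $\limsup_{N\to\infty}$ using $\lambda_{s,N}\to\lambda_s$, $\tilde\lambda_{s,N}\to\tilde\lambda_s$ yields a bound of the asserted form, except with $\sum_s|\lambda_s-\tilde\lambda_s|$ in place of $\sum_{s,s'}|\lambda_s\lambda_{s'}-\tilde\lambda_s\tilde\lambda_{s'}|$. The stated form is recovered via the elementary inequality
\[\sum_{s\in\mathscr S}|\lambda_s-\tilde\lambda_s|\,\le\,\sum_{s,s'\in\mathscr S}|\lambda_s\lambda_{s'}-\tilde\lambda_s\tilde\lambda_{s'}|,\]
which follows from $\lambda_s-\tilde\lambda_s=\sum_{s'}(\lambda_s\lambda_{s'}-\tilde\lambda_s\tilde\lambda_{s'})$ and the triangle inequality. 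The main obstacle is bookkeeping rather than analysis: carefully separating $D$ into a ``uniform'' part (controlled by $\max|\Delta^2-\tilde\Delta^2|$ and $\max|\tau^2-\tilde\tau^2|$) and a ``partition-change'' part (controlled by $|M_N|$), and verifying that the various constants combine into the displayed form.
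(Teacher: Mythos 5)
Your proof is correct, and it is the same Guerra-type Gaussian interpolation as the paper's; the only genuine difference is in how the two species structures are coupled. The paper keeps the given partitions and instead builds a bijection $f:[N]^2\to[N]^2$ (and $g:[N]\to[N]$) matching index \emph{pairs} across the two systems, interpolating $\sqrt{t}J_{ij}+\sqrt{1-t}\tilde J_{f(i,j)}$; the fraction of unmatched pairs is then directly $\sum_{s,s'}(\lambda_{s}\lambda_{s'}-\tilde\lambda_{s}\tilde\lambda_{s'})_+$, which is how the pairwise sum enters its bound. You instead align the partitions at the \emph{site} level so that the mismatch set satisfies $|M_N|/N=\tfrac12\sum_s|\lambda_{s,N}-\tilde\lambda_{s,N}|+o(1)$, run the global interpolation between two independent Hamiltonians, split the covariance difference into a uniform piece and a partition-change piece supported on pairs meeting $M_N$, and then convert $\sum_s|\lambda_s-\tilde\lambda_s|$ into $\sum_{s,s'}|\lambda_s\lambda_{s'}-\tilde\lambda_s\tilde\lambda_{s'}|$ via the identity $\lambda_s-\tilde\lambda_s=\sum_{s'}(\lambda_s\lambda_{s'}-\tilde\lambda_s\tilde\lambda_{s'})$ (which is valid). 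Your version avoids the somewhat fiddly pair-level bijection at the cost of that final elementary inequality; the constants work out — the factor $2$ from ``$i\in M_N$ or $j\in M_N$'' cancels the $\tfrac12$ in $|M_N|/N$ — so both arguments land on the stated bound.
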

\begin{proof}
    We may assume that $(\tilde J,\tilde h)$ is independent of $(J,h),$ 

    Consider the following matching procedure.
    For each $N$, we have two partitions of $[N]\colonequals \{1,\dots, N\}$, $(I_{s,N})_{s\in\mathscr S}$ and $(\tilde I_{s,N})_{s\in\mathscr S}$, which correspond to the free energies $\e F_N$ and $\e \tilde F_N$, respectively.
    For each $(s,s')\in\mathscr S\times \mathscr S$ and $N\in\mathbb N$, we define $P_{ss',N}\subseteq I_{s,N}\times I_{s',N}$ as follows.
    If $\lambda_{s,N}\lambda_{s',N}\le  \tilde \lambda_{s,N} \tilde \lambda_{s',N}$, then we set $P_{ss',N}= I_{s,N}\times I_{s',N}$. 
    If $\lambda_{s,N}\lambda_{s',N}>  \tilde \lambda_{s,N} \tilde \lambda_{s',N}$, we set $P_{ss',N}$ to be an arbitrary (strict) subset of $ I_{s,N}\times I_{s',N}$ having the same cardinality as  $\tilde I_{s,N}\times \tilde I_{s',N}$.
    With this construction of $(P_{ss',N})_{s,s' \in\mathscr S}$, we can construct a bijection  $f: [N]^2 \to [N]^2$ 
    which satisfies \begin{enumerate}[label=(\roman*)]
        \item $f(P_{ss',N})\subseteq \tilde I_{s,N}\times \tilde I_{s',N}$ for all $s,s'\in\mathscr S$,
        \item $f(P_{ss',N})= \tilde I_{s,N}\times \tilde I_{s',N}$ if and only if $\lambda_{s,N}\lambda_{s',N}>  \tilde \lambda_{s,N} \tilde \lambda_{s',N}$,
        \item All elements in $\cup_{s,s': \lambda_{s,N}\lambda_{s',N}>  \tilde \lambda_{s,N} \tilde \lambda_{s',N}} ((I_{s,N}\times I_{s',N}) \setminus P_{ss',N})$ are bijectively mapped into $\cup_{s,s': \lambda_{s,N}\lambda_{s',N}\le  \tilde \lambda_{s,N} \tilde \lambda_{s',N}} (( \tilde I_{s,N}\times \tilde I_{s',N}) \setminus f(P_{ss',N}))$ by $f.$
    \end{enumerate} 
    Observe that \begin{equation}\label{eq: non-matched pairs}
        \frac{\bigl|[N]^2\setminus \cup_{s,s'\in\mathscr S}P_{ss',N}\bigr|}{N^2}= \sum_{s,s'\in\mathscr S} (\lambda_{s,N}\lambda_{s',N}- \tilde \lambda_{s,N} \tilde \lambda_{s',N})\1_{\{ \lambda_{s,N}\lambda_{s',N}>  \tilde \lambda_{s,N} \tilde \lambda_{s',N}\}}.
    \end{equation}
    We can similarly construct subsets $J_{s,N}\subseteq I_{s,N}$ where the equality holds if and only if $\lambda_{s,N}\le \tilde\lambda_{s,N}$, and from this, a bijection $g:[N]\to [N]$ such that $g(J_{s,N})\subseteq \tilde I_{s,N}$ for all $s\in\mathscr S$, and equality if $\lambda_{s,N}> \tilde \lambda_{s,N}$.
    Moreover, \begin{equation}\label{eq: non-matched sites}
        \frac{\bigl|[N]\setminus \cup_{s\in\mathscr S}J_{s,N}\bigr|}{N}= \sum_{s\in\mathscr S} (\lambda_{s,N}- \tilde \lambda_{s,N} )\1_{\{ \lambda_{s,N}>  \tilde \lambda_{s,N} \}}.
    \end{equation}
    
    Define the interpolating Gaussian random variable $J_{ij}(t) =\sqrt{t}J_{ij}+\sqrt{1-t}\tilde J_{f(i,j)}$ and $h_i(t)=\sqrt{t}h_i+\sqrt{1-t}\tilde h_{g(i)}$ for $0\le t\le 1$ and $1\le i,j\le N$. 
    Define the corresponding Hamiltonian $H_t(\sigma)= N^{-1/2}\sum_{i,j}J_{ij}(t) \sigma_i\sigma_j +\sum_{i}h_i(t)\sigma_i$ and the free energy $F_t= N^{-1}\log \sum_{\sigma} \exp(H_t(\sigma))$ for $0\le t\le 1$.
Denote the Gibbs expectation associated to the Hamiltonian  by $\lla \cdot \rra_{t}$.
 With a slight abuse of notation, denote $s(f(i,j))= (s(\pi_1 \circ f(i,j)), s(\pi_2 \circ f(i,j) ) ) $, where $\pi_i$ is the projection into $i$'th coordinate.
    By the standard Gaussian interpolation (see \cite[Lemma~1.3]{Pan13_SK_book}), \begin{align*}
        \partial_t  \e F_t &= \frac{1}{2N^2}  \sum_{1\le i,j\le N}\bigl(1-\e \lla \sigma_i\sigma_j\rra_t^2 \bigr) \bigl(\Delta^2_{s(i)s(j)}- \tilde \Delta^2_{s(f(i,j))}\bigr) 
        \\& \;  +\frac{1}{2N} \sum_{1\le i\le N} \bigl(1-\e \lla \sigma_i\rra_t^2\bigr)(\tau^2_{s(i)}-\tilde \tau^2_{s(g(i))}).
    \end{align*} 
    Using \eqref{eq: non-matched pairs} and \eqref{eq: non-matched sites}, we can estimate \begin{align*}
        |\partial_t  \e F_t|  &\le \frac{1}{2N^2}  \sum_{1\le i,j\le N} |\Delta^2_{s(i)s(j)}- \tilde \Delta^2_{s(f(i,j))}|   + \frac{1}{2N} \sum_{1\le i\le N}|\tau^2_{s(i)}-\tilde \tau^2_{s(g(i))}|
        \\&\le \max_{s,s'\in\mathscr S}|\Delta^2_{ss'}-\tilde \Delta^2_{ss'}| + \max_{s,s'\in\mathscr S} (\Delta^2_{ss'}\vee \tilde \Delta^2_{ss'})  \sum_{s,s'\in\mathscr S} |\lambda_{s,N}\lambda_{s',N}- \tilde \lambda_{s,N} \tilde \lambda_{s',N}|
        \\&\quad +\max_{s\in\mathscr S}|\tau^2_{s}-\tilde \tau^2_{s}| + \max_{s\in\mathscr S} (\tau^2_{s}\vee \tilde \tau^2_{s})  \sum_{s\in\mathscr S} |\lambda_{s,N} - \tilde \lambda_{s,N}|,
    \end{align*}
    from which the fundamental theorem of calculus and  $\e F_1= \e F_N$, $\e F_0=\e \tilde F_N$ complete the proof. 
\end{proof}

\begin{lemma}
For any $r\ge 2$ and sequences in \eqref{eq: zeta sequence} and \eqref{eq: q sequence}, we have the following: for $a,b, s\in\mathscr S$,
\begin{align*}
    &0\le \partial_{\tau^2_{a}} \e_hX^s_0\le  2^{-1}\1_{\{\tau^2_s>0, \, a=s\}},
    \\&0\le \partial_{\Delta^2_{ab}} \e_hX^s_0\le  \1_{\{a=s\}}\lambda_b,
    \\& 0\le \partial_{\lambda_a} \e_hX^s_0\le  \Delta^2_{sa}.
\end{align*}
\end{lemma}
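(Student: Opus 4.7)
The plan is to differentiate the RPC representation
\[
\e_h X_0^s = \e\log\sum_{\alpha\in\mathbb N^r}v_\alpha\ch\bigl(h_s+C^s(\alpha)\bigr)
\]
of \eqref{eq: standard property of RPC} using Gaussian integration by parts and the chain rule. The parameter $\tau^2_a$ enters only through the law of $h_s$ (when $a=s$), whereas $\Delta^2_{ab}$ and $\lambda_a$ enter only through the covariance increments $Q^s_l = 2\sum_{s'}\Delta^2_{ss'}\lambda_{s'}(q_l)_{s'}$ from \eqref{eq: covariance of Gaussian process C}. Since Lemma \ref{eq: classical calculation for derivative of X^s_0} already supplies $\partial_{Q^s_l}\e_h X^s_0$ for the inner levels $1\le l\le r-1$, the remaining work reduces to the two boundary derivatives at $l=0$ (for the $\tau^2$-bound) and $l=r$ (for the $\Delta^2$- and $\lambda$-bounds).

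For $\partial_{\tau^2_a}\e_h X^s_0$, the derivative is trivially zero for $a\ne s$. For $a=s$ with $\tau^2_s>0$, applying the heat-equation identity $\partial_v\e F(\sqrt v\,Z)=\tfrac12 \e F''(\sqrt v\,Z)$ to $F(x)=\log\sum_\alpha v_\alpha\ch(x+C^s(\alpha))$, together with the Gibbs weights $\mathcal G_\alpha=v_\alpha\ch(x+C^s(\alpha))/U$ and the identity $\sch^2+\hth^2\equiv 1$, gives
\[
\partial_{\tau^2_s}\e_h X^s_0 \;=\; \tfrac12\biggl(1-\e\Bigl(\textstyle\sum_\alpha \mathcal G_\alpha\hth(h_s+C^s(\alpha))\Bigr)^{\!2}\biggr)\in\bigl[0,\tfrac12\bigr].
\]

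For $\partial_{\Delta^2_{ab}}$ and $\partial_{\lambda_a}$, the chain rule yields $\partial_{\Delta^2_{ab}}Q^s_l = 2\1_{\{s=a\}}\lambda_b(q_l)_b$ and $\partial_{\lambda_a}Q^s_l = 2\Delta^2_{sa}(q_l)_a$, so both reduce to linear combinations of $\partial_{Q^s_l}\e_h X^s_0$ over $l=1,\dots,r$. Lemma \ref{eq: classical calculation for derivative of X^s_0} handles $1\le l\le r-1$ with contribution $-\tfrac12(\zeta_l-\zeta_{l-1})A_l$, where $A_l = \e_h\e_{1,\dots,l}W^s_1\cdots W^s_l(\e_{l+1,\dots,r}W^s_{l+1}\cdots W^s_r\hth Y^s_r)^2\in[0,1]$. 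For the missing $l=r$ term, we perform a separate Gaussian IBP on $z_r$ in $Y^s_r = Y^s_{r-1}+z_r\sqrt{Q^s_r-Q^s_{r-1}}$, using $W^s_r = \ch^{\zeta_{r-1}}(Y^s_r)/\e_r\ch^{\zeta_{r-1}}(Y^s_r)$ and again $\sch^2+\hth^2\equiv 1$, to obtain
\[
\partial_{Q^s_r}\e_h X^s_0 \;=\; \tfrac12\bigl(1-(1-\zeta_{r-1})B_r\bigr), \qquad B_r\colonequals \e_h\e_{1,\dots,r}W^s_1\cdots W^s_r\,\hth^2 Y^s_r\in[0,1].
\]
Combining, using $(q_r)_b=1$ and the Cauchy--Schwarz bound $A_l\le B_r$ (together with the martingale-type identity $\e_l W^s_l\equiv 1$), we get
\[
\partial_{\Delta^2_{ab}}\e_h X^s_0 = \1_{\{s=a\}}\lambda_b\Bigl[1-(1-\zeta_{r-1})B_r-\sum_{l=1}^{r-1}(\zeta_l-\zeta_{l-1})(q_l)_b\,A_l\Bigr],
\]
and analogously for $\partial_{\lambda_a}$ with $\lambda_b$ and $(q_l)_b$ replaced by $\Delta^2_{sa}$ and $(q_l)_a$. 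The bracket lies in $[0,1]$ by the telescoping estimate $(1-\zeta_{r-1})B_r+\sum_{l=1}^{r-1}(\zeta_l-\zeta_{l-1})(q_l)_b A_l \le B_r(1-\zeta_0)\le 1$, yielding both the non-negativity and the asserted upper bounds.

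The main obstacle will be the boundary case $l=r$: the covariance identity \eqref{eq: IBP covariance} used in the proof of Lemma \ref{eq: classical calculation for derivative of X^s_0} degenerates when $\alpha^1\wedge\alpha^2 = r$, so that argument does not extend verbatim. The Gaussian IBP on $z_r$ described above supplies the correct boundary formula; as a sanity check, in the replica-symmetric regime ($r=2$, $\zeta_0=0$, $\zeta_1=1$), the explicit expression $\e_h X^s_0 = (Q^s_2-Q^s_1)/2 + \e\log\ch(h_s+z\sqrt{Q^s_1})$ yields $\partial_{Q^s_2}\e_h X^s_0 = \tfrac12$, which matches the $\partial_{Q^s_r}$ formula above at $\zeta_{r-1}=1$.
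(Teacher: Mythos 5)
Your proposal is correct, but for the $\Delta^2$- and $\lambda$-derivatives it takes a genuinely different route from the paper. You push the derivative through the parameters $Q^s_l$ via the chain rule, reuse the inner-level formula of Lemma~\thref{eq: classical calculation for derivative of X^s_0} for $1\le l\le r-1$, and supply the missing boundary derivative $\partial_{Q^s_r}\e_h X^s_0=\tfrac12\bigl(1-(1-\zeta_{r-1})B_r\bigr)$ by a separate Gaussian IBP in $z_r$ (your observation that the covariance identity \eqref{eq: IBP covariance} degenerates at $l=r$, so the cited lemma does not cover that level, is exactly right); the bounds then follow from $A_l\le B_r\le 1$, $q_l^b\le 1$, and telescoping the $\zeta$-increments. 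The paper instead differentiates the RPC representation \eqref{eq: standard property of RPC} directly in $\Delta^2_{ab}$ (resp.\ $\lambda_a$) by computing $\partial_{\Delta^2_{ab}}C^s(\alpha)$ and the covariance $\e[C^s(\alpha^2)\,\partial_{\Delta^2_{ab}}C^s(\alpha^1)]=\1_{\{a=s\}}\lambda_b\,q^b_{\alpha^1\wedge\alpha^2}$, arriving in one step at the closed form $\1_{\{a=s\}}\lambda_b\bigl(1-\e\lla q^b_{\alpha^1\wedge\alpha^2}\hth(h_s+C^s(\alpha^1))\hth(h_s+C^s(\alpha^2))\rra\bigr)$; decomposing $q^b_{\alpha^1\wedge\alpha^2}$ over tree levels shows the two expressions agree. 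Your version has the advantage of making both the non-negativity and the upper bound completely explicit (the paper's form requires the additional, unspoken observation that $\e\lla q^b_{\alpha^1\wedge\alpha^2}\hth\,\hth\rra\ge 0$, which rests on a positive-semidefiniteness property of the overlap indicator under the RPC average), at the cost of having to handle the degenerate case $Q^s_r=Q^s_{r-1}$ separately (the paper reduces to strictly increasing $Q^s_l$ by shrinking $r$, a remark worth adding to your write-up). The $\tau^2$-derivative is handled identically in both proofs, the heat-equation identity being the same computation as Gaussian IBP in $h_s=\sqrt{\tau^2_s}\,z$.
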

\begin{proof}
    The calculations will be similar to that of Lemma~\ref{eq: classical calculation for derivative of X^s_0}.
Fix $s\in\mathscr S$.
    Since $h_s \stackrel{d}{=}\sqrt{\tau^2_s}z$ for some standard Gaussian random variable $z$ independent of everything else, we can rewrite  \eqref{eq: standard property of RPC} as \[\e X_{0}^{s}= \e \log\sum_{\alpha\in\mathbb N^{r}} v_\alpha \ch( \sqrt{\tau^2_s} z+ C^s(\alpha)).\] 
    Using Gaussian IBP, \begin{align*}
        \partial_{\tau^2_a}\e X^s_0 &=  \1_{\{\tau^2_s>0, \, a=s\}} \sum_{\alpha\in\mathbb N^r}\e \Bigl( \frac{z}{2\sqrt{\tau^2_s}} \cdot  \frac{v_\alpha  \sh(\sqrt{\tau^2_s} z+C^{s}(\alpha))}{\sum_{\alpha\in\mathbb N^r}v_\alpha \ch(\sqrt{\tau^2_s} z+C^{s}(\alpha))} \Bigr)
        \\&= 2^{-1}\1_{\{\tau^2_s>0, \, a=s\}}  \bigl(1 - \e\lla \hth (\sqrt{\tau^2_s} z+C^{s}(\alpha) \rra^2  \bigr),
    \end{align*} where $\lla \cdot \rra$ denotes the expectation on $\mathbb N^r$ with weights $(v_\alpha)_{\alpha\in\mathbb N^r}.$
    This immediately verifies the  assertion about the derivative in $\tau^2_a.$

We will check the other two inequalities.
From definition \eqref{eq: definition of Q}, we have
    \begin{equation*}
        \partial_{\Delta^2_{ab}} Q^s_l = 2 \1_{\{a=s\}}\lambda_b q^b_l, \quad \partial_{\lambda_a}Q^s_l= 2\Delta^2_{sa}q^a_l.
    \end{equation*}
    By considering smaller $r$ if necessary, we may assume $Q^s_l>Q^s_{l-1}$ for all $1\le l\le r.$
    Recalling the definition \eqref{eq: Gaussian process C},
\begin{equation*}
   \partial_{\Delta^2_{ab}}C^{s}(\alpha) = \sum_{\beta \in p(\alpha)} \frac{z_\beta (\partial_{\Delta^2_{ab}}Q_{|\beta|}^{s} - \partial_{\Delta^2_{ab}}Q_{|\beta|-1}^{s})}{2(Q_{|\beta|}^{s} -Q_{|\beta|-1}^{s})^{1/2}} = \1_{\{a=s\}} \lambda_b \sum_{\beta \in p(\alpha)} \frac{z_\beta ( q^b_{|\beta|} -  q^b_{|\beta|-1})}{(Q_{|\beta|}^{s} -Q_{|\beta|-1}^{s})^{1/2}}.
\end{equation*}
From \eqref{eq: Gaussian process C} and the preceding display, a careful calculation shows
\begin{equation*}
    \e  \bigl[ C^{s}(\alpha^2) \cdot \partial_{\Delta^2_{ab}}C^{s}(\alpha^1) \bigr] =  \1_{\{a=s\}} \lambda_b  q^b_{\alpha^1\wedge \alpha^2},
\end{equation*} from which Gaussian IBP yields 
\begin{align*}
    \partial_{\Delta^2_{ab}} \e_h X_0^{s} &= \sum_{\alpha\in\mathbb N^r}\e v_\alpha  \partial_{\Delta^2_{ab}} C^{s}(\alpha) \frac{\sh(h_{s}+C^{s}(\alpha))}{\sum_{\alpha\in\mathbb N^r}v_\alpha \ch(h_{s}+C^{s}(\alpha))}
    \\&=  \1_{\{a=s\}} \lambda_b   \bigl(1- \e \lla q^b_{\alpha^1\wedge \alpha^2} \hth( h_s +C^s(\alpha^1)\hth( h_s +C^s(\alpha^2)\rra \bigr),
\end{align*}  and the derivative in $\Delta^2_{ab}$ is checked.
We can similarly calculate the derivative in $\lambda_a$, \[\partial_{\lambda_a} \e_h X_0^{s}= \Delta^2_{sa}  \bigl(1- \e \lla q^a_{\alpha^1\wedge \alpha^2} \hth( h_s +C^s(\alpha^1)\hth( h_s +C^s(\alpha^2)\rra \bigr),\] which finishes the proof.

\end{proof}
The preceding lemma implies  \begin{align*}
    |\e_h X_0^s - \e_{\tilde h} \tilde X_0^s| &\le  2^{-1}|\tau^2_s -\tilde \tau^2_s|+ \sum_{b\in\mathscr S} \max (\lambda_b, \tilde \lambda_b) |\Delta^2_{sb}-\tilde \Delta^2_{sb}| + \sum_{a\in\mathscr S} \max(\Delta^2_{sa},\tilde \Delta^2_{sa}) |\lambda_a-\tilde \lambda_a|
    \\&\le 2^{-1}|\tau^2_s -\tilde \tau^2_s|+ \sum_{b\in\mathscr S} |\Delta^2_{sb}-\tilde \Delta^2_{sb}| + \max_{a\in\mathscr S}(\Delta^2_{sa} \vee  \tilde \Delta^2_{sa}) \sum_{a\in\mathscr S} |\lambda_a-\tilde \lambda_a|. 
\end{align*} 
By \eqref{eq: standard property of RPC}, $\e_h X^s_0 \ge 0.$
Moreover, applying Jensen's inequality to \eqref{eq: standard property of RPC} and using \eqref{eq: covariance of Gaussian process C}, \[\e_h X^s_0 \le  \e \log \sum_{\alpha \in\mathbb N^r} v_\alpha \e \ch(h_s+C^s(\alpha))\le  2^{-1}\tau^2_s +(\Delta^2 \Lambda \1)_s.\]
Note \[
    \Bigl| \sum_{s\in\mathscr S} \lambda_s \e_h X_0^s -\sum_{s\in\mathscr S} \tilde \lambda_s \e_{\tilde h} \tilde X_0^s \Bigr| \le \sum_{s\in \mathscr S}|\lambda_s-\tilde\lambda_s|\cdot  \e_h X^s_0 + \sum_{s\in\mathscr S} |\e_h X_0^s - \e_{\tilde h} \tilde X_0^s| \cdot \lambda_s.
\]
The last three displays together show \begin{align*}
    \Bigl| \sum_{s\in\mathscr S} \lambda_s \e_h X_0^s -\sum_{s\in\mathscr S} \tilde \lambda_s \e_{\tilde h} \tilde X_0^s \Bigr| &\le \max_{s\in\mathscr S}\bigl(2^{-1}\tau^2_s +(\Delta^2 \Lambda \1)_s\bigr) \cdot \sum_{s\in \mathscr S}|\lambda_s-\tilde\lambda_s| 
    \\  &+\max_{s\in\mathscr S}\frac{|\tau^2_s -\tilde \tau^2_s|}{2}
    + \max_{s\in\mathscr S}\sum_{a\in\mathscr S} |\Delta^2_{sa}-\tilde \Delta^2_{sa}| 
    \\&+ \max_{s,a\in\mathscr S}(\Delta^2_{sa} \vee  \tilde \Delta^2_{sa}) \sum_{a\in\mathscr S} |\lambda_a-\tilde \lambda_a|,
\end{align*}
which shows the Lipschitz continuity of the second term of the Parisi functional.
On the other hand, using summation by parts, we can write the third term in the Parisi functional 
\[
\sum_{l=0}^{r-1} \zeta_l \,(Q_{l+1} - Q_l)
= \zeta_{r-1} Q_r - \sum_{l=0}^{r-1} (\zeta_l - \zeta_{l-1})\, Q_l,
\]
from which we obtain the uniform upper bound of the difference 
\begin{align*}
    \Bigl| \sum_{l=0}^{r-1} \zeta_l (Q_{l+1} - Q_l) -\sum_{l=0}^{r-1} \zeta_l (\tilde Q_{l+1} - \tilde Q_l)\Bigr|
&\le  \zeta_{r-1} |Q_r-\tilde Q_r| + \sum_{l=0}^{r-1} (\zeta_l - \zeta_{l-1})  |Q_l-\tilde Q_l |
\\&\le \max_{0\le l\le r} |Q_l-\tilde Q_l|
\\&\le \sum_{a,b\in\mathscr S} |(\Lambda\Delta^2\Lambda -\tilde \Lambda \tilde \Delta^2 \tilde \Lambda)_{ab}|.
\end{align*}
We can summarize the above findings as follows.
\begin{lemma}\label{lem: Lipschitz of Parisi functional in model parameters}
    For any $r\ge 2$ and sequences in \eqref{eq: zeta sequence} and \eqref{eq: q sequence}, we have
    \begin{align*}
        |\mathscr P ((q_l),(\zeta_l))- \tilde {\mathscr P} ((q_l),(\zeta_l))| &\le \max_{s\in\mathscr S}\bigl(2^{-1}\tau^2_s +(\Delta^2 \Lambda \1)_s\bigr) \cdot \sum_{s\in \mathscr S}|\lambda_s-\tilde\lambda_s| 
    \\  &+\max_{s\in\mathscr S}\frac{|\tau^2_s -\tilde \tau^2_s|}{2}+ \max_{s\in\mathscr S}\sum_{s'\in\mathscr S} |\Delta^2_{ss'}-\tilde \Delta^2_{ss'}| 
    \\&+ \max_{s,s'\in\mathscr S}(\Delta^2_{ss'} \vee  \tilde \Delta^2_{ss'}) \sum_{s\in\mathscr S} |\lambda_s-\tilde \lambda_s|
    + \frac{1}{2}\sum_{s,s'\in\mathscr S} |(\Lambda\Delta^2\Lambda -\tilde \Lambda \tilde \Delta^2 \tilde \Lambda)_{ss'}|.
    \end{align*}
    In particular, $|\inf_{r,(q_l),(\zeta_l)}\mathscr P ((q_l),(\zeta_l))- \inf_{r,(q_l),(\zeta_l)}\tilde {\mathscr P} ((q_l),(\zeta_l))|$ has the same upper bound.
\end{lemma}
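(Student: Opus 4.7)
The plan is to prove the lemma by bounding each of the three nontrivial terms of $\mathscr P((q_l),(\zeta_l))-\tilde{\mathscr P}((q_l),(\zeta_l))$ separately, noting that the $\log 2$ contributions cancel exactly and that $(q_l),(\zeta_l)$ are held fixed, so only the terms depending on $(\Lambda,\Delta^2,\tau^2)$ move. Because the final uniform bound then holds pointwise over $r,(q_l),(\zeta_l)$, taking infima on both sides immediately yields the stated Lipschitz bound for the Parisi formulas.

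First, I would derive derivative bounds for the single-species functional $\e_h X_0^s$ with respect to $\tau_a^2$, $\Delta^2_{ab}$, and $\lambda_a$. Starting from the RPC representation $\e_h X_0^s = \e\log\sum_{\alpha\in \mathbb N^r} v_\alpha \ch(\sqrt{\tau^2_s}\,z + C^s(\alpha))$ in \eqref{eq: standard property of RPC} and applying Gaussian integration by parts exactly as in Lemma~\ref{eq: classical calculation for derivative of X^s_0}, one obtains
\[
0\le \partial_{\tau^2_a}\e_h X_0^s \le \tfrac{1}{2}\1_{\{a=s\}},\quad 0\le \partial_{\Delta^2_{ab}}\e_h X_0^s\le \1_{\{a=s\}}\lambda_b,\quad 0\le \partial_{\lambda_a}\e_h X_0^s\le \Delta^2_{sa}.
\]
The upper bounds follow because each derivative can be written as a probability-weighted expectation of $1-\langle\tanh(\cdots)\tanh(\cdots)\rangle$, which lies in $[0,1]$. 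Combining these via the mean value theorem produces a Lipschitz estimate for $|\e_h X_0^s-\e_{\tilde h}\tilde X_0^s|$ in the three parameters.

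Next, I would split $\bigl|\sum_s \lambda_s \e_h X_0^s - \sum_s \tilde\lambda_s \e_{\tilde h}\tilde X_0^s\bigr|$ using the triangle inequality into a term proportional to $|\lambda_s-\tilde\lambda_s|$ times the uniform upper bound $\e_h X_0^s\le \tfrac{1}{2}\tau_s^2+(\Delta^2\Lambda\1)_s$ (which follows from Jensen's inequality applied to \eqref{eq: standard property of RPC} together with \eqref{eq: covariance of Gaussian process C}), plus a term proportional to $\lambda_s$ times the Lipschitz estimate above. This produces the first four contributions of the claimed bound.

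For the third term, I would apply summation by parts to decouple $(\zeta_l)$ from the profile-dependent $Q_l$:
\[
\sum_{l=0}^{r-1}\zeta_l(Q_{l+1}-Q_l)=\zeta_{r-1}Q_r-\sum_{l=0}^{r-1}(\zeta_l-\zeta_{l-1})Q_l.
\]
Since $\zeta_{r-1}+\sum_{l=0}^{r-1}(\zeta_l-\zeta_{l-1})=1$ (using $\zeta_{-1}=0$), the difference is bounded by $\max_{0\le l\le r}|Q_l-\tilde Q_l|$, which in turn is bounded uniformly in $(q_l)$ by $\sum_{s,s'}|(\Lambda\Delta^2\Lambda-\tilde\Lambda\tilde\Delta^2\tilde\Lambda)_{ss'}|$ because $q_l\in[0,1]^{\mathscr S}$. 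No serious obstacle arises: the only subtlety is making the derivative bounds on $\e_h X_0^s$ uniform in $r$ and $(q_l),(\zeta_l)$, which is automatic since the bounds come from representing each derivative as an expectation of a function bounded by $1$ under the RPC Gibbs measure. Summing the three contributions gives the stated inequality, and passing to the infimum over $r,(q_l),(\zeta_l)$ on both sides concludes the proof.
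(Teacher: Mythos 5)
Your proposal is correct and follows essentially the same route as the paper: derivative bounds for $\e_h X_0^s$ in $\tau^2_a$, $\Delta^2_{ab}$, $\lambda_a$ obtained from the RPC representation \eqref{eq: standard property of RPC} via Gaussian integration by parts, a triangle-inequality split of the weighted sum using the Jensen bound $\e_h X_0^s\le 2^{-1}\tau^2_s+(\Delta^2\Lambda\1)_s$, and summation by parts to bound the quadratic term by $\max_l|Q_l-\tilde Q_l|$. The argument and the resulting constants match the paper's proof in Appendix~\ref{sec: Proof of continuity of Parisi formula in model parameters}.
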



\section{Support Cardinality of Parisi Measures}\label{sec: cardinality of supports of Parisi measures}
For convenience, we denote the continuous map $S=2\Delta^2\Lambda$ on $\mathbb R^{\mathscr S}$ throughout this section.
In view of \cite[Theorem 1.2]{CIM25}, the following lemma readily shows that the supports of any Parisi measures have the same cardinality.
\begin{lemma}
    Suppose  $S_*(\mu)=\nu$ for some $\mu \in\mathcal M^\uparrow$ and a probability measure $\nu$ with totally ordered support in  $[0,\infty)^{\mathscr S}$, where $S_*$ denotes the push-forward by $S.$
    For any  $x\in \operatorname{supp}(\nu)$, we have $|\operatorname{supp}(\mu)\cap S^{-1}(x)|=1.$
    Moreover, $\operatorname{supp}(\mu) \subseteq S^{-1}\operatorname{supp}(\nu)$
    and, therefore, $|\operatorname{supp}(\mu)|=|\operatorname{supp}(\nu)|.$
\end{lemma}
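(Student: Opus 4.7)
The plan is to establish the bijectivity of $S=2\Delta^2\Lambda$ restricted to $\operatorname{supp}(\mu)$, viewed as a map into $\operatorname{supp}(\nu)$. Once this is achieved, the identity $|\operatorname{supp}(\mu)|=|\operatorname{supp}(\nu)|$ is immediate, and the first assertion that $|\operatorname{supp}(\mu)\cap S^{-1}(x)|=1$ for each $x\in\operatorname{supp}(\nu)$ simply restates injectivity together with surjectivity at the point $x$. I will split the task into three short steps.

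First, I will verify the containment $\operatorname{supp}(\mu)\subseteq S^{-1}(\operatorname{supp}(\nu))$ by a standard continuity argument: for any $p\in\operatorname{supp}(\mu)$ and any open neighborhood $V$ of $S(p)$, the set $S^{-1}(V)$ is an open neighborhood of $p$, so $\nu(V)=\mu(S^{-1}(V))>0$, which places $S(p)\in\operatorname{supp}(\nu)$. Next, I will show surjectivity of $S\colon\operatorname{supp}(\mu)\to\operatorname{supp}(\nu)$. Fix $x\in\operatorname{supp}(\nu)$; for each $n$, $\mu(S^{-1}(B_{1/n}(x)))=\nu(B_{1/n}(x))>0$, so I can pick $p_n\in\operatorname{supp}(\mu)\cap S^{-1}(B_{1/n}(x))$. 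Compactness of $\operatorname{supp}(\mu)$ and continuity of $S$ yield a limit point $p^*\in\operatorname{supp}(\mu)$ with $S(p^*)=x$.

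The main obstacle is the injectivity step, and it hinges on the irreducibility of $\Delta^2$. Suppose towards contradiction that $p_1,p_2\in\operatorname{supp}(\mu)$ are distinct with $S(p_1)=S(p_2)$. Since $\operatorname{supp}(\mu)$ is totally ordered, I may assume $p_1\lee p_2$, so that $v\coloneqq p_2-p_1\gee \0$ is a nonzero vector with $\Delta^2\Lambda v=\0$. I will then show this is incompatible with the irreducibility of $\Delta^2$ by the following short argument: letting $I=\{i:v_i>0\}\ne\emptyset$, the equation $\sum_{i\in I}\Delta^2_{ji}\lambda_i v_i=0$ for every $j\in\mathscr{S}$ forces $\Delta^2_{ji}=0$ for all $j\in\mathscr{S}$ and all $i\in I$, and symmetry gives $\Delta^2_{ij}=0$ as well. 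This means the rows and columns indexed by $I$ decouple from those indexed by $I^c$, which contradicts irreducibility (the case $I=\mathscr{S}$ is excluded as it would force $\Delta^2=\0$, which is also reducible unless $|\mathscr{S}|=1$, in which case $v$ must be zero already). Hence $p_1=p_2$, completing the injectivity.

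Combining the three steps, $S$ restricts to a bijection $\operatorname{supp}(\mu)\to\operatorname{supp}(\nu)$, so $|\operatorname{supp}(\mu)\cap S^{-1}(x)|=1$ for every $x\in\operatorname{supp}(\nu)$ and $|\operatorname{supp}(\mu)|=|\operatorname{supp}(\nu)|$. I expect the only nontrivial input to be the linear-algebraic consequence of irreducibility in the injectivity step, which is entirely self-contained; the remaining topological arguments are routine.
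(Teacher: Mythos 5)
Your proof is correct and follows essentially the same route as the paper's: the containment $\operatorname{supp}(\mu)\subseteq S^{-1}\operatorname{supp}(\nu)$ via openness of $S^{-1}$ of a ball, existence of a preimage in $\operatorname{supp}(\mu)$ via compactness, and uniqueness by combining total orderedness of the support with the fact that a nonzero vector $v\gee\0$ cannot lie in $\ker(\Delta^2\Lambda)$ when $\Delta^2$ is irreducible. The only difference is that the paper gets this last fact by citing the Perron--Frobenius theorem (so that $\ker(S)$ meets the union of the positive and negative cones only at $\0$), whereas you prove it directly from nonnegativity of the entries; your elementary argument is fine, and the one loose end --- the parenthetical claim that $v$ "must be zero already" when $|\mathscr S|=1$ and $\Delta^2=\0$, which does not actually follow --- concerns a degenerate case that the paper's own Perron--Frobenius citation likewise excludes.
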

\begin{proof}
    Let us  denote the union of the positive and negative cones by $K=[0,\infty)^{\mathscr S}\cup (-\infty,0]^{\mathscr S}$.
Since $S$ is irreducible, $\operatorname{ker}(S)\cap K=\{\0\}$ by the Perron--Frobenius theorem \cite[(8.3.9)]{Mey00_book}.  
    Fix $x\in\operatorname{supp}(\nu).$
    The uniqueness is simple to check as follows. 
    Suppose $y,z \in \operatorname{supp}(\mu)\cap S^{-1}(x)$ and $y\ne z$.
    This means $y=z+a$ for some $\0\ne a\in \operatorname{ker}(S)$.
    Then, $a \notin K$ and $\{y,z\}$ is not totally ordered, which is a contradiction to $\mu \in\mathcal M^\uparrow$.
    To establish existence, note $0<\nu(\mathsf{B}_{1/n}(x))=  \mu (S^{-1}(\mathsf{B}_{1/n}(x)))\le \mu (S^{-1}(\operatorname{cl}(\mathsf{B}_{1/n}(x))))$ for every $n\ge 1.$ 
    In particular, $(\operatorname{supp}(\mu)\cap S^{-1}(\operatorname{cl}(\mathsf{B}_{1/n}(x))))_{n\ge 1}$ is a decreasing family of nonempty closed sets.
    By the compactness of $\operatorname{supp}(\mu)$, we must have $\emptyset \ne \cap_{n\ge 1}\bigl(\operatorname{supp}(\mu)\cap S^{-1}(\operatorname{cl}(\mathsf{B}_{1/n}(x)))\bigr)= \operatorname{supp}(\mu)\cap S^{-1}(x) $.

    Next, to check  $\operatorname{supp}(\mu) \subseteq S^{-1}\operatorname{supp}(\nu),$ suppose $y\in \operatorname{supp}(\mu)$.
    Then, $\nu(\mathsf{B}_\eps (Sy))= \mu (S^{-1}\mathsf{B}_\eps (Sy)) \\>0,$ where the inequality follows from the fact that $S^{-1}(\mathsf{B}_\eps (Sy))$ is an open set containing $y$ and $y\in\operatorname{supp}(\mu).$
    This means that $Sy \in \operatorname{supp}(\nu)$, as desired.
    
\end{proof}

\section{Simultaneous replica symmetry breaking}\label{sec: Proof of simultaneous RSB}

As a simple byproduct of our proof of  Theorem~\ref{thm: main theorem}, we can address whether all species are simultaneously RSB; that is, the projections of a Parisi measure into each species have supports of the same cardinality.
\begin{proposition}
    Consider MSK with deterministic external field.
    Suppose $\Delta^2$ is irreducible and positive-definite.
Let $(\pi_s)_*$ denote the push-forward  by the projection map $\pi_s$ into the $s$-species.
For every Parisi measure $\mu$ on $[0,1]^{\mathscr S}$, we have $ |\operatorname{supp} ( (\pi_s)_*\mu)|=|\operatorname{supp}( (\pi_{s'})_*\mu)|$ for all $s,s'\in\mathscr S$.
\end{proposition}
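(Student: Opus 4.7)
The plan is to show that for every species $s\in\mathscr S$, the coordinate projection $\pi_s$ is injective on $\operatorname{supp}(\mu)$. Once this is established, $|\operatorname{supp}((\pi_s)_*\mu)|=|\operatorname{supp}(\mu)|$ for every $s$, which immediately yields the asserted equality across species. The two ingredients I would use are (i) the fixed-point equation for Parisi measures from Proposition~\ref{prop: support of Parisi measure}, adapted to deterministic external fields as indicated in Appendix~\ref{sec: Extension of Parisi functional}; and (ii) the irreducibility of $\Delta^2$.

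The key observation is a componentwise strict monotonicity of $\Psi_\mu$. From \eqref{eq: definition of vector valued map Psi}, the $s$-th component $\Psi_\mu(h,p)_s$ depends on $p$ only through the scalar $(2\Delta^2\Lambda p)_s$, and Lemma~\ref{lem: properties of Psi} shows that this dependence is \emph{strictly} increasing, since $\partial_w \Gamma_{\nu,\xi_V}(x,V^{-1}w)>0$ pointwise in $x$. For a deterministic field $h^0$, $p\mapsto \Psi_\mu(h^0,p)_s$ is therefore strictly increasing as a function of $(2\Delta^2\Lambda p)_s$; the same holds after averaging in any (Gaussian or deterministic) $h$.

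With this in hand, suppose for contradiction that distinct $p,q\in\operatorname{supp}(\mu)$ satisfy $p_s=q_s$ for some $s\in\mathscr S$. Since $\operatorname{supp}(\mu)$ is totally ordered, assume $p\lee q$, and set $v\colonequals q-p\gee\0$, $v\ne\0$. Let $J=\{t\in\mathscr S:v_t>0\}$, so $s\in J^c$ and $J\ne\emptyset$. For every $s'\in J^c$, the fixed-point relation yields $\e_h\Psi_\mu(h,p)_{s'}=p_{s'}=q_{s'}=\e_h\Psi_\mu(h,q)_{s'}$, and the strict monotonicity of the previous paragraph forces $(\Delta^2\Lambda v)_{s'}=0$. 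Expanding, $\sum_{t\in J}\Delta^2_{s't}\lambda_t v_t=0$, and since each summand is non-negative with $\lambda_t v_t>0$ for $t\in J$, we conclude $\Delta^2_{s't}=0$ for every $s'\in J^c$ and $t\in J$. Because $\emptyset\ne J\ne\mathscr S$, this produces a zero block in $\Delta^2$ that disconnects $J$ from $J^c$, contradicting irreducibility. Hence $\pi_s$ is injective on $\operatorname{supp}(\mu)$, completing the argument.

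The main obstacle I anticipate is not in the species-coupling logic above, which is essentially forced by irreducibility, but rather in verifying that the fixed-point characterization of $\operatorname{supp}(\mu)$ and the strict monotonicity of $\Psi_\mu$ both survive in the deterministic-field setting. The excerpt states that the results of Appendix~\ref{sec: Extension of Parisi functional} extend to deterministic external fields with only cosmetic modifications, and the positivity $\partial_w\Gamma_{\nu,\xi_V}>0$ from Lemma~\ref{lem: properties of Psi}(ii) is pointwise in $x$, so neither ingredient relies on the Gaussian average. I would therefore include a brief remark confirming these adaptations at the start of the proof before executing the injectivity argument.
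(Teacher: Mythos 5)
Your proposal is correct and follows essentially the same route as the paper: both use the fixed-point identity $q=\e_h\Psi_\mu(h,q)$ on $\operatorname{supp}(\mu)$ from Proposition~\ref{prop: support of Parisi measure}, the strict positivity $\partial_2\Gamma_{\nu,\xi_V}>0$ from Lemma~\ref{lem: properties of Psi} to get strict monotonicity of $\Psi_\mu(h,\cdot)_k$ in $(2\Delta^2\Lambda\,\cdot)_k$, and irreducibility of $\Delta^2$ to couple the species. The only cosmetic difference is that you argue by contradiction via a forbidden zero block $\Delta^2_{J^c J}=\0$, whereas the paper argues directly that $p_s<q_s$ forces $p_k<q_k$ along every edge $\Delta^2_{ks}>0$ and propagates by irreducibility.
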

\begin{remark}\label{rmk: simultaneous RSB}
    This result is not new and was obtained in Chen--Mourrat \cite[Theorem 6.2]{CM24} for possibly indefinite $\Delta^2$ by  analyzing a critical point equation of a certain functional, which is conceptually similar to Lemma~\ref{prop: support of Parisi measure}.
Simultaneous RSB in the setting of multi-species spherical mixed $p$-spin models with convex interaction was also established in Bates--Sohn \cite{BS22_CMP}.
We note that in all of these results, the support cardinality of a Parisi measure is not  determined, but we expect full-RSB at low temperatures, meaning a nontrivial continuous component of the support; see Zhou~\cite{Zho25} for recent progress on full-RSB in the single-species SK model without external field at low temperature just below the criticality.

\end{remark}
\begin{proof}

Let $\mu\in \mathcal M^\uparrow$ be any Parisi measure and suppose $p,q \in\operatorname{supp}(\mu)$ satisfy $p_s < q_s$ for some species $s\in\mathscr S.$
We aim to show $p_k<q_k$ for any $k\in\mathscr S$ such that $\Delta^2_{ks}>0$.
In spirit of \cite{BS22}, we will achieve this by examining  an identity satisfied by the support of a Parisi measure;
by Lemma~\ref{prop: support of Parisi measure} which also holds for deterministic external fields, it suffices to establish \[\Psi_\mu (h,p)_k < \Psi_\mu (h,q)_k,\] which is equivalent to, in view of definition \eqref{eq: definition of vector valued map Psi}, $f_k(p)<f_k(q)$ where \[f_k(x) \colonequals \Gamma_{\mu\circ   (2\Delta^2\Lambda)^{-1} \circ \pi_k^{-1}\circ S_k, \; \xi_k}\Bigl(h_k, \frac{(2\Delta^2\Lambda x)_k}{(2\Delta^2\Lambda \1)_k}\Bigr), \quad x\in [0,1]^{\mathscr S}.\]
By the chain rule, we have \[\partial_{x_s} f_k(x)= \frac{(\Delta^2\Lambda)_{ks}}{(\Delta^2\Lambda \1)_k} \partial_2 \Gamma_{\mu\circ   (2\Delta^2\Lambda)^{-1} \circ \pi_k^{-1}\circ S_k, \; \xi_k} \Bigl(h_k, \frac{(2\Delta^2\Lambda x)_k}{(2\Delta^2\Lambda \1)_k}\Bigr) \quad  \text{for any } s\in\mathscr S,\] where we denote the derivative in the second coordinate by $\partial_2$.
By the fundamental theorem of calculus, \[f_k(q)-f_k(p) = \sum_{l\in\mathscr S} (q_l-p_l)\frac{(\Delta^2\Lambda)_{kl}}{(\Delta^2\Lambda \1)_k} \int_0^1 \partial_2 \Gamma_{\mu\circ   (2\Delta^2\Lambda)^{-1} \circ \pi_k^{-1}\circ S_k, \; \xi_k} \Bigl(h_k, \frac{(\Delta^2\Lambda (tq+(1-t)p))_k}{(\Delta^2\Lambda \1)_k}\Bigr)   dt.\]
By item (ii) in the proof of Lemma~\ref{lem: properties of Psi}, the integrand hence the integral on the right-hand side of the preceding display is positive.
As a result, $f_k(q)>f_k(p)$ for all $k\in\mathscr S$ such that $(\Delta^2\Lambda)_{ks}>0$, i.e., $\Delta^2_{ks}>0$.
Together with the irreducibility of $\Delta^2$, this establishes bijections between the supports of all the species.

\end{proof}

\bibliographystyle{acm}
\setlength{\bibsep}{0.5pt}   

{\footnotesize\bibliography{references}}

@article {talagrand2004,
    AUTHOR = {Panchenko, D. and Talagrand, M.},
     TITLE = {Bounds for diluted mean-fields spin glass models},
   JOURNAL = {Probab. Theory Relat. Fields},
    VOLUME = {130},
      YEAR = {2004},
    NUMBER = {3},
     PAGES = {319--336}
}

@article {Guerra,
    AUTHOR = {Guerra, F.},
     TITLE = {Broken replica symmetry bounds in the mean field spin glass
              model},
   JOURNAL = {Comm. Math. Phys.},
    VOLUME = {233},
      YEAR = {2003},
    NUMBER = {1},
     PAGES = {1--12},

}

@article{Sherrington1975,
  title = {Solvable Model of a Spin-Glass},
  author = {Sherrington, D. and Kirkpatrick, S.},
  journal = {Phys. Rev. Lett.},
  volume = {35},
  issue = {26},
  pages = {1792--1796},
  numpages = {0},
  year = {1975},
  month = {Dec},
  publisher = {American Physical Society},
  doi = {10.1103/PhysRevLett.35.1792},
  url = {https://link.aps.org/doi/10.1103/PhysRevLett.35.1792}
}

@book{Talagrand2013vol1,
  title={Mean Field Models for Spin Glasses: Volume I: Basic Examples},
  author={Talagrand, M.},
  series={A Series of Modern Surveys in Mathematics},
  year={2013},
  publisher={Springer Berlin Heidelberg}
}

@book{Talagrand2013vol2,
  title={Mean Field Models for Spin Glasses: Volume II: Advanced Replica-Symmetry and Low Temperature},
  author={Talagrand, M.},
  series={ A Series of Modern Surveys in Mathematics},
  year={2013},
  publisher={Springer Berlin Heidelberg}
}

@article {Che17,
    AUTHOR = {Chen, Wei-Kuo},
     TITLE = {Variational representations for the {P}arisi functional and
              the two-dimensional {G}uerra-{T}alagrand bound},
   JOURNAL = {Ann. Probab.},
  FJOURNAL = {The Annals of Probability},
    VOLUME = {45},
      YEAR = {2017},
    NUMBER = {6A},
     PAGES = {3929--3966},
      ISSN = {0091-1798,2168-894X},
   MRCLASS = {60K35 (82B44)},
  MRNUMBER = {3729619},
MRREVIEWER = {Giuseppe\ Genovese},
       DOI = {10.1214/16-AOP1154},
       URL = {https://doi.org/10.1214/16-AOP1154},
}

@article {CS17,
    AUTHOR = {Chen, Wei-Kuo and Sen, Arnab},
     TITLE = {Parisi formula, disorder chaos and fluctuation for the ground
              state energy in the spherical mixed {$p$}-spin models},
   JOURNAL = {Comm. Math. Phys.},
  FJOURNAL = {Communications in Mathematical Physics},
    VOLUME = {350},
      YEAR = {2017},
    NUMBER = {1},
     PAGES = {129--173},
      ISSN = {0010-3616,1432-0916},
   MRCLASS = {82D30},
  MRNUMBER = {3606472},
       DOI = {10.1007/s00220-016-2808-3},
       URL = {https://doi.org/10.1007/s00220-016-2808-3},
}

@article{Par80,
doi = {10.1088/0305-4470/13/4/009},
url = {https://dx.doi.org/10.1088/0305-4470/13/4/009},
year = {1980},
month = {apr},
publisher = {},
volume = {13},
number = {4},
pages = {L115},
author = {G Parisi},
title = {A sequence of approximated solutions to the {S}-{K} model for spin glasses},
journal = {Journal of Physics A: Mathematical and General},
abstract = {In the framework of the new version of the replica theory, a sequence of approximated solutions is computed for the Sherrington-Kirkpatrick model (see Phys. Rev. Lett., vol.35, p.1972, 1975) of spin glasses.}
}

@article {Tal06,
    AUTHOR = {Talagrand, Michel},
     TITLE = {Parisi measures},
   JOURNAL = {J. Funct. Anal.},
  FJOURNAL = {Journal of Functional Analysis},
    VOLUME = {231},
      YEAR = {2006},
    NUMBER = {2},
     PAGES = {269--286},
      ISSN = {0022-1236,1096-0783},
   MRCLASS = {82B44 (28A33 82D30)},
  MRNUMBER = {2195333},
       DOI = {10.1016/j.jfa.2005.03.001},
       URL = {https://doi.org/10.1016/j.jfa.2005.03.001},
}

@article{AT78,
doi = {10.1088/0305-4470/11/5/028},
url = {https://dx.doi.org/10.1088/0305-4470/11/5/028},
year = {1978},
month = {may},
publisher = {},
volume = {11},
number = {5},
pages = {983},
author = {J R L de Almeida and D J Thouless},
title = {Stability of the {S}herrington-{K}irkpatrick solution of a spin glass model},
journal = {Journal of Physics A: Mathematical and General},
abstract = {The stationary point used by Sherrington and Kirkpatrick (1975) in their evaluation of the free energy of a spin glass by the method of steepest descent is examined carefully. It is found that, although this point is a maximum of the integrand at high temperatures, it is not a maximum in the spin glass phase nor in the ferromagnetic phase at low temperatures. The instability persists in the presence of a magnetic field. Results are given for the limit of stability both for a partly ferromagnetic interaction in the absence of an external field and for a purely random interaction in the presence of a field.}
}

@article{Che21,
author = {Wei-Kuo Chen},
title = {{On the Almeida-Thouless transition line in the Sherrington-Kirkpatrick model with centered Gaussian external field}},
volume = {26},
journal = {Electronic Communications in Probability},
number = {none},
publisher = {Institute of Mathematical Statistics and Bernoulli Society},
pages = {1 -- 9},
keywords = {Almeida-Thouless line, Parisi formula, Sherrington-Kirkpatrick model},
year = {2021},
doi = {10.1214/21-ECP439},
URL = {https://doi.org/10.1214/21-ECP439}
}

@article{Ton02,
doi = {10.1209/epl/i2002-00667-5},
url = {https://dx.doi.org/10.1209/epl/i2002-00667-5},
year = {2002},
month = {dec},
publisher = {},
volume = {60},
number = {5},
pages = {764},
author = {F. L. Toninelli},
title = {About the {A}lmeida-{T}houless transition line  in the
 {S}herrington-{K}irkpatrick  mean-field spin glass model},
journal = {Europhysics Letters},
abstract = {We consider the Sherrington-Kirkpatrick model and we prove that the quenched free energy per spin is strictly larger than the corresponding replica symmetric approximation, for all values of the temperature and of the magnetic field below the Almeida-Thouless line. This implies the breaking of replica symmetry in the whole expected spin glass phase.}
}

@article{JT17,
  author       = {Jagannath, Aukosh and Tobasco, Ian},
  title        = {Some properties of the phase diagram for mixed p-spin glasses},
  journal      = {Probability Theory and Related Fields},
  year         = {2017},
  month        = apr,
  volume       = {167},
  number       = {3},
  pages        = {615--672},
  abstract     = {In this paper we study the Parisi variational problem for mixed p-spin glasses with Ising spins. Our starting point is a characterization of Parisi measures whose origin lies in the first order optimality conditions for the Parisi functional, which is known to be strictly convex. Using this characterization, we study the phase diagram in the temperature-external field plane. We begin by deriving self-consistency conditions for Parisi measures that generalize those of de Almeida and Thouless to all levels of Replica Symmetry Breaking (RSB) and all models. As a consequence, we conjecture that for all models the Replica Symmetric phase is the region determined by the natural analogue of the de Almeida–Thouless condition. We show that for all models, the complement of this region is in the RSB phase. Furthermore, we show that the conjectured phase boundary is exactly the phase boundary in the plane less a bounded set. In the case of the Sherrington–Kirkpatrick model, we extend this last result to show that this bounded set does not contain the critical point at zero external field.},
  doi          = {10.1007/s00440-015-0691-z},
  url          = {https://doi.org/10.1007/s00440-015-0691-z},
  issn         = {1432-2064}
}

@article{AC15,
  author       = {Auffinger, Antonio and Chen, Wei-Kuo},
  title        = {The Parisi Formula has a Unique Minimizer},
  journal      = {Communications in Mathematical Physics},
  year         = {2015},
  month        = may,
  volume       = {335},
  number       = {3},
  pages        = {1429--1444},
  abstract     = {In 1979, Parisi (Phys Rev Lett 43:1754–1756, 1979) predicted a variational formula for the thermodynamic limit of the free energy in the Sherrington–Kirkpatrick model, and described the role played by its minimizer. This formula was verified in the seminal work of Talagrand (Ann Math 163(1):221–263, 2006) and later generalized to the mixed p-spin models by Panchenko (Ann Probab 42(3):946–958, 2014). I
}
}

@book {Pan13_SK_book,
    AUTHOR = {Panchenko, Dmitry},
     TITLE = {The {S}herrington-{K}irkpatrick model},
    SERIES = {Springer Monographs in Mathematics},
 PUBLISHER = {Springer, New York},
      YEAR = {2013},
     PAGES = {xii+156},
      ISBN = {978-1-4614-6288-0; 978-1-4614-6289-7},
   MRCLASS = {82-02 (82B44 82D30)},
  MRNUMBER = {3052333},
MRREVIEWER = {Jos\'e\ Trashorras},
       DOI = {10.1007/978-1-4614-6289-7},
       URL = {https://doi.org/10.1007/978-1-4614-6289-7},
}

@article {Pan15,
    AUTHOR = {Panchenko, Dmitry},
     TITLE = {The free energy in a multi-species {S}herrington-{K}irkpatrick
              model},
   JOURNAL = {Ann. Probab.},
  FJOURNAL = {The Annals of Probability},
    VOLUME = {43},
      YEAR = {2015},
    NUMBER = {6},
     PAGES = {3494--3513},
      ISSN = {0091-1798,2168-894X},
   MRCLASS = {60K35 (60G09 82B44)},
  MRNUMBER = {3433586},
MRREVIEWER = {Rongfeng\ Sun},
       DOI = {10.1214/14-AOP967},
       URL = {https://doi.org/10.1214/14-AOP967},
}

@article {BSS19,
    AUTHOR = {Bates, Erik and Sloman, Leila and Sohn, Youngtak},
     TITLE = {Replica symmetry breaking in multi-species
              {S}herrington-{K}irkpatrick model},
   JOURNAL = {J. Stat. Phys.},
  FJOURNAL = {Journal of Statistical Physics},
    VOLUME = {174},
      YEAR = {2019},
    NUMBER = {2},
     PAGES = {333--350},
      ISSN = {0022-4715,1572-9613},
   MRCLASS = {82B10 (60K35 82B26 82B44)},
  MRNUMBER = {3910896},
       DOI = {10.1007/s10955-018-2197-4},
       URL = {https://doi.org/10.1007/s10955-018-2197-4},
}

@article {DW21,
    AUTHOR = {Dey, Partha S. and Wu, Qiang},
     TITLE = {Fluctuation results for multi-species
              {S}herrington-{K}irkpatrick model in the replica symmetric
              regime},
   JOURNAL = {J. Stat. Phys.},
  FJOURNAL = {Journal of Statistical Physics},
    VOLUME = {185},
      YEAR = {2021},
    NUMBER = {3},
     PAGES = {Paper No. 22, 40},
      ISSN = {0022-4715,1572-9613},
   MRCLASS = {82B26 (60F05 82B44)},
  MRNUMBER = {4338695},
MRREVIEWER = {Flora\ Koukiou},
       DOI = {10.1007/s10955-021-02835-w},
       URL = {https://doi.org/10.1007/s10955-021-02835-w},
}

@article {Gue03,
    AUTHOR = {Guerra, Francesco},
     TITLE = {Broken replica symmetry bounds in the mean field spin glass
              model},
   JOURNAL = {Comm. Math. Phys.},
  FJOURNAL = {Communications in Mathematical Physics},
    VOLUME = {233},
      YEAR = {2003},
    NUMBER = {1},
     PAGES = {1--12},
      ISSN = {0010-3616,1432-0916},
   MRCLASS = {82B44 (82D30)},
  MRNUMBER = {1957729},
MRREVIEWER = {Beat\ M.\ Niederhauser},
       DOI = {10.1007/s00220-002-0773-5},
       URL = {https://doi.org/10.1007/s00220-002-0773-5},
}

@misc{Che23,
      title={Parisi {PDE} and convexity for vector spins}, 
      author={Hong-Bin Chen},
      year={2023},
      eprint={2311.10446},
      archivePrefix={arXiv},
      primaryClass={math.PR},
      url={https://arxiv.org/abs/2311.10446}, 
}

@book {FG21,
    AUTHOR = {Figalli, Alessio and Glaudo, Federico},
     TITLE = {An invitation to optimal transport, {W}asserstein distances,
              and gradient flows},
    SERIES = {EMS Textbooks in Mathematics},
 PUBLISHER = {EMS Press, Berlin},
      YEAR = {[2021] \copyright 2021},
     PAGES = {vi+136},
      ISBN = {978-3-98547-010-5},
   MRCLASS = {49-01 (28A33 35A15 49N15 49Q22 60B05)},
  MRNUMBER = {4331435},
       DOI = {10.4171/ETB/22},
       URL = {https://doi.org/10.4171/ETB/22},
}

@article {BS22,
    AUTHOR = {Bates, Erik and Sohn, Youngtak},
     TITLE = {Free energy in multi-species mixed {$p$}-spin spherical
              models},
   JOURNAL = {Electron. J. Probab.},
  FJOURNAL = {Electronic Journal of Probability},
    VOLUME = {27},
      YEAR = {2022},
     PAGES = {Paper No. 52, 75},
      ISSN = {1083-6489},
   MRCLASS = {82B44 (60G15 82D30)},
  MRNUMBER = {4416676},
MRREVIEWER = {Flora\ Koukiou},
       DOI = {10.1214/22-ejp780},
       URL = {https://doi.org/10.1214/22-ejp780},
}

@article {AC15_PTRF,
    AUTHOR = {Auffinger, Antonio and Chen, Wei-Kuo},
     TITLE = {On properties of {P}arisi measures},
   JOURNAL = {Probab. Theory Related Fields},
  FJOURNAL = {Probability Theory and Related Fields},
    VOLUME = {161},
      YEAR = {2015},
    NUMBER = {3-4},
     PAGES = {817--850},
      ISSN = {0178-8051,1432-2064},
   MRCLASS = {60K35 (82B44)},
  MRNUMBER = {3334282},
MRREVIEWER = {Sunder\ Sethuraman},
       DOI = {10.1007/s00440-014-0563-y},
       URL = {https://doi.org/10.1007/s00440-014-0563-y},
}

@article {BCMT15,
    AUTHOR = {Barra, Adriano and Contucci, Pierluigi and Mingione, Emanuele
              and Tantari, Daniele},
     TITLE = {Multi-species mean field spin glasses. {R}igorous results},
   JOURNAL = {Ann. Henri Poincar\'e},
  FJOURNAL = {Annales Henri Poincar\'e. A Journal of Theoretical and
              Mathematical Physics},
    VOLUME = {16},
      YEAR = {2015},
    NUMBER = {3},
     PAGES = {691--708},
      ISSN = {1424-0637,1424-0661},
   MRCLASS = {82D30 (80A17)},
  MRNUMBER = {3311887},
       DOI = {10.1007/s00023-014-0341-5},
       URL = {https://doi.org/10.1007/s00023-014-0341-5},
}

@book {Rud91,
    AUTHOR = {Rudin, Walter},
     TITLE = {Functional analysis},
    SERIES = {International Series in Pure and Applied Mathematics},
   EDITION = {Second},
 PUBLISHER = {McGraw-Hill, Inc., New York},
      YEAR = {1991},
     PAGES = {xviii+424},
      ISBN = {0-07-054236-8},
   MRCLASS = {46-01 (47-01)},
  MRNUMBER = {1157815},
}

@article {BS22_CMP,
    AUTHOR = {Bates, Erik and Sohn, Youngtak},
     TITLE = {Crisanti-{S}ommers formula and simultaneous symmetry breaking
              in multi-species spherical spin glasses},
   JOURNAL = {Comm. Math. Phys.},
  FJOURNAL = {Communications in Mathematical Physics},
    VOLUME = {394},
      YEAR = {2022},
    NUMBER = {3},
     PAGES = {1101--1152},
      ISSN = {0010-3616,1432-0916},
   MRCLASS = {60K35 (60G15 82B44 82D30)},
  MRNUMBER = {4470246},
       DOI = {10.1007/s00220-022-04421-x},
       URL = {https://doi.org/10.1007/s00220-022-04421-x},
}

@misc{CM24,
      title={Simultaneous replica-symmetry breaking for vector spin glasses}, 
      author={Hong-Bin Chen and Jean-Christophe Mourrat},
      year={2024},
      eprint={2411.14105},
      archivePrefix={arXiv},
      primaryClass={math.PR},
      url={https://arxiv.org/abs/2411.14105}, 
}

@book {Mey00_book,
    AUTHOR = {Meyer, Carl},
     TITLE = {Matrix analysis and applied linear algebra},
      NOTE = {With 1 CD-ROM (Windows, Macintosh and UNIX) and a solutions
              manual (iv+171 pp.)},
 PUBLISHER = {Society for Industrial and Applied Mathematics (SIAM),
              Philadelphia, PA},
      YEAR = {2000},
     PAGES = {xii+718},
      ISBN = {0-89871-454-0},
   MRCLASS = {15-01},
  MRNUMBER = {1777382},
       DOI = {10.1137/1.9780898719512},
       URL = {https://doi.org/10.1137/1.9780898719512},
}

@article{ACM21,
  author    = {Diego Alberici and Pierluigi Contucci and Emanuele Mingione},
  title     = {Deep {B}oltzmann {M}achines: Rigorous Results at Arbitrary Depth},
  journal   = {Annales Henri Poincaré},
  year      = {2021},
  volume    = {22},
  number    = {8},
  pages     = {2619--2642},
  doi       = {10.1007/s00023-021-01027-2},
  url       = {https://doi.org/10.1007/s00023-021-01027-2},
  issn      = {1424-0661},
  abstract  = {A class of deep Boltzmann machines is considered in the simplified framework of a quenched system with Gaussian noise and independent entries. The quenched pressure of a K-layers spin glass model is studied allowing interactions only among consecutive layers. A lower bound for the pressure is found in terms of a convex combination of K Sherrington–Kirkpatrick models and used to study the annealed and replica symmetric regimes of the system. A map with a one-dimensional monomer–dimer system is identified and used to rigorously control the annealed region at arbitrary depth K with the methods introduced by Heilmann and Lieb. The compression of this high-noise region displays a remarkable phenomenon of localisation of the processing layers. Furthermore, a replica symmetric lower bound for the limiting quenched pressure of the model is obtained in a suitable region of the parameters and the replica symmetric pressure is proved to have a unique stationary point.}
}

@misc{Zho25,
      title={Existence of Full Replica Symmetry Breaking for the {S}herrington-{K}irkpatrick Model at Low Temperature}, 
      author={Yuxin Zhou},
      year={2025},
      eprint={2504.00269},
      archivePrefix={arXiv},
      primaryClass={math.PR},
      url={https://arxiv.org/abs/2504.00269}, 
}

@article {Pan05,
    AUTHOR = {Panchenko, Dmitry},
     TITLE = {Free energy in the generalized {S}herrington-{K}irkpatrick
              mean field model},
   JOURNAL = {Rev. Math. Phys.},
  FJOURNAL = {Reviews in Mathematical Physics. A Journal for Both Review and
              Original Research Papers in the Field of Mathematical Physics},
    VOLUME = {17},
      YEAR = {2005},
    NUMBER = {7},
     PAGES = {793--857},
      ISSN = {0129-055X,1793-6659},
   MRCLASS = {82B44},
  MRNUMBER = {2159369},
MRREVIEWER = {Nicoletta\ Cancrini},
       DOI = {10.1142/S0129055X05002455},
       URL = {https://doi.org/10.1142/S0129055X05002455},
}

@misc{Che24,
      title={On free energy of non-convex multi-species spin glasses}, 
      author={Hong-Bin Chen},
      year={2024},
      eprint={2411.13342},
      archivePrefix={arXiv},
      primaryClass={cond-mat.dis-nn},
      url={https://arxiv.org/abs/2411.13342}, 
}

@article {Wu24,
    AUTHOR = {Wu, Qiang},
     TITLE = {Thouless-{A}nderson-{P}almer equations for the multi-species
              {S}herrington-{K}irkpatrick model},
   JOURNAL = {J. Stat. Phys.},
  FJOURNAL = {Journal of Statistical Physics},
    VOLUME = {191},
      YEAR = {2024},
    NUMBER = {7},
     PAGES = {Paper No. 87, 14},
      ISSN = {0022-4715,1572-9613},
   MRCLASS = {82B44 (60F25 82D30 82D40)},
  MRNUMBER = {4773258},
       DOI = {10.1007/s10955-024-03288-7},
       URL = {https://doi.org/10.1007/s10955-024-03288-7},
}

@misc{CL25,
      title={Order of fluctuations of the free energy in the positive semi-definite {MSK} model at critical temperature}, 
      author={Elizabeth Collins-Woodfin and Han Gia Le},
      year={2025},
      eprint={2501.11732},
      archivePrefix={arXiv},
      primaryClass={math.PR},
      url={https://arxiv.org/abs/2501.11732}, 
}

@misc{CIM25,
      title={The convex structure of the {P}arisi formula for multi-species spin glasses}, 
      author={Hong-Bin Chen and Victor Issa and Jean-Christophe Mourrat},
      year={2025},
      eprint={2508.06397},
      archivePrefix={arXiv},
      primaryClass={math.PR},
      url={https://arxiv.org/abs/2508.06397}, 
}

@article {Sub23_AoP,
    AUTHOR = {Subag, Eliran},
     TITLE = {T{AP} approach for multispecies spherical spin glasses {II}:
              the free energy of the pure models},
   JOURNAL = {Ann. Probab.},
  FJOURNAL = {The Annals of Probability},
    VOLUME = {51},
      YEAR = {2023},
    NUMBER = {3},
     PAGES = {1004--1024},
      ISSN = {0091-1798,2168-894X},
   MRCLASS = {82D30 (60G15 60G60 82B44)},
  MRNUMBER = {4583061},
MRREVIEWER = {Ofer\ Zeitouni},
       DOI = {10.1214/22-aop1605},
       URL = {https://doi.org/10.1214/22-aop1605},
}

@article {Sub25,
    AUTHOR = {Subag, Eliran},
     TITLE = {T{AP} approach for multi-species spherical spin glasses {I}:
              {G}eneral theory},
   JOURNAL = {Electron. J. Probab.},
  FJOURNAL = {Electronic Journal of Probability},
    VOLUME = {30},
      YEAR = {2025},
     PAGES = {Paper No. 87, 32},
      ISSN = {1083-6489},
   MRCLASS = {82D30 (60G15 60G60 82B44)},
  MRNUMBER = {4904487},
       DOI = {10.1214/25-ejp1333},
       URL = {https://doi.org/10.1214/25-ejp1333},
}

@article {Sub23,
    AUTHOR = {Subag, Eliran},
     TITLE = {On the second moment method and {RS} phase of multi-species
              spherical spin glasses},
   JOURNAL = {Electron. J. Probab.},
  FJOURNAL = {Electronic Journal of Probability},
    VOLUME = {28},
      YEAR = {2023},
     PAGES = {Paper No. 50, 21},
      ISSN = {1083-6489},
   MRCLASS = {82D30 (60G15 60G60)},
  MRNUMBER = {4570231},
       DOI = {10.1214/23-ejp933},
       URL = {https://doi.org/10.1214/23-ejp933},
}

@misc{HS25,
      title={Strong Topological Trivialization of Multi-Species Spherical Spin Glasses}, 
      author={Brice Huang and Mark Sellke},
      year={2025},
      eprint={2308.09677},
      archivePrefix={arXiv},
      primaryClass={math.PR},
      url={https://arxiv.org/abs/2308.09677}, 
}

@misc{HS23,
      title={Algorithmic Threshold for Multi-Species Spherical Spin Glasses}, 
      author={Brice Huang and Mark Sellke},
      year={2023},
      eprint={2303.12172},
      archivePrefix={arXiv},
      primaryClass={math.PR},
      url={https://arxiv.org/abs/2303.12172}, 
}

@article {HS24,
    AUTHOR = {Huang, Brice and Sellke, Mark},
     TITLE = {Optimization algorithms for multi-species spherical spin
              glasses},
   JOURNAL = {J. Stat. Phys.},
  FJOURNAL = {Journal of Statistical Physics},
    VOLUME = {191},
      YEAR = {2024},
    NUMBER = {2},
     PAGES = {Paper No. 29, 42},
      ISSN = {0022-4715,1572-9613},
   MRCLASS = {82D30 (60K35)},
  MRNUMBER = {4709433},
       DOI = {10.1007/s10955-024-03242-7},
       URL = {https://doi.org/10.1007/s10955-024-03242-7},
}

@article {Liu21,
    AUTHOR = {Liu, Qun},
     TITLE = {Fluctuations for the bipartite {S}herrington-{K}irkpatrick
              model},
   JOURNAL = {J. Stat. Phys.},
  FJOURNAL = {Journal of Statistical Physics},
    VOLUME = {184},
      YEAR = {2021},
    NUMBER = {1},
     PAGES = {Paper No. 12, 21},
      ISSN = {0022-4715,1572-9613},
   MRCLASS = {82D30 (60F05 82B26 82B44)},
  MRNUMBER = {4283044},
MRREVIEWER = {Francis\ Comets},
       DOI = {10.1007/s10955-021-02801-6},
       URL = {https://doi.org/10.1007/s10955-021-02801-6},
}

@article {BL20,
    AUTHOR = {Baik, Jinho and Lee, Ji Oon},
     TITLE = {Free energy of bipartite spherical {S}herrington-{K}irkpatrick
              model},
   JOURNAL = {Ann. Inst. Henri Poincar\'e{} Probab. Stat.},
  FJOURNAL = {Annales de l'Institut Henri Poincar\'e{} Probabilit\'es et
              Statistiques},
    VOLUME = {56},
      YEAR = {2020},
    NUMBER = {4},
     PAGES = {2897--2934},
      ISSN = {0246-0203,1778-7017},
   MRCLASS = {82B44 (60B20 60K35)},
  MRNUMBER = {4164860},
MRREVIEWER = {Flora\ Koukiou},
       DOI = {10.1214/20-AIHP1062},
       URL = {https://doi.org/10.1214/20-AIHP1062},
}

@article {CL25_AIHP,
    AUTHOR = {Collins-Woodfin, Elizabeth W. and Le, Han Gia},
     TITLE = {Free energy fluctuations of the bipartite spherical {SK} model
              at critical temperature},
   JOURNAL = {Ann. Henri Poincar\'e},
  FJOURNAL = {Annales Henri Poincar\'e. A Journal of Theoretical and
              Mathematical Physics},
    VOLUME = {26},
      YEAR = {2025},
    NUMBER = {3},
     PAGES = {1087--1147},
      ISSN = {1424-0637,1424-0661},
   MRCLASS = {60K35 (60B20 60F05 82D30)},
  MRNUMBER = {4884429},
       DOI = {10.1007/s00023-024-01448-9},
       URL = {https://doi.org/10.1007/s00023-024-01448-9},
}

@article {Kiv23,
    AUTHOR = {Kivimae, Pax},
     TITLE = {The ground state energy and concentration of complexity in
              spherical bipartite models},
   JOURNAL = {Comm. Math. Phys.},
  FJOURNAL = {Communications in Mathematical Physics},
    VOLUME = {403},
      YEAR = {2023},
    NUMBER = {1},
     PAGES = {37--81},
      ISSN = {0010-3616,1432-0916},
   MRCLASS = {82D30 (60K35)},
  MRNUMBER = {4645713},
MRREVIEWER = {Shuta\ Nakajima},
       DOI = {10.1007/s00220-023-04733-6},
       URL = {https://doi.org/10.1007/s00220-023-04733-6},
}

@article {Tal09,
    AUTHOR = {Talagrand, Michel},
     TITLE = {A general form of certain mean field models for spin glasses},
   JOURNAL = {Probab. Theory Related Fields},
  FJOURNAL = {Probability Theory and Related Fields},
    VOLUME = {143},
      YEAR = {2009},
    NUMBER = {1-2},
     PAGES = {97--111},
      ISSN = {0178-8051,1432-2064},
   MRCLASS = {82B44 (60G15)},
  MRNUMBER = {2449124},
MRREVIEWER = {Nicoletta\ Cancrini},
       DOI = {10.1007/s00440-007-0121-y},
       URL = {https://doi.org/10.1007/s00440-007-0121-y},
}

@article{BGG11,
doi = {10.1088/1751-8113/44/24/245002},
url = {https://dx.doi.org/10.1088/1751-8113/44/24/245002},
year = {2011},
month = {may},
publisher = {},
volume = {44},
number = {24},
pages = {245002},
author = {Barra, Adriano and Genovese, Giuseppe and Guerra, Francesco},
title = {Equilibrium statistical mechanics of bipartite spin systems},
journal = {Journal of Physics A: Mathematical and Theoretical},
abstract = {The aim of this paper is to give an extensive treatment of bipartite mean field spin systems, pure and disordered. At first, bipartite ferromagnets are investigated, and an explicit expression for the free energy is achieved through a new minimax variational principle. Then, via the Hamilton–Jacobi technique, the same structure of the free energy is obtained together with the existence of its thermodynamic limit and the minimax principle is connected to a standard max one. The same is investigated for bipartite spin-glasses. By the Borel–Cantelli lemma we obtain the control of the high temperature regime, while via the double stochastic stability technique we also obtain the explicit expression of the free energy in the replica symmetric approximation, uniquely defined by a minimax variational principle again. We also obtain a general result that states that the free energies of these systems are convex linear combinations of their independent one-party model counterparts. For the sake of completeness, we show further that at zero temperature the replica symmetric entropy becomes negative and, consequently, such a symmetry must be broken. The treatment of the fully broken replica symmetry case is deferred to a forthcoming paper. As a first step in this direction, we start deriving the linear and quadratic constraints to overlap fluctuations.}
}

@article {LD24,
    AUTHOR = {Liu, Qun and Dong, Zhishan},
     TITLE = {Some rigorous results for the diluted multi-species {SK}
              model},
   JOURNAL = {J. Stat. Phys.},
  FJOURNAL = {Journal of Statistical Physics},
    VOLUME = {191},
      YEAR = {2024},
    NUMBER = {12},
     PAGES = {Paper No. 162, 25},
      ISSN = {0022-4715,1572-9613},
   MRCLASS = {82C22 (82D30)},
  MRNUMBER = {4834494},
       DOI = {10.1007/s10955-024-03376-8},
       URL = {https://doi.org/10.1007/s10955-024-03376-8},
}

@article {BY22,
    AUTHOR = {Brennecke, Christian and Yau, Horng-Tzer},
     TITLE = {The replica symmetric formula for the {SK} model revisited},
   JOURNAL = {J. Math. Phys.},
  FJOURNAL = {Journal of Mathematical Physics},
    VOLUME = {63},
      YEAR = {2022},
    NUMBER = {7},
     PAGES = {Paper No. 073302, 12},
      ISSN = {0022-2488,1089-7658},
   MRCLASS = {82D30 (82B20 82B44)},
  MRNUMBER = {4451893},
MRREVIEWER = {Henryk\ Arod\'z},
       DOI = {10.1063/5.0073807},
       URL = {https://doi.org/10.1063/5.0073807},
}

@article {BGF10,
    AUTHOR = {Barra, Adriano and Genovese, Giuseppe and Guerra, Francesco},
     TITLE = {The replica symmetric approximation of the analogical neural
              network},
   JOURNAL = {J. Stat. Phys.},
  FJOURNAL = {Journal of Statistical Physics},
    VOLUME = {140},
      YEAR = {2010},
    NUMBER = {4},
     PAGES = {784--796},
      ISSN = {0022-4715,1572-9613},
   MRCLASS = {82C32 (92B20)},
  MRNUMBER = {2670741},
MRREVIEWER = {Dimitri\ Petritis},
       DOI = {10.1007/s10955-010-0020-y},
       URL = {https://doi.org/10.1007/s10955-010-0020-y},
}

@article {BGGT14,
    AUTHOR = {Barra, Adriano and Genovese, Giuseppe and Guerra, Francesco
              and Tantari, Daniele},
     TITLE = {About a solvable mean field model of a {G}aussian spin glass},
   JOURNAL = {J. Phys. A},
  FJOURNAL = {Journal of Physics. A. Mathematical and Theoretical},
    VOLUME = {47},
      YEAR = {2014},
    NUMBER = {15},
     PAGES = {155002, 16},
      ISSN = {1751-8113,1751-8121},
   MRCLASS = {82D30 (80A22)},
  MRNUMBER = {3191671},
       DOI = {10.1088/1751-8113/47/15/155002},
       URL = {https://doi.org/10.1088/1751-8113/47/15/155002},
}

@article {McK24,
    AUTHOR = {McKenna, Benjamin},
     TITLE = {Complexity of bipartite spherical spin glasses},
   JOURNAL = {Ann. Inst. Henri Poincar\'e{} Probab. Stat.},
  FJOURNAL = {Annales de l'Institut Henri Poincar\'e{} Probabilit\'es et
              Statistiques},
    VOLUME = {60},
      YEAR = {2024},
    NUMBER = {1},
     PAGES = {636--657},
      ISSN = {0246-0203,1778-7017},
   MRCLASS = {82B44 (60G15 60K35 82D30)},
  MRNUMBER = {4718393},
MRREVIEWER = {Flora\ Koukiou},
       DOI = {10.1214/22-aihp1327},
       URL = {https://doi.org/10.1214/22-aihp1327},
}

@article {BGGPT14,
    AUTHOR = {Barra, Adriano and Galluzzi, Andrea and Guerra, Francesco and
              Pizzoferrato, Andrea and Tantari, Daniele},
     TITLE = {Mean field bipartite spin models treated with mechanical
              techniques},
   JOURNAL = {Eur. Phys. J. B},
  FJOURNAL = {The European Physical Journal B. Condensed Matter and Complex
              Systems},
    VOLUME = {87},
      YEAR = {2014},
    NUMBER = {3},
     PAGES = {Art. 74, 13},
      ISSN = {1434-6028,1434-6036},
   MRCLASS = {78A48 (82B10)},
  MRNUMBER = {3180909},
       DOI = {10.1140/epjb/e2014-40952-4},
       URL = {https://doi.org/10.1140/epjb/e2014-40952-4},
}

@article {Mou21,
    AUTHOR = {Mourrat, Jean-Christophe},
     TITLE = {Nonconvex interactions in mean-field spin glasses},
   JOURNAL = {Probab. Math. Phys.},
  FJOURNAL = {Probability and Mathematical Physics},
    VOLUME = {2},
      YEAR = {2021},
    NUMBER = {2},
     PAGES = {281--339},
      ISSN = {2690-0998,2690-1005},
   MRCLASS = {82D30 (82B44)},
  MRNUMBER = {4408014},
       DOI = {10.2140/pmp.2021.2.281},
       URL = {https://doi.org/10.2140/pmp.2021.2.281},
}

@article {CM25,
    AUTHOR = {Chen, Hong-Bin and Mourrat, Jean-Christophe},
     TITLE = {On the free energy of vector spin glasses with nonconvex
              interactions},
   JOURNAL = {Probab. Math. Phys.},
  FJOURNAL = {Probability and Mathematical Physics},
    VOLUME = {6},
      YEAR = {2025},
    NUMBER = {1},
     PAGES = {1--80},
      ISSN = {2690-0998,2690-1005},
   MRCLASS = {82D30 (82B44)},
  MRNUMBER = {4848391},
MRREVIEWER = {Xingbin\ Pan},
       DOI = {10.2140/pmp.2025.6.1},
       URL = {https://doi.org/10.2140/pmp.2025.6.1},
}

@book {Rud_RCA,
    AUTHOR = {Rudin, Walter},
     TITLE = {Real and complex analysis},
   EDITION = {Third},
 PUBLISHER = {McGraw-Hill Book Co., New York},
      YEAR = {1987},
     PAGES = {xiv+416},
      ISBN = {0-07-054234-1},
   MRCLASS = {00A05 (26-01 30-01 46-01)},
  MRNUMBER = {924157},
}

@incollection {Com96,
    AUTHOR = {Comets, F.},
     TITLE = {A spherical bound for the {S}herrington-{K}irkpatrick model},
      NOTE = {Hommage \`a\ P. A. Meyer et J. Neveu},
   JOURNAL = {Ast\'erisque},
  FJOURNAL = {Ast\'erisque},
    NUMBER = {236},
      YEAR = {1996},
     PAGES = {103--108},
      ISSN = {0303-1179,2492-5926},
   MRCLASS = {82B26},
  MRNUMBER = {1417976},
MRREVIEWER = {Pierre\ Picco},
}

@article {ALR87,
    AUTHOR = {Aizenman, M. and Lebowitz, J. L. and Ruelle, D.},
     TITLE = {Some rigorous results on the {S}herrington-{K}irkpatrick spin
              glass model},
   JOURNAL = {Comm. Math. Phys.},
  FJOURNAL = {Communications in Mathematical Physics},
    VOLUME = {112},
      YEAR = {1987},
    NUMBER = {1},
     PAGES = {3--20},
      ISSN = {0010-3616,1432-0916},
   MRCLASS = {82A57 (82A05)},
  MRNUMBER = {904135},
MRREVIEWER = {Aernout\ C. D. van Enter},
       URL = {http://projecteuclid.org/euclid.cmp/1104159806},
}

\end{document}